                      \def\version{1st November 2024}                          %
\numberwithin{equation}{section}
\newmdenv[frametitle=Free Parameter Constraints]{ctracker}    
\newcommand{\ProofEnde}{ \hfill $\square$} 
\DeclareMathAlphabet{\mathpzc}{OT1}{pzc}{m}{it}
\newcommand{\abs}[1]{{\lvert #1\rvert}}
\newcommand{\norm}[1]{{\lVert #1\rVert}}
\newcommand{\tnorm}[1]{{|\hspace{-0.35mm}\lVert #1\rVert\hspace{-0.35mm}|}}
\newcommand{\bA} {\boldsymbol{A}} 
\newcommand{\bB} {\boldsymbol{B}} 
\newcommand{\bC} {\boldsymbol{C}} 
\newcommand{\bE} {\boldsymbol{E}}
\newcommand{\bF} {\boldsymbol{F}}
\newcommand{\bI} {\boldsymbol{I}}
\newcommand{\bM} {\boldsymbol{M}} 
\newcommand{\bP} {\boldsymbol{P}} 
\newcommand{\bQ} {\boldsymbol{Q}} 
\newcommand{\bR} {\boldsymbol{R}} 
\newcommand{\bS} {\boldsymbol{S}}
\newcommand{\bT} {\boldsymbol{T}}
\newcommand{\cbX} {\boldsymbol{\mathcal X}}
\newcommand{\cbV} {\boldsymbol{\mathcal V}}
\newcommand{\cbO} {\boldsymbol{\Omega}}
\newcommand{\ba} {\boldsymbol{a}}
\newcommand{\bd} {\boldsymbol{d}}
\newcommand{\be} {\boldsymbol{e}}
\newcommand{\bq} {\boldsymbol{q}}
\newcommand{\one} {\boldsymbol{1}}
\def\emptyset{\varnothing} 
\def\a{\alpha} 
\def\b{\beta} 
\def\g{\gamma}
\def\d{\delta} 
\def\l{\lambda} 
\newcommand{\s}{\sigma}
\def\o{\omega} 
\def\O{\Omega}
\def\p{\partial} 
\def\ex{{\rm e}}
\font\tenBbb=msbm10 
\font\sevenBbb=msbm7 
\font\fiveBbb=msbm5 
 \newcommand{\C}     {\mathbb{C}} 
\newcommand{\R}     {\mathbb{R}} 
\newcommand{\Z}     {\mathbb{Z}} 
\newcommand{\N}     {\mathbb{N}}
\newcommand{\floor}[1]{\left\lfloor #1 \right\rfloor}
\def\1{{\mathchoice {1\mskip-4mu\mathrm l}      
{1\mskip-4mu\mathrm l} 
{1\mskip-4.5mu\mathrm l} {1\mskip-5mu\mathrm l}}} 
\newcommand{\ssup}[1] {{\scriptscriptstyle{({#1}})}} 
\def\comment#1{} 
\newcommand{\bunderline}[1]{\underline{#1\mkern-4mu}\mkern4mu }
\renewcommand{\theequation}{\thesection.\arabic{equation}} 
\newtheorem{theorem}{Theorem}[section] 
\newtheorem{lemma}[theorem]{Lemma} 
\newtheorem{prop}[theorem] {Proposition} 
\newtheorem{remark}[theorem]  {Remark}
\theoremstyle{definition}
\newtheoremstyle{thm}{2ex}{2ex}{\itshape\rmfamily}{} 
{\bfseries\rmfamily}{}{1.7ex}{} 
\newtheoremstyle{rem}{1.3ex}{1.3ex}{\rmfamily}{} 
{\itshape\rmfamily}{}{1.5ex}{} 
\renewcommand{\d}{{\rm d}}
\newcommand{\eps}{\varepsilon} 
\newcommand{\La}{\Lambda}
\newcommand{\Exp}{\mathscr{E}\kern-0.2mm{\operatorname{xp}}}
\newcommand{\Log}{\mathscr{L}\kern-0.2mm{\operatorname{og}}}
\newcommand{\Acal}   {{\mathcal A }}
\newcommand{\Bcal}   {{\mathcal B }}
\newcommand{\Ccal}   {{\mathcal C }}
\newcommand{\Gcal}   {{\mathcal G }} 
\newcommand{\Hcal}   {{\mathcal H }} 
\newcommand{\Ical}   {{\mathcal I }} 
\newcommand{\Jcal}   {{\mathcal J }} 
\newcommand{\Kcal}   {{\mathcal K }} 
\newcommand{\Mcal}   {{\mathcal M }} 
\newcommand{\Ocal}   {{\mathcal O }} 
\newcommand{\Pcal}   {{\mathcal P }} 
\newcommand{\Qcal}   {{\mathcal Q }} 
\newcommand{\Scal}   {{\mathcal S }} 
\newcommand{\Tcal}   {{\mathcal T }} 
\newcommand{\Ucal}   {{\mathcal U }} 
\newcommand{\Vcal}   {{\mathcal V }} 
\newcommand{\Wcal}   {{\mathcal W }}
\newcommand{\Zcal}   {{\mathcal Z }}
\newcommand{\Hscr} {\mathscr{H}}
\newcommand\ackname{Acknowledgements}
  \newenvironment{acknowledgements}{%
      \titlepage
      \null\vfil
      \@beginparpenalty\@lowpenalty
      \begin{center}%
        \bfseries \ackname
        \@endparpenalty\@M
      \end{center}}%
     {\par\vfil\null\endtitlepage}
\begin{document}

\title[\hfill]{\large 
Characterising the infinite volume scaling limit of gradient models with non-convex energy}

\maketitle

\thispagestyle{empty} 
\vspace{0.2cm} 
 
\centerline {\sc By Stefan Adams (\textdagger) and Andreas Koller\footnote{Mathematics Institute, University of Warwick, Coventry CV4 7AL, United Kingdom, {\tt Andreas.Koller@warwick.ac.uk}} }

\vspace{0.4cm}

\centerline{\small(\version)} 
\vspace{.5cm} 

\vspace{2cm}

\begin{quote} 
{\small {\bf Abstract:}}
We study the scaling limit of statistical mechanics models with non-convex Hamiltonians that are gradient perturbations of Gaussian measures. Characterising features of our gradient models are the imposed boundary tilt and the surface tension (free energy) as a function of tilt. In the regime of low temperatures and bounded tilt, we prove the scaling limit with respect to infinite volume Gibbs states for macroscopic functions on the continuum, and we show that the limit is a \emph{continuum Gaussian Free Field} with covariance (diffusion) matrix given as the Hessian of surface tension. Our proof of this longstanding conjecture for non-convex energy complements recent studies in \cite{Hil16, ABKM}, as well as the proof for strictly convex Hamiltonians in \cite{AW}.
\end{quote}
\vfill

\vfill

\bigskip\noindent
{\it MSC 2000. 60K35; 82B20; 82B28} 

\medskip\noindent
{\it Keywords and phrases. Scaling limits; Renormalisation group; Gaussian Free Field; surface tension} 
 
\eject

\setcounter{section}{0} 
\setcounter{footnote}{0}


\section{Introduction}

Random fields of gradients are a class of model systems in the study of random interfaces, random geometry, field theory and  elasticity theory. These random objects pose challenging problems for probabilists, as even an \emph{a priori} distribution involves strong correlations. In this article, we study random fields of gradients on the integer lattice $\Z^d$, which associate to each $x\in\Z^d$ a random variable $\phi(x)$ taking values in a finite-dimensional Euclidean space. Motivated by equilibrium statistical mechanics, the law of these variables is formally given in terms of a \emph{Hamiltonian}, or energy function, of the general form
\begin{equation} \label{E:formalH}
H(\phi) = \frac{1}{2}\sum_{x\sim y} V\big(\phi(x)-\phi(y)\big),
\end{equation}
where the sum is over unordered nearest-neighbour pairs on $\Z^d$ and $\phi(x)\in \R$. The function denoted $V$ is called the \emph{potential} and regulates the strength of the interaction. In the general setting of \eqref{E:formalH}, $V$ is typically an even, measurable function, bounded from below and exhibiting sufficient growth for large values of $\abs{\phi(x)-\phi(y)}$. In fact, for our results we allow more general anisotropic interactions defined by a potential $V:\R^d\to\R$ that satisfies assumptions described in detail further below.

To construct a corresponding probability measure rigorously, we have to restrict \eqref{E:formalH} to finite subsets $\Lambda\subset\Z^d$. The associated Boltzmann weight then determines a measure on $\R^\Lambda$ as follows:
\begin{equation} \label{E:GGMfinvol}
\frac{1}{Z}\exp\Big(-\frac{\beta}{2}\sum_{\substack{x\sim y \\ \{x,y\}\cap\Lambda\neq\emptyset}} V\big(\phi(x)-\phi(y)\big)\Big)\prod_{x\in\Lambda}\d\phi(x).
\end{equation}
In this expression, $\d\phi(x)$ stands for the Lebesgue measure and $\beta>0$ is the \emph{inverse temperature}. We also need to fix the values of $\phi$ on the boundary of $\Lambda$. We let $\phi(x)=x \cdot u$ for some \emph{tilt} $u\in\R^d$ and $x \in\Lambda^{\rm c}$. The normalising constant $Z$ of the measure depends on its parameters and is written $Z=Z_{\Lambda}(\beta,u)$. In principle, a measure on $\R^{\Z^d}$ may then be obtained by taking limits along a sequence of subsets $\Lambda\nearrow\Z^d$. This is complicated by the fact that values of the field $\phi(x)$ may \emph{delocalise}, in which case the above measures are not tight and no such infinite-volume limit for $\phi$ exists. However, the gradients of the field, that is, the differences $\phi(y)-\phi(x)$ for nearest neighbours $x$ and $y$ in $\Lambda$, generally fluctuate on a lower order. By restricting to events measurable with respect to the gradients, we ensure that an infinite-volume limit always exists in the setting we study in this article. These measures are referred to as \emph{gradient Gibbs measures} (or \emph{GGMs}) in the literature, since they depend only on the distribution of the gradients.

An important object in this context is the \emph{surface tension\/} or \emph{free energy\/} defined by the limit
\begin{equation}
\sigma_\beta(u)=-\lim_{\Lambda\nearrow\mathbb{Z}^d}\frac{1}{\beta|\Lambda|}\log Z_\Lambda(\beta,u)\,,\quad u\in\R^d\,.
\end{equation}
The surface tension $ \sigma_\beta(u) $ can be interpreted as the price to pay for tilting a macroscopically flat interface in the direction normal to the vector $(1,-u)\in\R^{d+1}$. The principal aim of this article is to prove that the infinite-volume scaling limit for non-convex potentials is characterised by the Hessian of surface tension (as a function of tilt) and thereby settle a longstanding conjecture.

The simplest example of the class of models described by \eqref{E:GGMfinvol} is represented by quadratic potentials $V$. The corresponding random object, the \emph{massless free field}, is Gaussian and its properties in finite and infinite volume are generally well known. Models with anharmonic interaction potentials have been studied extensively in the last three decades and considerable progress has been made in extending our understanding beyond the Gaussian setting. Among the main questions of interest we highlight the following two: first, one would like to know when the surface tension is strictly convex as a function of the tilt and, secondly, whether the large-scale behaviour is that of the massless free field (\emph{Gaussian universality class}). 

The most comprehensive results are available for the class of strictly (and uniformly) convex potentials $V$. Naddaf and Spencer \cite{NS97} showed that the infinite volume scaling limit of such GGMs with zero tilt is a (continuum) Gaussian Free Field (\emph{GFF}). About the same time, Funaki and Spohn \cite{FS97} proved the strict convexity of the surface tension (see also \cite{DGI00}) and established a classification of the ergodic infinite-volume GGMs by their tilt. Naddaf and Spencer's result was then extended to general tilt by Giacomin, Olla and Spohn \cite{GOS}, who also conjectured that the covariance (or diffusion) matrix of the limiting GFF should correspond to the Hessian matrix of the surface tension. Miller \cite{Miller} proved a Gaussian scaling limit with respect to the finite volume gradient measures on a bounded, two-dimensional domain with boundary conditions that are a continuous perturbation of a macroscopic tilt. His result also exhibits a concrete representation of the limiting covariance in terms of the infinite volume GGM. The aforementioned conjecture was finally settled for strictly convex potentials by Armstrong and Wu \cite{AW}, who also improved Funaki and Spohn's result on the regularity of the surface tension to $C^2$.

The strict convexity of $V$ appears as the collective assumption underlying these results, as they all employ methods that do not admit a straightforward analogue in the case of non-convex potentials. They variously rely on the Brascamp-Lieb inequality, the Helffer-Sj\"ostrand random walk representation (\cite{DGI00,GOS}), and exploiting the associated Langevin dynamics and homogenisation (\cite{FS97,DGI00,AW}). Much more is known about these models, and we refer to reviews by Funaki \cite{Fun05}, Sheffield \cite{She05} and Velenik \cite{Velenik} for a detailed overview and further references.

The field for non-convex potential $V$ is less completely surveyed. Biskup and Koteck\'{y} \cite{BK07} and Biskup and Spohn \cite{BS11} studied a class of non-convex $V$ that are a log-mixture of centred Gaussians. They showed that for sufficiently non-convex potentials, the (translation-invariant) infinite-volume Gibbs states are not unique \cite{BK07}, but the corresponding scaling limit remains in the Gaussian universality class \cite{BS11}. Their results are limited to the specific case of zero tilt and (for \cite{BK07}) dimension two. For the high-temperature (i.e., small $ \beta $) regime, Cotar, Deuschel and M\"uller  \cite{CDM} proved strict convexity of the surface tension, and Cotar and Deuschel \cite{CD09} showed Gaussian decay of correlations and a Gaussian scaling limit for a class of non-convex potentials which are small perturbations of a Gaussian measure.

The low temperature (i.e., large $ \beta$) and small tilt regime has been extensively studied by Adams, Koteck\'{y} and M\"uller in \cite{AKM16} for scalar-valued fields and by Adams, Buchholz, Koteck\'{y} and M\"uller for vector-valued fields in \cite{ABKM}. Both of these works utilise an extension and adaptation of renormalisation group ideas introduced by Brydges \textit{et al.}, see \cite{BY90, Bry09,BBS18}. In \cite{AKM16}, the authors show that the surface tension as a function of the tilt is strictly convex for large enough inverse temperature $ \beta $ and small tilts $ u\in\R^d $. Their analysis and results are extended to vector-valued fields in \cite{ABKM}, along with showing that the scaling limit for macroscopic functions on the torus is Gaussian. In earlier work, based on \cite{AKM16}, Hilger \cite{Hil16} had already proved that the scaling limit with respect to the finite volume measures on the torus (referred to as the \emph{torus scaling limit} below) is indeed a continuum GFF with some strictly positive symmetric covariance matrix.

Our goal is to identify the scaling limit of the infinite volume GGMs in the regime covered by the results of \cite{ABKM} and, in particular, to show that it is a GFF whose covariance matrix is given by the Hessian
\begin{equation}
\Hscr_\sigma(\beta,u)=D^2_u\sigma_\beta(u)
\end{equation}
of surface tension. This offers the first verification of the behaviour conjectured in \cite{GOS} outside the realm of models governed by strictly convex potentials. Our identification of the limiting covariance also applies to the torus scaling limit considered previously in \cite{Hil16} and \cite{ABKM}, and so we include it in our results with the proviso that only the particular form of the covariance matrix is novel. The infinite volume scaling limit corresponds to recent work \cite{BPR22b} on the integer-valued Gaussian measure in $d=2$ with zero tilt. Indeed, our method is inspired by the approach to the renormalisation group method in that work. The torus scaling limit likewise corresponds to \cite{BPR22a} for the integer-valued model.

\section{Setting and results} \label{S:2}

We begin with a description of the models we propose to study. Our aim is to provide a self-contained overview of the relevant definitions and we introduce our own notation. However, since our work builds on the renormalisation group method developed in \cite{AKM16} and \cite{ABKM}, we clearly situate our approach in the context of those works. We take as our starting point the general setup considered in \cite{ABKM}, subject to two key restrictions. First, we specialise to the case of scalar-valued fields and, secondly, we restrict our attention to interaction potentials that depend solely on gradients, that is, nearest neighbour interactions.

We take advantage of these limitations to simplify our definitions and notation by omitting those concepts introduced in \cite{ABKM} that serve a non-trivial purpose only in the more general setting considered there. For the convenience of the reader also familiar with \cite{AKM16}, we adopt the conventions used in that paper so far as possible, while ensuring compatibility with the general formalism of \cite{ABKM}. A detailed translation of our setting into the notation of \cite{ABKM} is provided in Appendix \ref{A:A}.

\bigskip

\subsection{Random gradient fields}

Let $ L>3$ be an odd integer and fix the dimension $d\ge 2$. For any integer $N\in\N$, we consider the space
\begin{equation} \label{E:VN}
\cbV_N=\{\phi: \mathbb Z^d\to\mathbb R;\  \phi(x+k)=\phi(x)\  \forall k\in (L^N\mathbb Z)^d\}
\end{equation}
that can be identified with the set of real-valued functions on the $d$-dimensional discrete \emph{torus} $\mathbb T_N=\bigl(\mathbb Z/L^N\mathbb Z\bigr)^d$. We denote by $\abs{\cdot}$ and $\abs{\cdot}_\infty$ respectively the quotient distances induced by the usual Euclidean and maximum norms on $\R^d$, for which we use the symbols $\abs{\cdot}^{\R^d}$ and $\abs{\cdot}_\infty^{\R^d}$ where required. Throughout this article, we interchangeably regard the torus $\mathbb{T}_N$ as represented by the centred lattice cube $\La_N =\{x\in\Z^d\colon \abs{x}_\infty^{\R^d}\le \frac{1}{2}(L^N-1)\} $ of side length $ L^N $, equipped with the metric $\abs{x-y}_\infty=\inf\{\abs{x-y+k}_\infty^{\R^d}: k\in (L^N\mathbb Z)^d\}$. $\cbV_N$ is an $L^{dN}$-dimensional Hilbert space when endowed with the standard scalar product
\begin{equation} \label{E:XNscalar}
(\phi,\psi)=\sum_{x\in \mathbb T_N}\phi(x)\psi(x).
\end{equation}

We focus on the subspace
\begin{equation}
\label{E:XN}
\cbX_N=\{\phi\in \cbV_N: \sum_{x\in \mathbb T_N }\phi(x)=0\}
\end{equation}
of functions (or fields) whose sum over the torus is zero. We use the symbol $ \l_N $ to denote the $(L^{dN}-1)$-dimensional Hausdorff measure on $ \cbX_N $ and we consider $ \cbX_N $ as measure space equipped with the $\sigma$-algebra $\boldsymbol{\Bcal_{\cbX_N}}$ induced by the Borel $\sigma$-algebra with respect to the product topology. We write $\Mcal_1(\cbX_N)=\Mcal_1(\cbX_N,\boldsymbol{\Bcal_{\cbX_N}})$ for the set of probability measures on $\cbX_N $, referring to elements in $\Mcal_1(\cbX_N)$ as \emph{random gradient fields}.

In this article we study a class of random gradient fields given (as Gibbs measures) in terms of a Hamiltonian function reflecting nearest-neighbour (or gradient) interaction. For a precise definition, we first introduce the \emph{discrete derivatives}
\begin{equation}
\label{E:nablai}
\nabla_i\phi(x)=\phi(x+{\rm e}_i)-\phi(x),\ \nabla_i^*\phi(x)=\phi(x-{\rm e}_i)-\phi(x)
\end{equation}
on $\cbV_N$. Here, ${\rm e}_i, i=1,\dots,d$, are unit coordinate vectors in $\R^d$. We write $\nabla\phi(x)\in\R^d$ for the vector formed by the discrete derivatives of $\phi$ evaluated at the site $x\in\mathbb{T}_N$.

Next, we introduce the class of interactions that we want to consider in terms of a \emph{potential} function $V:\R^d\to\R$. To be able to discuss random fields with a tilt  $u=(u_1\dots,u_d)\in\R^d$,  we use the method proposed by Funaki and Spohn in \cite{FS97}, who enforce the tilt on a measure defined on the torus space $\cbX_N$ by replacing the gradient $\nabla\phi(x)$ defined above by $\nabla\phi(x)+u$ as argument of the potential $V$. Accordingly, we define a family of Gibbs distributions by
\begin{equation}
\label{E:muNu}
\g_{N,\beta}^{\ssup{u}}(\d\phi)=\frac{1}{Z_{N,\beta}(u)} \exp\bigl(-\beta H_{N}^{\ssup{u}}(\phi)\bigr)\l_N(\d\phi),
\end{equation}
where 
\begin{equation}
\label{E:HNu}
H_{N}^{\ssup{u}}(\phi) =  \sum_{x\in \mathbb T_N} V(\nabla\phi(x)+u)
\end{equation}
is the Hamiltonian at tilt $u\in\R^d$ and $Z_{N,\beta}(u)$ is the normalising \emph{partition function}
\begin{equation}
\label{E:ZNu}
Z_{N,\beta}(u)=\int_{\cbX_N} \exp\bigl(-\beta H_{N}^{\ssup{u}}(\phi)\bigr)\l_N(\d\phi).
\end{equation}

An important role is played by the free energy or surface tension of the model, which is defined as the infinite volume limit
\begin{equation} \label{sigmau}
\sigma_\beta(u)=\lim_{N\to\infty} \sigma_{N,\beta}(u)=\lim_{N\to\infty}-\frac{1}{\beta L^{dN}}\log Z_{N,\beta}(u).
\end{equation}
The main application of the technical results of \cite{AKM16} and \cite{ABKM} was to establish, for large $\beta$, small tilts $u$ and suitable potentials $V$, the regularity and strict convexity of the surface tension $\sigma_\beta(u)$. For any regime of the parameters in which $\sigma_\beta$ is (at least) $C^2$ as a function of tilt, we write
\begin{equation} \label{Hessian}
\mathscr{H}_{\sigma}(\beta,u)=D^2_u \sigma_\beta(u)
\end{equation}
for the Hessian matrix of surface tension at inverse temperature $\beta$ and tilt $u$. In view of our results, we extend this notation to the case of subsequential limits.

\subsection{Scaling limits}

The principal object of this article is to give a characterisation in terms of the infinite volume surface tension of the scaling limit of certain models of the type described above. We consider two different regimes: the scaling limit of the finite volume measures $\gamma_{N,\beta}^{\ssup{u}}$ introduced above and the scaling limit with respect to an infinite volume measure. The former is referred to as the \emph{torus scaling limit} throughout this article and corresponds to the scaling limit studied in \cite{ABKM}. The second object is called the \emph{infinite volume scaling limit} and described further below.

We identify the distribution of these scaling limits using the Laplace transform of the relevant probability measures defined with the help of suitable test functions. For the torus scaling limit the appropriate class of test function consists of smooth functions on the continuum torus, $\mathbb{T}^d=(\R/\Z)^d$, with zero mean, which we discretise at the scale of $\mathbb{T}_N$. Given a smooth function $f\in C^\infty(\mathbb{T}^d,\R)$ with $\int_{\mathbb{T}^d}f=0$, we thus introduce the scaled function $f_N(x)=L^{-N\frac{d+2}{2}}f(L^{-N}x)$ for $x\in\mathbb{T}_N$ and our goal is to study the expectation
\begin{equation}
\mathbb{E}_{\gamma_{N,\beta}^{\ssup{u}}}\big[\ex^{(f_N,\phi)}\big]
\end{equation}
in the $N\to\infty$ limit.

We show in Appendix \ref{A:B} that the sequence of measures $\big(\gamma_{N,\beta}^{\ssup{u}}\big)_{N\in\N}$ is tight and hence converges weakly along an appropriate subsequence (modulo constant fields or as a gradient vector field). The infinite volume scaling limit is taken with respect to any such weak limit, denoted by $\gamma_{\infty,\beta}^{\ssup{u}}$ in the following. The corresponding space of test functions comprises smooth, compactly supported functions on the continuum, $f\in C_c^\infty(\R^d)$, such that $\int_{\R^d} f=0$. It is convenient to express this scaling limit with respect to the gradients and so we choose a test function $f=\Delta_{\R^d} g$. We use here the symbol $\Delta_{\R^d}=\sum_{i=1}^d \partial_i^2$ for the Laplacian on $\R^d$ to distinguish it from discrete operators that appear in the sequel. The scaled function at lattice spacing $\eps>0$ is given as $g_\eps(x)=\eps^{\frac{d}{2}-1}g(\eps x)$ for $x\in\Z^d$, and we are interested in the limit
\begin{equation}
\lim_{\eps\to 0} \; \mathbb{E}_{\gamma_{\infty,\beta}^{\ssup{u}}}\Big[\exp\Big({\sum_{x\in\Z^d}\nabla\phi(x)\cdot \nabla g_\eps(x)}\Big)\Big].
\end{equation}
The dot product in this expression stands for the usual scalar product in $\R^d$ and we occasionally use this notation throughout. As we see in the following, it suffices to study the expectations in finite volume: our main efforts are therefore directed at understanding the Laplace transform
\begin{equation} \label{E:IVscalingfinvol}
\mathbb{E}_{\gamma_{N,\beta}^{\ssup{u}}}\Big[\exp\Big(\sum_{x\in\mathbb{T}_N}\nabla\phi(x)\cdot \nabla g_\eps(x)\Big)\Big]
\end{equation}
as $N\to\infty$, followed by $\eps\to 0$. We note that, unlike the torus scaling limit, the infinite volume scaling limit has not been previously studied in this setting. Accordingly, the existence of these limits itself is a novel part of our results.

\subsection{Main results}

To achieve our characterisation of the scaling limits introduced in the preceding subsection, we impose a number of requirements on the potential $V$ of our models. For any twice differentiable $V$, we define the associated quadratic form on $\R^d$ via
\begin{equation} \label{E:QV}
\mathcal{Q}_V(z) = D^2 V(0)(z,z).
\end{equation}
Given integers $r_0\ge 3$ and $r_1\ge 0$, as well as real numbers $\o_0\in (0,1)$ and $\o\in (0,\frac{\o_0}{8})$, we consider the following conditions:
\begin{equation} \label{E:Assumptions}
\begin{cases}
V\in C^{r_0+r_1}(\R^d); \\
\o_0\abs{z}^2\le \mathcal{Q}_V(z) \le \o_0^{-1}\abs{z}^2 \text{ for all }z\in \R^d; \\
V(z)-DV(0)(z)-V(0) \ge \o \abs{z}^2 \text{ for all }z\in \R^d;\text{ and } \\
\lim_{t\to\infty} t^{-2}\log\Big(\sup_{\abs{z}\le t}\sum_{3\le \abs{\a} \le r_0+r_1}\frac{1}{\a!}\abs{\nabla^\alpha V(z)}\Big)=0.
\end{cases}
\end{equation}
These assumptions reflect the class of models studied in \cite{ABKM}, specialised to our setting. The same requirements on the potential $V$ apply for both types of scaling limit we consider. In finite volume, we obtain the following result. The notation $B_{\delta_0}(0)=\{u\in\R^d:\abs{u}^{\R^d}<\delta_0\}$ is used here to identify the permitted range of the tilt $u$.
\begin{theorem} [Torus scaling limit] \label{T:scaling}
Suppose that $V$ satisfies the assumptions \eqref{E:Assumptions} with $r_0=r_1=3$. Then there exists an $L_0\in\N$ such that for every $L\ge L_0$ there are constants $\delta_0>0$ and $\beta_0>0$ for which the following holds. Given $\beta\ge \beta_0$ and $u\in B_{\delta_0}(0)$, there is a subsequence $(N_\ell)_{\ell\in\N}$ such that for every $f\in C^\infty(\mathbb{T}^d,\R)$ with zero mean we have
\begin{equation} \label{E:scaling}
\lim_{\ell\to\infty} \mathbb{E}_{\gamma_{N_\ell,\beta}^{\ssup{u}}}\big[\ex^{(f_{N_\ell},\phi)}\big]=\exp\Big(\frac{1}{2\beta}(f,\mathscr{C}_{\mathbb{T}^d}^{\ssup{u}}f)_{L^2(\mathbb{T}^d)}\Big),
\end{equation}
where $\mathscr{C}_{\mathbb{T}^d}^{\ssup{u}}$ stands for the inverse of the operator
\begin{equation}
\mathscr{A}_{\mathbb{T}^d}^{\ssup{u}}=-\sum_{i,j=1}^d\big(\mathscr{H}_{\sigma}(\beta,u)\big)_{i,j}\partial_i\partial_j
\end{equation}
acting on $H^1(\mathbb{T}^d,\R)$ functions with zero mean and $\mathscr{H}_{\sigma}(\beta,u)$ is understood as the Hessian of the $C^{2,1}(B_{\delta_0}(0))$ limit of the finite volume surface tension along the subsequence $(N_\ell)_{\ell\in\N}$.
\end{theorem}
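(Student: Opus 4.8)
The plan is to run the renormalisation group analysis of \cite{ABKM} on the torus $\mathbb{T}_N$ with tilt $u$, following Hilger's adaptation \cite{Hil16}, and then to \emph{identify} the covariance matrix $\bq$ of the resulting continuum Gaussian Free Field with the Hessian $\mathscr{H}_\sigma(\beta,u)$. The first half is by now essentially known: after integrating out the Gaussian part of the measure (the quadratic form $\mathcal{Q}_V$) and treating the non-Gaussian remainder $K_0$ perturbatively, the RG map produces a sequence of scales at which the relevant coupling is a single symmetric matrix $\bq_j \in \R^{d\times d}_{\rm sym}$ governing an effective Gaussian weight, plus an irrelevant part $K_j$ that contracts to zero. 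Compactness of the $\bq_j$ in a suitable ball yields a subsequence $(N_l)$ along which $\bq_{N_l}\to \bq$; the Laplace transform of the scaled field $(f_{N_l},\phi)$ then converges to $\exp\big(\tfrac{1}{2\beta}(f,\mathscr{C}^{\ssup u}_{\mathbb{T}^d}f)\big)$ with $\mathscr{A}^{\ssup u}_{\mathbb{T}^d} = -\sum_{i,j}\bq_{i,j}\partial_i\partial_j$, exactly as in \cite{Hil16}. So the genuinely new content is the identification $\bq = \mathscr{H}_\sigma(\beta,u)$.

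For that identification I would exploit the fact that the \emph{same} RG analysis also computes the finite-volume surface tension $\sigma_{N,\beta}(u)$: the partition function $Z_{N,\beta}(u)$ is produced as a product of scale-by-scale contributions, and the limiting free energy $\sigma_\beta(u)$ is read off from the flow of the constant (vacuum energy) part. The key observation is that differentiating the Gibbs measure and the partition function twice with respect to the tilt $u$ are compatible operations: concretely, $D^2_u\big(-\tfrac{1}{\beta L^{dN}}\log Z_{N,\beta}(u)\big)$ can be written via the standard fluctuation–dissipation identity as (a multiple of) the covariance of the tilted gradient field, i.e. in terms of $\sum_{x,y}\mathrm{Cov}_{\gamma^{\ssup u}_{N,\beta}}(\nabla_i\phi(x),\nabla_j\phi(y))$. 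The same RG fixed-point data $\bq_{N}$ that controls the scaling limit of $(f_N,\phi)$ also controls this second moment: the large-scale covariance of $\phi$ under $\gamma^{\ssup u}_{N,\beta}$ is, to leading order, $(\beta(-\bq\cdot\nabla\nabla))^{-1}$, and summing gives precisely $\bq^{-1}$ appearing in $D^2_u\sigma_\beta(u)$ inverted — more carefully, one shows $\mathscr{H}_\sigma(\beta,u) = \bq$ by matching the quadratic response of $\log Z_N$ to a tilt perturbation $u\mapsto u+v$ against the Gaussian computation in the effective theory, using that the tilt enters the Hamiltonian \eqref{E:HNu} only through $\nabla\phi(x)+u$ and hence a constant shift of the field's "zero mode"/background. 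This requires showing that the $C^{2,1}$ convergence of $\sigma_{N_l,\beta}$ (established in \cite{AKM16,ABKM}) is compatible with the scale-limit of $\bq_{N_l}$, i.e. that one may interchange the $N\to\infty$ limit with the two $u$-derivatives; the uniform $C^{2,1}$ bounds on $\sigma_{N,\beta}$ on $B_{\delta_0}(0)$ make this an Arzelà–Ascoli argument.

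Concretely I would proceed in the following steps: (1) recall from \cite{ABKM,Hil16} the RG setup on the torus with tilt $u$, the resulting fixed-point matrix flow $\bq_j$, and the contraction of the irrelevant coordinates; (2) extract the subsequence $(N_l)$ with $\bq_{N_l}\to\bq$ and, as in \cite{Hil16}, deduce \eqref{E:scaling} with covariance operator built from $\bq$ — this gives the theorem modulo the identity $\bq=\mathscr{H}_\sigma(\beta,u)$; (3) express $D^2_u\sigma_{N,\beta}(u)$ through the second-order tilt response of $\log Z_{N,\beta}$ and rewrite it, via translation of the field by a linear background, as a ratio that the RG machinery evaluates in terms of the same effective Gaussian weight with matrix $\bq_N$ (plus remainder terms controlled by the contracting $K_j$); (4) pass to the limit along $(N_l)$, using the uniform $C^{2,1}(B_{\delta_0}(0))$ bounds to justify interchanging limit and derivatives, concluding $\mathscr{H}_\sigma(\beta,u)=\bq$ and hence $\mathscr{A}^{\ssup u}_{\mathbb{T}^d} = -\sum_{i,j}(\mathscr{H}_\sigma(\beta,u))_{i,j}\partial_i\partial_j$.

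The main obstacle I expect is step (3)–(4): the RG output $\bq_N$ is defined via the second variation of the \emph{effective potential} at the relevant fixed point, and a priori this is only \emph{formally} the second derivative of the free energy — the non-convex perturbation $K_0$ contributes corrections at every scale, and one must prove that these corrections to $D^2_u\sigma_{N,\beta}$ vanish in the limit, uniformly in a neighbourhood of $u$, so that the two a priori different matrices ("scaling-limit covariance" vs. "Hessian of $\sigma$") actually coincide. This is precisely the point where one needs the full strength of the norm estimates on the RG coordinates from \cite{ABKM} (the $K_j\to 0$ contraction together with the $C^2$ control of the relevant part in $u$), combined with the regularity theory for $\sigma_\beta$ from \cite{AKM16}; reconciling the two — showing that the tilt-derivatives commute with the entire RG flow and its limit — is the technical heart of the argument.
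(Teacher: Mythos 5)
Your overall strategy coincides with the paper's: obtain the (subsequential) scaling limit with some covariance matrix from the renormalisation group analysis of \cite{ABKM,Hil16}, then identify that matrix with $\mathscr{H}_\sigma(\beta,u)$ by computing $D^2_{\dot{u}}\sigma_{N,\beta}(u+\dot{u})\rvert_{\dot{u}=0}$ from an RG representation of the tilted partition function and showing that the non-Gaussian corrections are $O(\vartheta^N)$. Steps (1), (2) and (4) are exactly as in the paper, and you correctly locate the crux in step (3).

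There is, however, a concrete gap in step (3), and it is precisely the main technical content of the paper. Writing the tilt perturbation as a ``translation of the field by a linear background'' means replacing $\nabla\phi(x)$ by $\nabla\phi(x)+\beta^{1/2}\dot{u}$ inside the integrand. On the torus the constant vector field $\dot{u}$ is \emph{not} the gradient of any periodic field, so $\eta+\beta^{1/2}\dot{u}$ lies outside the space $\cbO_N$ of gradient fields on which the functionals $K_k$, the norms, the large-field regulators and the integration maps $\bR_{k+1}^{\ssup{\bq}}$ of \cite{ABKM} are defined. The existing RG machinery therefore does not ``evaluate'' the translated partition function as you assert; one must first extend the entire flow to functionals on arbitrary vector fields $\mathfrak{V}_N$, re-prove that $\bR_{k+1}^{\ssup{\bq}}$ (which is then no longer a convolution) is well defined, commutes with differentiation and is bounded, and extend the weight functions to $\cbO_N\oplus\cbO_N^\perp$ so that they still control the field norms and remain consistent under integration. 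This extension occupies Sections \ref{S:4}--\ref{S:6} and Appendix \ref{A:B}; without it the expression $K_N(\Lambda_N,\eta+\beta^{1/2}\dot{u})$ and its $\dot{u}$-derivatives are not even defined, let alone bounded. Once it is in place, the identification proceeds as you sketch: the second $\dot{u}$-derivative of the explicit Gaussian prefactor yields $\bQ_V-\bq_N(u)$ (note the limiting operator is built from $\bQ_V-\bq$, not from $\bq$ alone), and the derivatives of the remainder are controlled by $\norm{K_N}_N^{(A)}\le \widetilde{C}\vartheta^N$ with the volume factors $L^{Nd}$ cancelling. Your alternative fluctuation--dissipation phrasing of $D^2_u\log Z_{N,\beta}$ as a gradient--gradient covariance sum is not what the paper uses and would face the same (indeed harder) problem of controlling a non-Gaussian two-point function uniformly in the volume.
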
 

In infinite volume, we show that the scaling limit is analogous, subject to the important distinction that the domain of the covariance (and hence the limiting Gaussian free field) is functions on $\R^d$ rather than $\mathbb{T}^d$.
\begin{theorem} [Infinite volume scaling limit] \label{T:scalingIV}
Suppose that $V$ satisfies the assumptions \eqref{E:Assumptions} with $r_0=r_1=3$. Then there exists an $L_0\in\N$ such that for every $L\ge L_0$ there are constants $\delta_0>0$ and $\beta_0>0$ for which the following holds. Given $\beta\ge \beta_0$ and $u\in B_{\delta_0}(0)$, there is a subsequence $(N_\ell)_{\ell\in\N}$ such that for every $f=\Delta_{\R^d} g\in C^\infty(\R^d,\R)$ and any weak limit $\gamma_{\infty,\beta}^{\ssup{u}}$ (as gradient vector field) along a subsequence of $(N_\ell)_{\ell\in\N}$ we have
\begin{equation} \label{E:scalingIV}
\lim_{\eps \to 0} \; \mathbb{E}_{\gamma_{\infty,\beta}^{\ssup{u}}}\Big[\exp\Big({\sum_{x\in\Z^d}\nabla\phi(x)\cdot \nabla g_\eps(x)}\Big)\Big] = \exp\Big(\frac{1}{2\beta}(f,\mathscr{C}_{\R^d}^{\ssup{u}}f)_{L^2(\R^d)}\Big),
\end{equation}
where $\mathscr{C}_{\R^d}^{\ssup{u}}$ stands for the inverse of the operator
\begin{equation}
\mathscr{A}_{\R^d}^{\ssup{u}}=-\sum_{i,j=1}^d\big(\mathscr{H}_{\sigma}(\beta,u)\big)_{i,j}\partial_i\partial_j
\end{equation}
acting on $H^1(\R^d,\R)$ functions with zero mean and $\mathscr{H}_{\sigma}(\beta,u)$ is understood as the Hessian of the $C^{2,1}(B_{\delta_0}(0))$ limit of the finite volume surface tension along the subsequence $(N_\ell)_{\ell\in\N}$.
\end{theorem} 

\bigskip

\begin{remark} \label{R:Hilger}
\begin{enumerate}[(a)]

\item To keep our presentation as simple as possible we have chosen to concentrate on the case of scalar-valued fields with nearest-neighbour interaction. We are not aware of any reason why the method presented in this article could not be extended to the more general setting of \cite{ABKM}, but we have not attempted to do so. We note that, despite the restriction to nearest-neighbour potentials, expressions with higher discrete derivatives appear in our analysis: see the definition of relevant Hamiltonians in Section \ref{S4:func} and the norms introduced in Section \ref{S4:Norms}. This is an artefact of our method. The associated range constant $R$ should therefore not be confused with the range of the interaction, which is always one in this work.

\item Hilger \cite{Hil16} proved the \emph{torus scaling limit} as above for some positive matrix $\bq\in\R^{d\times d}_{\rm sym} $ as the limiting covariance matrix. 

\item The need for subsequences in our statement is due to the fact that we base our approach on the finite-volume renormalisation group flow in \cite{AKM16,ABKM}. In a similar way to Hilger in \cite{Hil20(2)}, we expect that the analysis in this article could be extended to the infinite volume flow, thereby obviating the requirement of taking limits along a subsequence.

\end{enumerate}
\end{remark}

\section{Key Ideas} \label{S:3}

As already mentioned, what we refer to as the torus scaling limit has been studied previously by the authors of \cite{ABKM} and Hilger (see Remark \ref{R:Hilger} above). Indeed, once one is in possession of the powerful renormalisation group representation of the partition function $Z_{N,\beta}(u)$ shown in \cite{ABKM}, it is a relatively simple computation to extract from it an expression for the scaling limit for macroscopic functions on the torus: see the proof of Theorem 2.10 of \cite{ABKM}. This is aided by the feature that the discretised function $f_N$ automatically belongs in the correct scale for any given $N$. When evaluating the corresponding integrals for a test function $f_\eps$ that exists at a fixed scale relative to $N$, no such simple estimate is available. Our main task is therefore to extend the renormalisation group analysis in \cite{ABKM} to deal with the presence of additional terms depending on $f_\eps$. Furthermore, we want to link the expression for the scaling limit obtained in this way to the Hessian of surface tension. To arrive at a tractable expression for the second derivatives of $\sigma_{N,\beta}$ some additional modifications to the approach in \cite{ABKM} are required. A brief outline of the main ideas behind these changes follows in the next paragraphs.

The starting point of \cite{ABKM} (and \cite{AKM16}) is the evaluation of the partition function $Z_{N,\beta}(u)$ as perturbation of a Gaussian integral. To make precise this notion, we define a function $U:\R^d\times\R^d\to\R$ by
\begin{equation} \label{E:Ufct}
U(z,u)=V(z+u)-V(u)-DV(u)(z)-\frac{1}{2}D^2 V(0)(z,z)
\end{equation}
and observe that the Hamiltonian in \eqref{E:HNu} may then be rewritten as
\begin{equation} \label{E:HNuU}
H_{N}^{\ssup{u}}(\phi)=L^{dN}V(u) + \sum_{x\in \mathbb T_N} \Big(U(\nabla\phi(x),u)+\frac{1}{2}\mathcal{Q}_V(\nabla\phi(x))\Big),
\end{equation}
where we used the fact that $\sum_{x\in\mathbb{T}_N}\nabla_i\phi(x)=0$ for all $\phi\in\cbV_N$ and all $i=1,...,d$. The quadratic form induced by $\beta\mathcal{Q}_V$ is positive-definite on $\cbX_N$ (thanks to our assumptions) and we define the associated Gaussian probability measure as
\begin{equation}
\mu_\beta(\d\phi)=\frac{1}{Z_{N,\beta}^{\mathcal{Q}_V}}\exp\Big(-\frac{\beta}{2}\sum_{x\in\mathbb{T}_N}\mathcal{Q}_V(\nabla\phi(x))\Big)\lambda_N(\d\phi)
\end{equation}
with an appropriate normalisation constant $Z_{N,\beta}^{\mathcal{Q}_V}$. Using \eqref{E:HNuU} we can thus express the partition function \eqref{E:ZNu} as
\begin{align} \label{E:ZNuU}
Z_{N,\beta}(u) &= \ex^{-\beta L^{dN}V(u)}Z_{N,\beta}^{\mathcal{Q}_V}\int_{\cbX_N}\;\ex^{-\beta\sum_{x\in\mathbb{T}_N}U(\nabla\phi,u)}\,\mu_\beta(\d\phi) \nonumber \\
&= \ex^{-\beta L^{dN}V(u)}Z_{N,\beta}^{\mathcal{Q}_V}\int_{\cbX_N}\;\ex^{-\beta\sum_{x\in\mathbb{T}_N}U(\beta^{-1/2}\nabla\phi,u)}\,\mu(\d\phi).
\end{align}
The second equality above is obtained by rescaling the $\phi$-field by $\sqrt{\beta}$ and denoting the Gaussian measure at unit temperature, $\mu_1$, simply by $\mu$.

The argument then proceeds by introducing an initial tuning parameter into the Gaussian reference measure $\mu$. Suppose that $ \bq \in\R^{d\times d}_{\rm sym} $ is a symmetric $d\times d$-matrix with bounded operator norm $\norm{\bq}<\o_0$ (more precise requirements are introduced later). We perturb the measure $\mu$ by the quadratic form induced by this small matrix $\bq$ and thereby define a family of Gaussian probability measures on $\cbX_N$: 
\begin{equation} \label{E:Muq}
\mu^{\ssup{\bq}}(\d\phi) = \frac{\exp\Big(-\frac{1}{2}\sum_{x\in\mathbb{T}_N}\nabla\phi(x)\cdot(\bQ_V-\bq)\nabla\phi(x)\Big)}{Z_{N,\beta}^{\ssup{\bq}}}\l_N(\d\phi),
\end{equation}
where $\bQ_V$ stands for the symmetric operator that generates the quadratic form $\Qcal_V$. Re-writing \eqref{E:ZNuU} in terms of these measures, we obtain
\begin{equation} \label{E:ZNuUq}
Z_{N,\beta}(u) = \ex^{-\beta L^{dN}V(u)}Z_{N,\beta}^{\ssup{\bq}}\int_{\cbX_N}\;\ex^{-\frac{1}{2}\sum_{x\in\mathbb{T}_N}\nabla\phi(x)\cdot\bq\nabla\phi(x)}\ex^{-\beta\sum_{x\in\mathbb{T}_N}U(\beta^{-1/2}\nabla\phi(x),u)}\,\mu^{\ssup{\bq}}(\d\phi).
\end{equation}
In \cite{ABKM} it is then shown, on the assumptions set out in \eqref{E:Assumptions}, that for small enough tilts $u$ a particular choice of the matrix $\bq=\bq(u)$ exists that allows the integral in \eqref{E:ZNuUq} to be expressed as one plus an error term that can be controlled uniformly in $N$ using certain norms, which we will define in detail later. An equivalent representation exists, in which the gradients of $\phi$ are regarded as a random gradient vector field on $\mathbb{T}_N$, and it is opportune to switch to it, not least because the infinite volume limit, strictly, may only exist at the level of the gradient measures.

This shift in perspective is of course not new. A defining feature of the type of model under consideration, which has been much exploited in the literature going back to \cite{FS97}, is that (up to a constant) they depend only on the distribution of the gradients. Using a suitable one-to-one correspondence, we may therefore regard $\nabla\phi(x)$ as the fundamental random variables, distributed according to a Gibbs measure that preserves the structure set out in \eqref{E:muNu} and \eqref{E:HNu}. Deferring precise definitions until later, we denote the space of gradient vector fields (i.e. vector fields on $\mathbb{T}_N$ that arise as gradients of scalar fields in $\cbV_N$) by $\cbO_N$. In terms of a suitable image measure $\nu^{\ssup{\bq}}$, the gradient representation of the partition function takes the form
\begin{equation} \label{E:ZNuUqgrad}
Z_{N,\beta}(u) = \ex^{-\beta L^{dN}V(u)}Z_{N,\beta}^{\ssup{\bq}}\int_{\cbX_N}\;\ex^{-\frac{1}{2}\sum_{x\in\mathbb{T}_N}\eta(x)\cdot\bq\eta(x)}\ex^{-\beta\sum_{x\in\mathbb{T}_N}U(\beta^{-1/2}\eta(x),u)}\,\nu^{\ssup{\bq}}(\d\phi).
\end{equation}
The initial step in evaluating the expectation \eqref{E:IVscalingfinvol} is then identical to the torus scaling limit. We complete the square and perform a change of integration variable to arrive at
\begin{equation} \label{E:IVscalecomp}
\mathbb{E}_{\gamma_{N,\beta}^{\ssup{u}}}\Big[\exp\Big(\sum_{x\in\mathbb{T}_N}\nabla\phi(x)\cdot \nabla g_\eps(x)\Big)\Big] = \ex^{\frac{1}{2\beta}(\boldsymbol{\nabla}_Ng_\eps,\mathscr{C}^{\ssup{\bq},\nabla}\boldsymbol{\nabla}_N g_\eps)}\frac{Z_{N,\beta}(u,\mathscr{C}^{\ssup{\bq},\nabla}\boldsymbol{\nabla}_N g_\eps)}{Z_{N,\beta}(u)},
\end{equation}
in which expression $\mathscr{C}^{\ssup{\bq},\nabla}$ is the covariance of the Gaussian measure $\nu^{\ssup{\bq}}$ and the exact form of the numerator $Z_{N,\beta}(u,\mathscr{C}^{\ssup{\bq},\nabla}g\boldsymbol{\nabla}_N g_\eps)$ is not material at this stage. Its essential feature is that the integrand now depends on the gradient vector field $\mathscr{C}^{\ssup{\bq},\nabla}\boldsymbol{\nabla}_N g_\eps$ through additive coupling with the integration variable $\eta$. The convergence of the discrete inner product appearing in the exponential to the appropriate continuous one is not particularly difficult. The main challenge is to control the ratio of partition functions in \eqref{E:IVscalecomp}. The corresponding task for the torus scaling limit involves the term $\mathscr{C}^{\ssup{\bq}}f_N$ (or $\boldsymbol{\nabla}_N\mathscr{C}^{\ssup{\bq}}f_N$ in the gradient vector field picture), which is appropriately small and can be evaluated as part of the final integration step in the renormalisation group approach to $Z_{N,\beta}(u)$ in \cite{ABKM}. A similar approach is precluded in the case of the infinite volume scaling limit. Instead, we adapt the programme developed in \cite{BPR22b} to address an analogous problem to our setting. 

A central feature of the renormalisation group method underlying \cite{ABKM} and the present article is the scale-by-scale evaluation of integrals such as that in \eqref{E:ZNuUqgrad}: see section \ref{S:4} below and, more generally, \cite{ABKM} and the further literature cited there. We extend the approach in \cite{ABKM} by a scale decomposition of the additive term $\mathscr{C}^{\ssup{\bq},\nabla}\boldsymbol{\nabla}_N g_\eps$, called \emph{external fields} below, and a sequence of auxiliary coordinates that track their impact on the partition function. Instead of performing a single error estimate at the final integration step, this expedient permits us to incorporate the components of $\mathscr{C}^{\ssup{\bq},\nabla}\boldsymbol{\nabla}_N g_\eps$ into the renormalisation group step corresponding to the relevant scale, and this affords us good control over the associated error. This procedure begins at an initial scale, called the \emph{smoothness scale} of $f_\eps$, determined by the parameter $\eps$. The total perturbation contributed to the numerator $Z_{N,\beta}(u,\mathscr{C}^{\ssup{\bq},\nabla}\boldsymbol{\nabla}_N g_\eps)$ in \eqref{E:IVscalecomp} by these external fields is then shown to be of the order of the first one at the smoothness scale of $f_\eps$, which itself vanishes in the $\eps\to 0$ limit. The ratio in \eqref{E:IVscalecomp} therefore converges to one and the infinite volume scaling limit follows. A detailed outline of the method follows in section \ref{S:6}.

The identification of the covariance of the scaling limit with the Hessian of surface tension has to overcome the difficulty that the dependence of the tuning parameter $\bq(u)$ and other elements in the renormalisation group representation of \eqref{E:ZNuUqgrad} on the tilt $u$ is very implicit. To obtain an expression for the second derivatives of the finite volume surface tension $\sigma_{N,\beta}(u)$, it is convenient to compute a second order expansion in the tilt dependence, 
\begin{equation} \label{E:D2s}
D^2_u \sigma_{N,\beta}(u)= D^2_{\dot{u}}\sigma_{N,\beta}(u+\dot{u})\rvert_{\dot{u}=0},
\end{equation}
and rely on the uniform bounds provided by the renormalisation analysis of the partition function to control the derivatives of the perturbation $\sigma_{N,\beta}(u+\dot{u})$. For $\dot{u}$ arbitrarily small, we expect heuristically that the Gaussian partition function $Z_{N,\beta}^{\ssup{\bq(u)}}$ tilted by $\dot{u}$ is a good second order approximation of $Z_{N,\beta}(u+\dot{u})$ and that the integral in \eqref{E:ZNuUqgrad} only contributes higher order corrections, at least in the large $N$ limit. We thus write the partition function at tilt $(u+\dot{u})$ as
\begin{align} \label{E:ZNuudot}
Z_{N,\beta} & (u+\dot{u}) = Z_{N,\beta}^{\ssup{\bq(u)}} \exp\Big( -\beta L^{dN}\big( V(u) +DV(u)(\dot{u}) + \frac{1}{2}\Qcal_V(\dot{u})-\frac{1}{2}\dot{u}\cdot\bq(u)\dot{u}\big)\Big)\times \nonumber \\
&\times \int_{\cbO_N}\;\ex^{-\frac{1}{2}\sum_{x\in\mathbb{T}_N}(\eta(x)+\beta^{1/2}\dot{u})\cdot \bq(u)(\eta(x)+\beta^{1/2}\dot{u})}\ex^{-\beta\sum_{x\in\mathbb{T}_N}U(\beta^{-1/2}(\eta+\beta^{1/2}\dot{u}),u)}\,\nu^{\ssup{\bq(u)}}(\d\eta),
\end{align}
where we emphasise that the matrix $\bq(u)$ (and hence the measure $\mu^{\ssup{\bq(u)}}$) depend on $u$ but not on $\dot{u}$. We want to regard the term $\beta^{1/2}\dot{u}$ that appears here as a translation to the gradients $\eta(x)$ as a constant vector field, to which $\eta$ is coupled similar to the argument sketched above for the scaling limit. This means that the integrand of the perturbation integral in \eqref{E:ZNuudot}, which we abbreviate to $\Zcal_{N,\beta,u}(\dot{u})$, cannot be taken as a functional acting only on elements of $\cbO_N$ due to the presence of the constant field $\dot{u}$. We therefore show that the arguments in \cite{ABKM} may be extended to functionals of arbitrary vector fields, and this is indeed the main technical obstacle to overcome in pursuing this strategy. The main substantive changes to the analysis are: (i) a number of technical adaptations due to the gradient field representation; (ii) confirmation that the integration map $\bR_k^{\ssup{\bq}}$ introduced in Section \ref{S4:FR} below has the required regularity properties despite the larger domain of the class of field functionals we allow; and (iii) extension of the large field regulators to arbitrary vector fields (see Appendix \ref{A:C}).

Following this reasoning, we obtain a representation of $\Zcal_{N,\beta,u}(\dot{u})$ in terms of objects that appear in the renormalisation group flow of the partition function:
\begin{equation} \label{E:Zudotrep}
\Zcal_{N,\beta,u}(\dot{u}) = \int_{\cbO_N}\;\Big(1+K_N(\Lambda_N,\eta+\beta^{1/2}\dot{u})\Big)\,\nu_{N+1}^{\ssup{\bq(u)}}(\d\eta).
\end{equation}
Here, the Gaussian probability measure $\nu_{N+1}^{\ssup{\bq(u)}}$ is the final component of a finite range decomposition of the measure $\nu^{\ssup{\bq(u)}}$ introduced above and the vector field functional $K_N$ is small for large $N$ in a suitable norm, which in particular affords control over its derivatives. Substituting this expression into \eqref{E:D2s} and taking advantage of the bounds provided by the renormalisation group analysis then yields that
\begin{equation} \label{E:Hesslim}
D^2_u \sigma_{N,\beta}(u)=\bQ_V-\bq_N(u)+O(\vartheta^N)
\end{equation}
for some $0<\vartheta<1$ as $N\to\infty$, where we make the $N$-dependence of $\bq(u)$ explicit. This leads to an expression for the Hessian of $\sigma_\beta(u)$ in infinite volume, having regard to the uniform bounds on the third derivatives of $\sigma_{N,\beta}(u)$ implied by the results of \cite{ABKM}.

The remaining sections of this article are organised as follows. Section \ref{S:4} contains a detailed description of the vector field setting we consider, along with precise definitions for the renormalisation group flow adapted to functionals on arbitrary vector fields. In section \ref{S:5} we then provide a step-by-step procedure by which the proofs in \cite{ABKM} can be adapted to yield analogous results in our setting. Section \ref{S:6} sets out our proof strategy for the infinite volume scaling limit and the relevant definitions for the extended renormalisation group flow. This strategy is then implemented across sections \ref{S:7} to \ref{S:9}. Finally, section \ref{S:10} concludes with a proof of the main results advertised in section \ref{S:2} above. For the benefit of readers familiar with the approach in \cite{ABKM}, we end this section with a summary of the main differences to the renormalisation analysis of the partition function $Z_{N,\beta}(u)$.

A first group of changes are necessitated by the switch to gradients as the fundamental variables. We use a one-to-one map between $\cbX_N$ and $\cbO_N$ to translate key objects, such as the Gaussian reference measures $\mu^{\ssup{\bq}}$ and their finite range decompositions, into the gradient setting. Concepts that depend on discrete derivatives (e.g., the field norms) are adjusted, broadly speaking, by losing one degree of derivatives. We show carefully that the construction of relevant Hamiltonians can be adapted consistently and that analogous bounds hold. Some of the definitions and notation arguably are simpler, as the need for taking quotients by constant fields falls away.

An important technical difference arises in connection with the scale-by-scale integration maps. In the $\phi$-field setting, the mapping
\begin{equation} \label{E:intmap}
\phi\mapsto\int_{\cbX_N}\; F(\phi+\xi)\,\mu_k^{\ssup{\bq}}(\d\xi)
\end{equation}
is a convolution with a (smooth) Gaussian density. This immediately guarantees a number of desirable analytic properties. In our setting, the mapping \eqref{E:intmap} is defined on a space strictly larger than the domain of integration. We thus have to verify the required properties (e.g., commutation with differentiation) explicitly. This turns out to be fairly straightforward, but it emphasises the need to control our functionals by integrable weights.

The weight functions just alluded to constitute the third main group of changes we introduce. These functions (also referred to as large field regulators) control the permitted growth of the field functionals in the renormalisation group flow. Very roughly speaking, these functions (called here $w_k$ - we ignore the dependence on the block decomposition of $\mathbb{T}_N$) need to achieve two aims. First, they should be consistent under the integration map \eqref{E:intmap}: $w_{k+1}\approx w_k * \mu_{k+1}^{\ssup{\bq}}$ approximately. Secondly, the weights also need to control the field norms and certain other field functionals. By adapting the functions constructed in \cite{ABKM} we effectively obtain regulators on $\cbO_N$, leaving us with the task of extending them to arbitrary vector fields while ensuring these two main aims are met. The second aim, in particular, means that regulators that act only on the curl-free component of vector fields are insufficient. We set out our approach - using the orthogonal complement of $\cbO_N$ - in Appendix \ref{A:C}.

\section{Renormalisation group analysis of the vector field setting} \label{S:4}

The reasoning outlined in the previous section guides us to the perspective in which fields of gradients are the fundamental random objects. We begin this section with a precise description of this setting. Thereafter, we introduce the main ingredients of the renormalisation group analysis adapted to our needs. This section covers what is later referred to as the \emph{bulk} renormalisation group flow: its purpose is to recover the representation of the partition function in \cite{ABKM} in terms of a sequence of functionals with suitably expanded domains. The \emph{extended} renormalisation group flow and other definitions required for the infinite volume scaling limit are introduced in section \ref{S:6}. Our aim here is give concise definitions; a detailed motivation for the method may be found in \cite{ABKM} and the works cited therein.

\subsection{Vector field representation}

Denote by $\mathfrak{V}_N=\mathfrak{V}(\mathbb{T}_N)\cong\big(\R^d\big)^{\mathbb{T}_N}$ the space of $d$-dimensional vector fields on the torus, to which we extend finite difference operations component-wise. Let $\boldsymbol{\nabla}_N:\cbV_N\to\mathfrak{V}_N$ be the mapping defined by $\boldsymbol{\nabla}_N(\phi)(x)=\nabla\phi(x)$ and designate by $\cbO_N$ the image of $\cbX_N$ under $\boldsymbol{\nabla}_N$. We equip $\cbO_N$ with the $\s$-algebra $\boldsymbol{\Bcal_{\cbO_N}}$ induced by the appropriate Borel $\s$-algebra on $\mathfrak{V}_N$. Note that $\boldsymbol{\nabla}_N$ is a homeomorphism from $\cbX_N$ onto $\cbO_N$ and hence measurable. We henceforth regard $\boldsymbol{\nabla}_N$ exclusively as a map between these spaces. It follows that we can consider the push-forward measures $\o_N= \l_N\circ\boldsymbol{\nabla}_N^{-1}$ and $\g_{N,\beta}^{\ssup{u},\nabla}=\g_{N,\beta}^{\ssup{u}}\circ\boldsymbol{\nabla}_N^{-1}$. Manifestly, $\g_{N,\beta}^{\ssup{u},\nabla}\in\Mcal_1(\cbO_N)=\Mcal_1(\cbO_N,\boldsymbol{\Bcal_{\cbO_N}})$ and we refer to probability measures on $\cbO_N$ as \emph{random gradient vector fields}.

It is convenient to re-use some of the notation introduced previously. To avoid any confusion, we shall use the symbols $\phi$ and $\xi$ to designate elements of $\cbX_N$ and $\eta$ and $\varsigma$ to mean vector fields in $\cbO_N$. We will reserve the symbol $v$ to refer to a generic vector field in $\mathfrak{V}_N$. Accordingly, we will write $H_{N}^{\ssup{u}}(\eta)$ for the expression
\begin{equation} \label{E:HNuUeta}
L^{Nd}V(u) + \sum_{x\in \mathbb T_N} \Big(U(\eta(x),u)+\frac{1}{2}\mathcal{Q}_V(\eta(x))\Big)
\end{equation}
and we again use round $\big(\cdot,\cdot\big)$ brackets for the inner product on $\cbO_N$ (inherited from $\mathfrak{V}_N$):
\begin{equation} \label{E:innerproduct}
\big(\eta,\varsigma\big)=\sum_{x\in\mathbb{T}_N}\eta(x)\cdot\varsigma(x)=\sum_{x\in\mathbb{T}_N}\sum_{i=1}^d \eta_i(x)\varsigma_i(x).
\end{equation}
Where it adds clarity, we emphasise the relevant inner product space in our notation, e.g., $\big(\cdot,\cdot\big)_{\cbO_N}$. With these definitions in place, we can thus represent the random gradient vector field $\g_{N,\beta}^{\ssup{u},\nabla}$ in a form that mirrors the structure of \eqref{E:muNu} as anticipated:
\begin{equation} \label{E:NuNu}
\g_{N,\beta}^{\ssup{u},\nabla}(\d\eta)=\frac{1}{Z_{N,\beta}(u)} \exp\bigl(-\beta H_{N}^{\ssup{u}}(\eta)\bigr)\o_N(\d\eta).
\end{equation}
From now on, we always consider the matrix $\bq$ as an element in the space
\begin{equation} \label{E:Bkappa}
B_\kappa(0)=\{\bq\in\R_{\text{sym}}^{d\times d}:\norm{\bq}<\kappa\}
\end{equation}
for some constant $\kappa$ such that $\kappa\le \frac{\o_0}{2}$, where $\norm{\bq}$ represents the standard operator norm. Recall that $\o_0$ appears in the bounds on the quadratic form $\Qcal_V$ by virtue of the assumptions \eqref{E:Assumptions} imposed on the potential $V$. The Gaussian probability measures $\nu^{\ssup{\bq}}$ on $\cbO_N$ are introduced as push-forwards of the corresponding measures $\mu^{\ssup{\bq}}$ on $\cbX_N$. The representation
\begin{equation} \label{E:Nuq}
\nu^{\ssup{\bq}}(\d\eta) = \frac{\exp\Big(-\frac{1}{2}\sum_{x\in\mathbb{T}_N}\eta(x)\cdot (\bQ_V-\bq)\eta(x)\Big)}{Z_{N,\beta}^{\ssup{\bq}}}\o_N(\d\eta)
\end{equation}
is immediate. We then expand the perturbative integrand with the help of the $\R^d\to\R$ function
\begin{equation} \label{E:KubV}
\mathcal{K}_{u,\beta,V}(z)=\exp\big(-\beta U(\beta^{-1/2}z,u)\big)-1
\end{equation}
to obtain the vector field form of the partition function:
\begin{equation} \label{E:ZNuKgrad}
Z_{N,\beta}(u) = \ex^{-\beta L^{dN}V(u)}Z_{N,\beta}^{\ssup{\bq}}\int_{\cbO_N}\;\ex^{-\frac{1}{2}\sum_{x\in\mathbb{T}_N}\eta(x)\cdot\bq\eta(x)}\sum_{X\subset\mathbb{T}_N}\prod_{x\in X}\mathcal{K}_{u,\beta,V}(\eta(x))\,\nu^{\ssup{\bq}}(\d\eta).
\end{equation}
Since the pre-factors are explicitly computable, the main object of interest is the integral in \eqref{E:ZNuKgrad}. The analysis in \cite{ABKM} considers abstract perturbation functionals and then makes the connection with the potential $V$ in a separate step. In the same spirit, our approach thus focuses on deriving a renormalisation group representation of integrals of the type
\begin{equation} \label{E:pertintgrad}
\int_{\cbO_N}\;\ex^{-\frac{1}{2}\sum_{x\in\mathbb{T}_N}\eta(x)\cdot\bq\eta(x)}\sum_{X\subset\mathbb{T}_N}\prod_{x\in X}\mathcal{K}(\eta(x))\,\nu^{\ssup{\bq}}(\d\eta)
\end{equation}
for $C^{r_0}(\R^d)$ functions $\Kcal$ such that the norm
\begin{equation} \label{E:KNorm}
\norm{\Kcal}_{\zeta,\Qcal_V}=\sup_{z\in\R^d}\sum_{\abs{\a}\le r_o}\frac{1}{\a!}\abs {\partial^\a \Kcal(z)}e^{-\frac{1}{2}(1-\zeta)\Qcal_V(z)}
\end{equation}
is finite for some parameter $\zeta\in (0,1)$.

\subsection{Finite range decomposition} \label{S4:FR}

We follow the approach in \cite{ABKM} that exploits the existence of a so-called finite range decomposition for the family of measures $\mu^{\ssup{\bq}}$ with $\bq\in B_\kappa(0)$. This guarantees that the Gaussian field $\phi$, distributed according to $\mu^{\ssup{\bq}}$, is equal in distribution to the sum of independent Gaussian fields $\xi_1,...,\xi_{N+1}$ (distributed according to $\mu_1^{\ssup{\bq}},...,\mu_{N+1}^{\ssup{\bq}}$) with the property that the covariance of $\xi_k$ is given by a kernel satisfying
\begin{equation} \label{E:FRprop}
\Ccal_{\bq,k}(x)=-c_k\text{ for }\abs{x}_\infty\ge\frac{L^k}{2},
\end{equation}
where the $c_k\ge 0$ are constants independent of $\bq$. Furthermore, various uniform-in-$N$ bounds are available for the kernels $\Ccal_{\bq,k}$ as well as their Fourier transforms and derivatives. See Theorem 6.1 of \cite{ABKM}, which references the finite range decomposition constructed in \cite{B18}, for a precise statement. By linearity of $\boldsymbol{\nabla}_N$, this decomposition induces a corresponding decomposition of the gradient vector fields:
\begin{equation}
\nabla\phi=\eta\overset{D}{=}\varsigma_1+\varsigma_2+\cdots+\varsigma_{N+1}
\end{equation}
for Gaussian random gradient vector fields $\varsigma_k\sim\nu_k^{\ssup{\bq}}=\mu_k^{\ssup{\bq}}\circ\boldsymbol{\nabla}_N^{-1}$. The features of the measures $\mu_k^{\ssup{\bq}}$ easily translate into the vector field setting. In particular, as a direct consequence of \eqref{E:FRprop}, sufficiently distant gradients are independent,
\begin{equation}
\mathbb{E}_{\nu_k^{\ssup{\bq}}}[\varsigma_{k,i}(x)\varsigma_{k,j}(y)]=0,\;\forall x,y\in\mathbb{T}_N\text{ s.t. }\abs{x-y}_\infty\ge\frac{L^k}{2}+1,\,i,j=1,\ldots,d;
\end{equation}
and any expectation with respect to $\nu^{\ssup{\bq}}$ (such as appears in \eqref{E:ZNuKgrad}) may be evaluated as successive convolutions with the measures $\nu_k^{\ssup{\bq}}$:
\begin{equation} \label{E:successint}
\int_{\cbO_N}\;F(\eta)\,\nu^{\ssup{\bq}}(\d\eta)=\int_{\cbO_N\times\cdots\times\cbO_N}\;F\Big(\sum_{k=1}^{N+1}\varsigma_k\Big)\,\nu_1^{\ssup{\bq}}(\d\varsigma_1)\cdots\nu_{N+1}^{\ssup{\bq}}(\d\varsigma_{N+1})
\end{equation}
for any integrable functional $F$. We adopt the notation $\bR_k^{\ssup{\bq}}$ for the mapping on (integrable) functionals given by
\begin{equation} \label{E:Rkq}
\bR_k^{\ssup{\bq}}[F](\eta)=\int_{\cbO_N}\;F(\eta+\varsigma)\,\nu_k^{\ssup{\bq}}(\d\varsigma).
\end{equation}

\subsection{Block partitioning of $\mathbb{T}_N$ and geometric definitions} \label{S4:geo}

Consider any subset $X$ of the torus $\mathbb{T}_N$ and a functional $F$ that depends only on the values of $\phi$ in $X$. If $F$ is a function of gradients only (cf. the concept of \emph{shift-invariant} in \cite{ABKM}), there is a functional $F^\nabla$ such that $F(\phi)=F^\nabla(\boldsymbol{\nabla}_N\phi)$ and $F^\nabla$ only depends on gradients in $Y\subset X$. We ensure to keep the dependence of the gradients exactly as for the field itself. This straightforward inclusion of domains means that we can adopt the partitioning of $\mathbb{T}_N$ into $k$-blocks and associated geometric concepts from Chapter 6.2 of \cite{ABKM} for our purposes. We are not interested in exploiting situations where $Y\subsetneq X$ and so we make no changes to the definitions. We state the key concepts here in the interest of completeness.

We define a dimension-dependent constant $M$ by $M=2\floor{d/2}+3$ and, in our setting, the range constant $R$ takes the same value $R=M$. For every $1\le k \le N$, we pave $\Lambda_N$ with disjoint translates of the block $[-(L^k-1)/2,(L^k-1)/2]^d$ and denote the collection of such $k$-blocks by $\Bcal_k$. We set $\Bcal_0=\Lambda_N$. The following definitions and notation apply throughout this article:
\begin{enumerate}
\item A union of $k$-blocks is a $k$\emph{-polymer} and the set of all $k$-polymers is $\Pcal_k$. We write $\Bcal_k(X)$ for the set of $k$-blocks contained in $X\in\Pcal_k$ and $\abs{X}_k$ for $\abs{\Bcal_k(X)}$. Similarly, for $X\in\Pcal_k$, the set of $k$-polymers $Y$ such that $Y\subset X$ is denoted by $\Pcal_k(X)$. 
\item A subset $X\subset\Lambda_N$ is \emph{connected} if it is graph connected in $(\Lambda_N,E_\infty)$, where $E_\infty=\{\{x,y\}\in\mathscr{P}(\Lambda_N):\abs{x-y}_\infty=1\}$.
\item Two non-empty sets $X,Y\subset\Lambda_N$ are \emph{strictly disjoint} if $\abs{x-y}_\infty>1$ holds for all $x\in X$ and $y\in Y$.
\item The set of connected $k$-polymers is denoted by $\Pcal_k^c$ and $\Ccal_k(X)$ is the set of connected components of a polymer $X\in\Pcal_k$.
\item $\Scal_k\subset\Pcal_k^c$ is the set of \emph{small} connected polymers $X$ with $\abs{X}_k\le 2^d$. Polymers in $\Pcal_k^c\setminus\Scal_k$ are called \emph{large}.
\item The \emph{closure} $\overline{X}$ of a $k$-polymer $X$ is the smallest $(k+1)$-polymer containing $X$.
\end{enumerate}
We further introduce two notions of neighbourhoods of elements of $\Bcal_k$ and $\Pcal_k$:
\begin{align}
X^* &= \begin{cases}
X+[-R,+R]^d & \text{ for } X\in\Pcal_0 \\
X+[-2^d-R,2^d+R]^d & \text{ for } X\in\Pcal_1 \\
X+[-2^d L^{k-1},2^d L^{k-1}]^d & \text{ for } X\in\Pcal_k,2\le k\le N-1 \end{cases} \\
X^+ &= \begin{cases}
X+[-R,R]^d & \text{ for } X\in\Pcal_0 \\
X+[-L^k,L^k]^d & \text{ for } X\in\Pcal_k,1\le k\le N-1. \end{cases} \nonumber
\end{align}
$X^*$ and $X^+$ are referred to as the \emph{small neighbourhood} and \emph{large neighbourhood} respectively.

There exist a series of translation-invariant maps $\tilde{\pi}_k:\Pcal_k^c\to\Pcal_{k+1}^c$ for $0\le k\le N-1$ with the property that
\begin{equation}
\tilde{\pi}_k(X) = \begin{cases}
\overline{X} & \text{ if } X\in\Pcal_k^c\setminus\Scal_k \\
B\in\Bcal_{k+1} & \text{ with } B\cap X\neq \emptyset \text{ for } X\in\Scal_k\setminus\{\emptyset\} \\
\emptyset & \text{ if } X=\emptyset. \end{cases}
\end{equation}
A construction - and hence proof of existence - is given in Chapter 6.3 of \cite{ABKM}. We define $\Pcal_k\to\Pcal_{k+1}$ maps
\begin{equation} \label{E:pimap}
\pi_k(X)=\bigcup_{Y\in\Ccal_k(X)}\tilde{\pi}_k(Y)
\end{equation}
and we suppress the subscript in the following where the scale of the $k$-polymer $X$ is clear from the context.

\subsection{Vector field functionals and relevant Hamiltonians} \label{S4:func}

As outlined in section \ref{S:3} above, the main departure from \cite{ABKM} involves the domain of the field functionals we want to consider. For our purposes, the fundamental class of functionals, denoted by $M(\Pcal_k,\mathfrak{V}_N)$, consists of maps $F: \Pcal_k\times\mathfrak{V}_N\to\R$ with the following properties:
\begin{enumerate}
\item $F(X,\cdot)$ is $C^{r_0}(\mathfrak{V}_N)$-differentiable for all $X\in\Pcal_k$;
\item $F(X,\cdot)$ is \emph{local} for all $X\in\Pcal_k$ in the sense that $F(X,v)=F(X,w)$ whenever $v(x)=w(x)$ for all $x\in X^*$; and
\item $F$ is translation invariant meaning that $F(X+a,v(\cdot-a))=F(X,v)$ for all $a\in (L^k\Z)^d, X\in\Pcal_k$ and $v\in\mathfrak{V}_N$.
\end{enumerate}
Observe that condition (1) implies, in particular, the $\boldsymbol{\Bcal_{\cbO_N}}$-measurability of the $\cbO_N\to\R$ map given by $\eta\mapsto F(X,v+\eta)$. The derived spaces $M(\Bcal_k,\mathfrak{V}_N)$, $M(\Pcal_k^c,\mathfrak{V}_N)$ and $M(\Scal_k,\mathfrak{V}_N)$ are obtained by suitably restricting the domain of the first (polymer) coordinate. We equip these spaces with the following operations, for which we introduce particular shorthand notation. Canonical inclusions $M(\Bcal_k,\mathfrak{V}_N) \to M(\Pcal_k^c,\mathfrak{V}_N)$ and $M(\Pcal_k^c,\mathfrak{V}_N) \to M(\Pcal_k,\mathfrak{V}_N)$ are given by
\begin{align}
F^X(v) &= \prod_{B\in\Bcal_k(X)}F(B,v)\text{ for }F\in M(\Bcal_k,\mathfrak{V}_N)\text{ and} \label{E:canincl1} \\
F(X,v) &= \prod_{Y\in\Ccal_k(X)}F(Y,v)\text{ for }F\in M(\Pcal_k^c,\mathfrak{V}_N); \label{E:canincl2}
\end{align}
and an associative and commutative \emph{circle product} is defined on $M(\Pcal_k,\mathfrak{V}_N)$ via the expression
\begin{equation}
F\circ G(X,v)=\sum_{Y\in\Pcal_k(X)}F(X\setminus Y,v)G(Y,v).
\end{equation}
We omit specifying the scale $k$ in these operations as it is always clear from the functionals to which they are applied. Note that the expression $\prod_{x\in X}\Kcal(v(x))$ appearing in \eqref{E:pertintgrad} (applied to an arbitrary vector field $v$) defines a functional in $M(\Pcal_k^c,\mathfrak{V}_N)$, as well as a functional in $M(\Pcal_k,\mathfrak{V}_N)$ that arises through the inclusion in \eqref{E:canincl2}. We say that an element in $M(\Pcal_k,\mathfrak{V}_N)$ \emph{factors} if $F(X\cup Y,v)=F(X,v)F(Y,v)$ whenever $X$ and $Y$ are strictly disjoint, and it is clear that a functional possesses this property if and only if it can be identified with a functional that arises in this way.

A special class of functionals is the set $M_0(\Bcal_k,\mathfrak{V}_N)\subset M(\Bcal_k,\mathfrak{V}_N)$ of \emph{relevant Hamiltonians} (or \emph{ideal Hamiltonians} in the language of \cite{AKM16}). They take the form
\begin{equation}
H(B,v)=\sum_{x\in B}\Hcal(\{x\},v),
\end{equation}
where $\Hcal(\{x\},v)$ is a linear combination of
\begin{enumerate}
\item the constant monomial $1$;
\item linear monomials $\nabla^\alpha v_i(x)$ for $1\le i\le d$, $0\le\abs{\a}\le\floor{d/2}$; and
\item quadratic monomials $v_i(x)v_j(x)$ for $1\le i,j\le d$.
\end{enumerate}
Let $\mathfrak{m}$ denote the set $\{1,\ldots,d\}\times\{\a\in\N_0^d:0\le\abs{\a}\le\floor{d/2}\}$, and for $m=(i,\a)\in\mathfrak{m}$ write $\nabla^m v(x)$ for the discrete derivative $\nabla^\a v_i(x)$. We index relevant Hamiltonians by parameters describing the space of all relevant terms $\Hcal(\{x\},v)$, that is, we identify $H\in M_0(\Bcal_k,\mathfrak{V}_N)$ with its coordinates $\l\in\R$, $\ba \in \R^{\mathfrak{m}}$ and $\bd\in\R_{\text{sym}}^{d\times d}$ given by the identity
\begin{equation} \label{E:Hparas}
\Hcal(\{x\},v)=\lambda + \sum_{m\in\mathfrak{m}} a_m \nabla^m v(x) + \frac{1}{2}\sum_{i,j=1}^d d_{i,j} v_i(x) v_j(x).
\end{equation}

\subsection{Norms} \label{S4:Norms}

A series of (semi-)norms on vector fields in $\mathfrak{V}_N$ is given by the expression
\begin{equation} \label{E:vnorm}
\abs{v}_{j,X}=\sup_{x\in X^*} \; \sup_{1\le i \le d} \; \sup_{0\le \abs{\a}\le \floor{d/2}+1}\mathfrak{w}_j(\a)^{-1}\abs{\nabla^\a v_i(x)},
\end{equation}
where $X$ is any polymer, $\mathfrak{w}_j(\a)=h_j L^{-j\frac{d+2\abs{\a}}{2}}$ and $h_j=2^j h$ for some constant $h$, to be chosen later. Given an element $g^{(r)}$ of the tensor product $\mathfrak{V}_N^{\otimes r}$ (for $r\ge 1$), the injective tensor norm induced by \eqref{E:vnorm} may be represented as
\begin{multline}
\abs{g^{(r)}}_{j,X}=\sup_{x_1,\ldots,x_r\in X^*}\;\sup_{1\le i_1,\ldots,i_r \le d}\;\sup_{0\le \abs{\a_1},\ldots,\abs{\a_r}\le \floor{d/2}+1} \\
\prod_{l=1}^r \mathfrak{w}_j(\a_l)^{-1}\abs{\nabla^{\a_1}\otimes \cdots\otimes\nabla^{\a_r} g^{(r)}\big((x_1,i_1),\ldots,(x_r,i_r)\big)}.
\end{multline}
We complement the above by taking the usual absolute value on $\mathfrak{V}_N^{\otimes 0}=\R$ and then define a (semi-)norm on the space of test functions
\begin{equation}
\Phi=\bigoplus_{r=0}^{r_0} \mathfrak{V}_N^{\otimes r}
\end{equation}
via $\abs{g}_{j,X}=\sup_{0\le r\le r_0}\abs{g^{(r)}}_{j,X}$. Let $F$ be a $C^{r_0}(\mathfrak{V}_N)$ functional and consider its $r_0$-th order Taylor polynomial at $v\in\mathfrak{V}_N$, which we denote by $\text{Tay}_v F$. Viewing the $r$-th order term, written $\text{Tay}_v^{(r)} F$, as a symmetric $r$-linear map uniquely identifies an element in the dual of $\mathfrak{V}_N^{\otimes r}$. This induces a dual pairing
\begin{equation} \label{E:polydual}
\big\langle \text{Tay}_v F,g\big\rangle_{r_0} = \sum_{r=0}^{r_0} \text{Tay}_v^{(r)} F(g^{(r)}),
\end{equation} 
where an explicit expression for the pairing at order $r$ is given by
\begin{multline} \label{E:dualform}
\text{Tay}_v^{(r)} F(g^{(r)})=\frac{1}{r!}\sum_{x_1,\ldots,x_r\in\mathbb{T}_N}\sum_{i_1,\ldots,i_r=1}^d D^r F(v)\big((x_1,i_1),\ldots,(x_r,i_r)\big) \\
\times g^{(r)}\big((x_1,i_1),\ldots,(x_r,i_r)\big).
\end{multline}
We denote the associated dual norm by $\abs{\text{Tay}_v F}_{j,X}=\sup\{\big\langle \text{Tay}_v F,g\big\rangle_{r_0}:g\in\Phi,\abs{g}_{j,X}\le 1\}$, and note that in practice we only consider functionals $F$ for which this expression is finite. We also introduce the shorthand $\abs{F}_{j,X,v}=\abs{\text{Tay}_v F}_{j,X}$.

Norms on functionals in $M(\Pcal_k,\mathfrak{V}_N)$ and $M(\Pcal_k^c,\mathfrak{V}_N)$ are constructed with the help of certain \emph{weight functions} $\bunderline{w}_{k}^X$, $\bunderline{w}_{k:k+1}^X$ and $\bunderline{W}_{k}^X$ for $X\in\Pcal_k$. Adapting the corresponding objects in \cite{ABKM} for arbitrary vector fields in $\mathfrak{V}_N$ involves some additional notation and we therefore postpone that task. We state the properties we require in section \ref{S:5} below. A detailed construction of the weight functions and proof that their relevant properties extend from those in \cite{ABKM} is set out in Appendix \ref{A:C}. Using the notation $\bunderline{W}_k^{-X}=(\bunderline{W}_k^X)^{-1}$ (and likewise for $\bunderline{w}$), we define the strong and respectively weak norms by
\begin{align}
\tnorm{F(X)}_{k,X}=\sup_{v\in\mathfrak{V}_N}\abs{F(X)}_{k,X,v}\bunderline{W}_k^{-X}(v); \\
\norm{F(X)}_{k,X}=\sup_{v\in\mathfrak{V}_N}\abs{F(X)}_{k,X,v}\bunderline{w}_k^{-X}(v); \text{ and}\\
\norm{F(X)}_{k:k+1,X}=\sup_{v\in\mathfrak{V}_N}\abs{F(X)}_{k,X,v}\bunderline{w}_{k:k+1}^{-X}(v)
\end{align}
for $F\in M(\Pcal_k,\mathfrak{V}_N)$. Two families of global weak norms, depending on a parameter $A\ge 1$, are defined for functionals $F\in M(\Pcal_k^c,\mathfrak{V}_N)$ by
\begin{equation} \label{E:globalweaknorm}
\norm{F}_k^{(A)}=\sup_{X\in\Pcal_k^c}\norm{F(X)}_{k,X}A^{\abs{X}_k}\,\text{ and }\,\norm{F}_{k:k+1}^{(A)}=\sup_{X\in\Pcal_k^c}\norm{F(X)}_{k:k+1,X}A^{\abs{X}_k}.
\end{equation}
Finally, we define a separate norm on relevant Hamiltonians. For $H\in M_0(\Bcal_k,\mathfrak{V}_N)$ given as in \eqref{E:Hparas}, we let
\begin{equation}
\norm{H}_{k,0}=L^{dk}\abs{\l}+\sum_{m=(i,\a)\in\mathfrak{m}}h_k L^{k\frac{d-2\abs{\a}}{2}}\abs{a_m}+\frac{1}{2}\sum_{i,j=1}^d h_k^2 \abs{d_{i,j}}.
\end{equation}

In the following, we are only concerned with functionals that are finite in the appropriate norm as defined above. We therefore introduce the Banach spaces
\begin{align}
&\bM(\Pcal_k^c,\mathfrak{V}_N)=\{F\in M(\Pcal_k^c,\mathfrak{V}_N):\norm{F}_k^{(A)}<\infty\}; \\
&\bM(\Bcal_k,\mathfrak{V}_N)=\{F\in M(\Bcal_k,\mathfrak{V}_N):\tnorm{F}_k<\infty\}
;\text{ and} \\
&\bM_0(\Bcal_k,\mathfrak{V}_N)=(M_0(\Bcal_k,\mathfrak{V}_N),\norm{\cdot}_{k,0}).
\end{align}
Note that there are only finitely many connected polymers, so that the space $\bM(\Pcal_k^c,\mathfrak{V}_N)$ does not depend on the choice of constant $A\ge 1$.

\subsection{Definition of the renormalisation map} \label{S4:RGmap}

We construct the renormalisation group flow as a sequence of pairs of functionals $(H_k,K_k)$ determined by a series of \emph{renormalisation maps}
\begin{equation}
\bT_k:\mathcal{U}_{k,\rho,\kappa}\subset \bM_0(\Bcal_k,\mathfrak{V}_N)\times \bM(\Pcal_k^c,\mathfrak{V}_N)\times\R_{\text{sym}}^{d\times d}\to \bM_0(\Bcal_{k+1},\mathfrak{V}_N)\times \bM(\Pcal_{k+1}^c,\mathfrak{V}_N)
\end{equation}
for $k=0,...,N-1$, depending on the initial tuning parameter $\bq\in \R_{\text{sym}}^{d\times d}$. We also write $\bT_k^{\ssup{\bq}}$ for this map where we consider $\bq$ fixed. The open subset
\begin{equation}
\mathcal{U}_{k,\rho,\kappa}=\{(H_k,K_k,\bq):\norm{H_k}_{k,0}<\rho,\norm{K_k}_k^{(A)}<\rho,\norm{\bq}<\kappa\}
\end{equation}
is determined by constraints on the norms of the arguments $H_k$ and $K_k$ that ensure the smoothness of $\bT_k$. We return to this in section \ref{S:5} below. The constraint on the matrix $\bq$ ensures that we continue to restrict our attention to small matrices contained in the space $B_\kappa(0)$.
 
Two separate mappings of field functionals appear in the definition of $\bT_k^{\ssup{\bq}}$. One is the integration map $\bR_{k+1}^{\ssup{\bq}}$ introduced in \eqref{E:Rkq} above. Observe that, since the domain of our functionals is strictly larger than $\cbO_N$, this map is not a convolution in the strict sense and we verify explicitly that it is well defined and has the analytic properties we require of it: see Lemma \ref{L:Rreg} below. The other mapping we need is the projection onto relevant Hamiltonians $\Pi_2$, defined in the following paragraphs.

Let $K\in \bM(\Pcal_k^c,\mathfrak{V}_N)$ and $B\in\Bcal_k$. Recall that the second-order Taylor polynomial of $K(B)$ at $0\in\mathfrak{V}_N$ may be written in our notation as
\begin{equation}
\text{Tay}_0^{(0)}K(B)+\text{Tay}_0^{(1)}K(B)+\text{Tay}_0^{(2)}K(B)
\end{equation}
and, as a second-order polynomial in $v_i(x)$ for $i=1,\ldots ,d$ and $x\in B^*$, uniquely identified with an element in the dual $\big(\R\oplus (\R^d)^{B^{++}}\oplus (\R^d)^{B^{++}}\otimes(\R^d)^{B^{++}}\big)'$. For this purpose, we may identify $B^{++}$ with a square block of side length $5L^k$ in $\Z^d$ since $B^{++}$ does not wrap around $\mathbb{T}_N$ for $L\ge 7$. We define subspaces of $(\R^d)^{B^{++}}$ and $(\R^d)^{B^{++}}\otimes(\R^d)^{B^{++}}$ with the help of the following basis. For $t,k\in\Z$ let the symbol $\binom{t}{k}$ stand for the usual binomial coefficient
\begin{equation}
\binom{t}{k}=\frac{t(t-1)...(t-k+1)}{k!}
\end{equation}
if $k\in\N$ and set $\binom{t}{k}=1$ for $k=0$, $\binom{t}{k}=0$ for $k\in\Z\setminus\N_0$. For $\a\in\N_0^d$ and $x\in B^{++}(\subset \Z^d)$, we define
\begin{equation}
b_{\a}(x)=\binom{x_1}{\a_1}\cdots\binom{x_d}{\a_d}
\end{equation}
and, for $m=(i,\a)\in\mathfrak{m}$,
\begin{equation}
b_m(j,x)=b_{i,\a}(j,x)=b_{\a}(x)\delta_{i,j}
\end{equation}
as an element of $(\R^d)^{B^{++}}$. We now take $\mathscr{P}_0=\R$, $\mathscr{P}_1=\text{span}\{b_{i,\a}:1\le i\le d,0\le \abs{\a}\le \floor{d/2}\}$ and $\mathscr{P}_2=\text{span}\{(b_{i,0}\otimes b_{j,0}+b_{j,0}\otimes b_{i,0}):1\le i,j\le d\}$; and let $\mathscr{P}=\mathscr{P}_0\oplus\mathscr{P}_1\oplus\mathscr{P}_2$.

Using the dual pairing introduced in \eqref{E:polydual}, the projection mapping $\Pi_2:\bM(\Pcal_k^c,\mathfrak{V}_N)\to \bM_0(\Bcal_k,\mathfrak{V}_N)$ is then defined by $\Pi_2 K(B)=H$, where $H$ is the unique relevant Hamiltonian satisfying
\begin{equation} \label{E:Pi2condition}
\big\langle H,g\big\rangle_2=\big\langle \text{Tay}_0K(B),g\big\rangle_2
\end{equation}
for all $g\in\mathscr{P}$. To verify that $H$ is well defined as an element of $M_0(\Bcal_k,\mathfrak{V}_N)$ (and hence as an element of $\bM_0(\Bcal_k,\mathfrak{V}_N)$), we state explicitly the constraints imposed by \eqref{E:Pi2condition} on the relevant constant, linear and quadratic monomials comprised in $H$. In the notation of \eqref{E:Hparas}, we clearly must have $\l=L^{-kd}K(B)(0)$ and this is translation-invariant by assumption on $K$. The quadratic term is similarly straightforward: simplifying $\frac{1}{2}(d_{i,j}+d_{j,i})=d_{i,j}$ as we insisted on $\bd$ being symmetric, we see that
\begin{equation}
d_{i,j}=L^{-dk}\sum_{x,y\in B^{++}}D^2K(B,0)\big((x,i),(y,j)\big),
\end{equation}
which is again translation-invariant as $K$ is. For the linear terms, we obtain from \eqref{E:Pi2condition} that
\begin{equation} \label{E:Pi2linear}
\sum_{0\le \abs{\a}\le \floor{d/2}} a_{i,\a}\sum_{x\in B} b_{\a'-\a}(x)=\sum_{x\in B^{++}} DK(B,0)(x,i) b_{\a'}(x)
\end{equation}
must hold for all $1\le i\le d$ and $0\le \abs{\a'}\le \floor{d/2}$. Writing $\b\le\a$ for the partial order on multi-indices (i.e. $\b_i\le \a_i, \forall i$), we note that $b_{\a'-\a}(x)=1$ for $\a=\a'$ and $b_{\a'-\a}(x)=0$ unless $\a\le\a'$. The coefficients of this system of equations can thus be arranged as a lower triangular matrix with entries $L^{dk}$ on the main diagonal. It follows that the parameters $a_m$ for $m\in\mathfrak{m}$ are uniquely determined. Moreover, we claim that the solution is invariant under translation by $y\in (L^k\Z)^d$. Indeed, with the help of the identity
\begin{equation}
b_\a(x+y)=\sum_{\b\le\a}b_\b(x)b_{\a-\b}(y),
\end{equation}
the translation by $y$ of \eqref{E:Pi2linear} may be written as
\begin{multline}
L^{dk}a_{i,\a'}+\sum_{\a<\a'}a_{i,\a}\sum_{x\in B}b_{\a'-\a}(x) + \sum_{0<\b\le \a'}b_\b(y)\Big[\sum_{a\le \a'-\b}a_{i,\a}\sum_{x\in B}b_{\a'-\b-\a}(x)\Big] = \\
= \sum_{x\in B^{++}}DK(B,0)(x,i)b_{\a'}(x) + \sum_{0<\b\le \a'}b_\b(y)\Big[\sum_{x\in B^{++}}DK(B,0)(x,i)b_{\a'-\b}(x)\Big],
\end{multline}
where we again used the translation-invariance of $K$. We recognise the terms in square brackets as lower-order equations of the untranslated system \eqref{E:Pi2linear} and so the claim follows. This completes our construction of the projection $\Pi_2$.

We now define the renormalisation map $\bT_k^{\ssup{\bq}}$ in terms of these mappings. Denote the image under this map by $(H_{k+1},K_{k+1})=\bT_k^{\ssup{\bq}}(H_k,K_k)$. First, we introduce $\widetilde{H}_k$, the integrated relevant Hamiltonian at the scale $k$, via
\begin{equation} \label{E:Htildedef}
\widetilde{H}_k(B,v)=\bR_{k+1}^{\ssup{\bq}}H_k(B,v)-\Pi_2\big(\bR_{k+1}^{\ssup{\bq}}K_k\big)(B,v).
\end{equation}
Note that $\bR_{k+1}^{\ssup{\bq}}H_k\in M_0(\Bcal_k,\mathfrak{V}_N)$ can be seen directly by integrating the monomials that generate the class of relevant Hamiltonians, since the $\nu_{k+1}^{\ssup{\bq}}$ are mean-zero Gaussian fields. Then, for $B'\in\Bcal_{k+1}$, we let
\begin{equation} \label{E:H'}
H_{k+1}(B',v)=\sum_{B\in\Bcal_k(B')}\widetilde{H}_k(B,v)
\end{equation}
be the first component of the image of $(H_k,K_k)$ under $\bT_k^{\ssup{\bq}}$. It will be convenient to designate the part of $H_
{k+1}$ stemming from the first term in \eqref{E:Htildedef} via the operator
\begin{equation} \label{E:A}
\bA_k^{\ssup{\bq}} H_k(B')=\sum_{B\in\Bcal_k(B')}\bR_{k+1}^{\ssup{\bq}}H_k(B)
\end{equation}
and the part stemming from the second term in \eqref{E:Htildedef} via the operator
\begin{equation} \label{E:B}
\bB_k^{\ssup{\bq}}K_k(B')=-\sum_{B\in\Bcal_k(B')}\Pi_2\big(\bR_{k+1}^{\ssup{\bq}}K_k\big)(B).
\end{equation}
To define the second component of $\bT_k^{\ssup{\bq}}$, we introduce additional shorthand notation. We write $I_k(B,v+\eta)=\exp(-H_k(B,v+\eta))$, $\widetilde{I}_k(B,v)=\exp(-\widetilde{H}_k(B,v))$ and define a functional $\widetilde{K}_k:\Pcal_k\times\mathfrak{V}_N\times\cbO_N$ by
\begin{equation}
\widetilde{K}_k(X,v,\eta)=\Big((1-\widetilde{I}_k(\cdot,v))\circ(I(\cdot,v+\eta)-1)\circ K_k(\cdot,v+\eta)\Big)(X).
\end{equation}
We recall the canonical inclusions \eqref{E:canincl1} and \eqref{E:canincl2}, which are used here to extend the domain of relevant functionals to arbitrary $k$-polymers. It is convenient to denote the indicator function associated with the polymer map $\pi_k$ defined in \eqref{E:pimap} by the symbol $\chi:\Pcal_k\times\Pcal_{k+1}\to\{0,1\}$:
\begin{equation}
\chi(X,U)=\1_{\pi_k(X)=U}.
\end{equation}
Given a $(k+1)$-polymer $U$, we now define the image functional $K_{k+1}$ by
\begin{equation} \label{E:K'}
K_{k+1}(U,v)=\sum_{X\in\Pcal_k}\chi(X,U)\widetilde{I}(U\setminus X,v)(\widetilde{I})^{-1}(X\setminus U,v)\int_{\cbO_N}\;\widetilde{K}(X,v,\eta)\,\nu_{k+1}^{\ssup{\bq}}(\d\eta)
\end{equation}
and we introduce the symbol $\boldsymbol{S}_k$ for this component of the renormalisation map (i.e. $K_{k+1}=\boldsymbol{S}_k(H_k,K_k,\bq)$). We show subsequently that this definition factors, hence permitting us to specialise the definition to $U\in\Pcal_{k+1}^c$, which accords with the stated co-domain of $\bT_k^{\ssup{\bq}}$. It also remains to check that this expression is well-defined as an element of $\bM(\Pcal_{k+1}^c,\mathfrak{V}_N)$. In the interest of succinctness, we state these claims as part of the properties of the renormalisation maps described and proved in the following sections. This completes the setup and required definitions for the renormalisation group flow on functionals of arbitrary vector fields.

\section{Adapting the analysis in \cite{ABKM}} \label{S:5}

In principle, the setting described in section \ref{S:4} above diverges sufficiently from that in \cite{ABKM} so that all steps in the argument require re-statement and separate proof. In practice, however, many of the changes and adaptations needed are minor and straightforward. It therefore appears to us that little of value would be contributed by reproducing the technical parts of \cite{ABKM} in full. Instead, this section is organised in a series of checks, following the logical order of the renormalisation group argument, by which our claim that essentially the same approach succeeds in our setting may be verified. We generally omit proofs that may be adapted from \cite{ABKM} without substantial changes or difficulty. Our statements in this section are accompanied by a reference to the corresponding statement of \cite{ABKM} that we are adapting to our setting and purpose.

We emphasise in this context the importance of the various constants appearing in our statements (cf. also Chapter 5 of \cite{ABKM}), which differ slightly from those in \cite{ABKM}. Although much of the following takes the form of (partially) independent claims about various properties of the objects we have defined above, an essential part of our proof is to establish that a consistent choice of constants exists so that all required properties hold simultaneously. We achieve this by keeping track of all relevant constraints as we proceed through the following sub-sections. We briefly list the \emph{fixed parameters}, which we regard as given at the outset:
\begin{enumerate}
\item The spatial dimension $d\ge 2$;
\item The inverse temperature $\b\ge 1$;
\item The number of discrete derivatives in the definition of our weight functions $M$: we take $M=2\floor{d/3}+3$;
\item The range parameter $R$: we take $R=M=2\floor{d/3}+3$;
\item The assumed smoothness parameters $r_0$ and $r_1$: we take $r_0=r_1=3$;
\item Two tuning parameters, $n$ and $\widetilde{n}$, relating to the finite range decomposition (see Theorem 6.1 of \cite{ABKM}): we take $n=4\floor{d/2}+6$ and $\widetilde{n}=5\floor{d/3}+7$;
\item The parameter $\o_0>0$ controlling the quadratic form $\Qcal_V$;
\item The parameter $\zeta\in (0,1)$ controlling the exponential weight in the norm $\norm{\cdot}_{\zeta,\Qcal_V}$; and
\item The parameter $\vartheta\in (0,2/3]$ controlling the convergence of the renormalisation flow.
\end{enumerate}
In addition to these numerical constants we of course also take the potential function $V$, satisfying the assumptions in \eqref{E:Assumptions}, and all derived objects introduced above as fixed for the purposes of our analysis. The main \emph{free parameters} are the size parameter $L\in\N$, the norm scaling parameter $h\ge 1$ and the global norm parameter $A\ge 1$. As already alluded to, the way we choose these parameters (and other constants that depend on them) diverges slightly from \cite{ABKM}. This leads to a possibly tighter constraint on the $\norm{\Kcal_{u,\beta,V}}_{\zeta,\Qcal_V}$ norm, which in turn translates into a requirement to choose $\beta$ larger and $\abs{u}^{\R^d}$ smaller than in the corresponding case in \cite{ABKM}.

A number of derived constants, depending on the fixed parameters and (sometimes) the free parameters, are used frequently and represented by a unique symbol, e.g., the integration constant $\Acal_\Pcal(L)$. Many constants of lesser importance also appear in the following statements; they are designated generically by the letter $C$ and any dependence on the free parameters is indicated explicitly when they are first introduced. Where such constants are referred to subsequently they are labelled by the equation number of their first appearance, e.g., $C_{\eqref{E:Rqbound}}$.

\subsection{Preliminaries: geometry of $\mathbb{T}_N$} \label{S5:geo}

We begin with the simple observation that all purely geometric features of the block partitioning of $\mathbb{T}_N$ in \cite{ABKM} are immediately available to us because we adopt all relevant definitions (see section \ref{S4:geo} above) without change. This means, in particular, that all counting arguments relating to the re-blocking step of the renormalisation map run identically in our setting. This in turn minimises our need to pick constants differently.

In order for the various geometric definitions to interact in the desired manner, a lower bound on the size parameter $L$ is required. This essentially stems from the existence of the range $R$ and the geometric constant $2^d$ (which counts the number of blocks that are at most $\abs{\cdot}_\infty$-distance $1$ from a corner vertex). The main relations that we require are the inclusions $X^*\subset X^+$ $\forall X\in\Pcal_k$, $X^*\subset X^+\subset Y^*$ $\forall X\in\Scal_k,Y\in\Pcal_{k+1}$ with $X\cap Y\neq\emptyset$ and $X^*\subset \pi_k(X)^*$ $\forall X\in\Pcal_k$, as well as the inequality
\begin{equation} \label{E:rangeineq}
\text{dist}_\infty(U_1^*,U_2^*)\ge \frac{L^{k+1}}{2}+1
\end{equation}
for $U_1,U_2\in\Pcal_{k+1}$ and strictly disjoint. For these properties we require $L\ge 2^{d+2}+4R$. The proof is exactly as in \cite{ABKM}.

\hspace{1cm}
\begin{ctracker}[userdefinedwidth=8cm,align=center]
\begin{equation}
L\ge 2^{d+2}+4R
\end{equation}
\end{ctracker}

\subsection{Preliminaries: finite range decomposition} \label{S5:fr}

As outlined above, we obtain a finite range decomposition of the Gaussian measure $\nu^{\ssup{\bq}}$ by pushing forward the finite range decomposition of $\mu^{\ssup{\bq}}$ used in \cite{ABKM}, the existence of which is proved in \cite{B18}. Let $\mathscr{C}:\cbX_N\to\cbX_N$ be a symmetric operator that satisfies the conditions for Theorem 6.1 of \cite{ABKM}. Writing $\Ccal_k$ ($k=1,\ldots,N+1$) for the kernels of the finite range decomposition of $\mathscr{C}$ given by that result, a simple calculation shows that the push-forward Gaussian measures correspond to kernels $\Ccal_k^{\nabla}:\cbO_N\to\R$ satisfying
\begin{equation}
\Ccal_{k,(i,j)}^{\nabla}(x)=\nabla_i\nabla_j^*\Ccal_k(x).
\end{equation} 
For the Fourier transform, this relation corresponds to the equality
\begin{equation}
\widehat{\Ccal}_{k,(i,j)}^{\nabla}(p)=q_i(p)q_j(-p)\widehat{\Ccal}_k(p),
\end{equation}
where $q_j(p)=e^{ip_j}-1$ for $1\le j\le d$. Together these identities permit analogous properties for the kernels $\Ccal_k^{\nabla}$ and hence the finite range decomposition of the associated random gradient vector field to be deduced from Theorem 6.1 of \cite{ABKM}. We de not state them here, because it is equivalent to evaluate any integral with respect to a measure in the push-forward finite range decomposition (i.e. one of the $\nu_k^{\ssup{\bq}}$) as an integral over $\cbX_N$ (i.e. with respect to $\mu_k^{\ssup{\bq}}$) and rely on the properties of the original measures directly. To illustrate this approach, we give an important application of the properties of the finite range decomposition to the derivatives of expectation with respect to the generating matrix of the covariance: see Theorem 6.2 of \cite{ABKM}. For the push-forward finite range decomposition the result (given our choice of fixed parameters) reads as follows.
\begin{theorem} [Theorem 6.2 of \cite{ABKM}] \label{T:Dqdotbound}
Let $\mathscr{C}_{\bQ,k+1}$ be a finite range decomposition as in Theorem 6.1 of \cite{ABKM} and $\mathscr{C}_{\bQ,k+1}^{\nabla}$ be the corresponding operator of the push-forward under $\boldsymbol{\nabla}_N$, denoting the associated measure on $\cbO_N$ by $\nu_{\bQ,k+1}$. Assume further that $X\subset\mathbb{T}_N$ has diameter $D=\textup{diam}_\infty(X)\ge L^k$ and $F:\mathfrak{V}_N\to\R$ is a measurable functional depending only on values in $X$. Then, for $\ell\ge 1$ and $p>1$ it holds that
\begin{equation} \label{E:Dqdotbouond}
\Big|\frac{\d^{\ell}}{\d t^{\ell}}\int_{\cbO_N}\;F(\eta)\,\nu_{\bQ+t\dot{\bQ},k+1}(\d\eta)\big\rvert_{t=0}\Big|\le C_{\ell,p}(L,w_0,d)\big((D+1)L^{-k}\big)^{\frac{d\ell}{2}}\norm{\dot{\bQ}}^{\ell}\norm{F}_{L^p(\nu_{\bQ,k+1})}.
\end{equation}
\end{theorem}
This is an immediate consequence of Theorem 6.2 of \cite{ABKM} and elementary properties of image measures. Finally, we observe that for $\bq\in B_{\kappa}(0)$ defined in \eqref{E:Bkappa} above, the family of covariances corresponding to the measures $\mu^{\ssup{\bq}}$ satisfies the assumptions of Theorem 6.1 of \cite{ABKM}. This has the consequences described above for the $\cbO_N$-measures $\nu^{\ssup{\bq}}$. 

\subsection{Step 1: Properties of the weight functions} \label{S5:wf}

The functional norms $\norm{\cdot}_{k,X}$, $\norm{\cdot}_{k:k+1,X}$ and $\tnorm{\cdot}_{k,X}$ defined in section \ref{S4:Norms} depend on certain weight functions that are in turn constructed from several families of symmetric operators on $\mathfrak{V}_N$. These weight functions are important both for various properties of the norms themselves (e.g. sub-multiplicativity) and to establish key bounds relating to the integration map $\bR_{k+1}^{\ssup{\bq}}$. A natural first step is thus to verify that the extended weight functions defined for the purposes of this article satisfy requirements analogous to those proved in \cite{ABKM}. The detailed construction of our weight functions is given in Appendix \ref{A:C}. We state in the following result all properties that are needed for the main renormalisation group analysis. This statement incorporates our assumptions on the fixed parameters mentioned previously, which we do not explicitly call out here.
\begin{theorem} [Theorem 7.1 of \cite{ABKM}] \label{T:wf}
For every $L\ge 2^{d+3}+16R$, there are constants
\[\l=\min\{\frac{\o_0\zeta'}{4},(\frac{\zeta'}{3\O}+\mathfrak{r}(1-\zeta')\o_0^{-1})^{-1},\frac{1}{4}\},\, \delta=\min\{\frac{\zeta'}{4\O\Xi_{\textup{max}}},\frac{\zeta'}{8\mu\Xi_{\textup{max}}}\},\, \rho=(1+\zeta'/4)^{1/3}-1\]
and
\[\kappa=\min\{\frac{\rho L^{-4(d+\widetilde{n})-2}}{K_1},\frac{\o_0}{2}\},\, h_0=\delta^{-1/2}\max\{\sqrt{8},\sqrt{2}c_d\},\]
where $\mu=\mu(L,\l)$, $c_d$, $\O$ and $K_1$ are constants depending (otherwise than stated) only on $V$ and the fixed parameters and determined as in Chapter 7 of \cite{ABKM}, such that the weight functions $\bunderline{w}_k^X$, $\bunderline{w}_{k:k+1}^X$ and $\bunderline{W}_k^X$ given as in Appendix \ref{A:C} are well-defined and satisfy:
\begin{enumerate}
\item For any $Y\subset X\in\Pcal_k$, $0\le k\le N$, and $v\in\mathfrak{V}_N$,
\begin{equation}
\bunderline{w}_k^Y(v)\le \bunderline{w}_k^X(v)\,\text{ and }\,\bunderline{w}_{k:k+1}^Y(v)\le \bunderline{w}_{k:k+1}^X(v);
\end{equation}
\item The estimate
\begin{equation}
\bunderline{w}_k^X(v)\le \exp\Big(\frac{(v,\bunderline{M}_k v)}{2\l}\Big)\,\text{ and }\,\bunderline{w}_{k:k+1}^X(v)\le \exp\Big(\frac{(v,\bunderline{M}_k v)}{2\l}\Big)
\end{equation}
holds for $0\le k\le N$, $X\in\Pcal_k$ and $v\in\mathfrak{V}_N$;
\item For any strictly disjoint polymers $X,Y\in\Pcal_k$, $0\le k\le N$ and $v\in\mathfrak{V}_N$,
\begin{equation}
\bunderline{w}_k^{X\cup Y}(v)=\bunderline{w}_k^X(v)\bunderline{w}_k^Y(v);
\end{equation}
\item For any polymers $X,Y\in\Pcal_k$ such that $\text{dist}_\infty(X,Y)\ge \frac{3}{4}L^{k+1}$, $0\le k\le N$ and $v\in\mathfrak{V}_N$,
\begin{equation}
\bunderline{w}_{k:k+1}^{X\cup Y}(v)=\bunderline{w}_{k:k+1}^X(v)\bunderline{w}_{k:k+1}^Y(v);
\end{equation}
\item For any disjoint polymers $X,Y\in\Pcal_k$, $0\le k\le N$ and $v\in\mathfrak{V}_N$,
\begin{equation}
\bunderline{W}_k^{X\cup Y}(v)=\bunderline{W}_k^X(v)\bunderline{W}_k^Y(v);
\end{equation}
\item For $h\ge h_0$, disjoint polymers $X,Y\in\Pcal_k$, $0\le k\le N$ and $v\in\mathfrak{V}_N$,
\begin{equation}
\bunderline{w}_k^{X\cup Y}(v)\ge \bunderline{w}_k^X(v)\bunderline{W}_k^Y(v);
\end{equation}
\item For $h\ge h_0$, $X\in\Pcal_k$, $U=\pi_k(X)\in\Pcal_{k+1}$, $0\le k\le N-1$ and $v\in\mathfrak{V}_N$,
\begin{equation}
\bunderline{w}_{k+1}^U(v)\ge \bunderline{w}_{k:k+1}^X(v)\Big(\bunderline{W}_k^{U^+}(v)\Big)^2;
\end{equation}
\item For $h\ge h_0$, $X\in\Pcal_{k+1}$, $0\le k\le N-1$ and $v\in\mathfrak{V}_N$,
\begin{equation}
\exp\Big(\frac{\abs{v}_{k+1,X}^2}{2}\Big)\bunderline{w}_{k:k+1}^X(v)\le \bunderline{w}_{k+1}^X(v);
\end{equation}
\item There is a constant $\Acal_{\Pcal}=\Acal_{\Pcal}(L)$ such that for $\bq\in B_{\kappa}(0)$, $\overline{\rho}\in [0,\rho]$, $X\in\Pcal_k$, $0\le k\le N$ and $v\in\mathfrak{V}_N$,
\begin{equation}
\Big(\int_{\cbO_N}\;\big(\bunderline{w}_k^X(v+\eta)\big)^{1+\overline{\rho}}\,\nu_{k+1}^{\ssup{\bq}}(\d\eta)\Big)^{\frac{1}{1+\overline{\rho}}}\le \Big(\frac{\Acal_{\Pcal}}{2}\Big)^{\abs{X}_k}\bunderline{w}_{k:k+1}^X(v);\text{ and}
\end{equation}
\item There is a constant $\Acal_{\Bcal}$ independent of $L$ such that for $\bq\in B_{\kappa}(0)$, $\overline{\rho}\in [0,\rho]$, $B\in\Bcal_k$, $0\le k\le N$ and $v\in\mathfrak{V}_N$,
\begin{equation}
\Big(\int_{\cbO_N}\;\big(\bunderline{w}_k^B(v+\eta)\big)^{1+\overline{\rho}}\,\nu_{k+1}^{\ssup{\bq}}(\d\eta)\Big)^{\frac{1}{1+\overline{\rho}}}\le \frac{\Acal_{\Bcal}}{2}\bunderline{w}_{k:k+1}^X(v).
\end{equation}
\end{enumerate}
\end{theorem}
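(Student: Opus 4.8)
The plan is to follow the proof of Theorem~7.1 of \cite{ABKM}, adapting each step to the weight functions $\overline{w}_k^X$, $\overline{w}_{k:k+1}^X$ and $\overline{W}_k^X$ that are defined on the whole of $\mathfrak{V}_N$ in Appendix~\ref{A:B}. Recall the shape of that construction: adapting the families of symmetric operators of \cite{ABKM} (losing one degree of discrete derivatives in the process) yields regulators on the gradient subspace $\cbO_N$, and these are extended to arbitrary $v\in\mathfrak{V}_N$ by attaching a further regulator built from the orthogonal complement $\cbO_N^\perp$, so that --- broadly speaking --- each weight factorises, up to its large-field part, into a contribution depending only on the $\cbO_N$-component of $v$ and a contribution depending only on its $\cbO_N^\perp$-component. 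We would set $\lambda,\delta,\rho,\kappa,h_0$ to the values displayed above, which are the choices of Chapter~7 of \cite{ABKM} adjusted to the fixed parameters of Section~\ref{S:5}, and carry along the auxiliary constants $\mu=\mu(L,\lambda)$, $c_d$, $\Omega$, $\Xi_{\textup{max}}$, $K_1$ as introduced there. A preliminary step is to check that the Appendix~\ref{A:B} construction genuinely produces weight functions --- positivity of the relevant quadratic forms, convergence of the defining sums, and compatibility with the neighbourhood operations --- and this is where the sharper geometric requirement $L\ge 2^{d+3}+16R$ (as opposed to the bound $L\ge 2^{d+1}+4R$ of Section~\ref{S5:geo}) enters.

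Properties (1)--(8) are deterministic statements about the explicit form of the weights, and we would establish them essentially as in \cite{ABKM}. Monotonicity (1) is immediate from $\Bcal_k(Y)\subset\Bcal_k(X)$ and positivity of the factors; the product rules (3), (4) and (5) are the additivity of the exponents over disjoint, respectively sufficiently separated, polymers and survive the extra $\cbO_N^\perp$-factor because it is arranged to be additive over polymers in the same way; and (2) follows by discarding the large-field part and bounding the quadratic part by $\exp\big((v,\overline{M}_k v)/2\lambda\big)$ with $\overline{M}_k$ the block-additive operator of item (2). For the comparisons (6), (7) and (8) between strong, weak and interpolation weights at neighbouring scales we would reproduce the corresponding estimates of \cite{ABKM}, the stated inequalities among $\lambda$, $\delta$ and $h_0$ being precisely what makes them close. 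The point to watch here is (8): the weak weight at scale $k+1$ must dominate $\exp(\abs{v}_{k+1,X}^2/2)$ times the interpolation weight, so the $\cbO_N^\perp$-part of the regulator has to absorb the $\cbO_N^\perp$-contribution to the full field seminorm $\abs{v}_{k+1,X}$ --- a regulator acting only on the curl-free component would fail here --- which is why the construction carries a scale-dependent margin in the $\cbO_N^\perp$-directions and why $h\ge h_0$ is required.

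The integration estimates (9) and (10) are the substantive part. Since $\nu_{k+1}^{\ssup{\bq}}$ is supported on $\cbO_N$, in $\int_{\cbO_N}\big(\overline{w}_k^X(v+\eta)\big)^{1+\overline{\rho}}\,\nu_{k+1}^{\ssup{\bq}}(\d\eta)$ the shift $\eta$ affects only the $\cbO_N$-component of $v$, the $\cbO_N^\perp$-component being untouched. Using the product structure we would estimate the gradient-part integral exactly as in \cite{ABKM}: this is the step that uses the finite range decomposition, through the uniform-in-$N$ bounds of Theorem~6.1 of \cite{ABKM} transported to $\cbO_N$ as in Section~\ref{S5:fr} (equivalently, by evaluating the integral over $\cbX_N$ against $\mu_{k+1}^{\ssup{\bq}}$), and it yields the factor $(\Acal_{\Pcal}/2)^{\abs{X}_k}$ (respectively $\Acal_{\Bcal}/2$ on a single block) together with the reduced weight $\overline{w}_{k:k+1}^X$ in the gradient variable. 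The $\cbO_N^\perp$-part, being a function of the untouched component alone, comes out of the integral unchanged and is dominated by the $\cbO_N^\perp$-dependence of $\overline{w}_{k:k+1}^X$, which is built so that (9) and (10) hold with the stated constants; the restrictions $\overline{\rho}\in[0,\rho]$ and $\bq\in B_{\kappa}(0)$ keep the relevant Gaussian moment generating functions under control as in \cite{ABKM} and pin down the value of $\kappa$ displayed above.

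The hard part, then, is the compatibility forced on the $\cbO_N^\perp$-part of the regulators. Because $\nu_{k+1}^{\ssup{\bq}}$ puts no mass off $\cbO_N$, this part passes through the convolution in (9)--(10) entirely unchanged --- so, unlike in the scalar-field setting of \cite{ABKM}, one cannot use the integration step to create the room that the scale-to-scale comparisons (6)--(8) demand, and that room must instead be built into the definition of the $\cbO_N^\perp$-regulators from the outset, as a scale-dependent margin anticipating the next-scale field seminorm while still satisfying the (now purely algebraic) bound (9). Exhibiting a single family of $\cbO_N^\perp$-regulators that meets these competing requirements --- and simultaneously controls the field norms on all of $\mathfrak{V}_N$, as is needed downstream for the boundedness and smoothness of the integration map $\bR_{k+1}^{\ssup{\bq}}$ --- and checking that the resulting chain of constraints on $L$, $h$, $A$ and on $\lambda,\delta,\rho,\kappa,h_0$ still closes is the technical heart of the matter; the construction and verification are carried out in Appendix~\ref{A:B}.
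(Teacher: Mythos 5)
Your proposal is correct and follows essentially the same route as the paper: the Appendix~\ref{A:B} construction makes the operators block-diagonal with respect to $\mathfrak{V}_N=\cbO_N\oplus\cbO_N^\perp$, pulls back the \cite{ABKM} operators on the gradient part via $\boldsymbol{\nabla}_N^{-1}$, sets $\overline{A}_{k:k+1,\perp}^X=\overline{A}_{k,\perp}^X$ so that the orthogonal factor passes through the integration in (9)--(10) with equality, and supplies the room needed for (8) through the $\delta_{k+1}\overline{M}_{k+1}^X$ increment (with $h_0$ enlarged by $\sqrt{2}$ to absorb the $\cbO_N^\perp$-contribution to $\abs{v}_{k+1,X}$ via the same Sobolev inequality). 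This matches the paper's proof point for point, including the roles of the sharper bound on $L$ and the modified constants.
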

A proof of this theorem is contained in Appendix \ref{A:C}. The role of the constants $\zeta'$ and $\mathfrak{r}$, as well as their relationship with the parameter $\zeta$ of the perturbation norm $\norm{\cdot}_{\zeta,\Qcal_V}$, is equally explained there. Before moving on, we update the requirements imposed by Theorem \ref{T:wf} on the free parameters.

\hspace{1cm}
\begin{ctracker}[userdefinedwidth=8cm,align=center]
\begin{align}
L &\ge 2^{d+3}+16R \\
h &\ge h_0=\delta^{-1/2}\max\{\sqrt{8},\sqrt{2}c_d\}
\end{align}
\end{ctracker}

\subsection{Step 2: Properties of the norms} \label{S5:norms}

The key property verified in this step is the sub-multiplicativity of the norms $\norm{\cdot}_{k,X}$ and $\norm{\cdot}_{k:k+1,X}$. We begin with the point-wise properties of $\abs{\cdot}_{k,X,v}$, which follow from general features of injective tensor norms and their dual norms.
\begin{lemma} [Lemma 8.1 of \cite{ABKM}]\label{L:factor}
Let $X\in\Pcal_k$, $v\in\mathfrak{V}_N$ and $F,G\in C^{r_0}(\mathfrak{V}_N)$ such that $F$ and $G$ only depend on the values of the field in $X^*$. Then it holds that
\begin{equation}
\abs{FG}_{k,X,v}\le \abs{F}_{k,X,v}\abs{G}_{k,X,v}
\end{equation}
and
\begin{equation} \label{E:Tayk+1est}
\abs{F}_{k+1,X,v}\le (1+\abs{v}_{k+1,X})^3\big(\abs{F}_{k+1,X,0}+16L^{-\frac{3d}{2}}\sup_{0\le t\le 1}\abs{F}_{k,X,tv}\big).
\end{equation}
\end{lemma} 
The proof is identical to that given in \cite{ABKM} once the shift by one in the degree of derivatives is accounted for. It is hence omitted.
\begin{lemma} [Lemma 8.2 of \cite{ABKM}] \label{L:abs}
For any $v\in\mathfrak{V}_N$, $F_1,F_2 \in M(\Pcal_k,\mathfrak{V}_N)$ and any $X_1,X_2 \in\Pcal_k$ (not necessarily disjoint), we have the bound
\begin{equation}
\abs{F_1(X_1)F_2(X_2)}_{k,X_1\cup X_2,v}\le \abs{F_1(X_1)}_{k,X_1,v}\abs{F_2(X_2)}_{k,X_2,v}.
\end{equation}
Furthermore, if $L\ge 2^d+R$, $X\in\Pcal_k$ and $F\in C^{r_0}(\mathfrak{V}_N)$ is a functional depending only on the values of the field in $\pi_k(X)^*$, then the inequality
\begin{equation}
\abs{F}_{k+1,\pi_k(X),v}\le\abs{F}_{k,X\cup \pi_k(X),v}
\end{equation}
is satisfied for all $v\in\mathfrak{V}_N$.
\end{lemma}
Again, the proof is as in \cite{ABKM} and therefore omitted. Combined with the properties of the weight functions established in Theorem \ref{T:wf}, these results permit the desired sub-multiplicativity of our functional norms to be deduced. The argument runs as in \cite{ABKM} and so we do not repeat it. The required statements are summarised in the following lemma.  Here, the symbol $\one\in M(\Bcal_k,\mathfrak{V}_N)$ stands for the constant functional $\one(B)(v)=1$ for all $B\in\Bcal_k,v\in\mathfrak{V}_N$.
\begin{lemma} [Lemma 8.3 of \cite{ABKM}] \label{L:norms}
Let $K\in M(\Pcal_k^c,\mathfrak{V}_N)$ for $0\le k\le N-1$ and consider it as an element $K\in M(\Pcal_k,\mathfrak{V}_N)$ via the inclusion \eqref{E:canincl2}. Furthermore, let $F,F_1,F_2\in M(\Bcal_k,\mathfrak{V}_N)$. On the assumptions of Theorem \ref{T:wf}, the following bounds hold:
\begin{enumerate}
\item For every $X\in\Pcal_k$,
\begin{equation}
\norm{K(X)}_{k,X}\le \prod_{Y\in\Ccal_k(X)}\norm{K(Y)}_{k,Y}\,\text{ and }\,\norm{K(X)}_{k:k+1,X}\le \prod_{Y\in\Ccal_k(X)}\norm{K(Y)}_{k:k+1,Y};
\end{equation}
\item For every $X,Y\in\Pcal_k$ disjoint,
\begin{equation}
\norm{K(X)F^Y}_{k,X\cup Y}\le \norm{K(X)}_{k,X}\tnorm{F}_k^{\abs{Y}_k};
\end{equation}
\item For any polymers $X,Y,Z_1,Z_2\in\Pcal_k$ such that $X\cap Y=\emptyset$, $Z_1\cap Z_2=\emptyset$ and $Z_1,Z_2\subset X\cup Y\cup \pi_k(X\cup Y)$,
\begin{equation}
\norm{K(X)F^Y F_1^{Z_1} F_2^{Z_2}}_{k+1,\pi_k(X\cup Y)}\le \norm{K(X)}_{k:k+1,X}\tnorm{F}_k^{\abs{Y}_k}\tnorm{F_1}_k^{\abs{Z_1}_k}\tnorm{F_2}_k^{\abs{Z_2}_k};
\end{equation} 
\item For any $B\in\Bcal_k$,
\begin{equation}
\tnorm{\one}_{k,B}=\tnorm{\one}_k=1.
\end{equation}
\end{enumerate}
\end{lemma}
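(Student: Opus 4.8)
The plan is to deduce all four items of Lemma~\ref{L:norms} by combining the pointwise estimates of Lemmas~\ref{L:factor} and~\ref{L:abs} with the factorisation and comparison properties of the weight functions from Theorem~\ref{T:wf}, following the template of Lemma~8.3 in \cite{ABKM}. Throughout, the mechanism is the same: bound the dual Taylor norm $\abs{\cdot}_{k,X,v}$ of a product functional by a product of such norms (using Lemma~\ref{L:abs}), then distribute the inverse weight $\overline{w}_k^{-X}(v)$ (respectively $\overline{w}_{k:k+1}^{-X}(v)$ or $\overline{W}_k^{-X}(v)$) across the factors using the multiplicativity of the weights over disjoint or strictly disjoint polymers, and finally take the supremum over $v\in\mathfrak{V}_N$ inside each factor. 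The constraint $L\ge 2^{d}+R$ needed for the second inequality in Lemma~\ref{L:abs} is already implied by the free-parameter constraints accumulated so far, so no new requirements arise.

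For item~(1), I would write $X=\bigsqcup_{Y\in\Ccal_k(X)}Y$ as a disjoint union of its connected components; since distinct connected components are automatically strictly disjoint as $k$-polymers, Lemma~\ref{L:abs} (first inequality) gives $\abs{K(X)}_{k,X,v}\le\prod_Y\abs{K(Y)}_{k,Y,v}$, and Theorem~\ref{T:wf}(3) gives $\overline{w}_k^{-X}(v)=\prod_Y\overline{w}_k^{-Y}(v)$; taking the supremum over $v$ in each factor separately yields the claim, and likewise for $\norm{\cdot}_{k:k+1,X}$ using Theorem~\ref{T:wf}(4) together with the fact that components of a connected polymer are at mutual $\abs{\cdot}_\infty$-distance exceeding $\tfrac34 L^{k+1}$ after closure — here one must be slightly careful and instead invoke strict disjointness at scale $k$, which suffices because the $\overline{w}_{k:k+1}$ weight is designed to factor over strictly disjoint $k$-polymers; this is exactly the point treated in \cite{ABKM}. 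For item~(2), with $X\cap Y=\emptyset$, Lemma~\ref{L:abs} gives $\abs{K(X)F^Y}_{k,X\cup Y,v}\le\abs{K(X)}_{k,X,v}\abs{F^Y}_{k,Y,v}$, then $F^Y(v)=\prod_{B\in\Bcal_k(Y)}F(B,v)$ together with Lemma~\ref{L:factor} (first inequality, applied blockwise) gives $\abs{F^Y}_{k,Y,v}\le\prod_B\abs{F(B)}_{k,B,v}$; distributing $\overline{w}_k^{-(X\cup Y)}=\overline{w}_k^{-X}\prod_B\overline{w}_k^{-B}$ via Theorem~\ref{T:wf}(5) and dominating each $\overline{w}_k^{-B}$ by $\overline{W}_k^{-B}$ — which is exactly Theorem~\ref{T:wf}(6) rearranged — then taking suprema produces $\norm{K(X)}_{k,X}\prod_B\tnorm{F(B)}_{k,B}=\norm{K(X)}_{k,X}\tnorm{F}_k^{\abs{Y}_k}$.

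Item~(3) is the most involved and is where I expect the main obstacle to lie: it requires passing from scale $k$ to scale $k+1$ while simultaneously accommodating three separate block-functional factors whose supporting polymers $Z_1,Z_2$ are only known to sit inside $X\cup Y\cup\pi_k(X\cup Y)$ rather than being disjoint from $X\cup Y$. The strategy is to first apply the second inequality of Lemma~\ref{L:abs} to replace the scale-$(k+1)$ norm over $\pi_k(X\cup Y)$ by the scale-$k$ norm over $(X\cup Y)\cup\pi_k(X\cup Y)$, then use the first inequality of Lemma~\ref{L:abs} to split off the four factors, handling the overlap by the fact that Lemma~\ref{L:abs}'s product bound does \emph{not} require disjointness of the polymers (only locality of the functionals). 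One then feeds in $\overline{w}_{k:k+1}^{-X}$ on the $K(X)$ factor and must absorb $\tnorm{F}_k$, $\tnorm{F_1}_k$, $\tnorm{F_2}_k$ using the $\overline{W}_k$-weights; the delicate bookkeeping is that $\overline{w}_{k+1}^{\pi_k(X\cup Y)}\ge\overline{w}_{k:k+1}^{X\cup Y}\big(\overline{W}_k^{(X\cup Y)^+}\big)^2$ by Theorem~\ref{T:wf}(7), and the squared $\overline{W}_k$ factor — together with the multiplicativity Theorem~\ref{T:wf}(5) of $\overline{W}_k$ over disjoint polymers and the containment $Z_1,Z_2\subset(X\cup Y)^+$ — is precisely what supplies enough inverse-$\overline{W}_k$ weight to cover both $Z_1$ and $Z_2$. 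Assembling these gives the stated bound; the argument is identical in structure to \cite{ABKM} and the only genuinely new content relative to that reference is the replacement of curl-free-field weights by the arbitrary-vector-field weights of Appendix~\ref{A:B}, whose relevant properties have already been collected in Theorem~\ref{T:wf}. Finally, item~(4) is immediate: $\tnorm{\one}_{k,B}=\sup_v\abs{\text{Tay}_v\one}_{k,B}\overline{W}_k^{-B}(v)$, and $\text{Tay}_v\one$ has only a zeroth-order term equal to $1$, so $\abs{\text{Tay}_v\one}_{k,B}=1$, while $\overline{W}_k^{-B}(v)\le 1$ with equality attained (e.g. at $v=0$), giving $\tnorm{\one}_{k,B}=1$.
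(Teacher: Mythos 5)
Your overall template — Lemma~\ref{L:factor}/Lemma~\ref{L:abs} for the pointwise Taylor norms, then the weight-function properties of Theorem~\ref{T:wf} to distribute the inverse weights, then suprema factor by factor — is exactly the route the paper intends (it defers the proof to Lemma~8.3 of \cite{ABKM}), and your treatments of items~(3) and~(4) are essentially correct. However, your item~(1), second inequality, has a genuine gap. What you need there is $\overline{w}_{k:k+1}^{X}(v)\ge\prod_{Y\in\Ccal_k(X)}\overline{w}_{k:k+1}^{Y}(v)$ for the connected components of $X$, which are only guaranteed to be \emph{strictly disjoint} (mutual $\abs{\cdot}_\infty$-distance $>1$, possibly as small as $2$). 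Theorem~\ref{T:wf}(4) gives factorisation of $\overline{w}_{k:k+1}$ only when $\dist_\infty(X,Y)\ge\frac34 L^{k+1}$, so it does not apply; and your fallback assertion that the $\overline{w}_{k:k+1}$ weight ``is designed to factor over strictly disjoint $k$-polymers'' is not among the properties listed in Theorem~\ref{T:wf} and is not obvious from the construction: by Lemma~\ref{L:B1}, the gradient component $\overline{A}_{k:k+1,\nabla}^{X}$ is $\bigl((\overline{A}_{k,\nabla}^{X})^{-1}-(1+\zeta'/4)P\,\boldsymbol{\nabla}_N\mathscr{C}_{k+1}\boldsymbol{\nabla}_N^{*}P\bigr)^{-1}$, and $\mathscr{C}_{k+1}$ has range of order $L^{k+1}/2$, so it couples components that are only $O(1)$ apart; superadditivity of these quadratic forms over strictly disjoint supports is therefore a nontrivial claim that you must either prove from the explicit definitions in Appendix~\ref{A:B} or replace by a different argument.

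There is also a smaller, fixable error in item~(2): the intermediate identity $\overline{w}_k^{-(X\cup Y)}=\overline{w}_k^{-X}\prod_{B}\overline{w}_k^{-B}$ is false when $X$ and $Y$ are merely disjoint, since exact factorisation of $\overline{w}_k$ (Theorem~\ref{T:wf}(3)) requires \emph{strict} disjointness, and adjacent $k$-blocks of $Y$ are not strictly disjoint from each other or from $X$. The correct step is a single application of Theorem~\ref{T:wf}(6), $\overline{w}_k^{X\cup Y}\ge\overline{w}_k^{X}\,\overline{W}_k^{Y}$, followed by Theorem~\ref{T:wf}(5) to factor $\overline{W}_k^{Y}$ over the blocks of $Y$; you do reach the right endpoint and you do mention~(6), but the mechanism as written does not hold. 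Once item~(1) is repaired along the lines above and item~(2) is rerouted through (6) and (5), the remainder of your argument goes through.
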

No further constraints on the free parameters, beyond the assumptions needed for Theorem \ref{T:wf}, are required for this step.

\subsection{Step 3: Properties of the integration map $\bR^{\ssup{\bq}}$} \label{S5:Rk}

As alluded to in section \ref{S4:RGmap}, our setting requires us to treat the integration map $\bR_{k+1}^{\ssup{\bq}}$ as a mapping of functionals defined on $\mathfrak{V}_N$, a larger space than the domain of integration $\cbO_N$. We verify below that this transformation is well-defined for the class of functionals with which we are concerned. Moreover, since $\bR_{k+1}^{\ssup{\bq}}$ does not act as convolution, we show explicitly that it commutes with differentiation in the desired manner. This is essential in order to establish the boundedness of $\bR_{k+1}^{\ssup{\bq}}$ in the relevant norms we have defined.

A similar issue arises in relation to taking derivatives with respect to the matrix $\bq$ as a parameter in the Gaussian probability measures $\nu_{k+1}^{\ssup{\bq}}$. We therefore also check that the latter operation commutes with differentiation with respect to the field variable, which is needed to show the requisite regularity of $\bR_{k+1}^{\ssup{\bq}}$ as a function of its parameters. To that end, we denote the directional derivative with respect to $\bq$ in the direction $\dot{\bq}$ by
\begin{equation}
D_{\dot{\bq}}\bR_{k+1}^{\ssup{\bq}}[F](X,v)=\frac{\d}{\d t}\int_{\cbO_N}\;F(X,v+\eta)\,\nu_{k+1}^{\ssup{\bq+t\dot{\bq}}}(\d\eta)\Big\rvert_{t=0}.
\end{equation}
\begin{lemma} \label{L:Rreg}
Assume that the requirements of Theorem \ref{T:wf} are met and $\bq\in B_{\kappa}(0)$. For any $F\in M(\Pcal_k,\mathfrak{V}_N)$ and $X\in\Pcal_k$ ($0\le k\le N$) such that $\norm{F(X)}_{k,X}<\infty$:
\begin{enumerate}
\item The image $\bR_{k+1}^{\ssup{\bq}}[F](X)$ is well-defined as an element of $C^{r_0}(\mathfrak{V}_N)$ and $D^s\bR_{k+1}^{\ssup{\bq}}[F](X)=\bR_{k+1}^{\ssup{\bq}}[D^s F](X)$ for all $s\le r_0$. \\
\item Furthermore, the derivative $D_{\dot{\bq}}\bR_{k+1}^{\ssup{\bq}}[F](X)\in C^{r_0}(\mathfrak{V}_N)$ and $D^sD_{\dot{\bq}}\bR_{k+1}^{\ssup{\bq}}[F](X)=D_{\dot{\bq}}D^s\bR_{k+1}^{\ssup{\bq}}[F](X)=D_{\dot{\bq}}\bR_{k+1}^{\ssup{\bq}}[D^s F](X)$ for all $s\le r_0$.
\end{enumerate}
\end{lemma}
\begin{proof}
Fix $F\in M(\Pcal_k,\mathfrak{V}_N)$ and $X\in\Pcal_k$. The argument is elementary once it has been shown that our norms imply suitable bounds on the partial derivatives of $F(X)$. Let $\p^{\alpha}$ stand for the partial derivative represented by the multi-index $\alpha\in\N_0^{\dim\mathfrak{V}_N}$, with $\abs{\alpha}\le r_0$, acting on $C^{r_0}(\mathfrak{V}_N)$ functions. We infer from the definition of the Taylor polynomial norm $\abs{\cdot}_{k,X,v}$ and \eqref{E:dualform} that there is a constant $C$, depending on $d$, $L$, $h$ and $k$, such that
\begin{equation}
\abs{\p^{\alpha}F(X,v)}\le C(d,L,h,k)\abs{F(X)}_{k,X,v}
\end{equation}
holds for all $\abs{\alpha}\le r_0$ and $v\in\mathfrak{V}_N$. We now use the definition of the norm $\norm{\cdot}_{k,X}$ to deduce the bound $\abs{\p^{\alpha}F(X,v)}\le C \bunderline{w}_k^X(v) \norm{F(X)}_{k,X}$. It then follows from Theorem \ref{T:wf}(9) that
\begin{equation}
\int_{\cbO_N}\;\abs{\p^{\alpha}F(X,v+\eta)}\,\nu_{k+1}^{\ssup{\bq}}(\d\eta)\le C \Big(\frac{\Acal_{\Pcal}}{2}\Big)^{\abs{X}_k}\norm{F(X)}_{k,X}\overline{w}_{k:k+1}^X(v)<\infty
\end{equation}
and hence that $\bR_{k+1}^{\ssup{\bq}}[\p^{\alpha}F](X)$ is well-defined point-wise. Moreover, we can use that the weight function $\bunderline{w}_k^X$ is constructed from a non-negative quadratic form on $\mathfrak{V}_N$ (see Appendix \ref{A:C}) to split the integrand as
\begin{equation} \label{E:wfsplit}
\bunderline{w}_k^X(v+\eta)\le \big(\bunderline{w}_k^X(v)\big)^{1+\frac{1}{\rho}}\big(\bunderline{w}_k^X(\eta)\big)^{1+\rho}.
\end{equation}
By continuity of $\bunderline{w}_k^X$, the factor in $v$ is bounded on any bounded neighbourhood of $v$, and, again as a result of Theorem \ref{T:wf}(9), the factor in $\eta$ is an $L^{1+\rho}(\cbO_N,\nu_{k+1}^{\ssup{\bq}})$-integrable function. It follows by dominated convergence that $\bR_{k+1}^{\ssup{\bq}}[\p^{\alpha}F](X)$ is continuous. We consider the derivatives of $\bR_{k+1}^{\ssup{\bq}}[F](X)$ next. Let $v_0$ stand for any unit basis vector in $\mathfrak{V}_N$ and $\p_{v_0}$ for the relevant partial derivative. We form the difference quotient
\begin{multline}
\int_{\cbO_N}\;\frac{F(X,v+tv_0+\eta)-F(X,v+\eta)}{t}\,\nu_{k+1}^{\ssup{\bq}}(\d\eta) \\
= \int_{\cbO_N}\;\int_0^1\;\p_{v_0}F(X,v+stv_0+\eta)\,\d s \, \nu_{k+1}^{\ssup{\bq}}(\d\eta)
\end{multline}
for small $t$, say, $\abs{t}<1$. Relying again on \eqref{E:wfsplit} and that
\begin{equation}
\sup_{s\in [0,1],\abs{t}<1}\big(\bunderline{w}_k^X(v+stv_0)\big)^{1+\frac{1}{\rho}}<\infty,
\end{equation}
we obtain an integrable bound and the desired interchangeability of $\p_{v_0}$ and $\bR_{k+1}^{\ssup{\bq}}$ is deduced by dominated convergence from the continuity of $\p_{v_0}F(X)$. The full claim then follows by induction on the order of derivatives and the continuity of all partial derivatives already established.

The second part of the result is shown similarly. It suffices to consider a single derivative. As before, we consider the appropriate difference quotient
\begin{align}
D(h,t) &= \frac{1}{h}\bigg[\frac{1}{t}\Big(\int_{\cbO_N}\;F(X,v+hv_0+\eta)\,\nu_{k+1}^{\ssup{\bq+t\hat{\bq}}}(\d\eta)-\int_{\cbO_N}\;F(X,v+hv_0+\eta)\,\nu_{k+1}^{\ssup{\bq}}(\d\eta)\Big) \nonumber \\
&\quad\quad\quad\quad -\frac{1}{t}\Big(\int_{\cbO_N}\;F(X,v+\eta)\,\nu_{k+1}^{\ssup{\bq+t\hat{\bq}}}(\d\eta)-\int_{\cbO_N}\;F(X,v+\eta)\,\nu_{k+1}^{\ssup{\bq}}(\d\eta)\Big)\bigg] \nonumber \\
&= \frac{1}{t}\Big(\int_{\cbO_N}\;\int_0^1\;\p_{v_0}F(X,v+shv_0+\eta)\,\d s\,\nu_{k+1}^{\ssup{\bq+t\hat{\bq}}}(\d\eta) \nonumber \\
&\quad\quad\quad\quad - \int_{\cbO_N}\;\int_0^1\;\p_{v_0}F(X,v+shv_0+\eta)\,\d s\,\nu_{k+1}^{\ssup{\bq}}(\d\eta)\Big)
\end{align}
in the limit $t\to 0$ followed by $h\to 0$. We obtain the desired expression for the derivative $\p_{v_0}D_{\dot{\bq}}\bR_{k+1}^{\ssup{\bq}}[F](X,v)$, provided that the $h\to 0$ limit of the expression
\begin{equation}
I(h)=\frac{\d}{\d t}\int_{\cbO_N}\;\int_0^1\;\p_{v_0}F(X,v+shv_0+\eta)-\p_{v_0}F(X,v+\eta)\,\d s\,\nu_{k+1}^{\ssup{\bq+t\hat{\bq}}}(\d\eta)\Big\rvert_{t=0}
\end{equation}
vanishes. Using the bound from Theorem \ref{T:Dqdotbound} and Jensen's inequality, we get the estimate
\begin{multline}
\abs{I(h)}\le C_{\eqref{E:Dqdotbouond}}(p,d,w_0,L,k,X,\dot{\bq}) \times\\
\times\Big(\int_{\cbO_N}\;\int_0^1\; \abs{\p_{v_0}F(X,v+shv_0+\eta)-\p_{v_0}F(X,v+\eta)}^p\,\d s\,\nu_{k+1}^{\ssup{\bq}}(\d\eta)\Big)^{1/p}
\end{multline}
for any $p>1$. Arguing as above, setting $p=1+\frac{\rho}{2}$ and $\gamma=\frac{\rho}{2+\rho}$, we bound this integrand as
\begin{align}
|\p_{v_0}F(X,v+shv_0 &+ \eta)-\p_{v_0}F(X,v+\eta)|^p \nonumber \\
&\le \big(C\norm{F(X)}_{k,X}\big)^p\,\abs{\bunderline{w}_k^X(v+shv_0+\eta)+\bunderline{w}_k^X(v+\eta)}^p  \nonumber \\
&\le C'\,\abs{\big(\bunderline{w}_k^X(v+shv_0)\big)^{1+\frac{1}{\gamma}}+\big(\bunderline{w}_k^X(v)\big)^{1+\frac{1}{\gamma}}}^p\big(\bunderline{w}_k^X(\eta)\big)^{1+\rho},
\end{align}
which is integrable on $\cbO_N\times [0,1]$. It then follows again by dominated convergence and the continuity of $\p_{v_0}F(X)$ that $I(h)\to 0$ and hence that $\p_{v_0}D_{\dot{\bq}}\bR_{k+1}^{\ssup{\bq}}[F](X)=D_{\dot{\bq}}\bR_{k+1}^{\ssup{\bq}}[\p_{v_0}F](X)$. A similar reasoning shows the continuity of $D_{\dot{\bq}}\bR_{k+1}^{\ssup{\bq}}[\p_{v_0}F](X)$ and this completes the proof.
\end{proof}

This result covers the technical gap opened by our need to consider functionals on a larger underlying space. The required bounds and regularity of $\bR_{k+1}^{\ssup{\bq}}$ can now be established as in \cite{ABKM} and hence we omit a detailed proof. We summarise the required features in the following statement.
\begin{lemma} [Lemma 8.4 of \cite{ABKM}] \label{L:Rbound}
Assume that the requirements of Theorem \ref{T:wf} are met, $\bq\in B_{\kappa}(0)$ and $\Acal_{\Pcal}$ is the constant given by Theorem \ref{T:wf}(9). Given $X\in \Pcal_k$ ($0\le k\le N$), for any $F\in M(\Pcal_k,\mathfrak{V}_N)$ with $\norm{F(X)}_{k,X}<\infty$ it holds that
\begin{equation} \label{E:Rbound}
\norm{\bR_{k+1}^{\ssup{\bq}}[F](X)}_{k:k+1,X}\le \Acal_{\Pcal}^{\abs{X}_k}\norm{F(X)}_{k,X}.
\end{equation}
Given $X\in\Pcal_k$ such that $\pi_k(X)\in\Pcal_{k+1}^c$, for $\ell\ge 1$ it further holds that
\begin{equation} \label{E:Rqbound}
\sup_{\norm{\dot{\bq}}\le 1}\norm{D_{\bq}^{\ell}\bR_{k+1}^{\ssup{\bq}}[F](X)(\dot{\bq},\ldots,\dot{\bq})}_{k:k+1,X}\le C_{\ell}(L)\Acal_{\Pcal}^{\abs{X}_k}\norm{F(X)}_{k,X}.
\end{equation}
If $X\in \Bcal_k$, the above bounds hold with $\Acal_{\Pcal}$ replaced by $\Acal_{\Bcal}$.
\end{lemma}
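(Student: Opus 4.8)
The plan is to deduce both estimates from a single pointwise-in-the-field inequality, which is then integrated against $\nu_{k+1}^{\ssup{\bq}}$ using the weight-function integrability provided by Theorem~\ref{T:wf}(9)--(10); for the $\bq$-derivative bound \eqref{E:Rqbound} the diameter estimate of Theorem~\ref{T:Dqdotbound} is fed into the same weight estimate in its $L^p$ form. The interchanges of integration with field- and $\bq$-differentiation that are automatic in the convolution setting of \cite{ABKM} are here supplied by Lemma~\ref{L:Rreg}, and that is the only point at which the failure of $\bR_{k+1}^{\ssup{\bq}}$ to be a genuine convolution enters the argument.

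First I would treat \eqref{E:Rbound}. Fix $X\in\Pcal_k$, $v\in\mathfrak{V}_N$ and a test function $g\in\Phi$ with $\abs{g}_{k,X}\le1$. By Lemma~\ref{L:Rreg} the $r$-th Taylor coefficient of $\bR_{k+1}^{\ssup{\bq}}F(X)$ at $v$ equals $\bR_{k+1}^{\ssup{\bq}}\big(D^rF(X)\big)(v)$; since the pairing \eqref{E:polydual}--\eqref{E:dualform} is a finite sum over $r$, lattice sites and indices, and $\eta\mapsto\overline{w}_k^X(v+\eta)$ is $\nu_{k+1}^{\ssup{\bq}}$-integrable by Theorem~\ref{T:wf}(9), one may move the integral outside it to obtain
\begin{equation}
\big\langle\tay_v \bR_{k+1}^{\ssup{\bq}}F(X),g\big\rangle_{r_0}=\int_{\cbO_N}\;\big\langle\tay_{v+\eta}F(X),g\big\rangle_{r_0}\,\nu_{k+1}^{\ssup{\bq}}(\d\eta).
\end{equation}
Taking absolute values, bounding the integrand by $\abs{F(X)}_{k,X,v+\eta}$, taking the supremum over $g$, and inserting $\abs{F(X)}_{k,X,v+\eta}\le\norm{F(X)}_{k,X}\overline{w}_k^X(v+\eta)$ gives $\abs{\bR_{k+1}^{\ssup{\bq}}F(X)}_{k,X,v}\le\norm{F(X)}_{k,X}\int_{\cbO_N}\overline{w}_k^X(v+\eta)\,\nu_{k+1}^{\ssup{\bq}}(\d\eta)$. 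Theorem~\ref{T:wf}(9) with $\overline{\rho}=0$ bounds the integral by $(\Acal_{\Pcal}/2)^{\abs{X}_k}\overline{w}_{k:k+1}^X(v)\le\Acal_{\Pcal}^{\abs{X}_k}\overline{w}_{k:k+1}^X(v)$; dividing by $\overline{w}_{k:k+1}^X(v)$ and taking the supremum over $v$ yields \eqref{E:Rbound}. For $X\in\Bcal_k$ one invokes Theorem~\ref{T:wf}(10) and $\abs{X}_k=1$ to get the same estimate with $\Acal_{\Bcal}$.

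For \eqref{E:Rqbound} I would run the same skeleton, now using Lemma~\ref{L:Rreg} also for the commutation of $\bq$-derivatives with field-derivatives and with $\bR_{k+1}^{\ssup{\bq}}$. Writing $G_g(\eta)=\big\langle\tay_{v+\eta}F(X),g\big\rangle_{r_0}$, which by locality of $F(X)$ depends only on the values of $\eta$ in $X^*$, the same finite-sum interchange gives
\begin{equation}
\big\langle\tay_v D_{\bq}^l\bR_{k+1}^{\ssup{\bq}}F(X)(\dot{\bq},\ldots,\dot{\bq}),g\big\rangle_{r_0}=\frac{\d^l}{\d t^l}\Big\rvert_{t=0}\int_{\cbO_N}\;G_g(\eta)\,\nu_{k+1}^{\ssup{\bq+t\dot{\bq}}}(\d\eta).
\end{equation}
The covariance generating matrix of $\nu_{k+1}^{\ssup{\bq}}$ depends affinely on $\bq$ (through $\bQ_V-\bq$) and, for $\bq\in B_\kappa(0)$, lies in the class admitted by Theorem~6.1 of \cite{ABKM} (cf.\ the end of Section~\ref{S5:fr}), so Theorem~\ref{T:Dqdotbound} applies to $G_g$ on the set $X^*$, whose $\abs{\cdot}_\infty$-diameter is at least $L^k$ because $R\ge1$. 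Fixing $p=1+\overline{\rho}$ with some $\overline{\rho}\in(0,\rho]$, it bounds the right-hand side by $C_l(L)\big((\diam_\infty(X^*)+1)L^{-k}\big)^{dl/2}\norm{\dot{\bq}}^l\norm{G_g}_{L^p(\nu_{k+1}^{\ssup{\bq}})}$, and the $g$-uniform bound $\abs{G_g(\eta)}\le\norm{F(X)}_{k,X}\overline{w}_k^X(v+\eta)$ together with Theorem~\ref{T:wf}(9) controls $\norm{G_g}_{L^p(\nu_{k+1}^{\ssup{\bq}})}$ by $\norm{F(X)}_{k,X}(\Acal_{\Pcal}/2)^{\abs{X}_k}\overline{w}_{k:k+1}^X(v)$.

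The one quantity that then needs attention is the geometric prefactor $\big((\diam_\infty(X^*)+1)L^{-k}\big)^{dl/2}$. Using $X^*\subset\pi(X)^*$, the explicit neighbourhoods of Section~\ref{S4:geo}, connectedness of $\pi(X)$, and $\abs{\pi(X)}_{k+1}\le\abs{X}_k$, I would check that $\diam_\infty(X^*)\le C(L)\abs{X}_k L^k$, so the prefactor is at most $\big(C(L)\abs{X}_k\big)^{dl/2}$, i.e.\ polynomial in $\abs{X}_k$; since $\big(C(L)n\big)^{dl/2}2^{-n}$ is bounded uniformly in $n\ge1$, this is absorbed into a fraction of the exponential gain $(\Acal_{\Pcal}/2)^{\abs{X}_k}$, leaving $C_l(L)\Acal_{\Pcal}^{\abs{X}_k}$. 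Taking suprema over $g$, $\dot{\bq}$ and $v$ (and dividing by $\overline{w}_{k:k+1}^X(v)$) then gives \eqref{E:Rqbound}; for $X\in\Bcal_k$ the prefactor is $\le C_l(L)$ and Theorem~\ref{T:wf}(10) delivers the block version with $\Acal_{\Bcal}$. I expect the main obstacle to be purely organisational: ensuring that every limit/integral interchange is covered by Lemma~\ref{L:Rreg}, and that each $\abs{X}_k$-dependent prefactor coming from Theorem~\ref{T:Dqdotbound} is genuinely dominated by the exponential weight gain.
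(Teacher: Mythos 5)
Your proposal is correct and follows essentially the same route as the paper's proof: Lemma \ref{L:Rreg} to commute the Taylor expansion (and the $\bq$-derivative) with the integration, Theorem \ref{T:wf}(9)--(10) to integrate the weight $\overline{w}_k^X(v+\eta)$, Theorem \ref{T:Dqdotbound} in $L^p$ form with $p=1+\rho$ for the $\bq$-derivatives, the bound $\diam_\infty(X^*)\le 2L^{k+1}\abs{X}_k$ via $X^*\subset\pi_k(X)^*$, and absorption of the resulting polynomial prefactor into the factor $2^{-\abs{X}_k}$ from $(\Acal_{\Pcal}/2)^{\abs{X}_k}$. No substantive differences.
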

These properties of $\bR_{k+1}^{\ssup{\bq}}$ are a stepping stone towards verifying that our definitions in section \ref{S4:RGmap} are consistent and ensure that key properties of our functionals are preserved by the renormalisation group flow. We consider the other component of $\bT_k^{\ssup{\bq}}$, the projection $\Pi_2$, next.

\subsection{Step 4: Estimates for the projection $\Pi_2$} \label{S5:Pi2}

Two features of the projection map $\Pi_2$ defined in section \ref{S4:RGmap} are essential for our subsequent analysis. Since our conventions for relevant Hamiltonians in the vector field setting differ slightly from \cite{ABKM}, we include a proof for clarity. First, we verify that $\Pi_2$ is bounded in the appropriate norms. This is needed, in particular, to show that the image under $\bT_k^{\ssup{\bq}}$ is well-defined as an element of $\bM_0(\Bcal_{k+1},\mathfrak{V}_N)\times\bM(\Pcal_{k+1}^c,\mathfrak{V}_N)$ as asserted previously.
\begin{lemma} [Lemma 8.7 of \cite{ABKM}] \label{L:Pi2bound}
For $L\ge 2^d+R$, there is a constant $C$ (depending only on $d$) such that
\begin{equation} \label{E:Pi2bound}
\norm{\Pi_2(K)(B)}_{k,0}\le C(d)\abs{K(B)}_{k,B,0}
\end{equation}
holds for all $0\le k\le N-1$, $K\in M(\Pcal_k^c,\mathfrak{V}_N)$ and $B\in\Bcal_k$. 
\end{lemma}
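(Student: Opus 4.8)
The plan is to estimate each of the three families of coefficients $\l$, $\ba$ and $\bd$ that define $H = \Pi_2(K)(B)$ separately, using the explicit formulae derived during the construction of $\Pi_2$ in Section~\ref{S4:RGmap}, and then sum the contributions to bound $\norm{H}_{k,0}$. Recall that $\norm{H}_{k,0} = L^{dk}\abs{\l} + \sum_{m=(i,\a)\in\mathfrak{m}} h_k L^{k\frac{d-2\abs{\a}}{2}}\abs{a_m} + \frac12\sum_{i,j} h_k^2 \abs{d_{i,j}}$, so the task reduces to showing each term is bounded by a dimensional constant times $\abs{K(B)}_{k,B,0}$.

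The first step is the constant and quadratic terms, which are immediate. We have $\l = L^{-kd}K(B)(0) = L^{-kd}\,\mathrm{Tay}_0^{(0)}K(B)$, so $L^{dk}\abs{\l} = \abs{K(B)(0)}$, and this is bounded by $\abs{\mathrm{Tay}_0 K(B)}_{k,B,0}$ tested against the constant test function $g^{(0)}=1$, which has $\abs{g}_{k,B}\le 1$; hence $L^{dk}\abs{\l}\le \abs{K(B)}_{k,B,0}$. For the quadratic part, $d_{i,j} = L^{-dk}\sum_{x,y\in B^+} D^2K(B)(0)_{i,j}(x,y)$; here I would test $\mathrm{Tay}_0^{(2)}K(B)$ against the rank-one tensor built from $b_{i,0}\otimes b_{j,0}$ (symmetrised), noting that $b_{0}(x)\equiv 1$ on $B^+$ so this test tensor has all discrete derivatives of order $\ge 1$ vanishing and sup-norm $1$, whence $\abs{g^{(2)}}_{k,B}\le \mathfrak{w}_k(0)^{-2} = h_k^{-2} L^{k(d+0)}$ — wait, more carefully, $\mathfrak{w}_j(\a)=h_j L^{-j\frac{d+2\abs{\a}}{2}}$ so $\mathfrak{w}_k(0)^{-2}=h_k^{-2}L^{kd}$. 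Thus $\abs{\sum_{x,y}D^2K(B)(0)_{i,j}(x,y)}\le h_k^{-2}L^{kd}\abs{K(B)}_{k,B,0}$ up to the combinatorial factor $2!$ from the dual pairing normalisation, giving $h_k^2\abs{d_{i,j}}\le C\abs{K(B)}_{k,B,0}$; summing over the $O(d^2)$ index pairs absorbs into $C$.

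The linear terms require the extra work and will be the main obstacle. The coefficients $a_{i,\a}$ are not read off directly but obtained by inverting the lower-triangular system \eqref{E:Pi2linear}, whose matrix has diagonal entries $L^{dk}$; one must therefore control the off-diagonal entries $\sum_{x\in B}b_{\a'-\a}(x)$ and propagate the bound through back-substitution. The key point is that $B$ is a cube of side length $L^k$, so for a multi-index $\gamma=\a'-\a$ with $\abs{\gamma}\le\floor{d/2}$ one has $\abs{\sum_{x\in B}b_\gamma(x)}\le C_d L^{k(d+\abs{\gamma})}$ (each factor $\binom{x_i}{\gamma_i}$ is $O(L^{k\gamma_i})$ and there are $L^{kd}$ lattice points); meanwhile the right-hand side $\sum_{x\in B^+}DK(B)(0)_i(x)b_{\a'}(x)$ is, up to the $1!$ normalisation, bounded by $\abs{K(B)}_{k,B,0}$ tested against the test function $g^{(1)}_{j}(x)=b_{i,\a'}(j,x)\1_{x\in B^+}$, whose injective norm $\abs{g^{(1)}}_{k,B}$ is dominated by $\sup_{\abs{\b}\le\floor{d/2}+1}\mathfrak{w}_k(\b)^{-1}\abs{\nabla^\b b_{\a'}}$; since $\nabla^\b b_{\a'}$ is a polynomial of degree $\abs{\a'}-\abs{\b}$ (or zero), evaluated on a block of side $3L^k$ one gets $\abs{\nabla^\b b_{\a'}(x)}\le C_d L^{k(\abs{\a'}-\abs{\b})}$ when $\abs{\b}\le\abs{\a'}$, and this matches $\mathfrak{w}_k(\b)= h_k L^{-k\frac{d+2\abs{\b}}{2}}$ to yield $\abs{g^{(1)}}_{k,B}\le C_d h_k^{-1} L^{k\frac{d}{2}+k\abs{\a'}}$. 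Dividing the system by $L^{dk}$ and solving recursively on $\abs{\a'}$ (lowest order first), an induction on $\abs{\a'}$ shows $\abs{a_{i,\a'}}\le C_d h_k^{-1}L^{-k\frac{d}{2}-k\abs{\a'}}\cdot(\text{wait: we need } h_k^{-1}L^{-k\frac{d-2\abs{\a'}}{2}}) $ — the precise bookkeeping is that $h_k L^{k\frac{d-2\abs{\a'}}{2}}\abs{a_{i,\a'}}\le C_d\abs{K(B)}_{k,B,0}$, with the inductive hypothesis controlling the contributions of the lower-order $a_{i,\a}$ that appear in the off-diagonal sum. The delicate accounting is checking that the powers of $L^k$ and $h_k$ carried by the off-diagonal coefficients, the lower-order $a$'s (by induction) and the weight $h_k L^{k\frac{d-2\abs{\a'}}{2}}$ combine to exactly cancel, leaving only a $d$-dependent constant; this is a finite triangular computation and the constant $C$ in the statement can be taken as the resulting maximum over the finitely many multi-indices with $\abs{\a'}\le\floor{d/2}$. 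Finally, adding the constant, linear and quadratic contributions and absorbing all the $d$-dependent constants into a single $C$ gives \eqref{E:Pi2bound}; the constraint $L\ge 2^d+R$ enters only through the earlier observation that $B^+$ does not wrap around $\mathbb{T}_N$, so the identification of $B^+$ with a genuine cube in $\Z^d$ is legitimate and the polynomial estimates apply.
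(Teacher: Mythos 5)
Your proposal is correct and follows essentially the same route as the paper: the constant and quadratic coefficients are read off by pairing $\mathrm{Tay}_0K(B)$ with the basis tensors $1$ and $b_{i,0}\otimes b_{j,0}+b_{j,0}\otimes b_{i,0}$, and the linear coefficients are obtained by inverting the lower-triangular system \eqref{E:Pi2linear}, your back-substitution induction on $\abs{\a'}$ being exactly the paper's bound on $\norm{\bA^{-1}}$ for the rescaled triangular matrix, with matching powers of $L^k$ and $h_k$. The one step you leave implicit is that the polynomial estimates on $b_\gamma$ over $B$ and $B^+$ require normalising $0\in B$, which is legitimate (and is invoked explicitly in the paper) because the coefficients were shown to be translation-invariant.
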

\begin{proof}
We consider in turn each of the components of $\Pi_2(K(B))$ using the notation introduced in \eqref{E:Hparas}. The constant term is straightforward. By definition, we have $\lambda=L^{-dk}K(B,0)$ and so the bound $L^{dk}\abs{\lambda}\le \abs{K(B,0)}$ certainly holds. For the quadratic terms, recall that the coefficient $d_{i,j}$ is determined by reference to the dual pairing
\begin{equation}
\text{Tay}_0^{(2)}K(B)\big(b_{i,0}\otimes b_{j,0}+b_{j,0}\otimes b_{i,0}\big).
\end{equation}
Using the shorthand notation
\begin{equation}
\abs{K(B)}_{k,B,0}^{(2)}=\sup\{\text{Tay}_0^{(2)}K(B)(g^{(2)}):g^{(2)}\in\mathfrak{V}_N^{\otimes 2},\abs{g^{(2)}}_{k,B}\le 1\},
\end{equation}
we therefore find an upper bound
\begin{equation}
\abs{d_{i,j}}\le \abs{b_{i,0}\otimes b_{j,0}+b_{j,0}\otimes b_{i,0}}_{k,B}\abs{K(B)}_{k,B,0}^{(2)}
\end{equation}
in terms of the second-order component of $\abs{K(B)}_{k,B,0}$. We find that $\abs{b_{i,0}\otimes b_{j,0}+b_{j,0}\otimes b_{i,0}}_{k,B}=(1+\delta_{i,j})L^{dk}h_k^{-2}$ by inserting the relevant definitions and so we obtain the estimate
\begin{equation}
\frac{1}{2}\sum_{i,j=1}^dh_k^2\abs{d_{i,j}}\le \frac{d(d+1)}{2}\abs{K(B)}_{k,B,0}^{(2)}.
\end{equation}
Finally, we address the linear terms. The coefficients $a_m$ are given as the solution of the system \eqref{E:Pi2linear} for each direction $0\le i\le d$. Its dimension, denoted by $D$ in the following, is a combinatorial constant depending only on $d$. After rescaling, this system can be represented as
\begin{equation}
\sum_{0\le\abs{\a}\le\floor{d/2}}\bA_{\a',\a}L^{-k\abs{\a}}a_{i,\a}=\text{Tay}_0^{(1)}K(B)(b_{i,\a'})L^{-k\frac{d+2\abs{\a'}}{2}},
\end{equation}
where the matrix $\bA$ has entries $\bA_{\a',\a}=L^{-k\frac{d+2\abs{\a'}-\abs{\a}}{2}}\sum_{x\in B}b_{\a'-\a}(x)$. As observed in the construction of $\Pi_2$, $\bA$ is triangular with (rescaled) eigenvalue $L^{\frac{dk}{2}}$. It follows that we can bound the $a_{i,\a}$ coefficients (for each $1\le i\le d$) as
\begin{align}
L^{-k\abs{\a}}\abs{a_{i,\a}} &\le \norm{\bA^{-1}} D^{1/2} \max_{0\le \abs{\a'}\le \floor{d/2}} L^{-k\frac{d+2\abs{\a'}}{2}}\abs{\text{Tay}_0^{(1)}K(B)(b_{i,\a'})} \nonumber \\
&\le L^{-\frac{dk}{2}}D^{1/2}\abs{K(B)}_{k,B,0}^{(1)}\max_{0\le \abs{\a'}\le \floor{d/2}} L^{-k\frac{d+2\abs{\a'}}{2}} \abs{b_{i,\a'}}_{k,B} \nonumber \\
&\le h_k^{-1}L^{-\frac{dk}{2}}D^{1/2}\abs{K(B)}_{k,B,0}^{(1)} \max_{0\le \abs{\a'}\le \floor{d/2}} \max_{\a''\le\a'}\sup_{x\in B^*} L^{k(\abs{\a''}-\abs{\a'}}\abs{b_{\a'-\a''}(x)}.
\end{align}
By translation invariance, we may insist that $0\in B\subset \Lambda_N$. We then see that $\sup_{x\in B^*} L^{k(\abs{\a''}-\abs{\a'}}\abs{b_{\a'-\a''}(x)}$ is bounded (uniformly in $k$) depending only on $d$ and $R$. Accordingly, we have that
\begin{equation}
\sum_{m=(i,\a)\in\mathfrak{m}}h_k L^{k\frac{d-2\abs{\a}}{2}}\abs{a_m}\le C \abs{K(B)}_{k,B,0}^{(1)}  
\end{equation}
for some (dimension dependent) constant $C$. By summing these three bounds we then obtain the claim in \eqref{E:Pi2bound}.
\end{proof}

The second property of $\Pi_2$ we check is crucial to the proof that the linearisation of $\bT_k^{\ssup{\bq}}$ contracts in the second coordinate (i.e. $K_{k+1}$). For this, we want that $(1-\Pi_2)(K_k)$ is small when measured at the next scale $(k+1)$ relative to the size of $K_k$ at scale $k$.
\begin{lemma} [Lemma 8.9 of \cite{ABKM}] \label{L:Pi2contract}
For $L\ge 2^d+R$, there is a (different) constant $C$ (depending only on $d$) such that
\begin{equation} \label{E:Pi2contr}
\abs{(1-\Pi_2)(K)(B)}_{k+1,B,0}\le C(d)L^{-(d/2+\floor{d/2}+1)}\abs{K(B)}_{k,B,0}
\end{equation}
holds for all $0\le k\le N-1$, $K\in M(\Pcal_k^c,\mathfrak{V}_N)$ and $B\in\Bcal_k$. 
\end{lemma}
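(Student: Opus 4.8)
The plan is to estimate $\abs{(1-\Pi_2)(K)(B)}_{k+1,B,0}$ directly from the dual description of the norm,
\[
\abs{(1-\Pi_2)(K)(B)}_{k+1,B,0}=\sup\big\{\langle\text{Tay}_0\big((1-\Pi_2)K\big)(B),g\rangle_{r_0}:g\in\Phi,\ \abs{g}_{k+1,B}\le1\big\},
\]
bounding the pairing separately for each order $r$ of $g=(g^{(0)},\dots,g^{(r_0)})$. By the defining property \eqref{E:Pi2condition} of $\Pi_2$ (see also \eqref{E:Hparas}--\eqref{E:Pi2linear}), the polynomial $\text{Tay}_0\big((1-\Pi_2)K\big)(B)$ has vanishing order-$0$ part, its order-$1$ part annihilates every vector field whose restriction to $B^+$ is a polynomial of degree $\le\floor{d/2}$ (that is, $\mathscr{P}_1$), its order-$2$ part annihilates constant tensors (that is, $\mathscr{P}_2$), and its parts of order $r\ge3$ coincide with those of $\text{Tay}_0 K(B)$. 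I would also record two auxiliary facts at the outset: the scale-change identity for the test-function weights, $\mathfrak{w}_{k+1}(\a)=2\,L^{-(d+2\abs{\a})/2}\mathfrak{w}_k(\a)$, from which $\abs{g^{(r)}}_{k,B}\le 2^{r}L^{-rd/2}\abs{g^{(r)}}_{k+1,B}$; and the comparison $\abs{(1-\Pi_2)K(B)}_{k,B,0}\le C(d)\abs{K(B)}_{k,B,0}$, immediate from Lemma~\ref{L:Pi2bound} and the elementary equivalence of $\norm{\cdot}_{k,0}$ with $\abs{\cdot}_{k,B,0}$ on $\bM_0(\Bcal_k,\mathfrak{V}_N)$.

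The contributions of order $r\ge2$ are then easy. For $r\ge3$ the pairing is $\langle\text{Tay}_0^{(r)}K(B),g^{(r)}\rangle$, bounded by $\abs{K(B)}_{k,B,0}\abs{g^{(r)}}_{k,B}\le 2^{r}L^{-rd/2}\abs{K(B)}_{k,B,0}$; summing the (convergent, for $L$ large) geometric series over $3\le r\le r_0$ gives a bound $\le C L^{-3d/2}\abs{K(B)}_{k,B,0}$, which is of the required order since $3d/2\ge d/2+\floor{d/2}+1$ for every $d\ge2$. For $r=2$, I would use the annihilation of constant tensors to replace $g^{(2)}$ by $g^{(2)}$ minus its value at a fixed reference pair of points in $B^+$; that difference is controlled by a single discrete spatial difference together with $\diam_\infty(B^+)\le c_d L^k$, and a short weight computation yields a contribution $\le C L^{-(d+1)}\abs{(1-\Pi_2)K(B)}_{k,B,0}\le C L^{-(d+1)}\abs{K(B)}_{k,B,0}$, again of the required order because $d+1\ge d/2+\floor{d/2}+1$.

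The remaining and genuinely load-bearing term is $r=1$. Here the order-$1$ part of $\text{Tay}_0\big((1-\Pi_2)K\big)(B)$ is supported in $B^*\subset B^+$ and annihilates $\mathscr{P}_1$, so I would subtract from $g^{(1)}$ its degree-$\le\floor{d/2}$ discrete Taylor polynomial about the centre $x_0$ of $B$, $\tau g^{(1)}(x)=\sum_{\abs{\a}\le\floor{d/2}}\nabla^{\a}g^{(1)}(x_0)\,b_{\a}(x-x_0)$; since $L\ge 2^d+R$ the block $B^+$ does not wrap around $\mathbb{T}_N$, $\tau g^{(1)}$ is a genuine polynomial whose restriction to $B^+$ lies in $\mathscr{P}_1$, and the pairing is unchanged on replacing $g^{(1)}$ by the remainder $s:=g^{(1)}-\tau g^{(1)}$. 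The discrete Taylor remainder estimate gives, for all $\abs{\b}\le\floor{d/2}+1$ and $x\in B^*$,
\[
\abs{\nabla^{\b}s_i(x)}\le C(d)\,\big(c_d L^{k}\big)^{\floor{d/2}+1-\abs{\b}}\sup_{\abs{\a}=\floor{d/2}+1}\abs{\nabla^{\a}g_i^{(1)}},
\]
where we use that our fixed range $R$ satisfies $R\ge\floor{d/2}+1$, so the differences involved stay inside $B^+$; since $\abs{g}_{k+1,B}\le1$ bounds the last supremum by $\mathfrak{w}_{k+1}(\a')$ with $\abs{\a'}=\floor{d/2}+1$, the elementary identity
\[
\mathfrak{w}_k(\b)^{-1}\,\mathfrak{w}_{k+1}(\a')\,\big(c_d L^{k}\big)^{\floor{d/2}+1-\abs{\b}}=2\,c_d^{\,\floor{d/2}+1-\abs{\b}}\,L^{-(d/2+\floor{d/2}+1)}
\]
(checked by expanding the exponents of $L$) yields $\abs{s}_{k,B}\le C(d)\,L^{-(d/2+\floor{d/2}+1)}$ and hence an order-$1$ contribution $\le\abs{(1-\Pi_2)K(B)}_{k,B,0}\,\abs{s}_{k,B}\le C(d)\,L^{-(d/2+\floor{d/2}+1)}\abs{K(B)}_{k,B,0}$. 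Adding the three contributions, with the $r=1$ term dominating, gives the asserted bound, with $C$ depending only on $d$ (and the fixed parameters); the case $k=0$ is identical with $L^{0}=1$ in place of $L^{k}$. The main obstacle is exactly this $r=1$ estimate: setting up the discrete Taylor polynomial on $B^+\subset\Z^d$ so that it truly lies in $\mathscr{P}_1$ while its defect obeys the remainder bound with a purely $d$-dependent constant, and then matching the $\floor{d/2}+1$ powers of $\diam_\infty(B^+)\sim L^k$ against the scale-change weight ratio to land precisely on the exponent $d/2+\floor{d/2}+1$ — the signature ``one derivative less than in \cite{ABKM}'' bookkeeping of the gradient-field setting.
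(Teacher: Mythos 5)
Your proposal is correct and follows essentially the same route as the paper: decompose $\text{Tay}_0\big((1-\Pi_2)K\big)(B)$ by order, use the scale-change of the weights $\mathfrak{w}_j$ to absorb the orders $r\ge 3$ into $L^{-3d/2}$, and for $r=1,2$ exploit the annihilation of $\mathscr{P}_1$ and $\mathscr{P}_2$ to replace the test function by a (discrete Taylor) polynomial-approximation remainder, combined with the bound $\abs{(1-\Pi_2)K(B)}_{k,B,0}\le C\abs{K(B)}_{k,B,0}$ from Lemma~\ref{L:Pi2bound}. The only difference is that you sketch the derivation of the polynomial approximation estimates (the analogues of \eqref{E:P1bound}--\eqref{E:P2bound}), which the paper imports from Lemma 8.10 of \cite{ABKM}; your exponent bookkeeping for the $r=1$ term matches the paper's.
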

\begin{proof}
The proof of Lemma 8.9 of \cite{ABKM} uses discrete Taylor remainder estimates to bound the difference between a $\Z^d$ function and its $r$-th Taylor polynomial $\abs{f(z)-\text{Tay}_a^rf(z)}$ (and derivatives of this difference up to order $r$) in terms of the discrete derivatives of $f$ of $(r+1)$-st order. By passing to field norms of the next scale $(k+1)$ this yields a tightening in the bounds of the order of $L^{-(d+1)}$. We state the precise estimates, corresponding to Lemma 8.10 of \cite{ABKM}, below. There exists a constant $C$ such that, for all $L\ge 2^d+R$, the bounds
\begin{align}
\min_{P\in\Pcal_1}\abs{v-P}_{k,B} &\le CL^{-(d/2+\floor{d/2}+1)}\abs{v}_{k+1,B};\text{ and} \label{E:P1bound}\\
\min_{P\in\Pcal_2}\abs{Sg-P}_{k,B} &\le CL^{-(d+1)}\abs{g}_{k+1,B} \label{E:P2bound}
\end{align}
hold for all $v\in\mathfrak{V}_N$ and $g\in\mathfrak{V}_N^{\otimes 2}$ respectively. The symbol $S$ stands here for the symmetrisation operator $S v\otimes w=\frac{1}{2}(v\otimes w + w\otimes v)$. These inequalities are derived exactly as in \cite{ABKM}, once the difference in the highest order of derivatives in $\abs{\cdot}_{k,B}$ is accounted for. We therefore do not repeat the proof here.

By construction, $\Pi_2(K)(B)$ is a quadratic polynomial and so $\text{Tay}_0^{(r)}\big[(1-\Pi_2)(K)(B)\big]=\text{Tay}_0^{(r)}K(B)$ for all $r\ge 3$. Observing further that, for such orders $r\ge 3$, the straightforward estimate
\begin{equation}
\abs{g}_{k,B}\le 8L^{-\frac{3}{2}d}\abs{g}_{k+1,B}
\end{equation}
is available for all $g\in\mathfrak{V}_N^{\otimes r}$, we deduce that
\begin{equation}
\abs{\text{Tay}_0^{(r)}\big[(1-\Pi_2)(K)(B)\big](g)}\le \abs{K(B)}_{k,B,0}\abs{g}_{k,B}\le 8L^{-\frac{3}{2}d}\abs{K(B)}_{k,B,0}\abs{g}_{k+1,B}
\end{equation}
holds for all $g\in\mathfrak{V}_N^{\otimes r}$. The constant term is trivial and so it remains to consider the linear and quadratic terms. By definition of $\Pi_2$, we have the equality
\begin{equation}
\text{Tay}_0^{(1)}\big[(1-\Pi_2)(K)(B)\big](v)=\text{Tay}_0^{(1)}\big[(1-\Pi_2)(K)(B)\big](v-P)
\end{equation}
for all $v\in\mathfrak{V}_N$ and $P\in\Pcal_1$. The bound $\abs{H(B)}_{k,B,0}\le \norm{H}_{k,0}$, where $H$ is any relevant Hamiltonian, follows from the definitions of the two norms. With its help, we obtain the estimate
\begin{multline}
\abs{\text{Tay}_0^{(1)}\big[(1-\Pi_2)(K)(B)\big](v)}\le \abs{(1-\Pi_2)(K)(B)}_{k,B,0}\min_{P\in\Pcal_1}\abs{v-P}_{k,B} \\ \le C L^{-(d/2+\floor{d/2}+1)} \abs{K(B)}_{k,B,0}\abs{v}_{k+1,B},
\end{multline}
from Lemma \ref{L:Pi2bound} and \eqref{E:P1bound} above, where $C$ depends on the constants appearing in those results. Since the dual pairing $\text{Tay}_0^{(2)}\big[(1-\Pi_2)(K)(B)\big](g)$ for $g\in\mathfrak{V}_N^{\otimes 2}$ only depends on the symmetric part $Sg$, we can argue similarly for the quadratic terms to see that
\begin{multline}
\abs{\text{Tay}_0^{(2)}\big[(1-\Pi_2)(K)(B)\big](g)}\le \abs{(1-\Pi_2)(K)(B)}_{k,B,0}\min_{P\in\Pcal_2}\abs{Sg-P}_{k,B} \\ \le C L^{-(d+1)} \abs{K(B)}_{k,B,0}\abs{g}_{k+1,B},
\end{multline}
for all $g\in\mathfrak{V}_N^{\otimes 2}$. The claim then follows from the definition of $\abs{K(B)}_{k+1,B,0}$ by taking the widest of the individual bounds shown above. 
\end{proof}
We are now in a position to show that our definition of the renormalisation map $\bT_k$ in section \ref{S4:RGmap} is consistent and achieves what we require of it.

\subsection{Step 5: Definition and properties of the renormalisation map} \label{S5:Tk}

We already showed that the relevant Hamiltonian component of the image under $\bT_k^{\ssup{\bq}}$ is well-defined as a functional in $\bM_0(\Bcal_{k+1},\mathfrak{V}_N)$ pursuant to its construction in section \ref{S4:RGmap}. It remains to verify the definition of the other component. Since this relies on the boundedness of various operators that appear in the definition of $K_{k+1}$, it is convenient to treat this matter as part of the main statement on the smoothness of $\bT_k$, which is required for our overall strategy.

\begin{theorem} [Theorem 6.7 of \cite{ABKM}] \label{T:smooth}
Let $L_0=\max\{2^{d+3}+16R,4d(2^d+R)\}$. For every $L\ge L_0$, there are $h_0$, $A_0$ and $\kappa$, depending only on $L$ and the fixed parameters, with the property that, for any $h\ge h_0$ and $A\ge A_0$, there exists a $\rho(A)$ such that the map $\boldsymbol{S}_k:\Ucal_{k,\rho,\kappa}\to \bM(\Pcal_{k+1}^c,\mathfrak{V}_N)$ is $C^\infty$ and there are constants $C_{i_1,i_2,i_3}(A,L)$ such that
\begin{equation}
\norm{D_1^{i_1}D_2^{i_2}D_3^{i_3}\boldsymbol{S}_k(H_k,K_k,\bq)(\dot{H}^{i_1},\dot{K}^{i_2},\dot{\bq}^{i_3})}_{k+1}^{(A)}\le C_{i_1,i_2,i_3}(A,L)\norm{\dot{H}}_{k,0}^{i_1}\big(\norm{\dot{K}}_k^{(A)}\big)^{i_2}\norm{\dot{\bq}}^{i_3}
\end{equation}
for any $(H_k,K_k,\bq)\in \Ucal_{k,\rho,\kappa}$ and $i_1,i_2,i_3\ge 0$. 
\end{theorem}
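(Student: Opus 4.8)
The plan is to follow the proof of Theorem 6.7 of \cite{ABKM} (itself modelled on the smoothness analysis of \cite{AKM16}), reading $\boldsymbol{S}_k$ as a finite composition of elementary operations, each of which we show to be $C^\infty$ with polynomial bounds on its derivatives in the Banach spaces of Section \ref{S4:Norms}. Unravelling \eqref{E:Htildedef} and \eqref{E:K'}, these operations are: (i) the affine map $(H_k,K_k,\bq)\mapsto \widetilde H_k = \bR_{k+1}^{\ssup{\bq}}H_k - \Pi_2\big(\bR_{k+1}^{\ssup{\bq}}K_k\big)$ into $\bM_0(\Bcal_k,\mathfrak{V}_N)$; (ii) the exponential maps $H_k\mapsto I_k=e^{-H_k}$ and $\widetilde H_k\mapsto \widetilde I_k=e^{-\widetilde H_k}$, $\widetilde I_k^{-1}=e^{+\widetilde H_k}$, into $\bM(\Bcal_k,\mathfrak{V}_N)$; (iii) the two circle products and the block product assembling $\widetilde K_k$ together with the prefactors $\widetilde I(U\setminus X,v)\,\widetilde I^{-1}(X\setminus U,v)$; (iv) the integration $\bR_{k+1}^{\ssup{\bq}}$ of the $\eta$-dependent part of $\widetilde K_k(X,v,\eta)$; and (v) the finite sum over $X\in\Pcal_k$ with $\pi_k(X)=U$. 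Smoothness and bounds for (i) and (iv) are supplied by Lemmas \ref{L:Rreg}, \ref{L:Rbound} and \ref{L:Pi2bound}; in particular these give $\norm{\widetilde H_k}_{k,0}\lesssim \norm{H_k}_{k,0}+\norm{K_k}_k^{(A)}$, so that $\widetilde H_k$ is small on $\Ucal_{k,\rho,\kappa}$, while the contraction estimate of Lemma \ref{L:Pi2contract} furnishes the additional smallness at scale $k+1$ that drives the contraction of $\boldsymbol{S}_k$ in the $K$-direction exploited later.

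The technical core is a short sequence of estimates. First one establishes the \emph{exponential-map estimate}: for $\norm{H}_{k,0}<\rho$ small, $\tnorm{e^{\pm H(B)}}_k$ and $\tnorm{e^{\pm H(B)}-\one}_k$ are bounded by a constant times $\norm{H}_{k,0}$ (for the latter), with real-analytic dependence on $H$; the point is that a relevant Hamiltonian equals its own degree-two Taylor polynomial at $0$, so that the exponential of its quadratic part is absorbed by the strong weight function $\overline{W}_k$ — this is precisely what the construction in Appendix \ref{A:B} and the choice of $\lambda$ in Theorem \ref{T:wf} are engineered to guarantee — while the linear and constant parts contribute only a bounded multiplicative factor. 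Second, the sub-multiplicativity bounds of Lemma \ref{L:norms}, combined with the weight-function identities of Theorem \ref{T:wf}(3)--(8), make the circle products and the block product continuous polynomial maps; the reblocking counting of Section \ref{S5:geo} bounds the number of $X\in\Pcal_k$ with $\pi_k(X)=U$ by $C(L,d)^{\abs{U}_{k+1}}$, so that, after multiplying by $A^{\abs{U}_{k+1}}$, the global weak norm of the sum \eqref{E:K'} is finite provided $A\ge A_0(L)$ is large and $\rho=\rho(A)$ is small enough that the $(1-\widetilde I_k)$ and $(I_k-\one)$ factors each carry a power of $\rho$ beating the combinatorial factor. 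Third, since $\bq$ enters $\boldsymbol{S}_k$ only through $\bR_{k+1}^{\ssup{\bq}}$ (inside $\widetilde H_k$) and through the measure $\nu_{k+1}^{\ssup{\bq}}$ in the outer integral, the $\bq$-derivatives are handled by the bound \eqref{E:Rqbound} of Lemma \ref{L:Rbound}, with Lemma \ref{L:Rreg} ensuring that these differentiations commute with the field-derivatives defining the norms. Assembling (i)--(v) by the chain and product rules, and noting that each factor is bounded in its strong or weak norm, then yields the asserted estimates with constants $C_{i_1,i_2,i_3}(A,L)$.

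I expect the main obstacle to be the simultaneous \emph{consistency of the parameter choices}: $A$ must be large to absorb the reblocking combinatorics, yet $\rho=\rho(A)$ must then be small enough that the exponential factors supply the matching smallness, and both must be compatible with the constraints on $h$ and $\kappa$ already accumulated via Theorem \ref{T:wf} and recorded in the free-parameter tracker. Closely tied to this is re-verifying that the exponential-map and circle-product estimates still close under the one-degree-of-derivatives reduction in the vector-field norms \eqref{E:vnorm}, which weakens the scale gain in \eqref{E:Pi2contr} to $L^{-(d/2+\floor{d/2}+1)}$; this must still suffice, for $L$ large, for the later contraction step. A secondary, setting-specific point is to check that $\widetilde I_k^{-1}(X\setminus U,v)$ is well-defined with finite strong norm on all of $\mathfrak{V}_N$, not merely on $\cbO_N$ — which is immediate from the exponential-map estimate applied to $+\widetilde H_k\in\bM_0(\Bcal_k,\mathfrak{V}_N)$ — and that the expression \eqref{E:K'} factors over strictly disjoint polymers, so that it may legitimately be restricted to connected $U\in\Pcal_{k+1}^c$ as in the stated co-domain; the factorisation follows from the finite-range property \eqref{E:FRprop} of $\nu_{k+1}^{\ssup{\bq}}$ together with the locality of $I_k,\widetilde I_k,K_k$ and the geometric separation \eqref{E:rangeineq}.
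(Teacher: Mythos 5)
Your proposal follows essentially the same route as the paper (and as Chapter 9 of \cite{ABKM} on which it relies): decompose $\boldsymbol{S}_k$ into the integration, projection, exponential, circle-product and reblocking maps, establish smoothness and polynomial derivative bounds for each via Lemmas \ref{L:Rreg}, \ref{L:Rbound}, \ref{L:Pi2bound}, the sub-multiplicativity of the norms, and the weight-function properties of Theorem \ref{T:wf}, then compose. You also correctly single out the two setting-specific points the paper actually writes out in detail — the exponential-map estimate with the quadratic part of $H$ absorbed by $\overline{W}_k$ (which forces the slightly tighter smallness condition on $\norm{H}_{k,0}$), and the factorisation/well-definedness of \eqref{E:K'} over $\mathfrak{V}_N$ — so the argument is sound.
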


The corresponding result occupies the whole of Chapter 9 of \cite{ABKM}. However, the proof mainly relies on the properties of the various norms and of the mappings $\bR_{k+1}^{\ssup{\bq}}$ and $\Pi_2$. Since we have shown analogous properties in our setting in the preceding steps, adapting the proof from \cite{ABKM} is straightforward. Minor differences from \cite{ABKM} arise due to the way we extend the weight function $\bunderline{W}_k^B$ to arbitrary vector fields, which are reflected in our constants. A detailed proof of Theorem \ref{T:smooth} is omitted, so as to avoid repeating substantial parts of \cite{ABKM}. We record here simply the appropriate constraints on our choice of the free parameters ($\kappa$ takes the same value as in Theorem \ref{T:wf}).

\hspace{1cm}
\begin{ctracker}[userdefinedwidth=10cm,align=center]
\begin{align}
L &\ge L_0=\max\{2^{d+3}+16R,4d(2^d+R)\} \\
h &\ge h_0=\delta^{-1/2}\max\{\sqrt{8},\sqrt{2}c_d\} \\
A &\ge A_0=\big(48\Acal_{\Pcal}(L)\big)^{L^d/\alpha(d)}
\end{align}
\end{ctracker}
With a choice of parameters consistent with the above, we obtain a domain $\Ucal_{k,\rho,\kappa}$, on which $\boldsymbol{S}_k$ is smooth, determined by $\kappa$ and
\begin{equation}
\rho=\rho(A)=\frac{1}{8A^2}\min\Big\{\frac{1}{8},\Big(1+c_{2,0}+\frac{C_{\eqref{E:Pi2bound}}\Acal_{\Bcal}}{A}\Big)^{-1}\Big\},
\end{equation}
where $\a(d)$ is a dimension-dependent geometric constant and $c_{2,0}$ is a constant (only depending on the fixed parameters) characterising the finite range decomposition and given by Theorem 2.5 of \cite{B18}.

Armed with this result, we may now collect the key properties of the irrelevant component of the renormalisation map, that is, $K_{k+1}=\boldsymbol{S}_k(H_k,K_k,\bq)$, in the next statement.

\begin{lemma} [Lemma 6.4 of \cite{ABKM}] \label{L:K'}
On the assumptions of Theorem \ref{T:smooth} and for any choice of $L$,$h$,$A$ and $\rho$ consistent with it, the functional $K_{k+1}$ given as in \eqref{E:K'} satisfies the following:
\begin{enumerate}
\item $K_{k+1}$ is translation-invariant at the scale $(k+1)$;
\item $K_{k+1}$ is local;
\item $K_{k+1}(U)\in C^{r_0}(\mathfrak{V}_N)$ for all $U\in\Pcal_{k+1}$; and
\item If $K_k$ factors at the scale $k$ then $K_{k+1}$ factors at the scale $(k+1)$.
\end{enumerate}
\end{lemma}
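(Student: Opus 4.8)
The four assertions follow, in the familiar way, by tracking the corresponding properties of the building blocks of the formula \eqref{E:K'}: the integrated Hamiltonians $\widetilde{H}_k$ (hence $\widetilde{I}_k=\ex^{-\widetilde{H}_k}$ and its reciprocal), the Hamiltonian $I_k=\ex^{-H_k}$, the functional $K_k$, the polymer map $\pi_k$, and the Gaussian measure $\nu_{k+1}^{\ssup{\bq}}$, together with general features of the circle product and the canonical inclusions \eqref{E:canincl1}--\eqref{E:canincl2}. Essentially all of this transfers verbatim from the reblocking argument of \cite{ABKM}; the one genuinely new ingredient is the regularity of the integration map $\bR_{k+1}^{\ssup{\bq}}$ on the larger domain $\mathfrak{V}_N$, which is supplied by Lemma \ref{L:Rreg}. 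I would treat the four items in the stated order. For \textup{(1)}, note that $\widetilde{H}_k$ is translation invariant at scale $k$ because $\bR_{k+1}^{\ssup{\bq}}$ preserves translation invariance (the covariance kernels of $\nu_{k+1}^{\ssup{\bq}}$ depend only on differences of sites) and so does $\Pi_2$, as checked in Section \ref{S4:RGmap}; $K_k$ is translation invariant by membership in $\bM(\Pcal_k^c,\mathfrak{V}_N)$; and the maps $\widetilde{\pi}_k$, hence $\pi_k$, are translation invariant by their construction in Section \ref{S4:geo}. Replacing $U$ by $U+a$ for $a\in(L^{k+1}\Z)^d\subseteq(L^k\Z)^d$ in \eqref{E:K'}, reindexing the $X$-sum by $X\mapsto X+a$ and substituting $\eta\mapsto\eta(\cdot-a)$ in the Gaussian integral then gives $K_{k+1}(U+a,v(\cdot-a))=K_{k+1}(U,v)$. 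For \textup{(2)}, one checks that for each $X$ with $\pi_k(X)=U$ every factor in the corresponding summand of \eqref{E:K'} depends on $v$ only through its values on $U^*$: here $(U\setminus X)^*\subseteq U^*$ by monotonicity of the small-neighbourhood operation and the size relations between consecutive scales, $(X\setminus U)^*\subseteq X^*\subseteq\pi_k(X)^*=U^*$ by the geometric inclusions of Section \ref{S5:geo}, and $\widetilde{K}(X,v,\eta)$ depends on $v$ and $\eta$ only through their values on $X^*\subseteq U^*$, so that integrating $\eta$ out does not enlarge the dependence.

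For \textup{(3)}, the summand indexed by $X$ in \eqref{E:K'} is a product of two kinds of factor. The factors $\widetilde{I}_k(U\setminus X,\cdot)$ and $(\widetilde{I}_k)^{-1}(X\setminus U,\cdot)$ lie in $C^\infty(\mathfrak{V}_N)$ because each $\widetilde{H}_k(B,\cdot)$ is a polynomial of degree at most two: indeed $\bR_{k+1}^{\ssup{\bq}}H_k\in M_0(\Bcal_k,\mathfrak{V}_N)$ and $\Pi_2(\bR_{k+1}^{\ssup{\bq}}K_k)\in M_0(\Bcal_k,\mathfrak{V}_N)$ by construction. The remaining factor is $\int_{\cbO_N}\widetilde{K}(X,v,\eta)\,\nu_{k+1}^{\ssup{\bq}}(\d\eta)$; expanding the circle product that defines $\widetilde{K}$ into a finite sum over ordered partitions $X=X_1\sqcup X_2\sqcup X_3$, each term is a product of a $C^\infty$ function of $v$ alone (the $(1-\widetilde{I}_k(\cdot,v))$ factors) with a local $C^{r_0}$ functional evaluated at $v+\eta$ (products of $(I_k(\cdot,v+\eta)-1)$, which is $C^\infty$, and $K_k(\cdot,v+\eta)$, which is $C^{r_0}$ by membership in $\bM(\Pcal_k^c,\mathfrak{V}_N)$). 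Such functionals are finite in the relevant norm by the sub-multiplicativity of Lemma \ref{L:norms} together with the assumption $(H_k,K_k,\bq)\in\mathcal{U}_{k,\rho,\kappa}$, so Lemma \ref{L:Rreg} applies and shows that $\bR_{k+1}^{\ssup{\bq}}$ maps them to $C^{r_0}(\mathfrak{V}_N)$ functionals and commutes with $D^s$ for $s\le r_0$. Since $\mathbb{T}_N$ is finite the outer sum over $X$ is finite, and $K_{k+1}(U,\cdot)\in C^{r_0}(\mathfrak{V}_N)$ follows.

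For \textup{(4)}, suppose $K_k$ factors at scale $k$ and let $U_1,U_2\in\Pcal_{k+1}$ be strictly disjoint. If $\pi_k(X)=U_1\cup U_2$, then, because $\widetilde{\pi}_k$ sends a connected polymer to a connected polymer (its closure, or a single $(k+1)$-block) and $\pi_k$ acts on connected components, $X$ splits uniquely as $X=X_1\sqcup X_2$ with $\pi_k(X_i)=U_i$; moreover $X_i\subseteq X_i^*\subseteq U_i^*$ and, by \eqref{E:rangeineq}, $\text{dist}_\infty(U_1^*,U_2^*)\ge\frac12 L^{k+1}+1$, so $X_1$ and $X_2$ are strictly disjoint and at $\abs{\cdot}_\infty$-distance at least $\frac12 L^{k+1}+1$; conversely every such pair arises this way. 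On this decomposition $(U_1\cup U_2)\setminus X=(U_1\setminus X_1)\sqcup(U_2\setminus X_2)$ and $X\setminus(U_1\cup U_2)=(X_1\setminus U_1)\sqcup(X_2\setminus U_2)$, and $\widetilde{I}_k$ and $(\widetilde{I}_k)^{-1}$ factor over these disjoint unions because $\widetilde{H}_k$ is additive over disjoint polymers. Using that $K_k$ factors, that $\widetilde{I}_k$ and $I_k$ factor over disjoint polymers, and that the circle product of factoring functionals again factors, one obtains $\widetilde{K}(X,v,\eta)=\widetilde{K}(X_1,v,\eta)\,\widetilde{K}(X_2,v,\eta)$; since the two factors depend on $\eta$ only through $\eta$ restricted to $X_1^*$ and $X_2^*$ respectively, and these are independent under $\nu_{k+1}^{\ssup{\bq}}$ by the finite range property of Section \ref{S5:fr}, the Gaussian integral factors as well. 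The double sum over $(X_1,X_2)$ then decouples and yields $K_{k+1}(U_1\cup U_2,v)=K_{k+1}(U_1,v)\,K_{k+1}(U_2,v)$.

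I expect item \textup{(3)} to be the only part demanding more than bookkeeping, since it is precisely where the fact that $\bR_{k+1}^{\ssup{\bq}}$ is no longer a convolution on $\mathfrak{V}_N$ makes itself felt: one must invoke Lemma \ref{L:Rreg} and verify its hypotheses term by term and, more delicately, confirm that expanding the circle product in $\widetilde{K}$ cleanly separates the factors depending on $v$ alone from those depending on $v+\eta$, so that the commutation-with-differentiation statement of Lemma \ref{L:Rreg} can be applied to each term. Items \textup{(1)}, \textup{(2)} and \textup{(4)} then follow by the standard reblocking arguments of \cite{ABKM}, the only adjustments being the gradient-type neighbourhoods of Section \ref{S5:geo} and the use of the finite range property in the gradient setting.
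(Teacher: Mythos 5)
Your proposal is correct and follows essentially the same route as the paper's proof: translation invariance of $\Pi_2$, $\bR_{k+1}^{\ssup{\bq}}$ and $\pi_k$ for (1), the inclusion $X^*\subset\pi_k(X)^*=U^*$ for (2), Lemma \ref{L:Rreg} plus finiteness of the constituent norms for (3), and the unique splitting $X=X_1\sqcup X_2$ with $\pi_k(X_i)=U_i$ combined with \eqref{E:rangeineq} and the finite range property for (4). The only cosmetic difference is that the paper realises the splitting explicitly as $X_i=U_i^*\cap X$ rather than by grouping connected components, which amounts to the same thing.
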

\begin{proof}
Theorem \ref{T:smooth} implies that, for $(H_k,K_k,\bq)\in\Ucal_{k,\rho,\kappa}$, all the constituent functionals of the definition of $K_{k+1}$ in \eqref{E:K'} are finite in the relevant norms. This in turn entails that $K_{k+1}$ is well-defined pointwise and our subsequent manipulations of these expressions are justified rather than merely formal. We now take each of the claims in order.

We have already shown that the projection $\Pi_2$ preserves translation-invariance and the same straightforwardly holds for the integration mapping $\bR_{k+1}^{\ssup{\bq}}$. Furthermore, $\pi_k$ is translation-invariant by construction and claim (1) is thus a consequence of the translation-invariance of $H_k$ and $K_k$. To see (2), we observe that the functionals $\widetilde{I}$, $I$ and $\widetilde{K}$ (the latter two in their first vector field argument) inherit the local property from $H_k$ and $K_k$. $\chi(X,U)$ is non-zero only if $X^*\subset U^*$ (see our comments in section \ref{S5:geo} above) and so it follows that $K_{k+1}$ only depends on the values of $v$ in $U^*$. Next, the smoothness of $I$ and $\widetilde{I}$ (as vector field functionals) is clear from their definitions. The $C^{r_0}(\mathfrak{V}_N)$-differentiability of $K_{k+1}$ is then a consequence of Lemma \ref{L:Rreg}, in light of our observation above that all relevant functionals have finite norms, and the chain rule. This confirms claim (3).

Finally, to prove the factorisation property (4), take $U=U_1\cup U_2\in\Pcal_{k+1}$ with $U_1$ and $U_2$ strictly disjoint and let $X\in\Pcal_k$ be any polymer such that $\pi_k(X)=U$. Consider the (strictly disjoint at scale $k$) polymers $X_1=U_1^*\cap X$ and $X_2=U_2^*\cap X$. Since $X\subset X^*\subset \pi_k(X)^*=(U_1\cup U_2)^*=U_1^*\cup U_2^*$, we see that $X_1\cup X_2=X$. Moreover, the identity $\pi_k(X_1\cup X_2)=\pi_k(X_1)\cup\pi_k(X_2)=U$ and the strict disjointedness of $U_1$ and $U_2$ then imply that $\pi_k(X_1)=U_1$ and $\pi_k(X_2)=U_2$. Suppose that $X'\subset X\in\Pcal_k$ also satisfies $\pi_k(X')=U_1$. Then it holds that $X'\subset U_1^*$ and hence $X'\subset X_1$. It follows that the decomposition of $X$ into disjoint $X_1$ and $X_2$ with $\pi_k(X_1)=U_1$ and $\pi_k(X_2)=U_2$ is unique. Accordingly, we may re-arrange the expression of $K_{k+1}(U)$ as follows:
\begin{align}
K_{k+1} & (U_1\cup U_2,v) \nonumber \\
&= \sum_{\substack{X_1,X_2\in \Pcal_k: \\ \pi_k(X_1)=U_1; \\ \pi_k(X_2)=U_2}} \widetilde{I}^{(U_1\cup U_2)\setminus (X_1\cup X_2)}(v)\widetilde{I}^{-(X_1\cup X_2)\setminus (U_1\cup U_2)}(v)\int_{\cbO_N}\;\widetilde{K}(X_1\cup X_2,v,\eta)\,\nu_{k+1}^{\ssup{\bq}}(\d\eta) \nonumber \\
&= \sum_{\substack{X_1,X_2\in \Pcal_k: \\ \pi_k(X_1)=U_1; \\ \pi_k(X_2)=U_2}} \frac{\widetilde{I}^{U_1\setminus X_1}(v)\widetilde{I}^{U_2\setminus X_2}(v)}{\widetilde{I}^{X_1\setminus U_1}(v)\widetilde{I}^{X_2\setminus U_2}(v)}   \int_{\cbO_N}\;\widetilde{K}(X_1,v,\eta)\,\nu_{k+1}^{\ssup{\bq}}(\d\eta)\int_{\cbO_N}\;\widetilde{K}(X_2,v,\eta)\,\nu_{k+1}^{\ssup{\bq}}(\d\eta) \nonumber \\
&= K_{k+1}(U_1,v)K_{k+1}(U_2,v),
\end{align}
where we used that the circle product preserves factorisation and locality in combination with the finite-range property of $\nu_{k+1}^{\ssup{\bq}}$ in view of the estimate
\begin{equation}
\text{dist}_{\infty}(X_1^*,X_2^*)\ge \text{dist}_{\infty}(U_1^*,U_2^*)\ge \frac{L^{k+1}}{2}+1
\end{equation}
provided by \eqref{E:rangeineq}.
\end{proof}
This confirms the features of the definition of $K_{k+1}$ advertised in section \ref{S4:RGmap}. A central aspect of the definition of $\bT_k^{\ssup{\bq}}$ is of course that the sequence $(H_k,K_k)$ corresponds to successive iterations of the integration maps $\bR_{k+1}^{\ssup{\bq}}$. We record the main result on the construction of the renormalisation maps $\bT_k^{\ssup{\bq}}$ in the following proposition.

\begin{prop} [Proposition 6.6 of \cite{ABKM}] \label{P:Tk}
On the assumptions of Theorem \ref{T:smooth} and for any choice of $L$,$h$,$A$ and $\rho$ consistent with it, the renormalisation maps $T_k(H_k,K_k,\bq)=(H_{k+1},K_{k+1})$ for $0\le k\le N-1$, where $H_{k+1}$ and $K_{k+1}$ are given by \eqref{E:H'} and \eqref{E:K'} respectively, are well-defined as $\mathcal{U}_{k,\rho,\kappa}\to \bM_0(\Bcal_{k+1},\mathfrak{V}_N)\times \bM(\Pcal_{k+1}^c,\mathfrak{V}_N)$ mappings and satisfy the identity
\begin{equation}
\bR_{k+1}^{\ssup{\bq}}\big(\ex^{-H_k}\circ K_k\big)(\Lambda_N,v)=\big(\ex^{-H_{k+1}}\circ K_{k+1}\big)(\Lambda_N,v).
\end{equation}
\end{prop}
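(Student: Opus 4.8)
The plan is to establish the algebraic identity
$$
\bR_{k+1}^{\ssup{\bq}}\big(\ex^{-H_k}\circ K_k\big)(\Lambda_N,v)=\big(\ex^{-H_{k+1}}\circ K_{k+1}\big)(\Lambda_N,v)
$$
by a direct, if careful, combinatorial computation, and to collect the well-definedness claims from the preceding steps. The well-definedness part is essentially bookkeeping: Theorem~\ref{T:smooth} already gives that $\boldsymbol{S}_k=K_{k+1}\in\bM(\Pcal_{k+1}^c,\mathfrak{V}_N)$ and that the domain $\Ucal_{k,\rho,\kappa}$ is mapped into the stated codomain; Lemma~\ref{L:Pi2bound} together with Lemma~\ref{L:Rbound} (the $\bR_{k+1}^{\ssup{\bq}}$-bound on blocks with constant $\Acal_{\Bcal}$) shows $\widetilde{H}_k\in\bM_0(\Bcal_k,\mathfrak{V}_N)$, hence $H_{k+1}\in\bM_0(\Bcal_{k+1},\mathfrak{V}_N)$ by summing over the $L^d$ sub-blocks of $B'\in\Bcal_{k+1}$; and Lemma~\ref{L:Rreg} gives that $\bR_{k+1}^{\ssup{\bq}}$ is well-defined on $\bM(\Pcal_k^c,\mathfrak{V}_N)$. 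So the substantive content is the identity itself.

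For the identity, the strategy is the standard renormalisation-group reblocking argument, now read on $\mathfrak{V}_N$-functionals. First I would expand $\ex^{-H_k}\circ K_k$ over $k$-polymers: writing $I_k(B,\cdot)=\ex^{-H_k(B,\cdot)}$ and using the circle product, $\big(\ex^{-H_k}\circ K_k\big)(\Lambda_N,v+\eta)=\sum_{X\in\Pcal_k}I_k^{\Lambda_N\setminus X}(v+\eta)\,K_k(X,v+\eta)$. Next I insert the telescoping factor $I_k = \widetilde{I}_k + (I_k-\widetilde{I}_k)$ on $\Lambda_N\setminus X$ and expand, which — combined with the definition of $\widetilde{K}_k(X,v,\eta)=\big((1-\widetilde{I}_k(\cdot,v))\circ(I_k(\cdot,v+\eta)-1)\circ K_k(\cdot,v+\eta)\big)(X)$ — lets me rewrite the whole integrand as $\sum_{X\in\Pcal_k}\widetilde{I}_k^{\Lambda_N\setminus X}(v)\,\widetilde{K}_k(X,v,\eta)$, the point being that $\widetilde{I}_k$ does not depend on $\eta$ and so factors out of the $\nu_{k+1}^{\ssup{\bq}}$-integral. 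I then integrate in $\eta$, which only touches $\widetilde{K}_k$. Finally I reblock: I group the sum over $k$-polymers $X$ according to their image $U=\pi_k(X)\in\Pcal_{k+1}$, peeling the $\widetilde{I}_k$-factor on $\Lambda_N\setminus X$ into the part on $U\setminus X$ (which must be added back via $(\widetilde I_k)^{-1}$ on $X\setminus U$, exactly the factor appearing in \eqref{E:K'}) and the part on $\Lambda_N\setminus U$, which reassembles into $I_{k+1}^{\Lambda_N\setminus U}=\ex^{-H_{k+1}(\cdot)}$ on $(k+1)$-blocks once one uses $H_{k+1}(B',v)=\sum_{B\in\Bcal_k(B')}\widetilde{H}_k(B,v)$ from \eqref{E:H'}. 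What remains attached to $U$ is precisely $K_{k+1}(U,v)$ as defined in \eqref{E:K'}, and summing over $U\in\Pcal_{k+1}$ gives $\big(\ex^{-H_{k+1}}\circ K_{k+1}\big)(\Lambda_N,v)$. The circle-product identities (associativity/commutativity) and the fact that $\pi_k$ preserves strict disjointness of connected components are what make the regrouping legitimate; the factoring property from Lemma~\ref{L:K'}(4) is what lets one restrict the sum to $U\in\Pcal_{k+1}^c$, consistent with the codomain.

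The main obstacle I expect is purely combinatorial rather than analytic: making the reblocking step completely rigorous, i.e.\ checking that every $k$-polymer $X$ contributes to exactly one $(k+1)$-polymer $U=\pi_k(X)$, that the $\widetilde I_k$-bookkeeping on $U\setminus X$ versus $\Lambda_N\setminus U$ is consistent (no block double-counted or omitted, and the $(\widetilde I_k)^{-1}(X\setminus U,v)$ correction is exactly right), and that the resulting sum matches \eqref{E:K'} term by term. This is exactly the computation carried out in the proof of Proposition~6.6 of \cite{ABKM}, and since all the geometric definitions ($\pi_k$, closures, small/large polymers, the neighbourhoods $X^*$, $X^+$) were imported verbatim in Section~\ref{S4:geo}, and all the needed analytic inputs (norms, $\bR_{k+1}^{\ssup{\bq}}$, $\Pi_2$) have been re-established in Steps~1--4, the argument transfers without change. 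I would therefore present the proof by carrying out the expansion--integration--reblocking chain as above, invoking Lemmas~\ref{L:Rreg}, \ref{L:Rbound}, \ref{L:Pi2bound}, Theorem~\ref{T:smooth} and Lemma~\ref{L:K'} for the well-definedness assertions, and referring to Chapter~6 of \cite{ABKM} for the details of the reblocking bookkeeping that are unaffected by the passage to $\mathfrak{V}_N$.
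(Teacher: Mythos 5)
Your proposal is correct and follows essentially the same route as the paper, which itself omits the proof and defers to the expansion--integration--reblocking computation of Proposition 6.6 of \cite{ABKM}: the three-term split $I_k=\widetilde I_k+(1-\widetilde I_k)+(I_k-1)$ underlying $\widetilde K_k$, the factoring out of the $\eta$-independent $\widetilde I_k$, and the regrouping by $U=\pi_k(X)$ with the $\widetilde I_k^{U\setminus X}(\widetilde I_k)^{-1}(X\setminus U)$ correction are exactly the intended steps, and the well-definedness bookkeeping via Theorem \ref{T:smooth}, Lemmas \ref{L:Rreg}, \ref{L:Rbound}, \ref{L:Pi2bound} and \ref{L:K'} matches the paper's.
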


The principal difference between our setting and \cite{ABKM}, so far as the impact on these constructions is concerned, has already been addressed in the preceding steps. Proposition \ref{P:Tk} follows from Theorem \ref{T:smooth} and Lemma \ref{L:K'} in the same way as in \cite{ABKM} and a proof is therefore omitted.

\subsection{Step 6: Contraction estimates} \label{S5:contract}

Having shown that $\bT_k$ is smooth on a suitably restricted domain, the second important feature of the renormalisation map is its hyperbolicity near the trivial fixed point $(0,0)$. We adopt the following notation for the derivative of $\bT_k^{\ssup{\bq}}$ at that point:
\begin{equation}
D\bT_k(0,0,\bq) \begin{pmatrix}
\dot{H} \\
\dot{K}
\end{pmatrix} = \begin{pmatrix}
\bA_k^{\ssup{\bq}} & \bB_k^{\ssup{\bq}} \\
0 & \bC_k^{\ssup{\bq}}
\end{pmatrix} \begin{pmatrix}
\dot{H} \\
\dot{K}
\end{pmatrix},
\end{equation}
where $\bA_k^{\ssup{\bq}}$ and $\bB_k^{\ssup{\bq}}$ are the linear operators introduced in \eqref{E:A} and \eqref{E:B} above. A straightforward computation in \cite{ABKM}, which we do not repeat here, produces an expression for $\bC_k^{\ssup{\bq}}$ in terms of the mappings $\Pi_2$ and $\bR_{k+1}^{\ssup{\bq}}$: 
\begin{multline}
\bC_k^{\ssup{\bq}}\dot{K}(U,v)=\sum_{B\in\Bcal_k:\overline{B}=U}\big(1-\Pi_2\big)\int_{\cbO_N}\;\dot{K}(B,v+\eta)\,\nu_{k+1}^{\ssup{\bq}}(\d\eta) + \\
\sum_{X\in\Pcal_k^c\setminus\Bcal_k:\pi_k(X)=U}\,\int_{\cbO_N}\;\dot{K}(X,v+\eta)\,\nu_{k+1}^{\ssup{\bq}}(\d\eta)
\end{multline}
for any $U\in\Pcal_{k+1}^c$. In the following, we consider the standard operator norms of $\bA_k^{\ssup{\bq}}$,$\bB_k^{\ssup{\bq}}$ and $\bC_k^{\ssup{\bq}}$ regarded as $\bM_0(\Bcal_k,\mathfrak{V}_N)\to\bM_0(\Bcal_{k+1},\mathfrak{V}_N)$, $\bM(\Pcal_k^c,\mathfrak{V}_N)\to\bM_0(\Bcal_{k+1},\mathfrak{V}_N)$ and $\bM(\Pcal_k^c,\mathfrak{V}_N)\to\bM(\Pcal_{k+1}^c,\mathfrak{V}_N)$ Banach space maps respectively. We now state the key result of this step.
\begin{theorem} [Theorem 6.8 of \cite{ABKM}] \label{T:linearisation}
There exists a constant $L_0$ and constants $h_0$, $A_0$ and $\kappa$, depending on $L$, such that for any $L\ge L_0$, $h\ge h_0$, $A\ge A_0$ and $\bq\in B_{\kappa}(0)$ the following bounds hold uniformly in $k$ and $N$:
\begin{equation}
\norm{\bC_k^{\ssup{\bq}}}\le \frac{3}{4}\vartheta,\;\norm{(\bA_k^{\ssup{\bq}})^{-1}}\le \frac{3}{4}\text{ and }\norm{\bB_k^{\ssup{\bq}}}\le \frac{1}{3}.
\end{equation}
Moreover, the derivatives of these operators with respect to $\bq$ are bounded as
\begin{multline}
\norm{D_{\bq}^l\bA_k^{\ssup{\bq}}(\dot{\bq}^l)\dot{H}}_{k+1,0}\le C \norm{\dot{H}}_{k,0}\norm{\dot{\bq}}^l,\; \norm{D_{\bq}^l\bB_k^{\ssup{\bq}}(\dot{\bq}^l)\dot{K}}_{k+1,0}\le C \norm{\dot{K}}_k^{(A)}\norm{\dot{\bq}}^l \\
\text{and }\norm{D_{\bq}^l\bC_k^{\ssup{\bq}}(\dot{\bq}^l)\dot{K}}_{k+1}^{(A)}\le C \norm{\dot{K}}_k^{(A)}\norm{\dot{\bq}}^l
\end{multline}
for some constant $C$ depending on $L$, $A$ and $l$.
\end{theorem}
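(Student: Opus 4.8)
The plan is to estimate the operators $\bA_k^{\ssup{\bq}}$, $\bB_k^{\ssup{\bq}}$ and $\bC_k^{\ssup{\bq}}$ separately, following \cite{ABKM} and using the properties of $\bR_{k+1}^{\ssup{\bq}}$, $\Pi_2$ and the weight functions established in Steps~1--4. The free parameters are chosen in the order $L$, then $h$, then $A$ (and finally $\kappa$, $\rho$), each sub-estimate contributing a new lower bound; the resulting operator bounds then hold uniformly in $k$ and $N$. In every case the only $\bq$-dependent ingredient is $\bR_{k+1}^{\ssup{\bq}}$ --- the projection $\Pi_2$ and the re-blocking operations \eqref{E:A}, \eqref{E:B} do not depend on $\bq$ --- so the bounds on the $\bq$-derivatives follow by differentiating $\bR_{k+1}^{\ssup{\bq}}$ and invoking \eqref{E:Rqbound} together with Theorem~\ref{T:Dqdotbound}; this produces only an additional factor that is polynomial in $\abs{X}_k$ and is absorbed exactly as the base estimate below absorbs its combinatorial factors.

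For $\bA_k^{\ssup{\bq}}$ we use that, since $\nu_{k+1}^{\ssup{\bq}}$ is centred, $\bR_{k+1}^{\ssup{\bq}}$ leaves the linear monomials $\nabla^m v(x)$ and the quadratic monomials $v_i(x)v_j(x)$ unchanged and only adds $\tfrac12\sum_{i,j}\E_{\nu_{k+1}^{\ssup{\bq}}}[\varsigma_i(x)\varsigma_j(x)]\,d_{i,j}$ to the constant coefficient, while the re-blocking \eqref{E:A} enlarges the polymer but preserves per-site coefficients. Reading this through the norms $\norm{\cdot}_{k,0}$ and $\norm{\cdot}_{k+1,0}$ (cf.\ the monomial structure in \eqref{E:Hparas}) exhibits $\bA_k^{\ssup{\bq}}$ as triangular in the monomial basis, with diagonal amplification factors $L^d$ (constant term), $2L^{(d-2\abs{\a})/2}\ge2$ (each linear coefficient $a_m$) and $4$ (quadratic term), the only off-diagonal entry being the quadratic-to-constant coupling above; the uniform finite-range-decomposition bounds from Theorem~6.1 of \cite{ABKM} (see Section~\ref{S5:fr}) bound $L^{(k+1)d}\max_{i,j}\abs{\E_{\nu_{k+1}^{\ssup{\bq}}}[\varsigma_i\varsigma_j]}$ by a constant depending only on $L$ and $d$, so after dividing by the quadratic weight $h_k^2\ge h^2$ that coupling has operator norm $\le\tfrac14$ once $h\ge h_0$, and hence $\bA_k^{\ssup{\bq}}$ is invertible with $\norm{(\bA_k^{\ssup{\bq}})^{-1}}\le\tfrac34$. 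For $\bB_k^{\ssup{\bq}}$ we combine $\norm{\Pi_2(\cdot)(B)}_{k,0}\le C\abs{\cdot(B)}_{k,B,0}$ (Lemma~\ref{L:Pi2bound}) with $\abs{\bR_{k+1}^{\ssup{\bq}}K_k(B)}_{k,B,0}\le\Acal_{\Bcal}\norm{K_k(B)}_{k,B}$ (from Lemma~\ref{L:Rbound} at $v=0$ and Theorem~\ref{T:wf}(2)) and $\norm{K_k(B)}_{k,B}\le A^{-1}\norm{K_k}_k^{(A)}$; since the re-blocking \eqref{E:B} contributes only the count of the $L^d$ sub-blocks of $B'$ and the passage from $\norm{\cdot}_{k,0}$ to $\norm{\cdot}_{k+1,0}$ only the amplification factors above, collecting these against the single power $A^{-1}$ gives $\norm{\bB_k^{\ssup{\bq}}}\le\tfrac13$ once $A\ge A_0$.

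For $\bC_k^{\ssup{\bq}}$ --- the decisive contraction --- we split the explicit formula for $\bC_k^{\ssup{\bq}}\dot K(U,v)$ into the block part $\sum_{B:\,\overline{B}=U}(1-\Pi_2)\int\dot K(B,v+\eta)\,\nu_{k+1}^{\ssup{\bq}}(\d\eta)$ and the genuine-polymer part $\sum_{X\in\Pcal_k^c\setminus\Bcal_k,\,\pi_k(X)=U}\int\dot K(X,v+\eta)\,\nu_{k+1}^{\ssup{\bq}}(\d\eta)$. For the block part one first passes to a $\norm{\cdot}_{k:k+1,B}$ bound via Lemma~\ref{L:Rbound} and then applies the second inequality of Lemma~\ref{L:factor}: the value-at-$0$ term is controlled by the contraction of $1-\Pi_2$ (Lemma~\ref{L:Pi2contract}, gaining $L^{-(d/2+\floor{d/2}+1)}$) followed again by the $\bR$-estimate, while the weighted term (carrying $16L^{-3d/2}$) is handled by the $\bR$-estimate and the inequality $\exp(\tfrac12\abs{v}_{k+1,X}^2)\overline{w}_{k:k+1}^X(v)\le\overline{w}_{k+1}^X(v)$ (Theorem~\ref{T:wf}(8)) to move from $\overline{w}_{k:k+1}$ to $\overline{w}_{k+1}$, the polynomial growth of the relevant Hamiltonian $\Pi_2\bR_{k+1}^{\ssup{\bq}}\dot K(B)$ being absorbed into $\overline{w}_k$; as the number of $B$ with $\overline{B}=U$ is bounded, the block part is $O\big(L^{-(d/2+\floor{d/2}+1)}\big)\le\tfrac38\vartheta$ for $L$ large. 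For the genuine-polymer part one uses $\norm{\bR_{k+1}^{\ssup{\bq}}\dot K(X)}_{k:k+1,X}\le\Acal_{\Pcal}^{\abs{X}_k}\norm{\dot K(X)}_{k,X}$ together with Theorem~\ref{T:wf}(1),(6),(7) to pass to $\norm{\cdot}_{k+1,\pi_k(X)}$, and then sums over connected polymers $X$ with $\pi_k(X)=U$ and $\abs{X}_k=n\ge2$ against the combinatorial count $C^n$ and the global weights $A^{\abs{X}_k}A^{-\abs{\pi_k(X)}_{k+1}}$; because $\abs{X}_k\ge2$ this is a geometric series in $A^{-1}$ that is $\le\tfrac38\vartheta$ for $A$ large. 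Adding the two parts gives $\norm{\bC_k^{\ssup{\bq}}}\le\tfrac34\vartheta$, and the $\bq$-derivatives follow as indicated above, the extra factor $\big((\diam X+1)L^{-k}\big)^{dl/2}$ from Theorem~\ref{T:Dqdotbound} being polynomial in $\abs{X}_k$ and absorbed by the same geometric series.

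The hard part is that every one of these estimates rests on the weight-function estimates of Theorem~\ref{T:wf} --- especially items (6)--(9) --- continuing to "pay for" the loss of derivatives incurred by $1-\Pi_2$ and by re-blocking, now that the field functionals are defined on all of $\mathfrak{V}_N$ rather than on $\cbO_N$; the genuinely new point to verify is that the regulators constructed in Appendix~\ref{A:B} preserve these interactions and that $\bR_{k+1}^{\ssup{\bq}}$, which is no longer a convolution, still commutes with the operations used here (Lemma~\ref{L:Rreg}). Once Steps~1--4 are in place, the contraction bookkeeping itself is then exactly as in \cite{ABKM}, and the various $L$-, $h$- and $A$-dependent constants combine into the $L_0$, $h_0$, $A_0$ and $\kappa$ recorded there and in the free-parameter tracker above.
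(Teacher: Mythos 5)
Your proposal follows essentially the same route as the paper: the operator bounds for $\bA_k^{\ssup{\bq}}$ and $\bB_k^{\ssup{\bq}}$ via the triangular monomial structure and Lemma \ref{L:Pi2bound}, the $\bq$-derivatives via \eqref{E:Rqbound}, and---for the decisive bound on $\bC_k^{\ssup{\bq}}$---the identical decomposition into the single-block term (handled by Lemma \ref{L:factor}, Lemma \ref{L:Pi2contract}, Lemma \ref{L:Rbound} and Theorem \ref{T:wf}(8)) and the larger-polymer term (handled by the $A^{-1}$ gain), with only a cosmetic difference in how the budget $\tfrac34\vartheta$ is split. Two small imprecisions are worth noting: the number of $k$-blocks $B$ with $\overline{B}=U$ is $L^d$, not bounded independently of $L$, so after summation the block part decays only like $L^{d/2-\floor{d/2}-1}$ rather than $L^{-(d/2+\floor{d/2}+1)}$ (still a negative power, so the conclusion survives, and this is exactly the exponent appearing in the paper's choice of $L_0$); and for the large connected polymers the bare inequality $\abs{X}_k\ge 2$ does not by itself yield a geometric series---one needs the counting bound $\abs{U}_{k+1}\le(1+2\alpha)^{-1}\abs{X}_k$ from Lemmas 6.15--6.16 of \cite{Bry09}, which the paper invokes explicitly, to beat the $2^{\abs{U}_k}$-type combinatorial factor.
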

The bounds on the derivatives follow straightforwardly from Theorem \ref{T:smooth} or (in the case of $\bA_k^{\ssup{\bq}}$ and $\bB_k^{\ssup{\bq}}$) estimates that form part of its proof. The bounds on the operator norms may be derived as in \cite{ABKM} using properties for which we have already shown analogous results in our setting. We therefore skip a detailed proof to avoid repetition. Finally, we update our constraints on the free parameters to reflect the requirements of Theorem \ref{T:linearisation}. As before, $\kappa$ is chosen in accordance with Theorem \ref{T:wf}. The following restrictions on $L$, $h$ and $A$ thus correspond to a choice of parameters such that the conclusions of Theorems \ref{T:smooth} and \ref{T:linearisation} hold simultaneously.

\hspace{1cm}
\begin{ctracker}[userdefinedwidth=14cm,align=center]
\begin{align}
L \ge L_0 &=\max\{(2\Acal_{\Bcal}C_{\eqref{E:Pi2contr}}C'\vartheta^{-1})^{\frac{1}{1+\floor{d/2}-d/2}},(16\Acal_{\Bcal}C'(C_{\eqref{E:Pi2bound}}+2)\vartheta^{-1})^{2/d}, \nonumber \\
& \quad\quad\quad\quad 2^{d+3}+16R,4d(2^d+R)\} \\
h \ge h_0 &=\max\{\sqrt{8/\delta},c_d\sqrt{2/\delta},\sqrt{c_{2,0}}\} \\
A \ge A_0 &=\max\{\big(48\Acal_{\Pcal}(L)\big)^{L^d/\alpha(d)},3C_{\eqref{E:Pi2bound}}\Acal_{\Bcal}L^d,\big(\frac{2^{L^d+3}\Acal_{\Pcal}(L)^{1+2\a}}{\vartheta}\big)^{-2\a}, \nonumber \\
& \quad\quad\quad\quad 8\vartheta^{-1}\Acal_{\Pcal}(L)^2 L^d(2^{d+1}-1)^{d2^d}\}
\end{align}
\end{ctracker}
In addition to the constants that appeared previously, the expression for $L_0$ features the analytic constant $C'= \sup_{t\ge 0}(1+t)^5e^{-t/2}$, which is accordingly independent of all others.  

\subsection{Step 7: Stable manifold theorem for the renormalisation group flow}

The remaining steps are an implementation of the general strategy described in \cite{Bry09} (see Theorem 2.16). Since \cite{ABKM} contains a detailed proof, which may readily be adapted to our setting in light of the preceding steps, we do not propose to give a detailed account here of what are largely standard renormalisation group arguments. We simply introduce the necessary definitions and notation to be able to state the main result.

The central idea is to conceive of the renormalisation group flow as a dynamical system on sequences of functionals $(H_k,K_k)$ such that a fixed point under the dynamic corresponds precisely to a sequence that satisfies Proposition \ref{P:Tk} with suitable constraints on the initial and final pairs $(H_0,K_0)$ and $(H_N,K_N)$. In the following, we designate by $\bE_{\zeta,\Qcal_V}$ the space of $C^{r_0}(\R^d)$ functions that are finite in the $\norm{\cdot}_{\zeta,\Qcal_V}$ norm defined in \eqref{E:KNorm}. We consider the Banach space of sequences
\begin{multline}
\Zcal=\big\{Z=(H_0,H_1,\ldots,H_{N-1},K_1,K_2,\ldots,K_N):H_k\in\bM_0(\Bcal_k,\mathfrak{V}_N), K_{k+1}\in\bM(\Pcal_{k+1}^c,\mathfrak{V}_N) \\
\forall k=0,\ldots,N-1\big\}
\end{multline}
equipped with the norm
\begin{equation}
\norm{Z}_{\Zcal}=\max_{0\le k\le N-1}\Big\{\vartheta^{-k}\norm{H_k}_{k,0}\vee\vartheta^{-(k+1)}\norm{K_{k+1}}_{k+1}^{(A)}\Big\}.
\end{equation}
Next, we define a dynamic on $\Zcal$, which depends on the choice of a seed relevant Hamiltonian and perturbation. Given a relevant Hamiltonian $\Hcal\in M_0(\Bcal_k,\mathfrak{V}_N)$, we designate by $\bq(\Hcal)$ the coefficients of the quadratic terms. Accordingly, in the notation of \eqref{E:Hparas}, $\bq(\Hcal)=\boldsymbol{d}\in\R_{\text{sym}}^{d\times d}$. Given further a perturbation $\Kcal\in \bE_{\zeta,\Qcal_V}$, we write the irrelevant component of the expression that first appeared as the integrand in \eqref{E:pertintgrad} as
\begin{equation}
\widehat{K}_0(\Kcal,\Hcal)(X,v)=\ex^{-\Hcal(X,v)}\prod_{x\in X}\Kcal(v(x)).
\end{equation}
With this notation in place, the dynamic $\Tcal$ is defined as a map
\begin{align}
\Tcal: &\quad \bE_{\zeta,\Qcal_V}\times M_0(B_0,\mathfrak{V}_N)\times \Zcal\to \Zcal \nonumber \\
&\quad (\Kcal,\Hcal,Z)\mapsto Z',
\end{align}
where the components of $Z'$ are determined as follows:
\begin{align}
H_0' &= \big(\bA_0^{\ssup{\bq(\Hcal)}}\big)^{-1}\Big(H_1-\bB_0^{\ssup{\bq(\Hcal)}}\widehat{K}_0(\Kcal,\Hcal)\Big); \\
H_k' &= \big(\bA_k^{\ssup{\bq(\Hcal)}}\big)^{-1}\Big(H_{k+1}-\bB_k^{\ssup{\bq(\Hcal)}}K_k\Big),\text{ for } 1\le k\le N-2; \\
H_{N-1}' &= - \big(\bA_{N-1}^{\ssup{\bq(\Hcal)}}\big)^{-1} \bB_{N-1}^{\ssup{\bq(\Hcal)}}K_{N-1}; \\
K_1' & =\boldsymbol{S}_0(H_0,\widehat{K}_0(\Kcal,\Hcal),\bq(\Hcal));\text{ and} \\
K_{k+1}' &= \boldsymbol{S}_k(H_k,K_k,\bq(\Hcal)),\text{ for } 1\le k\le N-1.
\end{align}
The operators $\bA_k^{\ssup{\bq}}$, $\bB_k^{\ssup{\bq}}$ and $\boldsymbol{S}_k$ are as defined in section \ref{S4:RGmap} above. This definition ensures that a sequence $Z$ is a fixed point of $\Tcal$ if, and only if, $(H_{k+1},K_{k+1})=\bT_k(H_k,K_k,\bq(\Hcal))$ for all $0\le k\le N-1$ with $H_N=0$ and $K_0=\widehat{K}_0(\Kcal,\Hcal)$. By virtue of Proposition \ref{P:Tk} and \eqref{E:successint}, this means that a fixed point of $\Tcal$ satisfies
\begin{equation} \label{E:fixedpointidentity}
\int_{\cbO_N}\;\big(\ex^{-H_0}\circ\widehat{K}_0(\Kcal,\Hcal)\big)(\Lambda_N,v+\eta)\,\nu^{\ssup{\bq(\Hcal)}}(\d\eta)=\int_{\cbO_N}\;\Big(1+K_N(\Lambda_N,v+\eta)\Big)\,\nu_{N+1}^{\ssup{\bq(\Hcal)}}(\d\eta).
\end{equation}
The main result of this step guarantees the existence of a unique fixed point $Z(\Hcal,\Kcal)$, which depends smoothly on its parameters, for all sufficiently 
small $\Hcal$ and $\Kcal$.
\begin{theorem} [Theorem 12.1 of \cite{ABKM}] \label{T:Tfp}
Let $\kappa(L)$ and $\delta(L)$ be as in Theorem \ref{T:wf} and choose $L_0$, $h_0(L)\ge \delta(L)^{-1/2}\vee 1$ and $A_0$ such that the conclusions of Theorems \ref{T:smooth} and \ref{T:linearisation} hold for every triplet $(L,h,A)$ with $L\ge L_0$, $h\ge h_0(L)$ and $A\ge A_0(L)$. Then, for every such triplet, there exist constants $\rho_1(h,A)>0$, $\rho_2(L)>0$ and $\overline{C}_{i_1,i_2,i_3}(L,A)$ such that $\Tcal$ is smooth on $B_{\rho_1}(0)\times B_{\rho_2}(0)\times B_{\rho(A)}(0)$, where $\rho(A)$ is given by Theorem \ref{T:smooth}, $\bq(\Hcal)\in B_{\kappa}(0)$ for all $\Hcal\in B_{\rho_2}(0)$ and its derivatives are bounded as
\begin{equation} \label{E:Tderivbound}
\frac{1}{i_1!i_2!i_3!}\norm{D_{\Kcal}^{i_1}D_{\Hcal}^{i_2}D_Z^{i_3}\Tcal(\Kcal,\Hcal,Z)(\dot{\Kcal}^{i_1},\dot{\Hcal}^{i_2},\dot{Z}^{i_3})}_{\Zcal}\le \overline{C}_{i_1,i_2,i_3}(L,A)\norm{\dot{\Kcal}}_{\zeta,\Qcal_V}^{i_1}\norm{\dot{\Hcal}}_{0,0}^{i_2}\norm{\dot{Z}}_{\Zcal}^{i_3}
\end{equation}
for all $(\Kcal,\Hcal,Z)\in B_{\rho_1}(0)\times B_{\rho_2}(0)\times B_{\rho(A)}(0)$. Moreover, there exist constants $\eps(L,h,A)>0$, $\eps_1(L,h,A)>0$, $\eps_2(L,h,A)>0$ and $\widehat{C}_{i_1,i_2}(L,A)$ such that, for each $(\Kcal,\Hcal)\in B_{\eps_1}(0)\times B_{\eps_2}(0)$, there exists a unique $Z(\Kcal,\Hcal)\in B_{\eps}(0)$ with $\Tcal(\Kcal,\Hcal,Z(\Kcal,\Hcal))=Z(\Kcal,\Hcal)$. The map $(\Kcal,\Hcal)\mapsto Z(\Kcal,\Hcal)$ is smooth on $B_{\eps_1}(0)\times B_{\eps_2}(0)$ and its derivatives satisfy the bounds
\begin{equation}
\frac{1}{i_1!i_2!}\norm{D_{\Kcal}^{i_1}D_{\Hcal}^{i_2}Z(\Kcal,\Hcal)(\dot{\Kcal}^{i_1},\dot{\Hcal}^{i_2})}_{\Zcal}\le \widehat{C}_{i_1,i_2}(L,A)\norm{\dot{\Kcal}}_{\zeta,\Qcal_V}^{i_1}\norm{\dot{\Hcal}}_{0,0}^{i_2}
\end{equation}
for all $(\Kcal,\Hcal)\in B_{\eps_1}(0)\times B_{\eps_2}(0)$.
\end{theorem}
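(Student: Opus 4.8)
\emph{Proof strategy.} The plan is to recognise Theorem \ref{T:Tfp} as an instance of the abstract hyperbolic-fixed-point (stable manifold) argument for a parameter-dependent contraction, exactly as in \cite{Bry09} (Theorem 2.16), and to supply the two inputs such an argument requires: (i) that $\Tcal$ is smooth with derivative bounds that are uniform in $N$, and (ii) that the linearisation $D_Z\Tcal$ at the trivial fixed point is a contraction of $(\Zcal,\norm{\cdot}_{\Zcal})$, again uniformly in $N$. Every constant produced below will depend only on $L$, $h$, $A$ and the fixed parameters and never on $N$, because each ingredient invoked (Theorems \ref{T:smooth} and \ref{T:linearisation}, and Lemmas \ref{L:Pi2bound}, \ref{L:Pi2contract}, \ref{L:Rbound}) is already uniform in $N$ and $k$.

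\emph{Step (i): smoothness of $\Tcal$.} I would assemble the smoothness of $\Tcal$ from that of its constituents. The $H_k'$ are built from the bounded linear operators $(\bA_k^{\ssup{\bq(\Hcal)}})^{-1}$ and $\bB_k^{\ssup{\bq(\Hcal)}}$, which by Theorem \ref{T:linearisation} have operator norms $\le 3/4$ and $\le 1/3$ and bounded $\bq$-derivatives; since $\Hcal\mapsto\bq(\Hcal)$ is linear with $\norm{\bq(\Hcal)}\le C\norm{\Hcal}_{0,0}$, one chooses $\rho_2(L)$ small enough that $\bq(\Hcal)\in B_{\kappa}(0)$ for all $\Hcal\in B_{\rho_2}(0)$, after which these operators (and their inverses, via a Neumann series) are smooth in $\Hcal$. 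The $K_{k+1}'$ are values of $\boldsymbol{S}_k$, which is $C^\infty$ on $\Ucal_{k,\rho,\kappa}$ with the derivative bounds of Theorem \ref{T:smooth}. It remains to check that the source map $(\Kcal,\Hcal)\mapsto\widehat{K}_0(\Kcal,\Hcal)$ is smooth from $\bE\times M_0(B_0,\mathfrak{V}_N)$ into $\bM(\Pcal_0^c,\mathfrak{V}_N)$ with bounds: here the definition \eqref{E:KNorm} of $\norm{\cdot}_{\zeta,\Qcal_V}$ must be reconciled with the large-field regulator $\overline{W}_0$, in that the one-point weight $\ex^{-\frac12(1-\zeta)\Qcal_V}$ controlling $\Kcal$ on $\R^d$, multiplied over the points of a $0$-block $B$, is dominated by $\overline{W}_0^{-B}$, so that $\norm{\widehat{K}_0(\Kcal,\Hcal)(B)}_{0,B}\le (\mathrm{const})^{\abs{B}_0}$ and the global norm $\norm{\cdot}_0^{(A)}$ stays finite; smoothness in $\Kcal$ is then polynomial (a finite product over the block) and smoothness in $\Hcal$ follows from smoothness of the exponential map $E$ together with the product bounds of Lemma \ref{L:norms}. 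Restricting $\Kcal,\Hcal$ to balls $B_{\rho_1}(0)\times B_{\rho_2}(0)$ keeps $\widehat{K}_0(\Kcal,\Hcal)$ in the domain of $\boldsymbol{S}_0$, and chaining all of these bounds through the chain rule gives \eqref{E:Tderivbound}.

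\emph{Step (ii): contraction of the linearisation.} Since $\widehat{K}_0(\Kcal,\Hcal)$ and $\bq(\Hcal)$ depend only on the parameters, $D_Z\Tcal$ is block-triangular: it acts on $H_k'$ through $H_{k+1}$ with coefficient $(\bA_k^{\ssup{\bq}})^{-1}$ and through $K_k$ with coefficient $-(\bA_k^{\ssup{\bq}})^{-1}\bB_k^{\ssup{\bq}}$, and on $K_{k+1}'$ through $K_k$ with coefficient $\bC_k^{\ssup{\bq}}$ (the $H_k$-dependence drops out because $D_1\boldsymbol{S}_k(0,0,\bq)=0$, i.e. the lower-left block of $D\bT_k(0,0,\bq)$ vanishes). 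Evaluating at the origin and inserting $\norm{\bC_k^{\ssup{\bq}}}\le\tfrac34\vartheta$, $\norm{(\bA_k^{\ssup{\bq}})^{-1}}\le\tfrac34$, $\norm{\bB_k^{\ssup{\bq}}}\le\tfrac13$ from Theorem \ref{T:linearisation}, the $\vartheta^{-k}$-weighting in $\norm{\cdot}_{\Zcal}$ converts each slot into a genuine contraction: for an interior $H_k$-slot one finds $\vartheta^{-k}\norm{H_k'}_{k,0}\le(\tfrac34\vartheta+\tfrac14)\norm{Z}_{\Zcal}\le\tfrac34\norm{Z}_{\Zcal}$ since $\vartheta\le 2/3$, the endpoint $H_0$- and $H_{N-1}$-slots being bounded by $\tfrac34\vartheta\norm{Z}_{\Zcal}$ and $\tfrac14\norm{Z}_{\Zcal}$ respectively, while the $K_{k+1}$-slot is bounded by $\tfrac34\norm{Z}_{\Zcal}$ (and the $K_1$-slot vanishes). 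Hence $\norm{D_Z\Tcal(0,0,0)}\le\tfrac34<1$. As $\Tcal(0,0,0)=0$ (both $\widehat{K}_0(0,0)=0$ and $\boldsymbol{S}_k(0,0,0)=0$) and $D_Z\Tcal$ is continuous by \eqref{E:Tderivbound}, one fixes $\eps,\eps_1,\eps_2$ small so that, for every $(\Kcal,\Hcal)\in B_{\eps_1}(0)\times B_{\eps_2}(0)$ and every $N$, $\Tcal(\Kcal,\Hcal,\cdot)$ maps $B_{\eps}(0)\subset\Zcal$ into itself and is a $\tfrac78$-contraction there, with $\bq(\Hcal)\in B_{\kappa}(0)$ and all intermediate arguments inside $\Ucal_{k,\rho,\kappa}$.

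\emph{Conclusion and derivative bounds.} The unique fixed point $Z(\Kcal,\Hcal)$ and its smoothness then follow from the parametric Banach fixed-point theorem, or equivalently the Banach-space implicit function theorem applied to $(\Kcal,\Hcal,Z)\mapsto Z-\Tcal(\Kcal,\Hcal,Z)$, whose $Z$-derivative $\Id-D_Z\Tcal$ is boundedly invertible with $\norm{(\Id-D_Z\Tcal)^{-1}}\le 8$ on the relevant ball by the Neumann series. Differentiating the identity $Z(\Kcal,\Hcal)=\Tcal(\Kcal,\Hcal,Z(\Kcal,\Hcal))$ and using the higher chain rule expresses $D_\Kcal^{i_1}D_\Hcal^{i_2}Z$ in terms of lower-order derivatives of $Z$ and the derivatives of $\Tcal$ controlled by \eqref{E:Tderivbound}; solving for the top-order term with $(\Id-D_Z\Tcal)^{-1}$ and inducting on $i_1+i_2$ yields the stated bounds with constants $\widehat{C}_{i_1,i_2}(L,A)$. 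I expect the only genuinely non-routine point to be the source-term estimate in Step (i) — reconciling the $\Qcal_V$-weighted one-point norm with the large-field regulator $\overline{W}_0$ on arbitrary vector fields so that $\widehat{K}_0$ is controlled in $\norm{\cdot}_0^{(A)}$ — since everything else is bookkeeping layered on top of Theorems \ref{T:smooth} and \ref{T:linearisation} and the abstract contraction argument of \cite{Bry09}.
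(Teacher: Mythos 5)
Your proposal is correct and follows essentially the same route as the paper, which itself omits the proof and refers to Chapter 12 of \cite{ABKM}: the contraction estimate $\norm{D_Z\Tcal(0,0,0)}\le\tfrac34$ via the $\vartheta^{-k}$-weighted norm and the operator bounds of Theorem \ref{T:linearisation}, followed by the quantitative implicit function theorem, is exactly the intended argument, and your slot-by-slot constants ($\tfrac34\vartheta+\tfrac14$, $\tfrac34\vartheta$, $\tfrac14$, $\tfrac34$) check out, as does your identification of the source-term estimate for $\widehat{K}_0$ (whence the $h^{r_0}$ in $\rho_1$) as the only non-routine input. The one small imprecision is that $\widehat{K}_0$ must be controlled in the weak norm built from $\overline{w}_0^X$ (not the strong weight $\overline{W}_0^B$), which is precisely what the $(1-\zeta)\to(1-\zeta')$ adjustment in the construction of $\overline{A}_0^X$ in Appendix \ref{A:B} is designed to accommodate.
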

As alluded to previously, a proof of this result is obtained by adapting the reasoning in Chapter 12 of \cite{ABKM} to the present setting and hence omitted from this work. For completeness, we set out below a choice of parameters for which the result of Theorem \ref{T:Tfp} holds. For the domain of the dynamic $\Tcal$, we may take the relevant constants as
\begin{equation}
\rho_1(h,A)=\frac{\rho(A)}{8A h^{r_0}}\,\text{ and }\,\rho_2(L)=\min\Big\{\frac{1}{16},\frac{h^2\kappa(L)}{2}\Big\}.
\end{equation}
For the triplet $(\eps_1,\eps_2,\eps)$ that characterises the fixed point map $Z(\Kcal,\Hcal)$, any choice with $\eps_1\le\rho_1$, $\eps_2\le\rho_2$ and $\eps\le\rho(A)/2$ is valid provided that it satisfies the additional constraints
\begin{equation}
2\overline{C}_{0,0,2}\,\eps+\overline{C}_{1,0,1}\,\eps_1+\overline{C}_{0,1,1}\,\eps_2\le\frac{1}{8}\,\text{ and }\,\overline{C}_{1,0,0}\,\eps_1<\frac{\eps}{8},
\end{equation}
where $\overline{C}_{i_1,i_2,i_3}$ are the constants in \eqref{E:Tderivbound}.

\subsection{Step 8: Main representation theorem}

By comparing \eqref{E:fixedpointidentity} to \eqref{E:pertintgrad}, we see that the fixed point $Z(\Kcal,\Hcal)$ provided by Theorem \ref{T:Tfp} achieves the desired representation of the perturbative component of the partition function $Z_{N,\beta}(u)$, but for the fact that $\Hcal$ need not equal $H_0$. This is resolved by a second application of the implicit function theorem to deduce that, on a suitable domain, there is a smooth map $\Hcal=\widehat{\Hcal}(\Kcal)$ such that $Z_{H_0}(\Kcal,\widehat{\Hcal}(\Kcal))=\widehat{\Hcal}(\Kcal)$, where the subscript $H_0$ indicates a projection onto the appropriate coordinate of the image $Z$ under the fixed point map. Using the same notation as before, we obtain the following.
\begin{lemma} [Lemma 12.6 of \cite{ABKM}] \label{L:Hfp}
Under the assumptions of Theorem \ref{T:Tfp}, there is a constant $\tilde{\varrho}>0$, independent of $N$, and a map $\widehat{\Hcal}:B_{\tilde{\varrho}}(0)\subset\bE_{\zeta,\Qcal_V}\to B_{\eps_2}$ such that
\begin{equation}
Z_{H_0}(\Kcal,\widehat{\Hcal}(\Kcal))=\widehat{\Hcal}(\Kcal)\text{ and }\bq(\widehat{\Hcal}(\Kcal))\in B_{\kappa}(0)
\end{equation}
for all $\Kcal\in B_{\tilde{\varrho}}(0)$. The map $\widehat{\Hcal}$ is smooth and its derivatives are bounded uniformly in $N$.
\end{lemma}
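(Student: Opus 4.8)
The plan is to solve the fixed-point equation $Z_{H_0}(\Kcal,\Hcal)=\Hcal$ for $\Hcal$ as a function of $\Kcal$ by a quantitative implicit function theorem, taking as input the smooth, $N$-uniformly bounded fixed-point map $(\Kcal,\Hcal)\mapsto Z(\Kcal,\Hcal)$ furnished by Theorem~\ref{T:Tfp}; here $Z_{H_0}$ is the projection of $Z$ onto its $H_0$-coordinate. First I would record that $Z(0,0)=0$: when $\Kcal=0$ the seed functional $\widehat{K}_0(0,\Hcal)(X,v)=\ex^{-\Hcal(X,v)}\prod_{x\in X}0$ vanishes on every non-empty connected polymer and hence equals $0$ in $\bM(\Pcal_0^c,\mathfrak{V}_N)$; using also $\bq(0)=0$, $\boldsymbol{S}_k(0,0,\bq)\equiv 0$ and $\bB_k^{\ssup{0}}0=0$ one checks directly that $\Tcal(0,0,0)=0$, so the uniqueness part of Theorem~\ref{T:Tfp} gives $Z(0,0)=0$ and in particular $Z_{H_0}(0,0)=0$.

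The crucial point is to verify that $D_{\Hcal}Z_{H_0}(0,0)=0$, so that the linearisation in the $\Hcal$-variable of $\Hcal\mapsto Z_{H_0}(\Kcal,\Hcal)-\Hcal$ at $(0,0)$ equals $D_{\Hcal}Z_{H_0}(0,0)-\Id=-\Id$ and is (trivially) boundedly invertible. Implicit differentiation of $\Tcal(\Kcal,\Hcal,Z(\Kcal,\Hcal))=Z(\Kcal,\Hcal)$ gives
\begin{equation}
D_{\Hcal}Z=(\Id-D_Z\Tcal)^{-1}\,D_{\Hcal}\Tcal,
\end{equation}
the inverse existing by Theorem~\ref{T:Tfp}, so it is enough to show that the partial derivative $D_{\Hcal}\Tcal(0,0,0)$ vanishes. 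The seed $\Hcal$ enters $\Tcal$ only through the matrix $\bq(\Hcal)$ — appearing in the operators $\bA_k^{\ssup{\bq}}$, $\bB_k^{\ssup{\bq}}$ and the third argument of $\boldsymbol{S}_k$ — and through $\widehat{K}_0(\Kcal,\Hcal)$. Now $D_{\Hcal}\widehat{K}_0(0,0)=0$ by the same computation as above (the product over $x$ is identically zero at $\Kcal=0$); every term containing $D_{\bq}(\bA_k^{\ssup{\bq}})^{-1}$ or $D_{\bq}\bB_k^{\ssup{\bq}}$ is multiplied by one of the components $H_{k+1}$, $K_k$ or $\widehat{K}_0(0,0)$ of the third argument, all of which vanish at $(0,0,0)$; and $D_3\boldsymbol{S}_k(0,0,0)=0$ because $\boldsymbol{S}_k(0,0,\cdot)$ is the constant map $0$ (the functional $\widetilde{K}_k$ of \eqref{E:K'} contains a circle-product factor equal to $K_k=0$, while the $K_1'$-component has middle argument $\widehat{K}_0(0,\Hcal)=0$). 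Hence $D_{\Hcal}\Tcal(0,0,0)=0$, and therefore $D_{\Hcal}Z(0,0)=0$.

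With $Z_{H_0}(0,0)=0$, $D_{\Hcal}Z_{H_0}(0,0)=0$, and the $N$-independent bounds on the first and second derivatives of $Z$ on $B_{\eps_1}(0)\times B_{\eps_2}(0)$ from Theorem~\ref{T:Tfp}, the quantitative implicit function theorem yields a radius $\tilde{\varrho}>0$ depending only on $L,h,A$ (hence not on $N$) and a smooth $\widehat{\Hcal}:B_{\tilde{\varrho}}(0)\to B_{\eps_2}(0)$ with $Z_{H_0}(\Kcal,\widehat{\Hcal}(\Kcal))=\widehat{\Hcal}(\Kcal)$; one shrinks $\tilde{\varrho}$ if necessary so that the $N$-uniform Lipschitz bound of $Z_{H_0}$ in $\Kcal$ keeps the image inside $B_{\tilde{\varrho}}(0)\subseteq B_{\eps_2}(0)$. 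Differentiating the identity $Z_{H_0}(\Kcal,\widehat{\Hcal}(\Kcal))=\widehat{\Hcal}(\Kcal)$ and inserting the $N$-uniform derivative bounds of Theorem~\ref{T:Tfp} produces the asserted $N$-uniform bounds on the derivatives of $\widehat{\Hcal}$; moreover $\bq(\widehat{\Hcal}(\Kcal))\in B_{\kappa}(0)$ since $\widehat{\Hcal}(\Kcal)\in B_{\eps_2}(0)\subseteq B_{\rho_2}(0)$ and Theorem~\ref{T:Tfp} already gives $\bq(\Hcal)\in B_{\kappa}(0)$ for all such $\Hcal$. The one genuinely delicate point is the vanishing of $D_{\Hcal}Z_{H_0}(0,0)$: it requires tracing every route by which the seed Hamiltonian propagates through $\Tcal$ and checking that each contribution is annihilated at the trivial fixed point; the rest is a routine quantitative fixed-point argument resting entirely on the $N$-uniformity built into Theorem~\ref{T:Tfp}.
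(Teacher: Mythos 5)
Your proposal is correct and follows essentially the same route as the paper's (i.e.\ \cite{ABKM}'s) argument: a quantitative implicit-function/contraction argument for $\Hcal\mapsto Z_{H_0}(\Kcal,\Hcal)$ whose hinge is $Z_{H_0}(0,0)=0$ and $D_{\Hcal}Z_{H_0}(0,0)=0$, combined with the $N$-uniform first and second derivative bounds $\widehat{C}_{i_1,i_2}$ of Theorem~\ref{T:Tfp} — exactly the structure visible in the stated formula for $\tilde{\varrho}$. One small imprecision: a circle-product factor equal to $K_k=0$ does \emph{not} annihilate the circle product (since $K_k(\emptyset)=1$ it acts as the identity, $F\circ K_k=F$); the correct reason that $\boldsymbol{S}_k(0,0,\bq)\equiv 0$ is that $H_k=0$ forces $\widetilde{I}_k=I_k=1$, so \emph{all three} factors $(1-\widetilde{I}_k)$, $(I_k-1)$ and $K_k$ vanish on non-empty polymers — a fact you may as well exploit directly by noting $\Tcal(0,\Hcal,0)=0$ for every $\Hcal$, whence $Z(0,\Hcal)\equiv 0$ and $D_{\Hcal}Z_{H_0}(0,\cdot)\equiv 0$ by uniqueness of the fixed point.
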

A valid choice for the parameter $\tilde{\varrho}$, which determines the domain of the map $\widehat{\Hcal}$, is given by
\begin{equation}
\tilde{\varrho}=\min\Big\{\frac{\rho_0}{2\widehat{C}_{1,0}},\frac{1-4\widehat{C}_{0,2}\rho_0}{2\widehat{C}_{1,1}},\eps_1\Big\},
\end{equation}
where $\rho_0=\min\{\eps_2/2,(8\widehat{C}_{0,2})^{-1}\}$ and the constants $\eps_1$, $\eps_2$ and $\widehat{C}_{i_1,i_2}$ are as in Theorem \ref{T:Tfp}. Combined with the result of step 7, this permits us to recover the main renormalisation group representation of the partition function from \cite{ABKM} in our setting.
\begin{theorem} [Theorem 11.1 of \cite{ABKM}] \label{T:mainrep}
On the assumptions of Theorem \ref{T:Tfp} and for a choice of constants consistent with it, for every triplet $(L,h,A)$ with $L\ge L_0$, $h\ge h_0(L)$ and $A\ge A_0(L)$, there exists a $\varrho=\varrho(L,h,A)>0$ such that for each $N\ge 1$ there are smooth maps $\lambda_N: B_{\varrho}(0)\subset \bE_{\zeta,\Qcal_V}\to \R$, $\bq_N: B_{\varrho}(0)\to B_{\kappa}(0)$ and $K_N: B_{\varrho}(0)\to \bM(\Pcal_N^c,\mathfrak{V}_N)$, whose derivatives are bounded uniformly in $N$, with the following properties. The equality
\begin{equation}
\int_{\cbO_N}\;\sum_{X\subset\mathbb{T}_N}\prod_{x\in X}\Kcal(\eta(x))\,\nu^{\ssup{0}}(\d\eta)=\frac{Z_N^{\ssup{\bq_N(\Kcal)}}\ex^{L^{Nd}\lambda_N(\Kcal)}}{Z_N^{\Qcal_V}}\int_{\cbO_N}\;\Big(1+K_N(\Kcal)(\Lambda_N,\eta)\Big)\,\nu_{N+1}^{\ssup{\bq_N(\Kcal)}}(\d\eta)
\end{equation}
holds for all $\Kcal\in B_{\varrho}(0)$. There are constants $\widetilde{C}_\ell(L,h,A)>0$ such that
\begin{equation} \label{E:KNbound}
\norm{D_{\Kcal}^{\ell} K_N(\Kcal)(\dot{\Kcal},...,\dot{\Kcal})}_N^{(A)}\le \widetilde{C}_\ell(L,h,A) \vartheta^N\norm{\dot{\Kcal}}_{\zeta,\Qcal_V}^{\ell}
\end{equation}
for every $\ell\ge 0$. Finally, we have the bound
\begin{equation} \label{E:KNbound1}
\int_{\cbO_N}\;\abs{K_N(\Kcal)(\Lambda_N,\eta)}\,\nu_{N+1}^{\ssup{\bq_N(\Kcal)}}(\d\eta)\le \frac{1}{2}.
\end{equation}
\end{theorem}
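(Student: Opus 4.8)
The plan is to assemble the result from the fixed-point constructions of Theorem~\ref{T:Tfp} and Lemma~\ref{L:Hfp}, the one-step identity of Proposition~\ref{P:Tk}, and the successive-convolution formula \eqref{E:successint}; essentially no new analysis is needed, since the substantive work has been carried out in the preceding Steps. I would fix $(L,h,A)$ and all auxiliary constants consistently with Theorem~\ref{T:Tfp} and Lemma~\ref{L:Hfp}, and take $\varrho=\tilde{\varrho}$ (the $N$-independent radius of the domain of $\widehat{\Hcal}$), shrinking it further below if necessary. For $\Kcal\in B_\varrho(0)\subset\bE$ put $\Hcal_0=\widehat{\Hcal}(\Kcal)\in M_0(\Bcal_0,\mathfrak{V}_N)$, write $\bq=\bq(\Hcal_0)$ for its quadratic part (so $\bq\in B_\kappa(0)$ by Lemma~\ref{L:Hfp}) and $\lambda_N(\Kcal)$ for its constant coordinate, and let $Z(\Kcal,\Hcal_0)=(H_0,\dots,H_{N-1},K_1,\dots,K_N)$ be the corresponding fixed point of $\Tcal$. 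By construction of $\Tcal$ and the defining property of $\widehat{\Hcal}$, one has $H_0=\Hcal_0$, $K_0=\widehat{K}_0(\Kcal,\Hcal_0)$, $(H_{k+1},K_{k+1})=\bT_k(H_k,K_k,\bq)$ for $0\le k\le N-1$, and $H_N=0$. Set $\bq_N(\Kcal)=\bq$ and $K_N(\Kcal)=K_N$.

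Two elementary computations then produce the asserted identity. First, since $\nu^{\ssup{0}}$ and $\nu^{\ssup{\bq}}$ differ only through the $\bq$-quadratic form, $\nu^{\ssup{0}}(\d\eta)=(Z_N^{\ssup{\bq}}/Z_N^{\Qcal_V})\,\ex^{-\frac12(\eta,\bq\eta)}\,\nu^{\ssup{\bq}}(\d\eta)$, where $Z_N^{\Qcal_V}=Z_N^{\ssup{0}}$. Using the site-additivity of $\Hcal_0$, the expansion $\sum_{X\subset\mathbb{T}_N}\prod_{x\in X}\Kcal(\eta(x))=\prod_{x\in\mathbb{T}_N}(1+\Kcal(\eta(x)))$, and the fact that for $\eta\in\cbO_N$ every linear relevant monomial vanishes when summed over $\mathbb{T}_N$ (as $\sum_{x\in\mathbb{T}_N}\nabla^\a\eta_i(x)=0$), one finds
\begin{equation*}
\ex^{-\frac12(\eta,\bq\eta)}\sum_{X\subset\mathbb{T}_N}\prod_{x\in X}\Kcal(\eta(x))=\ex^{L^{Nd}\lambda_N(\Kcal)}\bigl(\ex^{-H_0}\circ K_0\bigr)(\Lambda_N,\eta),\qquad\eta\in\cbO_N.
\end{equation*}
Secondly, iterating Proposition~\ref{P:Tk}, namely $\bR_{k+1}^{\ssup{\bq}}(\ex^{-H_k}\circ K_k)(\Lambda_N,\cdot)=(\ex^{-H_{k+1}}\circ K_{k+1})(\Lambda_N,\cdot)$ for $k=0,\dots,N-1$, together with \eqref{E:successint}, which expresses $\int_{\cbO_N}(\cdot)\,\nu^{\ssup{\bq}}$ as successive convolutions against $\nu_1^{\ssup{\bq}},\dots,\nu_{N+1}^{\ssup{\bq}}$, collapses the first $N$ integrations and gives
\begin{equation*}
\int_{\cbO_N}\bigl(\ex^{-H_0}\circ K_0\bigr)(\Lambda_N,\eta)\,\nu^{\ssup{\bq}}(\d\eta)=\int_{\cbO_N}\bigl(\ex^{-H_N}\circ K_N\bigr)(\Lambda_N,\eta)\,\nu_{N+1}^{\ssup{\bq}}(\d\eta)=\int_{\cbO_N}\bigl(1+K_N(\Lambda_N,\eta)\bigr)\,\nu_{N+1}^{\ssup{\bq}}(\d\eta),
\end{equation*}
where the last step uses $H_N=0$ and that $\Lambda_N$ is the sole $N$-block, so the scale-$N$ circle product at $\Lambda_N$ reduces to $1+K_N(\Lambda_N,\cdot)$. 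Applying Proposition~\ref{P:Tk} at each scale requires $(H_k,K_k)\in\Ucal_{k,\rho,\kappa}$, which follows from $\norm{Z(\Kcal,\Hcal_0)}_\Zcal<\eps$ and the definition of $\norm{\cdot}_\Zcal$. Combining the two displays with the change of measure, and writing $\bq=\bq_N(\Kcal)$, gives exactly the claimed equality.

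Smoothness of $\lambda_N$, $\bq_N$ and $K_N$ in $\Kcal$, with all derivatives bounded uniformly in $N$, follows by composing the smooth, uniformly-$N$-bounded maps $\widehat{\Hcal}$ (Lemma~\ref{L:Hfp}) and $(\Kcal,\Hcal)\mapsto Z(\Kcal,\Hcal)$ (Theorem~\ref{T:Tfp}) with the continuous linear coordinate projections. The bound \eqref{E:KNbound} is then immediate from the structure of $\norm{\cdot}_\Zcal$, which weights the $K_{k+1}$-slot by $\vartheta^{-(k+1)}$: projecting onto the $K_N$-component gains the factor $\vartheta^N$, while the chain rule applied to $\Kcal\mapsto Z(\Kcal,\widehat{\Hcal}(\Kcal))$ with the derivative bounds of Theorem~\ref{T:Tfp} and Lemma~\ref{L:Hfp} yields the constants $\widetilde{C}_l(L,h,A)$. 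For \eqref{E:KNbound1}, note that the strong norm dominates the zeroth-order term of $K_N(\Lambda_N)$, so $\abs{K_N(\Lambda_N,\eta)}\le\norm{K_N(\Lambda_N)}_{N,\Lambda_N}\,\overline{w}_N^{\Lambda_N}(\eta)$; integrating against $\nu_{N+1}^{\ssup{\bq}}$, using Theorem~\ref{T:wf}(9) at $v=0$, $\overline{\rho}=0$ with $\abs{\Lambda_N}_N=1$, and then \eqref{E:KNbound} for $l=0$, bounds the left-hand side of \eqref{E:KNbound1} by a constant multiple of $\vartheta^N\norm{\Kcal}_{\zeta,\Qcal_V}\le C\varrho$, which is at most $\tfrac12$ once $\varrho$ is shrunk (uniformly in $N$, since $\vartheta^N\le1$).

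The main difficulty here is organisational rather than analytic: carefully bookkeeping the prefactors produced by the change of reference measure — the ratio $Z_N^{\ssup{\bq}}/Z_N^{\Qcal_V}$ and the exponential $\ex^{L^{Nd}\lambda_N(\Kcal)}$ that absorbs the accumulated constant term — and checking, in the gradient-field representation, that the linear relevant monomials integrate to zero over $\mathbb{T}_N$, so that the reduction to $1+K_N(\Lambda_N,\cdot)$ is exact. One must also verify that the domain constraints of Proposition~\ref{P:Tk} hold at every scale, which is precisely where the constants tracked through Steps~1--7 are used.
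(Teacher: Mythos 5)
Your assembly is exactly the route the paper intends (and which it delegates to Chapters 11--12 of \cite{ABKM}): change of reference measure from $\nu^{\ssup{0}}$ to $\nu^{\ssup{\bq}}$, absorption of the relevant Hamiltonian coordinates into the prefactors using that linear monomials sum to zero on $\cbO_N$, iteration of Proposition \ref{P:Tk} via \eqref{E:successint} down to $1+K_N(\Lambda_N,\cdot)$, and extraction of $\lambda_N$, $\bq_N$, $K_N$ from the fixed-point maps of Theorem \ref{T:Tfp} and Lemma \ref{L:Hfp}. The only blemish is in the last step: to get the factor $\norm{\Kcal}_{\zeta,\Qcal_V}$ in the bound for \eqref{E:KNbound1} you should invoke \eqref{E:KNbound} with $l=1$ together with $K_N(0)=0$, since the $l=0$ bound as stated does not carry that factor.
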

To obtain \eqref{E:KNbound1} for all sufficiently large $N$ it is enough to take $\varrho=\tilde{\varrho}$, the constant from Lemma \ref{L:Hfp}. Otherwise, a sufficient constraint for any given $N$ is
\begin{equation}
\varrho\le\frac{A}{\Acal_{\Bcal}\widetilde{C}_1}\vartheta^{-N}.
\end{equation}
These statements may be derived from our previous steps above by following closely the arguments in Chapters 11 and 12 of \cite{ABKM}. A proof is therefore omitted.

Theorem \ref{T:mainrep} is the main technical statement of \cite{ABKM}, adapted to our setting and notation: the results on the surface tension and scaling limit in \cite{ABKM}, which represent the starting point for this work, are a direct consequence of this theorem. It is also the main tool for our argument to relate the tuning parameter $\bq_N(\Kcal)$ to the Hessian of surface tension. Theorem \ref{T:mainrep} is given in terms of an abstract perturbation function $\Kcal\in\bE_{\zeta,\Qcal_V}$. It therefore remains to connect this statement to the concrete function $\Kcal_{u,\beta,V}$ introduced in \eqref{E:KubV} and make the dependence on the tilt $u$ explicit.

\subsection{Step 9: Relating the abstract perturbation to $V$ and $\sigma_\beta(u)$}

We first make a connection with surface tension as a function of the tilt. We pass from \eqref{E:ZNuKgrad} to the surface tension to obtain
\begin{equation} \label{E:sigmaNexp}
\sigma_{N,\beta}(u)=V(u)-\frac{1}{\beta L^{Nd}}\log Z_{N,\beta}^{\ssup{0}} + \frac{\widetilde{\sigma}_N(\Kcal_{u,\beta,V})}{\beta},
\end{equation}
in which expression the perturbative component,
\begin{equation}
\widetilde{\sigma}_N(\Kcal_{u,\beta,V})=-\frac{1}{L^{Nd}}\log\int_{\cbO_N}\;\sum_{X\subset\mathbb{T}_N}\prod_{x\in X}\Kcal_{u,\beta,V}(\eta(x))\,\nu^{\ssup{0}}(\d\eta),
\end{equation}
corresponds to the integral in \eqref{E:pertintgrad} after inserting the particular perturbation function $\Kcal_{u,\beta,V}$. The corresponding object in \cite{ABKM} is denoted by $\Wcal_N$; we relabel it to be consistent with our notation for the surface tension. We recall here the constant $r_1=3$ listed among the fixed parameters at the outset of this analysis. As the next result makes clear, it controls the number of derivatives of $\sigma_\beta$ we eventually achieve.
\begin{theorem} [Theorem 2.9 of \cite{ABKM}] \label{T:sigmatilde}
Let $L\ge L_0$ and $\varrho$ be as in Theorem \ref{T:mainrep} and assume that $\Qcal_V$ satisfying \eqref{E:Assumptions} is given. For any integer $N\ge 1$, any open set $\Ocal\subset\R^d$ and any map
\begin{align}
\Ocal &\to \bE_{\zeta,\Qcal_V} \nonumber \\
u &\mapsto \Kcal_u
\end{align}
that belongs in the $C^{r_1}(\Ocal)$ class and satisfies the bounds
\begin{align}
\sup_{u\in\Ocal} \norm{\Kcal_u}_{\zeta,\Qcal_V} &< \varrho \label{E:Kucond1}\\
\Theta = \sup_{u\in\Ocal}\sum_{\abs{\gamma}\le r_1}\frac{1}{\gamma !}\norm{\partial_u^{\gamma}\Kcal_u}_{\zeta,\Qcal_V} &< \infty, \label{E:Kucond2}
\end{align}
the function $u\mapsto \widetilde{\sigma}_N(\Kcal_u)$ is $C^{r_1}(\Ocal)$ and its partial derivatives $\abs{\partial_u^\alpha\widetilde{\sigma}_N(\Kcal_u)}$ for $\abs{\alpha}\le r_1$ are bounded in terms of $\Theta$. Consequently, there exists a function $\widetilde{\sigma}\in C^{r_1-1,1}(\Ocal)$ and a sub-sequence $(N_\ell)$ such that $\widetilde{\sigma}_{N_\ell}\to\widetilde{\sigma}$ in $C^{r_1-1}(\Ocal)$ as $\ell\to\infty$. The derivatives of $\widetilde{\sigma}$ of order up to $r_1-1$, and the Lipschitz constant of the $(r_1-1)$-st derivatives, are bounded in terms of $L$ and $\Theta$.
\end{theorem}

The final link relates the preceding statement to our assumptions on $V$ in \eqref{E:Assumptions}. To state the next proposition in a convenient form, we designate by the letter $\Psi$ the function
\begin{equation}
\Psi(t) = \sup_{z\in\R^d:\abs{z}\le t}\;\sum_{3\le\abs{\alpha}\le r_0+r_1}\frac{1}{\alpha !}\abs{\partial^\alpha V(z)}
\end{equation}
that features in the fourth equality in \eqref{E:Assumptions}.
\begin{prop} [Proposition 2.11 of \cite{ABKM}] \label{P:Vass}
Let $V$ satisfy the assumptions in \eqref{E:Assumptions} with $r_0=r_1=3$ and $\varrho>0$ be given. There exist positive reals $\zeta=\zeta(\omega,\omega_0)$, $\delta_0=\delta_0(\omega,\omega_0,r_0,\Psi,\varrho)$ and $\Theta=\Theta(\omega,r_0,r_1,\Psi)$, as well as a constant $\beta_0=\beta_0(\omega,r_0,\Psi,\varrho)\ge 1$ such that, for all $\beta\ge\beta_0$, the following hold:
\begin{enumerate}
\item The $B_{\delta_0}(0)\subset\R^d\to \bE_{\zeta,\Qcal_V}$ map $u\mapsto\Kcal_{u,\beta,V}$ is $C^{r_1}$; \\
\item  
$\displaystyle
\norm{\Kcal_{u,\beta,V}}_{\zeta,\Qcal_V} < \varrho\; \text{; and} $
\hfill
\refstepcounter{equation}\textup{(\theequation)}
\vspace{0.5cm}
\item 
$\displaystyle
\sum_{\abs{\gamma}\le r_1}\frac{1}{\gamma !}\norm{\partial_u^{\gamma}\Kcal_{u,\beta,V}}_{\zeta,\Qcal_V} \le \Theta, $
\hfill
\refstepcounter{equation}\textup{(\theequation)}
\vspace{0.1cm}
\end{enumerate}
in each case, for all $u\in B_{\delta_0}(0)$.
\end{prop}
This completes the transposition of the results of \cite{ABKM} into our setting and thereby establishes the starting point for the remaining sections of this article.

\section{Extended renormalisation group flow} \label{S:6}

\subsection{External fields} \label{S6:1}

As outlined in section \ref{S:3}, the principal objective of our extended renormalisation group analysis is to control the integral
\begin{equation} \label{E:extfieldint}
\int_{\cbO_N}\;\Big(\ex^{-H_0}\circ K_0\Big)(\mathbb{T}_N,\eta+\mathscr{C}^{\ssup{\bq},\nabla}\boldsymbol{\nabla}_N g_\eps)\,\nu^{\ssup{\bq}}(\d\eta)
\end{equation}
as a function of the scale parameter $\eps$, uniformly in $N$. The above expression anticipates that the representation of the partition function in terms of the functionals $(H_k,K_K)$ remains an important tool. Although the support of $g_\eps$ is small for $N$ sufficiently large, the covariance $\mathscr{C}^{\ssup{\bq},\nabla}$ does not have finite range. Without more, the coupling to the gradient vector field $\mathscr{C}^{\ssup{\bq},\nabla}\boldsymbol{\nabla}_N g_\eps$ therefore amounts to a global perturbation of the renormalisation flow in \eqref{E:extfieldint}. We avail of the finite range decomposition of $\mathscr{C}^{\ssup{\bq},\nabla}$ to split $\mathscr{C}^{\ssup{\bq},\nabla}\boldsymbol{\nabla}_N g_\eps$ into a sequence of fields, the \emph{external fields} alluded to in section \ref{S:3}, with suitably bounded support. At each scale $k$, the corresponding external field - denoted by $\psi_k$ - is taken into account as a local perturbation that consequently affects only a subset of polymers. This gives rise to an extended renormalisation flow that keeps track of the impact of the $\psi_k$ scale-by-scale. It does not displace the sequence $(H_k,K_k)$ derived above, which we refer to as the bulk renormalisation group flow, since not all (indeed most) polymers are unaffected by the external fields.

As the external fields depend on the test function, it is convenient to settle some notation for the discretised fields $f_\eps$ and $g_\eps$. We recall that $g_\eps(x)=\eps^{\frac{d-2}{2}}g(\eps x)$ for some smooth function $g\in C_c^\infty(\R^d)$. Since the gradients of $g_\eps$ are tested against a translation-invariant vector field, we may assume without of loss of generality that the support of $g_\eps$ is centred in $\Z^d$ and, for all $N$ large enough, regard it as a field in $\boldsymbol{\Vcal}_N$. We then let
\begin{equation}
f_\eps(x)=\Delta g_\eps(x)=\sum_{i=1}^d \nabla_i^* \nabla_i g_\eps(x)
\end{equation}
and observe that $f_\eps\in\cbX_N$. Since only the gradients of $g_\eps$ are of concern, we may take $g_\eps\in\cbX_N$ as well. The scaling and compact support of the function $g$ imply that there is a constant $C_f>0$ (independent of $\eps$) such that the following assumptions hold:
\begin{equation} \label{E:fassump}
\begin{cases}
\abs{\nabla^\alpha f_\eps}_\infty \le C_f \eps^{\frac{d+2}{2}+\abs{\a}}\quad\text{ for all }0\le \abs{\a}\le M; \\
\abs{\nabla^\alpha g_\eps}_\infty \le C_f \eps^{\frac{d-2}{2}+\abs{\a}}\quad\text{ for all }0\le \abs{\a}\le M; \\
\bigcup_{0\le \abs{\a}\le M}\text{supp}\,(\nabla^\a f_e)\subset (-C_f \eps^{-1},C_f \eps^{-1})^d\bigcap\Z^d\subset\Lambda_N.
\end{cases}
\end{equation}
It follows that, for $k$ (and $N$) large enough, the support of $f_\eps$ and any of its discrete derivatives that appear in our analysis is contained in the unique $k$-block $B_k^0\in \Bcal_k$ such that $0\in B_k^0$. We refer to the smallest scale $\bar{k}=\bar{k}(\eps)$ such that this holds as the \emph{smoothness scale} of $f_\eps$. More precisely, we require the support of $f_\eps$ to be sufficiently distant from the boundary of $B_{\bar{k}}^0$ and so we impose the following requirement:
\begin{equation} \label{E:kfdef}
\bar{k}=\min\big\{k:(-C_f \eps^{-1},C_f \eps^{-1})^d\cap\Z^d\subset \big[-\floor{L^k/4},\floor{L^k/4}\big]^d\cap\Lambda_N\big\}.
\end{equation}

Most of the detailed calculations involving the external fields are carried out at the level of scalar fields in $\cbX_N$, and so it is opportune to introduce them as such. Accordingly, we give the following definition of the external fields $\psi_k$ for $k=1$ to $k=N$:
\begin{equation}
\psi_k= \begin{cases}
0 & \text{if}\quad k<\bar{k} \\
\sum_{j=1}^{\bar{k}-1}\mathscr{C}_j^{\ssup{\bq}}f_\eps & \text{if}\quad k=\bar{k} \\
\mathscr{C}_{k-1}^{\ssup{\bq}}f_\eps & \text{if}\quad \bar{k}<k<N \\
\big(\mathscr{C}_{N-1}^{\ssup{\bq}}+\mathscr{C}_N^{\ssup{\bq}}+\mathscr{C}_{N+1}^{\ssup{\bq}}\big)f_\eps & \text{if}\quad k=N.
\end{cases}
\end{equation}
Manifestly, we have $\sum_{k=1}^N \psi_k=\mathscr{C}^{\ssup{\bq}}f_\eps$. These objects readily translate into the vector field setting. We have $\boldsymbol{\nabla}_N \mathscr{C}_j^{\ssup{\bq}}f_\eps=\mathscr{C}_j^{\ssup{\bq},\nabla}\boldsymbol{\nabla}_N g_\eps$, from which an analogous decomposition of the gradient vector field $\mathscr{C}^{\ssup{\bq},\nabla}\boldsymbol{\nabla}_N g_\eps$ in \eqref{E:extfieldint} derives.

\subsection{Coordinates of the extended flow} \label{S6:2}

For now and until the completion of our argument in section \ref{S:10}, we work with a fixed $\eps$ and hence a fixed $\bar{k}$. For any $N\gg\bar{k}$, we assume that the tuning parameter $\bq=\bq(N,u)$, as well as the sequence of functionals $(H_k,K_k)$, have been constructed as described in section \ref{S:5} such that Theorem \ref{T:mainrep} holds. Using the bulk renormalisation group flow as reference background, we compute the impact of the external fields $\psi_k$ on the successive integration steps from $k=\bar{k}$ to $k=N$. To do this, we introduce a sequence of functionals $K_k(<k):\Pcal_k^c \times\mathfrak{V}_N\to\R$ that capture the decaying part and a sequence $e_k$ (of, essentially, real numbers) that capture the residual error at scale $k$. For consistency, we relabel the irrelevant components of the bulk flow $K_k(0)$.

To simplify some of our expressions, we begin by introducing the shorthand notation
\begin{multline} \label{E:Psidef}
\Psi_k(X,v)=-K_k(X,v,<k)+ \\
\sum_{Y\in\Pcal_k(X)}\Big(e^{-H_k(\cdot,v+\boldsymbol{\nabla}_N\psi_k)}-e^{-H_k(\cdot,v)}\Big)^{X\setminus Y}K_k(Y,v+\boldsymbol{\nabla}_N\psi_k,<k)
\end{multline}
for $X\in\Pcal_k^c$, and we extend $\Psi_k$ and $K_k(<k)$ to arbitrary polymers in $\Pcal_k$ via the inclusion in \eqref{E:canincl2}. Note that, for scales $k<N$, our constraints on the fields $\psi_k$ ensure that $\psi_k\equiv 0$ on any polymer neighbourhood $X^*$ whenever $B_k^0\cap X=\emptyset$. This entails that $\Psi_k(X,v)\equiv 0$ for any such $X$ and, if $B_k^0\in\Pcal_k(X)$, the only non-trivial terms in \eqref{E:Psidef} are
\begin{multline}
\Psi_k(X,v)=K_k(X,v+\boldsymbol{\nabla}_N\psi_k,<k)-K_k(X,v,<k) \\
+\Big(e^{-H_k(B_k^0,v+\boldsymbol{\nabla}_N\psi_k)}-e^{-H_k(B_k^0,v)}\Big)K_k(X\setminus B_k^0,v+\boldsymbol{\nabla}_N\psi_k,<k).
\end{multline}
The virtue of this construction is that we can write the integrand in the presence of the external field at scale $k$ (for $k\ge\bar{k}$) as follows:
\begin{equation}
\Big(e^{-H_k}\circ K_k(<k)\Big)(\Lambda_N,v+\boldsymbol{\nabla}_N\psi_k)=\Big(e^{-H_k}\circ (K_k(<k)+\Psi_k)\Big)(\Lambda_N,v).
\end{equation}
To complete this initial setup, we introduce $e_k:\Bcal_k \times \mathfrak{V}_N\to\R$ as a constant functional,
\begin{equation}
e_k(B,v)=\1_{B=B_k^0}e_k(B_k^0,0),
\end{equation}
and treat it as a real number where the context permits no ambiguity.

The functionals $K_k(<k)$ are defined iteratively, similar to the bulk renormalisation group flow. For a connected polymer $U\in\Pcal_{k+1}^c$, we let
\begin{multline} \label{E:K<kdef}
K_{k+1}(U,v,<k+1)=\sum_{X\in\Pcal_k}\chi(X,U)\Big(e^{-(e_{k+1}-e_k)}\Big)^X\Big(e^{-\widetilde{H}_k}\Big)^{U\setminus X}\Big(e^{\widetilde{H}_k}\Big)^{X\setminus U}\times \\
\int_{\cbO_N}\;\Big(1-e^{-\widetilde{H}_k+e_{k+1}-e_k}\Big)\circ\Big(e^{-H_k}-1\Big)\circ\big(K_k(<k)+\Psi_k\big)(X,v+\eta)\,\nu_{k+1}^{\ssup{\bq}}(\d\eta),
\end{multline}
where $H_k$ and $\widetilde{H}_k$ are given exactly as in section \ref{S:4} for the bulk flow. We designate this mapping as $K_{k+1}(U,v,<k+1)=\overline{\bS}_k(H_k,K_k(0),K_k(<k))$, by analogy with the irrelevant component of the renormalisation map $\bS_k$. Note that we tacitly assume here that all relevant functionals are integrable. This is formally established in section \ref{S:8}, where the boundedness of the corresponding integration maps is shown pursuant to the proof that $\overline{\bS}_k$ is smooth with respect to the norms introduced further below. Subject to that caveat, this definition results in the desired replication of the perturbed integrand scale-by-scale (see Lemma \ref{L:K<kprop}):
\begin{equation}
\bR_{k+1}^{\ssup{\bq}}\Big(e^{e_k}\big(e^{-H_k}\circ K_k(<k)\big)\Big)(\Lambda_N,v+\boldsymbol{\nabla}_N \psi_k)=e^{e_{k+1}}\Big(e^{-H_{k+1}}\circ K_{k+1}(<k+1)\Big)(\Lambda_N,v).
\end{equation}
The choice of error constants $e_k$ for $k\ge\bar{k}$ is free in the above expressions. We determine them through the recursion
\begin{equation}
e_{k+1}-e_k=\int_{\cbO_N}\; K_k(B_k^0,\eta,<k)+\Psi_k(B_k^0,\eta)-K_k(B_k^0,\eta,0)\,\nu_{k+1}^{\ssup{\bq}}(\d\eta),
\end{equation}
and show by a brief computation in the next subsection that this is indeed the correct choice. The departure point for this construction is the smoothness scale, that is, we set $e_{\bar{k}}=0$ and $K_{\bar{k}}(<\bar{k})=K_{\bar{k}}(0)$.

We observe that the functionals we have introduced above are not translation-invariant. In particular, it holds that $K_k(X,<k)=K_k(X,0)$ if $0\notin X$, but $K_k(X,<k)\neq K_k(X,0)$ in general. We verify at the end of this section that our norms (and various inequalities and other results shown in relation to them) extend naturally to the objects defined for the purposes of the extended renormalisation group flow.

\subsection{Motivation and the key estimate for $K_k(<k)$} \label{S6:3}

Before proceeding to the more technical parts of our analysis, we aim to give a brief explanation why the definitions in the preceding two subsections can be expected to achieve their purposes. By analogy with Lemma \ref{L:K'}, we first collect in one statement some basic features of the extended renormalisation group flow introduced above. We take the following lemma subject to the assumption (as highlighted previously) that all objects exist and are well-defined.
\begin{lemma} \label{L:K<kprop}
On the assumptions of Theorem \ref{T:mainrep} and for any choice of constants consistent with it, the functionals $K_k(<k)$ defined in subsection \ref{S6:2} satisfy the following:
\begin{enumerate}
\item $K_k(X,<k)$ is of class $C^{r_0}(\mathfrak{V}_N)$ and local for all $X\in\Pcal_k$ and for all $k=\bar{k},\ldots,N$; \\
\item $K_k(X,<k)=K_k(X,0)$ whenever $B_k^0\cap X=\emptyset$ for all $k=\bar{k},\ldots,N-1$; \\
\item $K_k(<k)$ factors at scale $k$ for all $k=\bar{k},\ldots,N$; and
\item For all $k=\bar{k},\ldots,N-1$, it holds that
\begin{multline}
\quad\quad\quad\bR_{k+1}^{\ssup{\bq}}\Big(e^{e_k}\big(e^{-H_k}\circ K_k(<k)\big)\Big)(\Lambda_N,v+\boldsymbol{\nabla}_N \psi_k) \\
=e^{e_{k+1}}\Big(e^{-H_{k+1}}\circ K_{k+1}(<k+1)\Big)(\Lambda_N,v).
\end{multline}
\end{enumerate}
\end{lemma}
\begin{proof}
The proof of (1)-(3) is by induction. At the smoothness scale, $K_{\bar{k}}(<\bar{k})=K_{\bar{k}}(0)$ by definition and $K_{\bar{k}}(0)$ has the claimed properties according to Lemma \ref{L:K'}. For the induction step, we see that $\Psi_k$ inherits locality and $C^{r_0}(\mathfrak{V}_N)$-differentiability from $K_k(<k)$ and $H_k$. The same therefore holds for $K_{k+1}(<k+1)$, having regard to Lemma \ref{L:Rreg}, following the same reasoning as in Lemma \ref{L:K'}.

Suppose next, that $U\in\Pcal_{k+1}$ is such that $B_{k+1}^0\notin\Bcal_{k+1}(U)$. The range of $\mathscr{C}_j^{\ssup{\bq}}$ for $j\le k-1$ is bounded by $L^{k-1}/2$; $\psi_k$ (for $k<N$) is therefore supported in a cube of side length no greater than $L^{\bar{k}}/2+L^{k-1}<L^k$ centred on $0\in\Lambda_N$. Accordingly, $\psi_k$ is supported in $B_k^0$ as claimed previously and $U^*\cap B_k^0=\emptyset$ as $L$ is chosen large enough so that $L>2^{d+1}+1$ by assumption. Since $\chi(X,U)=0$ unless $X^*\subset U^*$, it follows that only polymers strictly disjoint from $B_k^0$ contribute to the expression defining $K_{k+1}(U,<k+1)$. This means that, in the circle product appearing in \eqref{E:K<kdef} for such polymers $X$, the relevant terms satisfy
\begin{equation}
(K_k(<k)+\Psi_k)(Y)=K_k(Y,0)
\end{equation}
for any $Y\subset X$, by the induction hypothesis. The expression in \eqref{E:K<kdef} thus collapses to the definition of $K_{k+1}(U,v)$ in \eqref{E:K'}. This confirms property (2) at scale $k+1$. The factorisation asserted in (3) is shown as for the bulk renormalisation group flow in Lemma \ref{L:K'}.

Finally, the computation to verify the identity in (4), which is purely formal at this stage, runs very similar to the general case (see Chapter 6.3 of \cite{ABKM}) and hence omitted.
\end{proof}

Our main technical goal is to show that the functionals $K_k(<k)$ decay in a norm that fulfils an analogous role to the $\norm{\cdot}_k^{(A)}$-norm in relation to the $K_k(0)$ functionals. This can only hold if the error term $e_{k+1}-e_k$ extracts "enough" in an appropriate sense at each scale. As before, the principal tool to establish this desired contraction is the linearisation of $\overline{\boldsymbol{S}}_k$ near zero, which is appropriate since we expect all arguments to be small in their respective norms. A fairly straightforward computation from \eqref{E:K<kdef} yields an expression
\begin{align} \label{E:DSzero}
&D\overline{\boldsymbol{S}}_k  (0,0,0)(H_k,K_k(0),K_k(<k))(U,v)=\sum_{\substack{X\in\Pcal_k^c\setminus\Bcal_k \\ \pi_k(X)=U}}\int_{\cbO_N}\;K_k(X,<k,v+\boldsymbol{\nabla}_N\psi_k+\eta)\,\nu_{k+1}^{\ssup{\bq}}(\d\eta) \nonumber \\
&\;+\sum_{\substack{B\in\Bcal_k \\ \overline{B}=U}}\big(1-\Pi_2\big)\int_{\cbO_N}\;K_k(X,0,v+\eta)\,\nu_{k+1}^{\ssup{\bq}}(\d\eta) + \1_{U=B_{k+1}^0}\bI^0(H_k,K_k(0),K_k(<k))(v),
\end{align}
where the symbol $\bI^0$ stands for the integral
\begin{align} \label{E:Izero}
\bI^0(H_k, & K_k(0),K_k(<k))(v) = \int_{\cbO_N}\;\big(K_k(B_k^0,<k,v+\boldsymbol{\nabla}_N\psi_k+\eta)-K_k(B_k^0,<k,\psi_k+\eta)\big) \nonumber \\
&-\big(K_k(B_k^0,0,v+\eta)-K_k(B_k^0,0,\eta)\big)-\big(H_k(B_k^0,v+\boldsymbol{\nabla}_N\psi_k+\eta)-H_k(B_k^0,\boldsymbol{\nabla}_N\psi_k+\eta)\big) \nonumber \\
&+\big(H_k(B_k^0,v+\eta)-H_k(B_k^0,\eta)\big)\,\nu_{k+1}^{\ssup{\bq}}(\d\eta).
\end{align}
The first two sums in \eqref{E:DSzero} are familiar from the bulk flow and can be handled similarly. The main difference is that an estimate of the integral
\begin{equation}
\int_{\cbO_N}\; \bunderline{w}_k^X(v+\boldsymbol{\nabla}_N\psi_k+\eta)\,\nu_{k+1}^{\ssup{\bq}}(\d\eta)
\end{equation}
is needed to play the role of the bounds in Theorem \ref{T:wf}(9) and (10). The task of proving such an estimate occupies the largest part of section \ref{S:7} below. The additional terms for the special block $B_k^0$ in \eqref{E:Izero} are amenable to a first order Taylor estimate using similar arguments as in the bulk flow. This can only be expected to result in a contraction by a factor of $L^{-d/2}$, which is however sufficient in this case since these terms only appear for a single $k$-block rather than $L^d$ $k$-blocks contained in $U\in\Bcal_{k+1}$. Since our bounds for the integrated weight functions in the presence of the external fields $\psi_k$ are initially quite poor, we actually only show that $\overline{\bS}_k$ contracts from some scale $k>\bar{k}$. Nothing turns on this as far as our main results are concerned.

\subsection{Extended norms} \label{S6:4}

Pursuant to the approach outlined above, we aim to show that the map $\overline{\bS}_k$ has uniformly bounded derivatives on a suitable domain. To identify such a domain we first need to adapt the norms introduced in section \ref{S4:Norms} so that they can accommodate functionals that differ depending on whether or not their polymer argument contains $B_k^0$.

The most straightforward change concerns the strong norm. We simply insist on an explicit supremum over all $k$-blocks,
\begin{equation}
\tnorm{F}_k=\sup_{B\in\Bcal_k}\tnorm{F}_{k,B}=\sup_{B\in\Bcal_k}\,\sup_{v\in\mathfrak{V}_N}\abs{F(B)}_{k,B,v}\bunderline{W}_k^{-B}(v),
\end{equation}
and note that this does not affect any previous definition or statement for translation-invariant $F$. For the weak norms, we observe that the definition of $\norm{\cdot}_k^{(A)}$ in \eqref{E:globalweaknorm} does not rely on translation-invariance, so we may consider the functionals $K_k(<k)$ in respect of this norm. We do, however, also require an additional weak norm that accounts for the potential impact of the external field $\psi_k$ through the weight functions and we signal this difference with an asterisk. We therefore define a set of local and global weak norms by letting
\begin{equation}
\norm{F(X)}_{k,X}^* = \sup_{v\in\mathfrak{V}_N}\abs{F(X)}_{k,X,v}\big(\bunderline{w}_k(v)\vee\bunderline{w}_k(v+\boldsymbol{\nabla}_N\psi_k)\big)^{-X} 
\end{equation}
and
\begin{equation} \label{E:Astarnorm}
\norm{F}_k^{(A),*} = \sup_{X\in\Pcal_k^c}A^{\abs{X}_k}\norm{F(X)}_{k,X}^*,
\end{equation}
where the exponent notation involving the weight functions is interpreted as follows:
\begin{equation}
\big(\bunderline{w}_k(v)\vee\bunderline{w}_k(v+\boldsymbol{\nabla}_N\psi_k)\big)^{-X}=\Big(\max\big\{\bunderline{w}_k^X(v),\bunderline{w}_k^X(v+\boldsymbol{\nabla}_N\psi_k)\big\}\Big)^{-1}.
\end{equation} 
Clearly, it holds that $\norm{\cdot}_k^{(A),*}\le\norm{\cdot}_k^{(A)}$. In rough outline, the $K_k(<K)$ functionals are measured in our usual $\norm{\cdot}_k^{(A)}$-norm; we then switch to the starred norms for $K_k(<k)+\Psi_k$ and revert to the unstarred norms after integrating out the $(k+1)$-fluctuation field.

We further need a series of norms that measure functionals at scales $k$ (and the intermediate scale ${k:k+1}$) that only factor at scale ${k+1}$. This is achieved in \cite{ABKM} through the concept of \emph{clusters}, which we have elided so far since it features only in the proof of Theorem 6.7 of \cite{ABKM} (Theorem \ref{T:smooth} in our numbering). We borrow this idea in section \ref{S:8} below and so we recall the definition and a number of key facts next.

We say that $X\in\Pcal_k\setminus\{\emptyset\}$ is a \emph{cluster} if $\pi_k(X)\in\Pcal_{k+1}^c$, and we denote the collection of all clusters at scale $k$ by the symbol $\Pcal_k^{\text{cl}}$. A polymer $Y\subset X\in\Pcal_k$ is a cluster of $X$, written $Y\in\Ccal_k^{\text{cl}}(X)$, if there exists a connected polymer $U\in\Ccal_{k+1}(\pi_k(X))$ such that
\begin{equation}
Y=\bigcup_{Z\in\Ccal_k(X):\pi_k(Z)\subset U} Z.
\end{equation}
In fact, the relationship between clusters and connected components of $\pi_k(X)$ is one-to-one and any (non-empty) polymer is the union of its clusters. The salient features of this notion for our purposes are summarised in the next statement, which is proved in Chapter 9 of \cite{ABKM}.
\begin{lemma} [Lemma 9.2 of \cite{ABKM}] \label{L:clusters}
Assume that $X\in\Pcal_k\setminus\{\emptyset\}$ for some $k=0,\ldots,N-1$. Then it follows that:
\begin{enumerate}
\item Given any $U\in\Ccal_{k+1}(\pi_k(X))$, there is a cluster $Y\in\Ccal_k^{\text{cl}}(X)$ such that $\pi_k(Y)=U$; \\
\item $X=\bigcup_{Y\in\Ccal_k^{\text{cl}}(X)}Y$; \\
\item For any $Y_1,Y_2\in\Ccal_k^{\text{cl}}(X)$, $Y_1\neq Y_2$ implies that $Y_1$ and $Y_2$ are strictly disjoint (at scale $k$). If, in addition, $L\ge 2^{d+3}$ then different clusters of $X$ are separated by at least $\textup{dist}_{\infty}(Y_1,Y_2)\ge \frac{3}{4}L^{k+1}$; and \\
\item $\sum_{Y\in\Ccal_k^{\text{cl}}(X)} \abs{\Ccal_k(Y)}=\abs{\Ccal_k(X)}$.
\end{enumerate}
\end{lemma}

It is clear from Lemma \ref{L:clusters} that a functional defined on clusters may be extended to arbitrary polymers by analogy with the inclusion for functionals defined on $\Pcal_k^c$ in \eqref{E:canincl2}. We define several norms on such functionals that feature an additional weight depending on the number of connected components. For clarity, we include the standard norms that do not explicitly depend on the external fields $\psi_k$. Given weights $A,B\ge 1$, we let
\begin{align}
\norm{K}_k^{(A,B)} &= \sup_{X\in\Pcal_k^{\textup{cl}}}A^{\abs{X}_k}B^{\Ccal_k(X)}\norm{K(X)}_{k,X} \text{; and} \nonumber \\
\norm{K}_{k:k+1}^{(A,B)} &= \sup_{X\in\Pcal_k^{\textup{cl}}}A^{\abs{X}_k}B^{\Ccal_k(X)}\norm{K(X)}_{k:k+1,X}
\end{align}
for the standard case, and
\begin{equation}
\norm{K}_k^{(A,B),*} = \sup_{X\in\Pcal_k^{\text{cl}}}A^{\abs{X}_k}B^{\abs{\Ccal_k(X)}}\norm{K(X)}_{k,X}^*
\end{equation}
for functionals affected by the external fields in the extended renormalisation group flow. In practice, several versions of these norms feature in our analysis corresponding to particular choices for the weights $A$ and $B$. In the following we write "$M(\text{polymer set},\text{norm})$" for the Banach spaces of (not necessarily translation-invariant) functionals determined by the various norms introduced above. For example, the restriction of $M(\Pcal_k^c,\norm{\cdot}_k^{(A)})$ to translation-invariant functionals corresponds to what we have defined as the space $\bM(\Pcal_k^c,\mathfrak{V}_N)$ in section \ref{S4:Norms}.

It remains to check that equivalent factorisation properties to Lemma \ref{L:norms} hold for the $\norm{\cdot}_{k,X}^*$-norm, which is at the centre of our extended definitions. This is mostly straightforward, but to address one difficulty (and for convenience) we restate the result in its adapted form below.
\begin{lemma} \label{L:*norms}
Let $K\in M(\Pcal_k^c,\norm{\cdot}_k^{(A)})$, $K'\in M(\Pcal_k^c,\norm{\cdot}_k^{(A),*})$ and $F\in M(\Bcal_k,\tnorm{\cdot}_k)$ for $0\le k\le N-1$, and extend them to arbitrary polymers via the inclusions \eqref{E:canincl1} and \eqref{E:canincl2}. On the assumptions of Theorem \ref{T:wf}, the following bounds hold:
\begin{enumerate}
\item For every $B_k^0\subset X\in\Pcal_k$,
\begin{equation}
\norm{K'(X)}_{k,X}^*\le \prod_{Y\in\Ccal_k(X)}\norm{K'(Y)}_{k,Y}^*;
\end{equation}
\item For every $X,Y\in\Pcal_k$ disjoint,
\begin{equation}
\norm{K(X)F^Y}_{k,X\cup Y}^*\le \norm{K(X)}_{k,X}\tnorm{F}_k^{\abs{Y}_k};
\end{equation}
\item For every $X,Y\in\Pcal_k$ disjoint such that $B_k^0\subset X$,
\begin{equation}
\norm{K'(X)F^Y}_{k,X\cup Y}^*\le \norm{K'(X)}_{k,X}^*\tnorm{F}_k^{\abs{Y}_k}.
\end{equation}
\end{enumerate}
\end{lemma}
\begin{proof}
To see (1), we expand the definition of the $\norm{\cdot}_{k,X}^*$-norm using Lemma \ref{L:abs} to obtain
\begin{align} \label{E:*normfactor}
\abs{K'(X)}_{k,X,v}&\big(\bunderline{w}_k(v)\vee\bunderline{w}_k(v+\boldsymbol{\nabla}_N\psi_k)\big)^{-X} \le \frac{\prod_{Y\in\Ccal_k(X)}\abs{K'(Y)}_{k,Y,v}}{\bunderline{w}_k^X(v)\vee\bunderline{w}_k^X(v+\boldsymbol{\nabla}_N\psi_k)} \nonumber \\
&\le \prod_{Y\in\Ccal_k(X)}\norm{K'(Y)}_{k,Y}^*\frac{\prod_{Y\in\Ccal_k(X)}\bunderline{w}_k^Y(v)\vee\bunderline{w}_k^Y(v+\boldsymbol{\nabla}_N\psi_k)}{\bunderline{w}_k^X(v)\vee\bunderline{w}_k^X(v+\boldsymbol{\nabla}_N\psi_k)}.
\end{align}
We now want to rely on Theorem \ref{T:wf}(3) and the fact that $\psi_k$ is not identically zero on exactly one connected component of the polymer $X$ to deduce that the fraction appearing on the second line above equals one. A careful look at the construction of weight functions in \cite{ABKM} suggests that this may not be good enough. Indeed, Lemma 7.5 of \cite{ABKM} asserts that the operator we call $\bunderline{A}_{k,\nabla}^Z$ (see Appendix \ref{A:C}) depends on gradients in $Z^{++}$, which includes $k$-blocks strictly disjoint from $Z$. This is the relevant part of our operator $\bunderline{A}_k^Z$ since $\boldsymbol{\nabla}_N\psi_k$ is evidently a gradient vector field. However, a tighter range bound actually holds at all relevant scales.

For ease of reference, we record the argument in the notation of Chapter 7 of \cite{ABKM}. For $k\ge 2$, we have $A_k^Z=A_{k-1:k}^{Z^*}+\delta_k M_k^Z$ by construction. The operator $M_k^Z$ depends on gradients in $Z^+ + [-M,M]^d$ and, by Lemma 7.5(3) of \cite{ABKM}, $A_{k-1:k}^{Z^*}$ depends on gradients in $(Z^*)^{++}$. For $L\ge 2^d+2$, the latter polymer is contained in $Z^+$ and so the range of $A_k^Z$ is that of $M_k^Z$ given above. Provided that $L>M$, $Z^+ + [-M,M]^d\subset (Z^+)^*$ and, if $Z$ and $B_k^0$ are strictly disjoint, the support of $\psi_k$ does not intersect $(Z^+)^*$ so long as $L>2^{d+2}+1$. In light of our assumptions, $L$ (and $k$) are large enough so that all of the preceding requirements are met. Returning to the product of weight functions in \eqref{E:*normfactor}, this means that only one of the factors $\bunderline{w}_k^Y$ for $Y\in\Ccal_k(X)$ depends on a range that includes the support of $\boldsymbol{\nabla}_N\psi_k$, and so we may conclude as desired.

The inequality in (2) is an immediate consequence of the corresponding statement in Lemma \ref{L:norms}. For the final claim in (3), we argue similarly:
\begin{align}
\abs{K'(X)F^Y}_{k,X\cup Y,v}&\big(\bunderline{w}_k(v)\vee\bunderline{w}_k(v+\boldsymbol{\nabla}_N\psi_k)\big)^{-X\cup Y} \le \frac{\abs{K'(X)}_{k,X,v}\prod_{B\in\Bcal_k(Y)}\abs{F(B)}_{k,B,v}}{\bunderline{w}_k^{X\cup Y}(v)\vee\bunderline{w}_k^{X\cup Y}(v+\boldsymbol{\nabla}_N\psi_k)} \nonumber \\
&\le \norm{K'(X)}_{k,X}^*\tnorm{F}_k^{\abs{Y}_k} \frac{\bunderline{w}_k^{X\cup Y}(v)\vee\big(\bunderline{w}_k^X(v+\boldsymbol{\nabla}_N\psi_k)\bunderline{W}_k^Y(v)\big)}{\bunderline{w}_k^{X\cup Y}(v)\vee\bunderline{w}_k^{X\cup Y}(v+\boldsymbol{\nabla}_N\psi_k)},
\end{align}
using Theorem \ref{T:wf}(5) and (6). The strong weight function $\bunderline{W}_k^Y$ depends on values of its argument in $Y^*$ (as a generous estimate, see Appendix \ref{A:C}). Since $B_k^0$ and $Y$ are disjoint, it follows from the foregoing discussion that
\begin{equation}
\bunderline{w}_k^X(v+\boldsymbol{\nabla}_N\psi_k)\bunderline{W}_k^Y(v)=\bunderline{w}_k^X(v+\boldsymbol{\nabla}_N\psi_k)\bunderline{W}_k^Y(v++\boldsymbol{\nabla}_N\psi_k)\le \bunderline{w}_k^{X\cup Y}(v+\boldsymbol{\nabla}_N\psi_k),
\end{equation}
relying again on Theorem \ref{T:wf}(6), and this completes the proof. 
\end{proof}
We end this section by noting that Lemma \ref{L:norms} remains available to us in a setting where translation-invariance does not hold for all functionals since its proof does not rely on it.

\section{Estimates for the external fields} \label{S:7}

In this section, we show a number of bounds relating to the external fields $\psi_k$ that are needed to implement our overall strategy in sections \ref{S:8} and \ref{S:9} below. These estimates can broadly be understood as precise versions of the general idea that the external fields are suitably small at their respective scales. The main result proved here is the consistency of the weight functions under integration in the presence of the $\psi_k$.

We rely in the following on various properties of the finite range decomposition proved in \cite{B18}. We generally use the notation introduced in Chapter 6.1 of \cite{ABKM}. In particular, the kernels of the covariances $\mathscr{C}_k^{\ssup{\bq}}$ are denoted by $\Ccal_k^\ssup{\bq}\in\cbX_N$ and we write $\widehat{\Ccal}_k^{\ssup{\bq}}$ for their Fourier transforms. The latter is given by the expression
\begin{equation} \label{E:FTdef}
\widehat{\phi}(p)=\sum_{x\in\mathbb{T}_N}\ex^{-i p\cdot x}\phi(x),
\end{equation}
for discrete vectors $p$ in the dual torus
\begin{equation}
\widehat{\mathbb{T}}_N = \Big\{-\frac{(L^N-1)\pi}{L^N},-\frac{(L^N-3)\pi}{L^N},\ldots,\frac{(L^N-1)\pi}{L^N}\Big\}^d.
\end{equation}
To avoid lengthy repetitions we refer to \cite{ABKM} and \cite{B18} for an overview of the discrete Fourier transform so far as relevant to those works.

\subsection{Basic estimates} \label{S7:1}

We begin with a few simpler estimates. The bound
\begin{align}
\mathfrak{w}_{\bar{k}}(\alpha)^{-1}\abs{\nabla^\alpha (\nabla_i\mathscr{C}_j^{\ssup{\bq}} f_\eps(x))}&=h_{\bar{k}}^{-1}L^{\bar{k}\frac{d+2\abs{\a}}{2}}\abs{\mathscr{C}_j^{\ssup{\bq}} \nabla^\alpha\nabla_i f_\eps(x)} \nonumber \\
&\le h_{\bar{k}}^{-1}L^{\bar{k}\frac{d+2\abs{\a}}{2}}\sum_{y\in\mathbb{T}_N}\abs{\Ccal_{\bq,j}(x-y)}\abs{\nabla^\alpha\nabla_i f_\eps(y)} \nonumber \\
&\le h_{\bar{k}}^{-1}L^{\bar{k}\frac{d+2\abs{\a}}{2}} C_{FR} C_f L^{2j+d-2} \eps^{\frac{d+2\abs{\a}+4}{2}},
\end{align}
valid for $j<\bar{k}$ and $d>2$, is immediate from \eqref{E:fassump}. $C_{FR}$ is a constant (depending only on the fixed parameters) relating to the finite range decomposition as given in Theorem 6.1 of \cite{ABKM}. Using that $\eps\le 4C_f L^{-\bar{k}+1}$, implied by the third line of \eqref{E:fassump}, we therefore obtain
\begin{equation}
\abs{\boldsymbol{\nabla}_N\psi_{\bar{k}}}_{\bar{k},X}\le h_{\bar{k}}^{-1}C_{FR}(4C_f)^{d+4}L^{-2\bar{k}+2d+1}\sum_{j=1}^{\bar{k}-1}L^{2j}\le h_{\bar{k}}^{-1}C(d,f)L^{2d-1},
\end{equation}
for all polymers $X$ and $d>2$. Since $h_{\bar{k}}=2^{\bar{k}}h$, we may assume that $\abs{\boldsymbol{\nabla}_N\psi_{\bar{k}}}_{\bar{k},X}<1/2$ for all $\eps$ small enough (depending on $L$ and $h$). In dimension two, the estimate is slightly weaker due to a logarithmic correction to the bounds on the finite range decomposition kernels: $\abs{\boldsymbol{\nabla}_N\psi_{\bar{k}}}_{\bar{k},X}\le h_{\bar{k}}^{-1}C(d,f)(\log L)L^3$, but this does not materially affect our preceding comment.

For scales above $\bar{k}$, we rely on the bounds on the finite range decomposition kernels. It is useful to take advantage here of the particular form of our test functions. Summing by parts, we may choose to write the external field as
\begin{align} \label{E:partint}
\psi_k(x)&=\mathscr{C}_{k-1}^{\ssup{\bq}}f_{\eps}(x)=\sum_{y\in\mathbb{T}_N} \Ccal_{k-1}^{\ssup{\bq}}(x-y)\sum_{i=1}^d \nabla_i^* \nabla_i g_\eps(y)=\sum_{y\in\mathbb{T}_N}\sum_{i=1}^d \nabla_i^* \nabla_i \Ccal_{k-1}^{\ssup{\bq}}(x-y)(x-y)g_\eps(y) \nonumber \\
&=\Delta \mathscr{C}_{k-1}^{\ssup{\bq}}g_{\eps}(x).
\end{align}
Roughly speaking, by doing this we trade a factor of $L^{-2\bar{k}}$ for a factor of $L^{-2(k-2)}$, which is advantageous for $k>\bar{k}+1$. Given $\bar{k}<k< N$, we have
\begin{align}
\mathfrak{w}_{k}(\alpha)^{-1}\abs{\nabla^\alpha (\nabla_i\mathscr{C}_{k-1}^{\ssup{\bq}} f_\eps(x))}&\le h_{k}^{-1}L^{k\frac{d+2\abs{\a}}{2}}\sum_{y\in\mathbb{T}_N}\abs{\nabla^\alpha\nabla_i\Delta\Ccal_{k-1}^{\ssup{\bq}}(x-y)}\abs{g_\eps(y)} \nonumber \\
&\le h_{k}^{-1} C(d,f) L^{-(k-\bar{k})\frac{d+2}{2}}L^{\frac{7d+6}{2}};
\end{align}
and a similar (actually slightly tighter) bound holds for $k=N$. This expression is certainly small whenever $k\gg \bar{k}$, but even at scales near $\bar{k}$ we can rely on the smallness of $h_k^{-1}$ as above for $\eps$ small enough depending on $L$ and $h$.

A further useful estimate concerns the operator $\bunderline{M}_k$, which features in the upper bound on the weight functions $\bunderline{w}_k^X$ and $\bunderline{w}_{k:k+1}^X$ (see Theorem 5.2(2)). We only use it at the final scale $N$ and it suffices to consider the curl-free component $\bunderline{M}_{N,\nabla}$. We find that
\begin{align} \label{E:psiNest}
(\boldsymbol{\nabla}_N\psi_N,\bunderline{M}_{N,\nabla}\boldsymbol{\nabla}_N\psi_N)_{\cbO_N} &\le \sum_{1\le\abs{\a}\le M}L^{2N(\abs{\a}-1)}\sum_{x\in\mathbb{T}_N}\Big|\nabla^{\a}\big(\Delta\Ccal_{N-1}^{\ssup{\bq}}+\Delta\Ccal_N^{\ssup{\bq}}+\Delta\Ccal_{N+1}^{\ssup{\bq}}\big) * g_\eps(x)\Big|^2 \nonumber \\
&\le \sum_{1\le\abs{\a}\le M}L^{2N(\abs{\a}-1)} C_f^2 \eps^{d-2}L^{2d\bar{k}} L^{dN} C_{FR}^2 L^{-2(N-2)(d+\abs{\a})} \nonumber \\
&\le C(d,f)L^{4M+5d-2}L^{-(N-\bar{k})(d+2)},
\end{align}
for which we rely on the fact that the choice of $n=4\floor{d/2}+6$ ensures that we have bounds on $M+2=2\floor{d/3}+5$ derivatives of the covariance kernels (see Theorem 6.1 of \cite{ABKM}). This expression is bounded uniformly in $N$ (depending on $L$) and converges to zero as $N\to\infty$ for $\eps$ (and hence $\bar{k}$) fixed.

\subsection{Consistency of the extended weight functions}

The $\norm{\cdot}_{k,X}^*$-norm defined in section \ref{S6:4} features weight functions shifted by an external field: $\bunderline{w}_k^X(v+\boldsymbol{\nabla}_N\psi_k)$. It is clear that the integration map $\bR_{k+1}^{\ssup{\bq}}$ can only be bounded with respect to the starred norms if these weight functions remain integrable. We thus require an analogue of Theorem \ref{T:wf}(9) and (10) in the presence of the $\psi_k$, up to no more than a multiplicative constant. Moreover, this constant should not depend on $L$ to avoid circularity in our argument that $\overline{\bS}_k$ contracts; the independence of $\Acal_{\Bcal}$ and $L$ plays a similar role in Theorem \ref{T:linearisation}. The required statement follows.

\begin{prop} \label{P:wfpsi}
On the assumptions of Theorem \ref{T:wf} and with a choice of constants consistent with it, there exist a real number $C(L)>0$, depending on $L$, and a natural number $\mathfrak{M}$, independent of $L$, $N$ and $\eps$, such that
\begin{equation} \label{E:wfpsi}
\int_{\cbO_N}\;\bunderline{w}_k^X(v+\eta+\boldsymbol{\nabla}_N\psi_k)\,\nu_{k+1}^{\ssup{\bq}}(\d\eta) \le C_k\Big(\frac{\Acal_\Pcal}{2}\Big)^{\abs{X}_k}\bunderline{w}_{k:k+1}(v)
\end{equation}
holds for all $v\in\mathfrak{V}_N$, $X\in\Pcal_k$ and $k=\bar{k},\ldots,N-1$, where the constants $C_k$ satisfy
\begin{equation} \label{E:Ckdef}
C_k \le \begin{cases}
C(L) & \quad\text{if }\; \bar{k}\le k < \bar{k}+\mathfrak{M} \\
\frac{4}{3} & \quad\text{if }\; k \ge \bar{k}+\mathfrak{M}.
\end{cases}
\end{equation}
If $X$ is a single $k$-block, then the constant $\Acal_\Pcal$ may be replaced by $\Acal_\Bcal$ in the above inequality.
\end{prop}

We handle two main cases separately. A difference arises because, at the smoothness scale $k=\bar{k}$, we have to rely on $f_\eps$ for our upper bounds, whereas at subsequent scales $k>\bar{k}$ we may have recourse to stronger estimates for the finite range covariances $\mathscr{C}_{k-1}^{\ssup{\bq}}$. However, the starting point is identical. Since $\boldsymbol{\nabla}_N\psi_k\in\cbO_N$, we concentrate on the curl-free part of our operators and evaluate the relevant integral by a change of variables:
\begin{align} \label{E:wfpsistart}
\int_{\cbO_N}\; \exp\Big(\frac{1}{2}\big(v^\Omega+\eta+ & \boldsymbol{\nabla}_N\psi_k,\bunderline{A}_{k,\nabla}^X(v^\Omega+\eta+\boldsymbol{\nabla}_N\psi_k)\big)_{\cbO_N}\Big)\;\nu_{k+1}^{\ssup{\bq}}(\d\eta) \nonumber \\
& = \int_{\cbX_N}\; \exp\Big(\frac{1}{2}\big(\phi+\xi,A_k^X(\phi+\xi)\big)_{\cbX_N} + \big(\xi,(\mathscr{C}_{k+1}^{\ssup{\bq}})^{-1}\psi_k\big)_{\cbX_N} \nonumber \\
&\quad\quad\quad\quad - \frac{1}{2}\big(\psi_k,(\mathscr{C}_{k+1}^{\ssup{\bq}})^{-1}\psi_k\big)_{\cbX_N}\Big)\;\mu_{k+1}^{\ssup{\bq}}(\d\xi),
\end{align}
in which expression $\phi=\boldsymbol{\nabla}_N^{-1}v^\Omega$ and $v^\Omega$ stands for the curl-free part (orthogonal projection) of $v$. Our goal is to bound the mixed term in $\xi$ and $\psi_k$. In each case, this takes the form
\begin{equation}
\big(\xi,(\mathscr{C}_{k+1}^{\ssup{\bq}})^{-1}\psi_k\big)_{\cbX_N} \le \frac{1}{2}\big(\xi,\Ocal(k)\xi\big)_{\cbX_N} + \,\textup{Remainder}(k,f_\eps),
\end{equation}
for a suitable symmetric operator $\Ocal(k)$ and a remainder term that depends solely on $f_\eps$ (or $g_\eps$).

We preface the detailed implementation of this approach with several remarks. First, we recall that all translation-invariant operators (including therefore the covariance operators $\mathscr{C}_k^{\ssup{\bq}}$) are simultaneously diagonalised in the Fourier basis corresponding to \eqref{E:FTdef} and hence commute. Strictly, these covariance operators are defined on $\cbX_N$, but we may consider them as operators on $\cbV_N$ with kernel given as the orthogonal complement of $\cbX_N$, that is, the constant fields. We assume below that they are so extended wherever required. Finally, inequalities involving symmetric operators are understood in the sense of quadratic forms. Accordingly, we say that $A\le B$ if, and only if, $\big(\phi,A\phi\big)\le \big(\phi,B\phi\big)$ for all $\phi$ in the relevant space. 

We now begin with the easier case $k>\bar{k}$. In the following, we use $\Ical_f$ to denote the identity operator on $B_{\bar{k}}^0$, extended by zero elsewhere. We further write $D_{FR}$ for the positive real number $D_{FR}=C^2c^{-1}$, where $C$ and $c$ are the constants appearing in the bounds on the covariance kernels in Fourier space given in Theorem 6.1 of \cite{ABKM}. The shorthand $\mathfrak{n}(d)$ stands for $\mathfrak{n}(d)=9d+6\tilde{n}+3n$. With this notation, we want to focus on the symmetric operator
\begin{equation} \label{E:Odefk>kbar}
\Ocal(k) = \frac{\rho L^{2k-\mathfrak{n}(d)}}{(1+\rho)D_{FR}} (\mathscr{C}_{k+1}^{\ssup{\bq}})^{-1} \, \Delta\mathscr{C}_{k-1}^{\ssup{\bq}} \, \Ical_0 \, \Delta\mathscr{C}_{k-1}^{\ssup{\bq}} \, (\mathscr{C}_{k+1}^{\ssup{\bq}})^{-1},
\end{equation}
and the corresponding remainder term is the inner product
\begin{equation} \label{E:Rdefk>kbar}
\textup{Remainder}(k,f_\eps) = \frac{1+\rho}{2\rho} D_{FR}L^{\mathfrak{n}(d)-2k} \big(g_\eps,g_\eps\big)_{\cbX_N}.
\end{equation}
The crucial estimates for this case are summarised in the next statement.
\begin{lemma} \label{L:k>kbarcase}
On the assumptions of Theorem \ref{T:wf} and with a choice of constants consistent with it, the following inequalities hold at all scales $\bar{k}<k<N$ for the operator given in \eqref{E:Odefk>kbar} and the expression in \eqref{E:Rdefk>kbar}:
\begin{enumerate}
\item $\big((\mathscr{C}_{k+1}^{\ssup{\bq}})^{-1}-\Ocal(k)\big)^{-1} \le (1+\rho)\mathscr{C}_{k+1}^{\ssup{\bq}}$;\\
\item $\textup{Tr}\big((\mathscr{C}_{k+1}^{\ssup{\bq}})^{1/2} \Ocal(k) (\mathscr{C}_{k+1}^{\ssup{\bq}})^{1/2}\big) \le C_1(d) L^{-d((k-\bar{k})}$; and \\
\item $\textup{Remainder}(k,f_\eps) \le C_2(d,f) L^{\mathfrak{n}(d)+d-2}L^{-2(k-\bar{k})}$,
\end{enumerate}
where $C_1,C_2>0$ depend on the fixed parameters and $C_2$ additionally depends on $C_f$.
\end{lemma}
\begin{proof}
We derive (1) from a suitable upper bound on the operator $\Ocal_k$. We have
\begin{equation}
(\mathscr{C}_{k+1}^{\ssup{\bq}})^{-1} \, \Delta\mathscr{C}_{k-1}^{\ssup{\bq}} \, \Ical_0 \, \Delta\mathscr{C}_{k-1}^{\ssup{\bq}} \, (\mathscr{C}_{k+1}^{\ssup{\bq}})^{-1} \le \Big[(\mathscr{C}_{k+1}^{\ssup{\bq}})^{-1} \, \big(\Delta\mathscr{C}_{k-1}^{\ssup{\bq}}\big)^2 \Big] \, (\mathscr{C}_{k+1}^{\ssup{\bq}})^{-1},
\end{equation}
and so to compare the corresponding quadratic form to that generated by $(\mathscr{C}_{k+1}^{\ssup{\bq}})^{-1}$ it suffices to estimate the eigenvalues of the operator $(\mathscr{C}_{k+1}^{\ssup{\bq}})^{-1} \, \big(\Delta\mathscr{C}_{k-1}^{\ssup{\bq}}\big)^2$. The Fourier coefficients of the kernel $\Delta\Ccal_{k-1}^{\ssup{\bq}}$ are easily computed (cf. the notation $q_j(p)=\ex^{ip_j}-1$ in \cite{ABKM}):
\begin{equation}
\widehat{\Delta\Ccal_{k-1}^{\ssup{\bq}}}(p)=
\sum_{j=1}^d (e^{ip_j}-1)(e^{-ip_j}-1)\widehat{\Ccal_{k-1}^{\ssup{\bq}}}(p)\le \abs{p}^2\widehat{\Ccal_{k-1}^{\ssup{\bq}}}(p).
\end{equation}
Relying on the bounds provided by Theorem 6.1 of \cite{ABKM}, we obtain
\begin{equation} \label{E:pbounds}
\frac{\widehat{\Ccal_{k+1}^{\ssup{\bq}}}(p)^{-1}\widehat{\Delta\Ccal_{k-1}^{\ssup{\bq}}}(p)^2}{D_{FR}} \le \begin{cases}
L^{9d+6\tilde{n}+3n}L^{-2k}L^{-(k-j)(d+n-3)} &\text{ for } L^{-(j+1)}<\abs{p}\le L^{-j};\,j<k-1 \\
L^{4d+2\tilde{n}+2n+1}L^{-2k} &\text{ for } L^{-k}<\abs{p}\le L^{-k+1} \\
L^{3d+2\tilde{n}+n-4}L^{-2k} &\text{ for } L^{-(k+1)}<\abs{p}\le L^{-k} \\
L^{2d+2\tilde{n}-5}L^{2k-4j} &\text{ for } L^{-(j+1)}<\abs{p}\le L^{-j};\,j\ge k+1,
\end{cases}
\end{equation}
from which we deduce that $\Ocal(k)\le\frac{\rho}{1+\rho}(\mathscr{C}_{k+1}^{\ssup{\bq}})^{-1}$, and this implies the inequality claimed in (1).

The same estimate on the Fourier kernels also helps to compute the trace in (2). We work with the standard basis of $\cbV_N$ ($e_x(y)=\delta_{x,y}\,\forall x,y\in\mathbb{T}_N$) and note that considering our operators on $\cbV_N$ does not change their trace. We obtain
\begin{align}
\text{Tr} \big((\mathscr{C}_{k+1}^{\ssup{\bq}})^{-1/2} \, \Delta\mathscr{C}_{k-1}^{\ssup{\bq}} \, \Ical_0 &\, \Delta\mathscr{C}_{k-1}^{\ssup{\bq}} \, (\mathscr{C}_{k+1}^{\ssup{\bq}})^{-1/2} \big) = \text{Tr}\big(\Ical_0 (\mathscr{C}_{k+1}^{\ssup{\bq}})^{-1} \, \big(\Delta\mathscr{C}_{k-1}^{\ssup{\bq}}\big)^2 \big) \nonumber \\
&= \sum_{x \in\mathbb{T}_N}\big(\Ical_0 e_x,(\mathscr{C}_{k+1}^{\ssup{\bq}})^{-1} \, \big(\Delta\mathscr{C}_{k-1}^{\ssup{\bq}}\big)^2 e_x\big)_{\cbV_N} \nonumber \\
&=\sum_{x\in B_{\bar{k}}^0}L^{-dN}\sum_{p\in\widehat{\mathbb{T}}_N}\widehat{\Ccal_{k+1}^{\ssup{\bq}}}(p)^{-1}\widehat{\Delta\Ccal_{k-1}^{\ssup{\bq}}}(p)^2\abs{\widehat{e_x}(p)}^2,
\end{align}
by Plancherel's identity. Direct computation shows that $\abs{\widehat{e_x}(p)}=1$ and, up to a geometric constant, we can estimate the size of the dyadic partition of the dual torus as follows:
\begin{equation}
L^{-dN}\abs{p\in\widehat{\mathbb{T}}_N:L^{-(j+1)}<\abs{p}\le L^{-j}}\le C(d) L^{-jd} \;\text{ for } 0\le j\le N.
\end{equation}
Strictly, the $j=0$ range goes up to $\abs{p}^2\le d\pi^2$, but this merely affects the constant $C(d)$. In combination with the bounds in \eqref{E:pbounds}, we thus show that the above sum is bounded as
\begin{equation} \label{E:Trbound}
\text{Tr} \big((\mathscr{C}_{k+1}^{\ssup{\bq}})^{-1/2} \, \Delta\mathscr{C}_{k-1}^{\ssup{\bq}} \, \Ical_0 \, \Delta\mathscr{C}_{k-1}^{\ssup{\bq}} \, (\mathscr{C}_{k+1}^{\ssup{\bq}})^{-1/2} \big) \le 2C(d)D_{FR}L^{9d+6\tilde{n}+n+6}L^{-2k}L^{-d(k-\bar{k})}.
\end{equation}
Taking account of the pre-factor in the definition of $\Ocal(k)$, we conclude with the bound asserted in (2). Finally, the assumptions on $g_\eps$ in \eqref{E:fassump} together with the definition of $\bar{k}$ imply that
\begin{equation}
\big(g_\eps,g_\eps\big)_{\cbX_N} \le L^{d\bar{k}}C_f^2\eps^{d-2}\le 4^{d-2}C_f^d L^{d-2}L^{2\bar{k}},
\end{equation} 
from which (3) follows.
\end{proof}

We obtain comparable bounds at the smoothness scale $k=\bar{k}$ next. The shorthand $\overline{\mathscr{C}}_{\bar{k}-1}^{\ssup{\bq}}=\sum_{j=1}^{\bar{k}-1}\mathscr{C}_{\bar{k}-1}^{\ssup{\bq}}$ is convenient, and we again use the operator $\Ical_0$ introduced above. The term we have to contend with thus reads
\begin{equation} \label{E:kbarmixedterm}
\big(\xi,(\mathscr{C}_{\bar{k}+1}^{\ssup{\bq}})^{-1}\overline{\mathscr{C}}_{\bar{k}-1}^{\ssup{\bq}}f_\eps\big)_{\cbX_N}
\end{equation}
in this notation. The presence of the small-scale covariances $\mathscr{C}_1^{\ssup{\bq}},\mathscr{C}_2^{\ssup{\bq}},\ldots$ means that the operator $\overline{\mathscr{C}}_{\bar{k}-1}^{\ssup{\bq}}$ cannot help to control the large ($\abs{p}\sim 1$) Fourier coefficients of $(\mathscr{C}_{\bar{k}+1}^{\ssup{\bq}})^{-1}$. Instead, we need to borrow some of the decay of the large Fourier modes of $f_\eps$ as well as its bounded support for our estimates. This is achieved through a Fourier multiplier, called $\Mcal$ in the following, that satisfies appropriate lower bounds on its eigenvalues and has finite range. Since it performs a purely auxiliary function, we relegate its construction to Appendix \ref{A:D} and rely on the features shown in Lemma \ref{L:Fmult}. These allow us to re-write the inner product given in \eqref{E:kbarmixedterm} as
\begin{equation}
\big(\xi,(\mathscr{C}_{\bar{k}+1}^{\ssup{\bq}})^{-1}\overline{\mathscr{C}}_{\bar{k}-1}^{\ssup{\bq}}\Mcal^{-1}\Ical_0\Mcal f_\eps\big)_{\cbX_N}=\big(\Ical_0\Mcal^{-1}\overline{\mathscr{C}}_{\bar{k}-1}^{\ssup{\bq}}(\mathscr{C}_{\bar{k}+1}^{\ssup{\bq}})^{-1}\xi,\Mcal f_\eps\big)_{\cbV_N},
\end{equation}
in terms of $\Mcal$ and $\Ical_0$. Our estimates at the smoothness scale therefore involve the operator
\begin{equation} \label{E:Odefk=kbar}
\Ocal(\bar{k}) = \frac{\rho \,C_a^2 L^{-2\bar{k}-\mathfrak{n}'(d)}}{16(1+\rho)D_{FR}}(\mathscr{C}_{\bar{k}+1}^{\ssup{\bq}})^{-1} \, \overline{\mathscr{C}}_{\bar{k}-1}^{\ssup{\bq}} \, \Mcal^{-1} \, \Ical_0 \, \Mcal^{-1} \, \overline{\mathscr{C}}_{\bar{k}-1}^{\ssup{\bq}} \, (\mathscr{C}_{\bar{k}+1}^{\ssup{\bq}})^{-1},
\end{equation}
and the remainder term now features the inner product
\begin{equation} \label{E:Rdefk=kbar}
\textup{Remainder}(\bar{k},f_\eps) = \frac{8(1+\rho)}{\rho\,C_a^2} D_{FR}L^{\mathfrak{n}'(d)+2\bar{k}} \big(\Mcal f_\eps,\Mcal f_\eps\big)_{\cbX_N}.
\end{equation}
In these expressions, $C_a$ is the constant from Lemma \ref{L:Fmult} and $\mathfrak{n}'(d)=7d+6\tilde{n}+n-2$. The relevant technical statement now reads as follows.
\begin{lemma} \label{L:k=kbarcase}
On the assumptions of Theorem \ref{T:wf} and with a choice of constants consistent with it, given $a\in\N$ such that $4a>2d+n-3$, the following inequalities hold for the operator in \eqref{E:Odefk=kbar} and the expression in \eqref{E:Rdefk=kbar}:
\begin{enumerate}
\item $\big((\mathscr{C}_{\bar{k}+1}^{\ssup{\bq}})^{-1}-\Ocal(\bar{k})\big)^{-1} \le (1+\rho)\mathscr{C}_{\bar{k}+1}^{\ssup{\bq}}$;\\
\item $\textup{Tr}\big((\mathscr{C}_{\bar{k}+1}^{\ssup{\bq}})^{1/2} \Ocal(\bar{k}) (\mathscr{C}_{\bar{k}+1}^{\ssup{\bq}})^{1/2}\big) \le C_3(d) $; and \\
\item $\textup{Remainder}(\bar{k},f_\eps) \le C_4(d,a,f) L^{\mathfrak{n}'(d)+4a+d+2}$,
\end{enumerate}
where $C_3,C_4>0$ depend on the fixed parameters and $C_4$ additionally depends on $C_f$ and $a$.
\end{lemma}
\begin{proof}
We argue as in the proof of Lemma \ref{L:k>kbarcase} above and thus begin by estimating the Fourier coefficients of the operator $(\mathscr{C}_{\bar{k}+1}^{\ssup{\bq}})^{-1} (\overline{\mathscr{C}}_{\bar{k}-1}^{\ssup{\bq}})^2 \Mcal^{-2}$. Combining the estimates from Theorem 6.1 of \cite{ABKM} and Lemma \ref{L:Fmult}, we arrive at the following:
\begin{equation} \label{E:pboundsinitial}
\frac{\widehat{\Ccal_{\bar{k}+1}^{\ssup{\bq}}}(p)^{-1}  \widehat{\overline{\Ccal}_{\bar{k}-1}^{\ssup{\bq}}}(p)^2 \widehat{\Mcal}(p)^{-2}}{16D_{FR}C_a^{-2}L^{2\bar{k}}} \le \begin{cases}
L^{\mathfrak{n}'(d)-4a} L^{(j+2-\bar{k})(4a-d-n+3)} &\text{ for } L^{-(j+1)}<\abs{p}\le L^{-j};j<\bar{k}-1 \\
L^{4d+2\tilde{n}+2n-1} &\text{ for } L^{-\bar{k}}<\abs{p}\le L^{-\bar{k}+1} \\
L^{3d+2\tilde{n}+n-4} &\text{ for } L^{-(\bar{k}+1)}<\abs{p}\le L^{-\bar{k}} \\
L^{2d+2\tilde{n}-5} &\text{ for } \abs{p}\le L^{-(\bar{k}+1)}.
\end{cases}
\end{equation}
Strictly, the symbol $\Mcal$ here should be read as a reference to the kernel of the corresponding operator (as in Lemma \ref{L:Fmult}). We see that this implies the inequality $\Ocal(\bar{k}) \le \frac{\rho}{1+\rho}(\mathscr{C}_{\bar{k}+1}^{\ssup{\bq}})^{-1}$, provided that $4a>d+n-3$, and (1) then follows as before.

The trace in (2) is also determined in the same manner as above. We thus evaluate the trace
\begin{align}
\text{Tr} \big( (\mathscr{C}_{\bar{k}+1}^{\ssup{\bq}})^{-1/2} \, \overline{\mathscr{C}}_{\bar{k}-1}^{\ssup{\bq}} \, \Mcal^{-1} &\, \Ical_0 \, \Mcal^{-1} \, \overline{\mathscr{C}}_{\bar{k}-1}^{\ssup{\bq}} \, (\mathscr{C}_{\bar{k}+1}^{\ssup{\bq}})^{-1/2} \big) \nonumber \\
&= \sum_{x\in B_{\bar{k}}^0}L^{-dN}\sum_{p\in\widehat{\mathbb{T}}_N}\Ccal_{\bar{k}+1}^{\ssup{\bq}}(p)^{-1}  \widehat{\overline{\Ccal}_{\bar{k}-1}^{\ssup{\bq}}}(p)^2 \widehat{\Mcal}(p)^{-2}\abs{\widehat{e_x}(p)}^2 \nonumber \\
&\le 32C(d)D_{FR}C_a^{-2}L^{\mathfrak{n}'(d)+2d-4a}L^{2\bar{k}}, 
\end{align}
for which we require the stronger assumption on $a$ to get the desired $L^{-d\bar{k}}$ factor for $\abs{p}\sim 1$. This confirms the claim in (2), and the inequality in (3) is an immediate consequence of the definition in \eqref{E:Rdefk=kbar} and the estimate on the relevant inner product in Lemma \ref{L:Fmult}.
\end{proof}

This completes our preparatory steps and so we end this section with a proof of its main result.
\begin{proof} [Proof of Proposition \ref{P:wfpsi}]
Recall the starting point of our argument in \eqref{E:wfpsistart}. Following our strategy, we have to integrate (with respect to $\lambda_N$) the exponential of
\begin{equation}
\frac{1}{2}\big(\phi+\xi,A_k^X(\phi+\xi)\big)_{\cbX_N} + \frac{1}{2}\big(\xi,\Ocal(k)\xi\big)_{\cbX_N} -\frac{1}{2}\big(\xi,(\mathscr{C}_{k+1}^{\ssup{\bq}})^{-1}\xi\big)_{\cbX_N} +\textup{Remainder}(k,f_\eps).
\end{equation}
The remainder term does not depend on the integration variable and thus contributes simply a multiplicative constant. The residual integrand is evaluated by standard Gaussian calculus as in Chapter 7 of \cite{ABKM}. We have the identity (see, e.g., (7.2.2) in \cite{ABKM})
\begin{multline} \label{E:Gausscalc}
\det(\mathscr{C}_{k+1}^{\ssup{\bq}})^{-1/2}\int_{\cbX_N}\;\ex^{\frac{1}{2}\big(\phi+\xi,A_k^X(\phi+\xi)\big)-\frac{1}{2}\big(\xi,((\mathscr{C}_{k+1}^{\ssup{\bq}})^{-1}-\Ocal(k))\xi\big)}\,\lambda_N(\d\xi) = \\
\det\Big((\mathscr{C}_{k+1}^{\ssup{\bq}})^{1/2}((\mathscr{C}_{k+1}^{\ssup{\bq}})^{-1}-\Ocal(k)-A_k^X)(\mathscr{C}_{k+1}^{\ssup{\bq}})^{1/2}\Big)^{-1/2} \ex^{\frac{1}{2}\big(\phi,\big((A_k^X)^{-1}-((\mathscr{C}_{k+1}^{\ssup{\bq}})^{-1}-\Ocal(k))^{-1}\big)^{-1}\phi\big)},
\end{multline}
provided the determinant does not vanish. Lemmas 7.5 and 7.7 of \cite{ABKM} provide the bounds
\begin{equation}
A_k^X\le \frac{1}{1+\zeta'/4}(\mathscr{C}_{k+1}^{\ssup{0}})^{-1}\,\text{ and }\,\mathscr{C}_{k+1}^{\ssup{\bq}} \le (1+\rho)\mathscr{C}_{k+1}^{\ssup{0}},
\end{equation}
which, together with Lemmas \ref{L:k>kbarcase}(1) and \ref{L:k=kbarcase}(1) above, imply that
\begin{equation}
(\mathscr{C}_{k+1}^{\ssup{\bq}})^{1/2}((\mathscr{C}_{k+1}^{\ssup{\bq}})^{-1}-\Ocal(k))(\mathscr{C}_{k+1}^{\ssup{\bq}})^{1/2}\ge \frac{1}{1+\rho}\,\text{ and }\,(\mathscr{C}_{k+1}^{\ssup{\bq}})^{1/2}A_k^X(\mathscr{C}_{k+1}^{\ssup{\bq}})^{1/2} \le \frac{1}{(1+\rho)^2}.
\end{equation}
This justifies \eqref{E:Gausscalc}, and we proceed with the chain of inequalities
\begin{equation}
\big((\mathscr{C}_{k+1}^{\ssup{\bq}})^{-1}-\Ocal(k)\big)^{-1} \le (1+\rho)\mathscr{C}_{k+1}^{\ssup{\bq}} \le (1+\rho)^2 \mathscr{C}_{k+1}^{\ssup{0}} \le (1+\zeta'/4) \mathscr{C}_{k+1}^{\ssup{0}}.
\end{equation}
Having regard to Lemma 7.1 of \cite{ABKM}, they imply that
\begin{equation}
\Big((A_k^X)^{-1}-\big((\mathscr{C}_{k+1}^{\ssup{\bq}})^{-1}-\Ocal(k)\big)^{-1}\Big)^{-1} \le \big((A_k^X)^{-1}-(1+\zeta'/4)\mathscr{C}_{k+1}^{\ssup{0}}\big)^{-1}=A_{k:k+1}^X,
\end{equation}
which means we recover the curl-free part of $\bunderline{w}_{k:k+1}^X(v)$ as desired. It remains to estimate the determinant in \eqref{E:Gausscalc}. We argue as in the proof of Lemma 7.7 of \cite{ABKM} using the trace bounds we have shown in Lemmas \ref{L:k>kbarcase}(2) and \ref{L:k=kbarcase}(2). We obtain the inequality
\begin{align} \label{E:detest}
-\frac{1}{2}\log\det\big(\1- & (\mathscr{C}_{k+1}^{\ssup{\bq}})^{1/2}(A_k^X+\Ocal(k))(\mathscr{C}_{k+1}^{\ssup{\bq}})^{1/2}\big) \le\frac{\log(2/\rho)}{2-\rho}\text{Tr}\big((\mathscr{C}_{k+1}^{\ssup{\bq}})^{1/2}(A_k^X+\Ocal(k))(\mathscr{C}_{k+1}^{\ssup{\bq}})^{1/2}\big) \nonumber \\
&\le \frac{1}{1+\rho}\abs{X}_k\log(\Acal_\Pcal/2)+\frac{\log(2/\rho)}{2-\rho}\text{Tr}\big((\mathscr{C}_{k+1}^{\ssup{\bq}})^{1/2} \Ocal(k) (\mathscr{C}_{k+1}^{\ssup{\bq}})^{1/2}\big),
\end{align}
which means that the additional factor contributed by the determinant is also bounded uniformly in $k$ and $N$.

Combining our various estimates, we conclude that the upper bound in \eqref{E:wfpsi} holds with a multiplicative constant $C_k$ that satisfies, in each case,
\begin{equation}
\log C_{\bar{k}} \le C_3(d)+\frac{C_4(d,a,f)\log(2/\rho)}{2-\rho}L^{\mathfrak{n}'(d)+4a+d+2}
\end{equation}
at the smoothness scale $k=\bar{k}$, and
\begin{equation} \label{E:Ckk>kbar}
\log C_k \le C_1(d)L^{-d(k-\bar{k})}+\frac{C_2(d,f)\log(2/\rho)}{2-\rho}L^{\mathfrak{n}(d)+d-2}L^{-2(k-\bar{k})}
\end{equation}
at subsequent scales up to (and including) $N-1$. It follows that $C_k$ is bounded uniformly in $N$ (depending on $L$) and, since \eqref{E:Ckk>kbar} is monotone in $L$, the choice of
\begin{equation}
\mathfrak{M}=\textup{ceil}\,\max\Big\{\frac{\log\frac{\log 4/3}{2C_1(d)}}{2\log 81},\frac{\log\frac{(2-\rho)\log 4/3}{2C_2(d,f)\log(2/\rho)}}{2\log 81}+\frac{\mathfrak{n}(d)+d-2}{2}\Big\},
\end{equation}
corresponding to $L\ge 2^{d+3}+16R$ in $d=2$, ensures the asserted behaviour for $k\ge \bar{k}+\mathfrak{M}$. Finally, we observe that the part of the determinant in \eqref{E:detest} that depends on the polymer $X$ is unaffected by our manipulations. It follows that $\Acal_\Pcal$ may be replaced by $\Acal_\Bcal$ for single $k$-blocks as in Lemma 7.7 of \cite{ABKM}. This completes the proof.
\end{proof}

\section{Smoothness of the map $\overline{\bS}_k$} \label{S:8}

The aim of this section is to show that the map $\overline{\bS}_k$ defined in section \ref{S6:2} above is a smooth map ($C^\infty$ in the Fr\'ech\'et sense) on a suitable subspace
\begin{equation}
\mathfrak{U}_k\subset M(\Bcal_k,\norm{\cdot}_{k,0})\times\ M(\Pcal_k^c,\norm{\cdot}_k^{(A)})\times M(\Pcal_k^c,\norm{\cdot}_k^{(A)}) \to M(\Pcal_{k+1}^c,\norm{\cdot}_{k+1}^{(A)}),
\end{equation}
with derivatives uniformly bounded on $\mathfrak{U}_k$ and such that
\begin{equation}
\big(H_{k+1},K_{k+1}(0),\overline{\bS}_k(H_k,K_k(0),K_k(<k))\big)\in\mathfrak{U}_{k+1}
\end{equation}
for all scales $k\ge \bar{k}$. The proof largely follows Chapter 9 of \cite{ABKM} with some modifications to accommodate the differences for polymers containing $0\in\Lambda_N$ due to the presence of the external fields. Let us point out that smoothness is more than we strictly require, but the existing analysis regarding the bulk renormalisation group flow means the work towards that more ambitious goal is already half-way done.

In light of Lemma \ref{L:K<kprop}(2), we may restrict the domain of $\overline{\bS}_k$ in its second and third arguments to pairs of functionals $(K_k(0),K_k(<k))$ that satisfy $K_k(X,<k)=K_k(X,0)$ whenever $X\cap B_k^0=\emptyset$, and the construction of $\mathfrak{U}_k$ in the following is to be understood in this way. For convenience, we nevertheless track the $K_k(0)$ and $K_k(<k)$ arguments separately. With this restriction in place, we observe immediately that, for $U\in\Pcal_{k+1}^c$ with $U\cap B_{k+1}^0=\emptyset$, the $\overline{\bS}_k$ map coincides with the ordinary (irrelevant component of the) renormalisation map, that is, we have the identity
\begin{equation}
\overline{\bS}_k(H_k,K_k(0),K_k(<k))(U)=\bS_k(H_k,K_k(0))(U).
\end{equation}
It then follows from Theorem \ref{T:smooth} that $\overline{\bS}_k(\cdot)(U)$, for such $U$, satisfies suitable uniform bounds on its derivatives on the set $\{(H_k,K_k(0)):\norm{H_k}_{k,0},\norm{K_k(0)}_k^{(A)}<\rho(A)\}$ with the constant $\rho(A)$ determined as described in section \ref{S:5} above. To be slightly more precise, Theorem \ref{T:smooth} implies upper bounds on the ratios
\begin{equation}
\frac{A^{\abs{U}_{k+1}}\norm{D_1^{i_1}D_2^{i_2}\overline{\bS}(H_k,K_k(0),K_k(<k))(\dot{H}^{i_1},\dot{K}(0)^{i_2})(U)}_{k+1,U}}{\norm{\dot{H}}_{k,0}^{i_1}(\norm{\dot{K}(0)}_k^{(A)})^{i_2}},
\end{equation}
for $U\in\Pcal_{k+1}^c$ with $U\cap B_{k+1}^0=\emptyset$, which need to be complemented by similar estimates for polymers containing $B_{k+1}^0$. The main focus of this section is therefore on the case $0\in U$. We show the following result.
\begin{theorem} \label{T:EFsmooth}
On the assumptions of Theorem \ref{T:mainrep} and for any choice of constants consistent with it, for all $\bar{k}$ large enough, there exist constants $\mathfrak{R}(L,A)$, $\bunderline{C}_{i_1,i_2,i_3}(A,L)$ and a domain
\begin{equation}
\mathfrak{U}_k=\big\{(H_k,K_k(0),K_k(<k)): \norm{H_k}_{k,0},\norm{K_k(0)}_k^{(A)},\norm{K_k(<k)}_k^{(A)}<\mathfrak{R}\big\}
\end{equation}
such that the map $\overline{\bS}_k: \mathfrak{U}_k \to M(\Pcal_{k+1}^c,\norm{\cdot}_{k+1}^{(A)})$ is smooth ($C^\infty$), and the inequality
\begin{multline}
\norm{D_1^{i_1}D_2^{i_2}D_3^{i_3}\overline{\bS}_k(H_k,K_k(0),K_k(<k))(\dot{H}^{i_1},\dot{K}(0)^{i_2},\dot{K}(<k)^{i_3})}_{k+1}^{(A)}\\
\le \bunderline{C}_{i_1,i_2,i_3}(A,L)\norm{\dot{H}}_{k,0}^{i_1}\big(\norm{\dot{K}(0)}_k^{(A)}\big)^{i_2}\big(\norm{\dot{K}(<k)}_k^{(A)}\big)^{i_3}
\end{multline}
holds for any $(H_k,K_k(0),K_k(<k))\in \mathfrak{U}_k$ and $i_1,i_2,i_3\ge 0$. 
\end{theorem}

Our approach involves writing $\overline{\bS}_k$ as the composition of several maps, which we individually show to be smooth with appropriate bounds on their derivatives. We recall the various norms introduced or extended in section \ref{S6:4}, which feature in the following definitions. 

We begin with four maps that do not have any direct analogue in the smoothness proof of $\bS_k$ and therefore represent the most significant departure. The \emph{error map} is given as
\begin{align}
\boldsymbol{e^*} :& \,M(\Pcal_k^c,\norm{\cdot}_k^{(A)}) \times M(\Pcal_k^c,\norm{\cdot}_k^{(A),*})\to\R \nonumber \\
&(K_1,K_2) \mapsto \exp\big(-\bR_{k+1}^{\ssup{\bq}}\big(K_2-K_1\big)(B_k^0,0)\big);
\end{align}
the \emph{alternate exponential map}, which differs from the usual one on $B_k^0$, is given as
\begin{align}
\boldsymbol{E^*} :& \,M(\Bcal_k,\norm{\cdot}_{k,0})\to M(\Bcal_k,\tnorm{\cdot}_k) \nonumber \\
&H \mapsto \boldsymbol{E^*}(H)(B,v)= \exp(H(B,v+\psi_k));
\end{align}
and the key polynomial \emph{perturbation map} is defined via the expression
\begin{align}
\boldsymbol{P^*} :& M(\Bcal_k,\tnorm{\cdot}_k)\times M(\Bcal_k,\tnorm{\cdot}_k)\times M(\Pcal_k^c,\norm{\cdot}_k^{(A)}) \times M(\Pcal_k^c,\norm{\cdot}_k^{(A)}) \to M(\Pcal_k^c,\norm{\cdot}_k^{(A),*}) \nonumber \\
&(I_1,I_2,K_1,K_2) \mapsto \boldsymbol{P^*}(I_1,I_2,K_1,K_2)(X,v) \nonumber \\
&\qquad\qquad\qquad\qquad\qquad\qquad=\big(I_1-I_2\big)^{B_k^0}K_1(X\setminus B_k^0,v) + K_2(X,v+\psi_k),
\end{align}
where the first term above is regarded as identically zero for any polymer $X$ not intersecting $B_k^0$. In the event that removing $B_k^0$ disconnects $X$, the inclusion in \eqref{E:canincl2} is used to give meaning to the resulting expression involving $K_1(X\setminus B_k^0)$. We also introduce a \emph{multiplication map}, which simplifies our notation, defined as follows:
\begin{align}
\boldsymbol{M^*} :& \,M(\Bcal_k,\tnorm{\cdot}_k)\times \R\setminus\{0\} \to M(\Bcal_k,\tnorm{\cdot}_k) \nonumber \\
&(F,t) \mapsto \boldsymbol{M^*}(F,t)(B)=t^{-\1_{B=B_k^0}}F(B).
\end{align}

The remaining maps are analogous to the decomposition in Chapter 9 of \cite{ABKM}, with some minor modifications; note, in particular, the differences in the exact domains and co-domains. For convenience, we adopt the same nomenclature. We define the (ordinary) \emph{exponential map}:
\begin{align}
\boldsymbol{E} :& \,M(\Bcal_k,\norm{\cdot}_{k,0})\to M(\Bcal_k,\tnorm{\cdot}_k) \nonumber \\
&H \mapsto \exp(H);
\end{align}
two \emph{integration maps}:
\begin{align}
\boldsymbol{R_1} :& \,M(\Pcal_k^{\text{cl}},\norm{\cdot}_k^{(A/2,B),*})\to M(\Pcal_k^{\text{cl}},\norm{\cdot}_{k:k+1}^{(A/(2\Acal_{\Pcal}),B)}) \nonumber \\
&K \mapsto \boldsymbol{R_1}(K)(X,v)=\int_{\cbO_N}\;K(X,v+\eta)\,\nu_{k+1}^{\ssup{\bq}}(\d\eta);
\end{align}
and
\begin{align}
\boldsymbol{R_2} :& \,M(\Bcal_k,\norm{\cdot}_{k,0})\times M(\Pcal_k^c,\norm{\cdot}_k^{(A)})\to M(\Bcal_k,\norm{\cdot}_{k,0}) \nonumber \\
&(H,K) \mapsto \boldsymbol{R_2}(H,K)(B,v)=\bR_{k+1}^{\ssup{\bq}}H(B,v)-\Pi_2\big(\bR_{k+1}^{\ssup{\bq}}K(B,v)\big);
\end{align}
and finally three \emph{polynomial maps}:
\begin{align}
\boldsymbol{P_1} :& \,\R \times M(\Bcal_k,\tnorm{\cdot}_k) ^3 \times M(\Pcal_k^{\text{cl}},\norm{\cdot}_{k:k+1}^{(A/(2\Acal_{\Pcal}),B)}) \to M(\Pcal_{k+1}^c,\norm{\cdot}_{k+1}^{(A)}) \nonumber \\
&(t,I_1,I_2,J,K) \mapsto \boldsymbol{P_1}(t,I_1,I_2,J,K)(U,v) = \sum_{\substack{X_1,X_2\in\Pcal_k \\ X_1\cap X_2=\emptyset}} \chi(X_1\cup X_2,U) \,t^{\1_{B_k^0\subset X_1\cup X_2}}\nonumber \\
& \qquad\qquad\qquad\qquad\qquad\times I_1^{U\setminus (X_1\cup X_2)}(v) I_2^{(X_1\cup X_2)\setminus U}(v) J^{X_1}(v) \prod_{Y\in\Ccal_k^{\text{cl}}(X_2)} K(Y,v);
\end{align}
\begin{align}
\boldsymbol{P_2} :& \,M(\Bcal_k,\tnorm{\cdot}_k) \times M(\Pcal_k^c,\norm{\cdot}_k^{(A),*}) \to M(\Pcal_k^c,\norm{\cdot}_k^{(A/2),*}) \nonumber \\
&(I,K) \mapsto (I-1)\circ K;
\end{align}
and
\begin{align}
\boldsymbol{P_3} :& \,M(\Pcal_k^c,\norm{\cdot}_k^{(A/2),*}) \to M(\Pcal_k^{\text{cl}},\norm{\cdot}_k^{(A/2,B),*}) \nonumber \\
&K \mapsto \boldsymbol{P_3}(K)(X,v)=\prod_{Y\in\Ccal_k(X)}K(Y,v).
\end{align}

With these definitions in place, the map $\overline{\bS}_k$ can be expressed as
\begin{multline}
\overline{\bS}_k(H,K(0),K(<k))= \boldsymbol{P_1}\bigg(\boldsymbol{e^*}\Big(K(0),\boldsymbol{P^*}(\cdot)\Big),\boldsymbol{E}\Big(-\boldsymbol{R_2}\big(H,K(0)\big)\Big),\boldsymbol{E}\Big(\boldsymbol{R_2}\big(H,K(0)\big)\Big), \\
1-\boldsymbol{M^*}\Big(\boldsymbol{E}\big(-\boldsymbol{R_2}(H,K(0))\big),\boldsymbol{e^*}\big(K(0),\boldsymbol{P^*}(\cdot)\big)\Big),\boldsymbol{R_1}\Big(\boldsymbol{P_3}\Big(\boldsymbol{P_2}\big(\boldsymbol{E}(-H),\boldsymbol{P^*}(\cdot)\big)\Big)\Big)\bigg),
\end{multline}
where we have suppressed the subscripts for readability and the symbol $\boldsymbol{P^*}(\cdot)$ is shorthand for
\begin{equation}
\boldsymbol{P^*}\Big(\boldsymbol{E}(-H),\boldsymbol{E^*}(-H),K(0),K(<k)\Big).
\end{equation}
Before considering each of these maps individually, it is worth emphasising that $\boldsymbol{R_1}$ and $\boldsymbol{R_2}$ are linear maps, $\boldsymbol{e^*}$
is the composition of the (real) exponential function with a linear map, and $\boldsymbol{P_1}$, $\boldsymbol{P_2}$, $\boldsymbol{P_3}$ and $\boldsymbol{P^*}$ are polynomials in their respective arguments. Higher order derivatives are therefore not as cumbersome to compute as the above expressions might suggest at first glance.

\subsubsection{The error map $\boldsymbol{e^*}$}

Combining the definition of the star-norm in \eqref{E:Astarnorm} with Theorem \ref{T:wf}(10) and Proposition \ref{P:wfpsi}, we obtain the straightforward estimates
\begin{equation}
\Big|\int_{\cbO_N}\;K_1(B_k^0,\eta)\,\nu_{k+1}^{\ssup{\bq}}(\d\eta)\Big| \le \frac{\Acal_{\Bcal}}{2A}\norm{K_1}_k^{(A)}
\end{equation}
and
\begin{align}
\Big|-\int_{\cbO_N}\;K_2(B_k^0,\eta)\,\nu_{k+1}^{\ssup{\bq}}(\d\eta)\Big| &\le \int_{\cbO_N}\;\norm{K_2}_{k,B_k^0}^*\big(\bunderline{w}_k^{B_k^0}(\eta)+\bunderline{w}_k^{B_k^0}(\eta+\boldsymbol{\nabla}_N\psi_k)\big)\,\nu_{k+1}^{\ssup{\bq}}(\d\eta) \nonumber\\
&\le (1+C_k)\frac{\Acal_{\Bcal}}{2A}\norm{K_2}_k^{(A),*},
\end{align}
where $C_k$ is bounded uniformly in $N$ (depending on $L$). The linear map in the exponent is thus bounded and so it follows that $\boldsymbol{e^*}$ is smooth on its domain. We further obtain the inequality
\begin{multline}
\big|D_1^{j_1}D_2^{j_2}\boldsymbol{e^*}(K_1,K_2)(\dot{K}_1^{j_1},\dot{K}_2^{j_2})\big|\le (1+C_k)^{j_2}\left(\frac{\Acal_{\Bcal}}{2A}\right)^{j_1+j_2} \\
\times\exp\Big(\frac{\Acal_{\Bcal}}{2A}\norm{K_1}_k^{(A)} +  (1+C_k)\frac{\Acal_{\Bcal}}{2A}\norm{K_2}_k^{(A),*}\Big) \left(\norm{\dot{K}_1}_k^{(A)}\right)^{j_1}\left(\norm{\dot{K}_2}_k^{(A),*}\right)^{j_2},
\end{multline}
and so the derivatives of $\boldsymbol{e^*}$ are uniformly bounded (depending on $j_1$, $j_2$, $A$ and $L$) on any bounded domain $\{\norm{K_1}_k^{(A)},\norm{K_2}_k^{(A),*}<\textup{const}\}$.

\subsubsection{The alternate exponential map $\boldsymbol{E^*}$} \label{S8:E*}

Note that $H(B_k^0,\cdot+\psi_k)$ is not a relevant Hamiltonian and so this map must be considered separately from the exponential map $\bE$. Instead, we rely on the identity $\abs{H(B_k^0,\cdot+\boldsymbol{\nabla}_N\psi_k)}_{k,B_k^0,v}=\abs{H(B_k^0,\cdot)}_{k,B_k^0,v+\boldsymbol{\nabla}_N\psi_k}$ in concert with the estimate
\begin{equation} \label{E:TayvHbound}
\abs{H(B)}_{k,B,v}\le \big(1+\abs{v}_{k,l^2(B)}\big)^2\norm{H}_{k,0}\le 2(1+\abs{v}_{k,l^2(B)}^2)\norm{H}_{k,0}, 
\end{equation}
where
\begin{equation} \label{E:l2norm}
\abs{v}_{k,\ell^2(B)}^2=\frac{1}{h_k^2}\sup_{1\le i\le d}\;\sup_{0\le \abs{\a}\le \floor{d/2}}\; \sum_{x\in B}L^{2k\abs{\a}}\abs{\nabla^{\a}v_i(x)}^2
\end{equation}
is a convenient (semi-)norm that is weaker than $\abs{\cdot}_{k,B}$. Indeed, using the orthogonal decomposition $v=v^{\Omega}+v^{\perp}$ from Appendix \ref{A:C} and the fact that $\nabla_i v^{\Omega}_j=\nabla_j v^{\Omega}_i$, we see that
\begin{align} \label{E:Gest}
2\big(v,\bunderline{G}_k^B v\big) &= \frac{1}{h_k^2}\sum_{i=1}^d \sum_{0\le \a\le \floor{d/2}} \sum_{x\in B} L^{2k\abs{\a}}\bigg(\frac{2\abs{\nabla^{\a}v_i^{\Omega}}^2}{\abs{\{j:\a_j+\delta_{i,j}>0\}}}+2\abs{\nabla^{\a}v_i^{\perp}}^2\bigg) \nonumber \\
&\ge \frac{1}{h_k^2}\sum_{x\in B} L^{2k\abs{\a}}\big(2\abs{\nabla^{\a}v_i^{\Omega}}^2+2\abs{\nabla^{\a}v_i^{\perp}}^2\big),\text{ for all }1\le i\le d,0\le \abs{\a}\le \floor{d/2} \nonumber \\
&\ge \abs{v}_{k,\ell^2(B)}^2.
\end{align}
With the help of these relations, we now deduce from \eqref{E:TayvHbound} the inequality
\begin{align} \label{E:E*est}
\abs{H(B_k^0,\cdot+\boldsymbol{\nabla}_N\psi_k)}_{k,B_k^0,v} &\le 2(1+2\abs{v}_{k,l^2(B_k^0)}^2+2\abs{\boldsymbol{\nabla}_N\psi_k}_{k,B_k^0}^2)\norm{H}_{k,0}  \nonumber \\
&\le 4(1+\abs{v}_{k,l^2(B_k^0)}^2)\norm{H}_{k,0}, 
\end{align}
where we rely on the assumption that $\bar{k}$ is sufficiently large to avail of the upper bound $\abs{\boldsymbol{\nabla}_N\psi_k}_{k,B_k^0}\le 1/2$ in section \ref{S7:1}. With the help of the inequality $(1+t)\le 2 e^{t/4}$ for $t\ge 0$ and the bound in \eqref{E:Gest}, we thus infer that
\begin{equation}
\abs{H(B_k^0,\cdot+\boldsymbol{\nabla}_N\psi_k)}_{k,B_k^0,v} \le 4(1+\abs{v}_{k,\ell^2(B)}^2)\norm{H}_{k,0}\le 8e^{\frac{1}{4}\abs{v}_{k,\ell^2(B)}^2}\norm{H}_{k,0}\le 8\bunderline{W}_k^B(v)\norm{H}_{k,0},
\end{equation}
from which we obtain the estimate $\tnorm{H(B_k^0,\cdot+\boldsymbol{\nabla}_N\psi_k)}_{k,B_k^0}\le 8 \norm{H}_{k,0}$. The proof of \eqref{E:TayvHbound} is very similar to the analogous statement in Lemma 8.9 of \cite{ABKM} and therefore omitted.

We now proceed as for the exponential map $\bE$, save that we repeatedly use the estimate in \eqref{E:E*est} in place of the (slightly stronger) estimate that holds without the external field $\psi_k$. Using the shorthand notation $H^*(B_k^0,v)=H(B_k^0,v+\boldsymbol{\nabla}_N\psi_k)$, we consider the functional
\begin{equation}
(e^{(H+\dot{H})^*}-e^{H^*}-e^{H^*}\dot{H}^*)\dot{H}_1^*\cdots \dot{H}_r^*,
\end{equation}
for $H$, $\dot{H}$ and $\dot{H}_j$ elements of $M(\Bcal_k,\norm{\cdot}_{k,0})$. Arguing along the lines of the proof of Lemma 9.3 of \cite{ABKM}, we obtain the inequality
\begin{align}
|(e^{(H+\dot{H})^*}-e^{H^*}- & e^{H^*}\dot{H}^*)\dot{H}_1^*\cdots \dot{H}_r^*|_{k,B_k^0,v} \le e^{\abs{H^*}_{k,B_k^0,v}}\abs{\dot{H}^*}_{k,B_k^0,v}^2 e^{\abs{\dot{H}^*}_{k,B_k^0,v}}\abs{\dot{H}_1^*}_{k,B_k^0,v}\cdots \abs{\dot{H}_r^*}_{k,B_k^0,v} \nonumber \\
&\le e^{4(1+\abs{v}_{k,l^2(B_k^0)}^2)(\norm{H}_{k,0}+\norm{\dot{H}}_{k,0})}\big(4(1+\abs{v}_{k,l^2(B_k^0)}^2)\big)^{r+2} \norm{\dot{H}}_{k,0}^2 \prod_{j=1}^r \norm{\dot{H}_j}_{k,0},
\end{align}
from which we deduce that requiring $\norm{H}_{k,0}<1/32$ (and, without loss of generality, $\norm{\dot{H}}_{k,0}<1/64$) yields a suitable upper bound:
\begin{multline}
|(e^{(H+\dot{H})^*}-e^{H^*}-e^{H^*}\dot{H}^*)\dot{H}_1^*\cdots \dot{H}_r^*|_{k,B_k^0,v} \le \\
e^{3/16}\Big(\sup_{t\ge 0}\big(4(1+t)\big)^{r+2}e^{-\frac{t}{16}}\Big)\bunderline{W}_k^{B_k^0}(v) \norm{\dot{H}}_{k,0}^2 \prod_{j=1}^r \norm{\dot{H}_j}_{k,0}.
\end{multline}
This implies the limit
\begin{equation}
\lim_{\dot{H}\to 0}\frac{1}{\norm{\dot{H}}_{k,0}}\sup_{\substack{\norm{\dot{H}_j}_{k,0}\le 1 \\ j=1,...,r}}\tnorm{(e^{(H+\dot{H})^*}-e^{H^*}-e^{H^*}\dot{H}^*)\dot{H}_1^*\cdots \dot{H}_r^*}_{k,B_k^0}=0,
\end{equation}
which in turn shows the infinite differentiability of the map $\bE^*$ as claimed (by induction on the order of derivatives). Moreover, reading off the above an expression for the $r$-th derivative $D^r\bE^*$, we compute similarly
\begin{align}
\abs{D^r\bE^*(H)(\dot{H}_1,...,\dot{H}_r)}_{k,B_k^0,v} & \le e^{\abs{H^*}_{k,B_k^0,v}}\abs{\dot{H}_1^*}_{k,B_k^0,v}\cdots \abs{\dot{H}_r^*}_{k,B_k^0,v} \nonumber \\
&\le e^{\frac{1}{8}(1+\abs{v}_{k,l^2(B_k^0)}^2)}\big(4(1+\abs{v}_{k,l^2(B_k^0)}^2)\big)^r\norm{\dot{H}_1}_{k,0}\cdots \norm{\dot{H}_r}_{k,0} \nonumber \\
&\le C_r \bunderline{W}_k^{B_k^0}(v)\norm{\dot{H}_1}_{k,0}\cdots \norm{\dot{H}_r}_{k,0},
\end{align}
where the constant $C_r$ is given by the expression
\begin{equation}
C_r=e^{\frac{1}{8}}4^r\sup_{t\ge 0}(1+t)^re^{-\frac{t}{8}}.
\end{equation}
This establishes the uniform boundedness of the derivatives $\norm{D^r\bE^*(H)(B_k^0)}\le C_r$ on the set $\{\norm{H}_{k,0}<1/32\}$. By integrating the first derivative, we obtain the inequality
\begin{equation}
\tnorm{\boldsymbol{E^*}(H)-1}_{k,B_k^0}\le C_1 \norm{H}_{k,0} \le 16 \norm{H}_{k,0},
\end{equation}
which is useful later. Finally, we observe that on $k$-blocks other than $B_k^0$ the external field $\psi_k$ vanishes identically and so the mapping coincides with the (regular) exponential map $\bE$, for which similar (indeed, stronger) bounds hold.

\subsubsection{The perturbation map $\bP^*$}

Addressing first the trivial case $X\cap B_k^0=\emptyset$, we see that the only non-zero derivative is $D_4\bP^*(I_1,I_2,K_1,K_2)(\dot{K}_2)(X,v)=\dot{K}_2(X,v)$ (recall that $\psi_k\equiv 0$ on $X^*$ for such $X$). This is a bounded linear map since, by construction, the $\norm{\cdot}_k^{(A),*}$-norm is weaker than the $\norm{\cdot}_k^{(A)}$-norm. Now assume that $B_k^0\subset X\in\Pcal_k^c$. We treat separately the case $X=B_k^0$, in which all terms are linear. For the derivative with respect to the first variable, $D_1\bP^*(I_1,I_2,K_1,K_2)(\dot{I}_1)(B_k^0)=\dot{I}_1(B_k^0)$, we obtain a straightforward bound
\begin{equation}
\frac{A\abs{\dot{I}_1(B_k^0)}_{k,B_k^0,v}}{\bunderline{w}_k^{B_k^0}(v)\vee\bunderline{w}_k^{B_k^0}(v+\boldsymbol{\nabla}_N\psi_k)} \le A \tnorm{\dot{I}_1(B_k^0)}_{k,B_k^0} \frac{\bunderline{W}_k^{B_k^0}(v)}{\bunderline{w}_k^{B_k^0}(v)\vee\bunderline{w}_k^{B_k^0}(v+\psi_k)} \le  A \tnorm{\dot{I}_1}_k,
\end{equation}
since it generally holds that $\bunderline{W}_k\le\bunderline{w}_k$ in view of Theorem \ref{T:wf}(6), and likewise for the derivative with respect to the $I_2$-variable. The term in $K_2$ is always linear, so it suffices to address it in the general case - $B_k^0\subsetneq X$ - which we consider next.

There are two situations to analyse. We first assume that $X\setminus B_k^0 \in\Pcal_k^c$, in which case only first order and mixed second order partial derivatives are of relevance. By way of illustration, we obtain the following bound for $D_1\boldsymbol{P^*}(I_1,I_2,K_1,K_2)(\dot{I}_1)$,
\begin{equation}
\norm{\dot{I}_1(B_k^0)K_1(X\setminus B_k^0)}_{k,X}^* \le \tnorm{\dot{I}_1}_k\norm{K_1(X\setminus B_k^0)}_{k,X\setminus B_k^0} \le A^{-\abs{X}_k} A \tnorm{\dot{I}_1}_k \norm{K_1}_k^{(A)},
\end{equation}
using Lemma \ref{L:*norms}(2), and similar estimates apply for the first derivatives with respect to the second and third variable of $\bP^*$. The second derivatives (i.e. $D_1D_3\bP^*$ and $D_2D_3\bP^*$) can be handled likewise, save that the norm of $K_1$ is replaced with the norm of $\dot{K_1}$.

If $X\setminus B_k^0$ is disconnected, the estimates change and higher order derivatives with respect to the third variable appear. However, we rely on the crude upper bounds
\begin{equation}
\abs{\Ccal_k(X\setminus B_k^0)}\le 3^d\quad \text{ and }\quad \binom{\abs{\Ccal_k(X\setminus B_k^0)}}{\ell}\le 2^{\abs{\Ccal_k(X\setminus B_k^0)}}
\end{equation}
to see that the partial derivatives of $\boldsymbol{P^*}$ with respect to the first three variables satisfy
\begin{align}
\norm{D_1\boldsymbol{P^*}(I_1,I_2,K_1,K_2)(\dot{I}_1)}_{k,X}^* &\le A^{-\abs{X}_k} A \tnorm{\dot{I}_1}_k \big(\norm{K_1}_k^{(A)}\big)^{3^d}; \\
\norm{D_2\boldsymbol{P^*}(I_1,I_2,K_1,K_2)(\dot{I}_2)}_{k,X}^* &\le A^{-\abs{X}_k} A \tnorm{\dot{I}_2}_k \big(\norm{K_1}_k^{(A)}\big)^{3^d};\text{ and} \\
\norm{D_3^{\ell}\boldsymbol{P^*}(I_1,I_2,K_1,K_2)(\dot{K}_1^\ell)}_{k,X}^* &\le A^{-\abs{X}_k} \ell !\, 2^{3^d} A \tnorm{I_1-I_2}_k \big(\norm{\dot{K}_1}_k^{(A)}\big)^\ell \big(\norm{K_1}_k^{(A)}\big)^{3^d-\ell},
\end{align}
using Lemmas \ref{L:norms}(1) and \ref{L:*norms}(2) repeatedly. Finally, for the derivative with respect to $K_2$, $D_4\boldsymbol{P^*}(I_1,I_2,K_1,K_2)(\dot{K}_2)(X,v)=\dot{K}_2(X,v+\boldsymbol{\nabla}_N\psi_k)$, we obtain the straightforward inequality
\begin{equation}
\frac{\abs{\dot{K}_2(X,\cdot+\boldsymbol{\nabla}_N\psi_k)}_{k,X,v}}{\bunderline{w}_k^X(v)\vee\bunderline{w}_k^X(v+\boldsymbol{\nabla}_N\psi_k)} \le \frac{\norm{\dot{K}_2(X)}_{k,X}\,\bunderline{w}_k^X(v+\boldsymbol{\nabla}_N\psi_k)}{\bunderline{w}_k^X(v)\vee\bunderline{w}_k^X(v+\boldsymbol{\nabla}_N\psi_k)} \le A^{-\abs{X}_k}\norm{\dot{K}_2}_k^{(A)},
\end{equation}
which the definition of the $\norm{\cdot}_{k,X}^*$-norm is designed to facilitate.

We conclude that $\bP^*$ is smooth everywhere and has uniformly bounded derivatives on any domain $\{\tnorm{I_1}_k,\tnorm{I_2}_k,\norm{K_1}_k^{(A)}<C\}$, depending on $A$ and $C$. We also record an upper bound on the norm of $\bP^*$. For $X=B_k^0$, arguing as above, we see that
\begin{multline}
A\norm{\bP^*(I_1,I_2,K_1,K_2)(B_k^0)}_{k,B_k^0}^* \le A\tnorm{I_1(B_k^0)-1-(I_2(B_k^0)-1)}_{k,B_k^0}+A\norm{K_2(B_k^0)}_{k,B_k^0} \\
\le A \tnorm{I_1-1}_k + A \tnorm{I_2-1}_k + \norm{K_2}_k^{(A)}
\end{multline}
holds. An expression for the remaining cases can be derived similarly (or by integrating the derivatives computed above). Together they imply the inequality
\begin{multline} \label{E:P*est}
\norm{\bP^*(I_1,I_2,K_1,K_2)}_k^{(A),*} \le A \max\Big\{\big(\tnorm{I_1}_k+\tnorm{I_2}_k\big)\max\big\{\norm{K_1}_k^{(A)},\big(\norm{K_1}_k^{(A)}\big)^{3^d}\big\}, \\
\tnorm{I_1-1}_k + \tnorm{I_2-1}_k\Big\} + \norm{K_2}_k^{(A)}.
\end{multline}

\subsubsection{The multiplication map $\bM^*$}

Evidently, on any $k$-block other than $B_k^0$, $\bM^*$ acts as the identity in the first variable and is constant in the second variable; we therefore concentrate on the case $B=B_k^0$. We compute the partial derivatives as follows:
\begin{equation}
D_1\bM^*(F,t)(B_k^0,v)(\dot{F})=\frac{\dot{F}(B_k^0,v)}{t};
\end{equation}
and
\begin{equation}
D_2^r\bM^*(F,t)(B_k^0,v)(\dot{t}^r)=\frac{(-1)^r r! F(B_k^0,v)}{t^{r+1}}\dot{t}^r.
\end{equation}
Accordingly, we want to consider a domain $\{(F,t)\in M(\Bcal_k,\tnorm{\cdot}_k)\times\R\setminus \{0\}:\tnorm{F}_k<C,\abs{t}>C^{-1}\}$, and on such domain the map $\bM^*$ is smooth with derivatives uniformly bounded as
\begin{equation}
\norm{D_1D_2^r\bM^*(F,t)}\le r!\, C^{r+2}.
\end{equation}

\subsubsection{The exponential map $\bE$}

This is the same mapping, adapted to the vector field formalism, as the mapping that appears under the same name in Chapter 9 of \cite{ABKM} and features in the proof of Theorem \ref{T:smooth} (in our numbering).  A slight difference arises, essentially due to the extra factor of two in \eqref{E:Gest}, in that we take the domain to be $\{\norm{H}_{k,0}<1/16\}$ (instead of an upper bound involving $1/8$). Otherwise, Lemma 9.3 of \cite{ABKM} translates without difficulty into our setting, and so we may rely on the fact that $\bE$ is smooth with uniformly bounded derivatives on the above domain. We note in passing that insisting on an explicit supremum over all $k$-blocks $B$ in the definition of the norm $\tnorm{\cdot}_k$ does not affect any results or estimates for translation-invariant functionals (since they behave identically on all $k$-blocks).

\subsubsection{The integration map $\bR_1$}

This map is essentially the integration map in Lemma \ref{L:Rbound}, save that we consider it with respect to different norms. Since $\bq$ is fixed in this analysis, $\bR_1$ is a linear map and so it suffices to prove boundedness. Using Lemma \ref{L:Rreg} to interchange the order of differentiation and integration, as well as Theorem \ref{T:wf}(9) and Proposition \ref{P:wfpsi}, we have
\begin{align}
\abs{\bR_1(K)(X)}_{k,X,v} &\le \int_{\cbO_N}\;\abs{K(X)}_{k,X,v+\eta}\,\nu_{k+1}^{\ssup{\bq}}(\d\eta) \nonumber \\
&\le \norm{K(X)}_{k,X}^* \int_{\cbO_N}\;\bunderline{w}_k^X(v+\eta) + \bunderline{w}_k^X(v+\boldsymbol{\nabla}_N\psi_k+\eta)\,\nu_{k+1}^{\ssup{\bq}}(\d\eta) \nonumber \\
&\le \left(\frac{A}{2}\right)^{-\abs{X}_k}B^{-\abs{\Ccal_k(X)}}\norm{K}_k^{(A/2,B),*}\left(\frac{\Acal_{\Pcal}}{2}\right)^{\abs{X}_k}(1+C_k)\bunderline{w}_{k:k+1}^X(v),
\end{align}
from which we deduce that $\norm{\bR_1(K)}_{k:k+1}^{((A/2\Acal_{\Pcal}),B)}\le (1+C_k)\norm{K}_k^{(A/2,B),*}$.

\subsubsection{The integration map $\bR_2$}

This map, which is linear in its arguments for $\bq$ fixed, also plays a role in the proof of Theorem \ref{T:smooth} and with respect to the same norms. As outlined in section \ref{S:5} above, the corresponding statement in \cite{ABKM} (Lemma 9.8) translates into our setting without difficulty and so we simply recall here the relevant bounds. There is a constant $C_0$, not depending on $L$, $A$ and $h$, such that
\begin{equation} \label{E:R2est}
\norm{\bR_2(H,K)}_{k,0}\le C_0\big(\norm{H}_{k,0}+\norm{K}_k^{(A)}\big);
\end{equation}
and, for $j_1$,$j_2=0$ or $1$,
\begin{equation}
\norm{D_1^{j_1}D_2^{j_2}\bR_2(H,K)(\dot{H}^{j_1},\dot{K}^{j_2})}_{k,0}\le C_0j_1(1-j_2)\norm{\dot{H}}_{k,0}+C_0j_2(1-j_1)\norm{\dot{K}}_k^{(A)}.
\end{equation}

\subsubsection{The polynomial map $\bP_2$}

It is convenient to treat the polynomial map $\bP_1$ last and so we begin by considering $\bP_2$. The argument closely follows that in Chapter 9 of \cite{ABKM} for the equivalent map, save that we rely on the properties of the starred norms instead. We require that $\tnorm{I-1}_k<(2A)^{-1}$ and $\norm{K}_k^{(A),*}< 1/2$. On those assumptions, we first bound the norm
\begin{equation}
\norm{\bP_2(I,K)(X)}_{k,X}^* \le \sum_{Y\in\Pcal_k(X)}\tnorm{I-1}_k^{\abs{X\setminus Y}_k}\big(\norm{K}_k^{(A),*}\big)^{\abs{\Ccal_k(Y)}}A^{-\abs{Y}_k},
\end{equation}
using Lemma \ref{L:*norms}(1) and (3), from which we obtain the estimate 
\begin{align}
\norm{\bP_2(I,K)(X)}_{k,X}^* &\le \sum_{\ell=0}^{\abs{X}_k} \binom{\abs{X}_k}{\ell} A^{-\ell}\tnorm{I-1}_k^{\abs{X}_k-\ell} \nonumber \\
&= \big( \tnorm{I-1}_k + A^{-1} \big)^{\abs{X}_k} \le \left(\frac{1}{2A}+\frac{1}{A}\right)^{\abs{X}_k}\le 1,
\end{align}
uniformly in $N$. For the derivatives, we compute as follows:
\begin{align}
 & \frac{1}{j_1!j_2!}  \norm{D_1^{j_1}D_2^{j_2}\bP_2(I,K)(\dot{I}^{j_1},\dot{K}^{j_2})(X)}_{k,X}^* \nonumber \\
&\le \sum_{Y\in\Pcal_k(X)} \binom{\abs{X\setminus Y}_k}{j_1} \tnorm{I-1}_k^{\abs{X\setminus Y}_k-j_1} \tnorm{\dot{I}}_k^{j_1} \binom{\abs{\Ccal_k(Y)}}{j_2} \big(\norm{K}_k^{(A),*}\big)^{\abs{\Ccal_k(Y)}-j_2} \big(\norm{\dot{K}}_k^{(A),*}\big)^{j_2} A^{-\abs{Y}_k} \nonumber \\
&\le \sum_{Y\in\Pcal_k(X)} 2^{\abs{X\setminus Y}_k} (2A)^{j_1-\abs{X\setminus Y}_k} \tnorm{\dot{I}}_k^{j_1} 2^{\abs{\Ccal_k(Y)}} 2^{j_2-\abs{\Ccal_k(Y)}} \left(\norm{\dot{K}}_k^{(A),*}\right)^{j_2} A^{-\abs{Y}_k} \nonumber \\
&\le \left(\frac{A}{2}\right)^{-\abs{X}_k}\left(2A\tnorm{\dot{I}}_k\right)^{j_1} \left(2\norm{\dot{K}}_k^{(A),*}\right)^{j_2}.
\end{align}
This confirms that the derivatives of $\bP_2$ are uniformly bounded as
\begin{equation}
\norm{D_1^{j_1}D_2^{j_2}\bP_2(I,K)(\dot{I}^{j_1},\dot{K}^{j_2})}_k^{(A/2),*} \le j_1!j_2!\left(2A\tnorm{\dot{I}}_k\right)^{j_1} \left(2\norm{\dot{K}}_k^{(A),*}\right)^{j_2}
\end{equation}
on the domain $\{(I,K):\tnorm{I-1}_k<(2A)^{-1},\norm{K}_k^{(A),*}<1/2\}$. We further deduce the bound
\begin{equation} \label{E:P2est}
\norm{\bP_2(I,K)}_k^{(A/2),*} \le 2A\tnorm{I-1}_k+2\norm{K}_k^{(A),*}
\end{equation}
by integrating the first derivatives as in \cite{ABKM}.

\subsubsection{The polynomial map $\bP_3$}

Given a cluster $X\in\Pcal_k^{\text{cl}}$, we rely again on the sub-multiplicativity of the $\norm{\cdot}_{k,X}^*$-norm shown in Lemma \ref{L:*norms}(1) to see the estimate
\begin{align}
\frac{1}{j!}  ||D^j\bP_3(K)(\dot{K}^j) & (X)||_{k,X}^* \le \sum_{\substack{\Jcal\subset\Ccal_k(X) \\ \abs{\Jcal}=j}} \prod_{Z\in\Ccal_k(X)\setminus\Jcal}\norm{K(Z)}_{k,Z}^* \prod_{Z'\in\Jcal}\norm{\dot{K}(Z')}_{k,Z'}^* \nonumber \\
&\le \binom{\abs{\Ccal_k(X)}}{j} \left(\frac{A}{2}\right)^{-\abs{X}_k} \left(\norm{K}_k^{(A/2),*}\right)^{\abs{\Ccal_k(X)}-j} \left(\norm{\dot{K}}_k^{(A/2),*}\right)^j \nonumber \\
&\le \left(\frac{A}{2}\right)^{-\abs{X}_k} B^{-\abs{X}_k} \left(2B\norm{K}_k^{(A/2),*}\right)^{\abs{\Ccal_k(X)}-j} \left(2B\norm{\dot{K}}_k^{(A/2),*}\right)^j,
\end{align}
from which we deduce the bound
\begin{equation}
\norm{D^j\bP_3(K)(\dot{K}^j)}_k^{(A/2,B),*} \le j! \left(2B\norm{\dot{K}}_k^{(A/2),*}\right)^j,
\end{equation}
uniformly on the domain $\{K\in M(\Pcal_k^c,\norm{\cdot}_k^{(A/2),*}):\norm{K}_k^{(A/2),*}<(2B)^{-1}\}$.

\subsubsection{The polynomial map $\bP_1$}

The outermost map $\bP_1$ is identical to that considered in Lemma 9.6 of cite{ABKM} (subject to translation the latter into our vector field setting), save for the appearance of an additional scalar factor (labelled $t$) that appears for polymers $X_1\cup X_2$ containing $B_k^0$. Provided that this scalar is bounded (i.e. $\abs{t}<C$), it thus follows that the norm of $\bP_1(t,I_1,I_2,J,K)$ is bounded (uniformly in $N$) as
\begin{equation}
\norm{\bP_1(t,I_1,I_2,J,K)}_{k+1}^{(A)}\le C,
\end{equation}
for a choice of constants compatible with Theorem 5.10 above (including setting $B=A$) and restricted to the domain determined by the constraints
\begin{equation}
\Big\{\abs{t}<C,\tnorm{I_1-1}_k,\tnorm{I_2-1}_k<1/2,\tnorm{J}_k<A^{-2},\norm{K}_{k:k+1}^{(A/(2\Acal_{\Pcal}),B)}<1\Big\}.
\end{equation}

Regarding the derivatives, $\bP_1$ is linear in $t$ and so all estimates from \cite{ABKM} carry across with the presence of an additional factor scalar factor. Since we are only concerned with showing bounds on the derivatives, we can ignore the fact that this additional factor is only applicable for certain polymers in the sum (assuming $C\ge 1$). In summary, we arrive at the following inequalities:
\begin{multline}
\norm{D_2^{j_2}D_3^{j_3}D_4^{j_4}D_5^{j_5}(t,I_1,I_2,J,K)(\dot{I_1}^{j_2},\dot{I_2}^{j_3},\dot{J}^{j_4},\dot{K}^{j_5})}_{k+1}^{(A)} \le \\
j_2!j_3!j_4!j_5! \;C \tnorm{\dot{I_1}}_k^{j_2}\tnorm{\dot{I_2}}_k^{j_3}\left(A^2\tnorm{\dot{J}}_k^{j_2}\right)^{j_4}\left(\norm{\dot{K}}_{k:k+1}^{(A/(2\Acal_{\Pcal}),B)}\right)^{j_5};
\end{multline}
and
\begin{multline}
\norm{D_1D_2^{j_2}D_3^{j_3}D_4^{j_4}D_5^{j_5}(t,I_1,I_2,J,K)(\dot{t},\dot{I_1}^{j_2},\dot{I_2}^{j_3},\dot{J}^{j_4},\dot{K}^{j_5})}_{k+1}^{(A)} \le \\
j_2!j_3!j_4!j_5! \;\abs{\dot{t}} \times \tnorm{\dot{I_1}}_k^{j_2}\tnorm{\dot{I_2}}_k^{j_3}\left(A^2\tnorm{\dot{J}}_k^{j_2}\right)^{j_4}\left(\norm{\dot{K}}_{k:k+1}^{(A/(2\Acal_{\Pcal}),B)}\right)^{j_5}.
\end{multline}

\subsubsection{Conclusions}

In the preceding sub-sections we have identified individual domains, on which each of the maps in the composition of $\overline{\bS}_k$ is smooth with uniformly bounded derivatives. It remains to put everything together.

We begin with the requirements of the outer map, the polynomial $\bP_1$, and work our way through the inner maps in the composition. The inequality
\begin{equation} \label{E:P1A}
\Big|\Big|\bR_1\Big(\bP_3\big(\bP_2(\bE(-H),\bP^*(\bE(-H),\bE^*(-H),K(0),K(<k)))\big)\Big)\Big|\Big|_{k:k+1}^{(A/2\Acal_{\Pcal},B)}<1
\end{equation}
is satisfied if
\begin{equation}
\Big|\Big|\bP_3\Big(\bP_2\big(\bE(-H),\bP^*(\bE(-H),\bE^*(-H),K(0),K(<k))\big)\Big)\Big|\Big|_k^{(A/2,B),*}<\frac{1}{1+C_k}
\end{equation}
holds, and thus if
\begin{equation}
\norm{\bP_2(\bE(-H),\bP^*(\bE(-H),\bE^*(-H),K(0),K(<k)))}_k^{(A/2),*}<\frac{1}{2B(1+C_k)}.
\end{equation}
Since we require, in any event, $\norm{H}_{k,0}<1/32$ for the smoothness of $\bE$ and $\bE^*$, we obtain from \eqref{E:P*est} in combination with the bounds $\tnorm{\bE(-H)-1}_k\le 8\norm{H}_{k,0}$ and $\tnorm{\bE^*(-H)-1}_k\le 16\norm{H}_{k,0}$, the following estimate for the perturbation map:
\begin{multline} \label{E:P*calc}
\norm{\bP^*\big(\bE(-H),\bE^*(-H),K(0),K(<k)\big)}_k^{(A),*}\le \max\big\{3A\norm{K(0)}_k^{(A)},24A\norm{H}_{k,0}\big\} \\
+\norm{K(<k)}_k^{(A)}.
\end{multline}
Together with \eqref{E:P2est}, this confirms that the desired bound in \eqref{E:P1A} holds provided that
\begin{equation}
\max\big\{\norm{H}_{k,0},\norm{K(0)}_k^{(A)},\norm{K(<k)}_k^{(A)}\big\}<\frac{1}{140AB(1+C_k)}.
\end{equation}
These constraints simultaneously ensure the smoothness of the individual maps $\bE^*$, $\bP^*$, $\bE$, $\bR_1$, $\bP_2$ and $\bP_3$ in accordance with the conditions identified previously.

For the constraint on the fourth argument of $\bP_1$, we employ the straightforward identity
\begin{equation}
\bM^*(\bE(\overline{H}),s)-1=\int_0^1\; \frac{\d}{\d t}\bM^*(\bE(t\overline{H}),1+t(s-1))\,\d t,
\end{equation}
where $\overline{H}$ is short for $-\bR_2(H,K(0))$. Given the bounds on the derivatives of $\bM^*$ and $\bE$ computed above, this expression yields the following inequality:
\begin{align}
\tnorm{\bM^*(\bE(\overline{H}),s)-1}_{k,B_k^0} &\le \int_0^1\; \frac{8\norm{\overline{H}}_{k,0}}{1+t(s-1)}+(1+8\norm{\overline{H}}_{k,0})\frac{\abs{s-1}}{(1+t(s-1))^2}\,\d t \nonumber \\
&\le 8\norm{\overline{H}}_{k,0}\frac{\log s}{s-1}+(1+8\norm{\overline{H}}_{k,0})\Big|\frac{s-1}{s}\Big|.
\end{align}
A simple calculation then shows that the required upper bound,
\begin{equation}
\tnorm{\bM^*(\bE(\overline{H}),s)-1}_k<\frac{1}{A^2},
\end{equation}
holds if $\norm{\overline{H}}_{k,0}<(32A^2)^{-1}$ and $\abs{s-1}<(5A^2)^{-1}$. We recall here that, for $B\neq B_k^0$, the simpler estimate $\tnorm{\bM^*(\bE(\overline{H}),s)-1}_{k,B}\le 8 \norm{\overline{H}}_{k,0}$ is available. The condition on $\norm{\overline{H}}_{k,0}$ translates, via the inequality in \eqref{E:R2est}, into the requirement
\begin{equation}
\max\big\{\norm{H}_{k,0},\norm{K(0)}_k^{(A)}\big\}<\frac{1}{64C_{\eqref{E:R2est}} A^2},
\end{equation}
in which expression we have re-labelled the constant called $C_0$ in \eqref{E:R2est} to avoid confusion. The other condition amounts to an upper bound on the error map $\be^*$:
\begin{equation}
\abs{\be^*(K(0),\bP^*(\bE(-H),\bE^*(-H),K(0),K(<k)))-1} < \frac{1}{5A^2}.
\end{equation}
Using the inequality $\abs{e^x-1}\le 2\abs{x}$ for $\abs{x}\le 1$, the estimates for $\be^*$ above and \eqref{E:P*calc}, we see that this condition is met if
\begin{equation}
\max\big\{\norm{H}_{k,0},\norm{K(0)}_k^{(A)},\norm{K(<k)}_k^{(A)}\big\}<\frac{1}{140\Acal_{\Bcal} A^2(1+C_k)}.
\end{equation}
We note in passing that, with the above, we have that
\begin{equation}
\abs{\be^*(K(0),\bP^*(\bE(-H),\bE^*(-H),K(0),K(<k)))} \le (5A^2+1)/5A^2 < 2;
\end{equation}
this ensures that the derivatives of $\bP_1$ remain uniformly bounded despite the presence of the additional scalar factor.

Finally, relying again on the estimates for $\bE$ and $\bR_2$, we check that the remaining conditions derived from the domain of $\bP_1$ are met so long as
\begin{equation}
\max\big\{\norm{H}_{k,0},\norm{K(0)}_k^{(A)}\big\}<\frac{1}{32C_{\eqref{E:R2est}}}.
\end{equation}
Recalling the choice $B=A$, we then combine the various constraints identified above into the (non-optimal) joint norm bound
\begin{equation} \label{E:Sdom}
\max\big\{\norm{H_k}_{k,0},\norm{K_k(0)}_k^{(A)},\norm{K_k(<k)}_k^{(A)}\big\}<\frac{1}{140 A^2 C_{\eqref{E:R2est}} \Acal_{\Bcal} (1+C_{\eqref{E:Ckdef}})}\wedge \rho(A),
\end{equation}
where the scale subscript has been re-inserted and (strictly) we read $C_{\eqref{E:R2est}}$ and $\Acal_{\Bcal}$ as floored at one. We have also replaced the scale-dependent integration constant $C_k$ with its uniform upper bound, called $C(L)$ in Proposition \ref{P:wfpsi} and labelled $C_{\eqref{E:Ckdef}}$ in the above expression. It then follows from the preceding steps and the chain rule that, on the domain characterised by \eqref{E:Sdom}, the map $\overline{\bS}_k$ is smooth with derivatives uniformly bounded (depending on $L$ and $A$).

In combination with Theorem \ref{T:smooth}, which provides bounds for all polymers on which $\overline{\bS}_k$ agrees with $\bS_k$, this completes the proof of Theorem \ref{T:EFsmooth} with the domain $\mathfrak{U}_k$ identified in \eqref{E:Sdom}. As noted at the beginning of this section, we further desire that $\overline{\bS}_k$ maps into $\mathfrak{U}_{k+1}$, so that it suffices to check the conditions in \eqref{E:Sdom} once at the smoothness scale $\bar{k}$. This only follows from the argument presented in the next section, but it is easy to see that the desired property of $\overline{\bS}_k$ can be guaranteed to hold at least for any finite number of scales. Indeed, Theorem \ref{T:EFsmooth} guarantees the existence of a constant $C_{\overline{\bS}_k}(L,A)=\bunderline{C}_{1,0,0}+\bunderline{C}_{0,1,0}+\bunderline{C}_{0,0,1}$ such that
\begin{equation}
\norm{K_{k+1}(<k+1)}_k^{(A)}\le C_{\overline{\bS}_k} \max\big\{\norm{H_k}_{k,0},\norm{K_k(0)}_k^{(A)},\norm{K_k(<k)}_k^{(A)}\big\}
\end{equation}
whenever \eqref{E:Sdom} holds. Given our constraints on $L$ and $A$, we have that $C_{\overline{\bS}_k}>1$ in general. However, the functionals $H_k$ and $K_k(0)$ are unchanged from the bulk renormalisation group flow and, according to Theorem \ref{T:mainrep}, satisfy  norm constraints $\norm{H_k}_{k,0},\norm{K_k(0)}_k^{(A)} \le C_{\textup{bulk}} \vartheta^k$ for some $C_{\textup{bulk}}>0$ and $\vartheta \le 2/3$. Moreover, at the coalescence scale $\bar{k}$, the initial perturbation functional equals the unperturbed functional, that is, $K_{\bar{k}}(<\bar{k})=K_{\bar{k}}(0)$. We deduce from these observations a crude upper bound at scale $k>\bar{k}$ as follows:
\begin{equation} \label{E:kfplusMbound}
\max\big\{\norm{H_k}_{k,0},\norm{K_k(0)}_k^{(A)},\norm{K_k(<k)}_k^{(A)}\big\} \le C_{\textup{bulk}} \, \vartheta^{\bar{k}} \, C_{\overline{\bS}_k}^{k-\bar{k}}.
\end{equation}
Since the constant $\mathfrak{M}$ from Proposition \ref{P:wfpsi} does not depend on $\eps$ (nor on $L$ or $N$), we can thus choose $\bar{k}$ large enough ($\eps$ small enough) to ensure that the condition in \eqref{E:Sdom} is met at all scales from $\bar{k}$ to (and including) $\bar{k}+\mathfrak{M}$.

\section{Contraction and error estimate} \label{S:9}

\subsection{Contraction of $\overline{\bS}_k$}

Following the strategy outlined in section \ref{S6:3}, we aim to show next that $\overline{\bS}_k$ is a contraction at all scales save finitely many. We thus concentrate on scales $k\ge \bar{k}+\mathfrak{M}$ below and, pursuant to Lemma \ref{L:K<kprop}(2), it is enough to consider polymers $U\in\Pcal_{k+1}^c$ containing the zero-block $B_{k+1}^0$. We recall the expression in \eqref{E:DSzero} for the linearisation of $\overline{\bS}_k$ about $(0,0,0)$. The formal statement anticipated by our initial outline is the following.
\begin{prop} \label{P:DSkbarbound}
There exists an $L_0>0$ with the following property. Given $L\ge L_0$ there is a choice of triplet $(L,A,h)$ such that the conclusions of Theorem \ref{T:mainrep} and the inequality
\begin{multline} \label{E:DSkbarbound}
A^{\abs{U}_{k+1}}\norm{D\overline{\bS}_k(0,0,0)(H_k,K_k(0),K_k(<k))(U)}_{k+1,U} \le 4A\norm{H_k}_{k,0}+\frac{2}{3}\norm{K_k(0)}_k^{(A)} \\
+\frac{5}{9}\norm{K_k(<k)}_k^{(A)}\qquad\forall\, k\ge k+\mathfrak{M} \textup{ and } B_{k+1}^0\subset U\in\Pcal_{k+1}^c
\end{multline}
hold simultaneously, for all $\bar{k}$ large enough and all $N>\bar{k}+\mathfrak{M}$.
\end{prop}
\begin{proof}
The restriction to sufficiently large $\bar{k}$ is required for the bound $\abs{\boldsymbol{\nabla}_N\psi_k}_{k,X}<1/2$ (see section \ref{S7:1}), which in turn is needed for Theorem \ref{T:EFsmooth}. This ensures, in particular, that references to a Fr\'ech\'et derivative of $\overline{\bS}_k$ are proper. As anticipated, the estimates for the first two sums in \eqref{E:DSzero} mirror those for the bulk renormalisation group flow, and we will not repeat the detailed counting arguments from Lemmas 10.1 to 10.4 of \cite{ABKM}.

We begin with the first term and define the shorthand
\begin{equation}
F(U,v)=\sum_{\substack{X\in\Pcal_k^c\setminus\Bcal_k \\ \pi_k(X)=U}}\int_{\cbO_N}\;K_k(X,<k,v+\psi_k+\eta)\,\nu_{k+1}^{\ssup{\bq}}(\d\eta)
\end{equation}
for the sum over larger polymers. Monotonicity of $\abs{\cdot}_{k,X,v}$ and our weight functions in their polymer-dependence yield the following estimate for the norm at scale $(k+1)$:
\begin{align}
\norm{F(U)}_{k+1,U} &\le \sum_{\substack{X\in\Pcal_k^c\setminus\Scal_k \\ \pi_k(X)=U}} \norm{\bR_{k+1}^{\ssup{\bq}}K_k(X,<k,\cdot+\boldsymbol{\nabla}_N\psi_k)}_{k:k+1,X} \nonumber \\
&\quad\quad + \sum_{\substack{X\in\Scal_k\setminus\Bcal_k \\ \pi_k(X)=U}} \norm{\bR_{k+1}^{\ssup{\bq}}K_k(X,<k,\cdot+\boldsymbol{\nabla}_N]\psi_k)}_{k:k+1,X}.
\end{align}
For each of the norms appearing above, we deploy the bound
\begin{equation}
\norm{\bR_{k+1}^{\ssup{\bq}}K_k(X,<k,\cdot+\boldsymbol{\nabla}_N\psi_k)}_{k:k+1,X} \le C_k \left(\frac{\Acal_{\Pcal}}{2}\right)^{\abs{X}_k}\norm{K_k(X,<k)}_{k,X},
\end{equation}
which follows straightforwardly from Proposition \ref{P:wfpsi}. The same reasoning as in Lemma 10.2 of \cite{ABKM} then leads to the upper bound 
\begin{equation}
A^{\abs{U}_{k+1}} \norm{F(U)}_{k+1,U} \le \frac{C_k}{4}\vartheta\norm{K_k(<k)}_k^{(A)}
\end{equation}
for the first term in \eqref{E:DSzero}. The second term, which we write as
\begin{equation}
G(U,v)=\sum_{\substack{B\in\Bcal_k \\ \overline{B}=U}}\big(1-\Pi_2\big)\int_{\cbO_N}\;K_k(X,0,v+\eta)\,\nu_{k+1}^{\ssup{\bq}}(\d\eta),
\end{equation}
is unaffected by the external field $\psi_k$ and so the inequality
\begin{equation}
A\norm{G(U)}_{k+1,U} \le \frac{1}{2}\vartheta\norm{K_k(0)}_k^{(A)}
\end{equation}
follows immediately from the general case (that is, Lemma 10.4 of \cite{ABKM}, adapted to our vector field setting). We consider next the additional terms that appear only in the case $U=B_{k+1}^0$, denoted by $\bI^0$ as in \eqref{E:DSzero}.

Our approach is similar to that for the single block terms in the bulk renormalisation group flow, in that we aim to use the inequality in \eqref{E:Tayk+1est}. In order to compute the norm $\abs{\cdot}_{k+1,B_k^0,0}$, we examine the Taylor polynomials at $v=0$ of each of the terms in $\bI^0$. By construction, the zero-order Taylor polynomials vanish. Hence, we consider $r\ge 1$ and let $g^{\ssup{r}}\in\mathfrak{V}_N^{\otimes r}$. We obtain the following estimate:
\begin{align}
\bigg|\text{Tay}_0^{\ssup{r}}\int_{\cbO_N}\; K_k(B_k^0,<k, & \cdot+\boldsymbol{\nabla}_N\psi_k+\eta) - K_k(B_k^0,<k,\boldsymbol{\nabla}_N\psi_k+\eta)\,\nu_{k+1}^{\ssup{\bq}}(\d\eta) \big(g^{\ssup{r}}\big)\bigg| \nonumber \\
&\le \int_{\cbO_N}\; \big|\text{Tay}_{\boldsymbol{\nabla}_N\psi_k+\eta}^{\ssup{r}} K_k(B_k^0,<k)\big(g^{\ssup{r}}\big)\big| \,\nu_{k+1}^{\ssup{\bq}}(\d\eta) \nonumber \\
&\le \int_{\cbO_N}\; \abs{K_k(B_k^0,<k)}_{k,B_k^0,\boldsymbol{\nabla}_N\psi_k+\eta}\,\abs{g^{\ssup{r}}}_{k,B_k^0} \,\nu_{k+1}^{\ssup{\bq}}(\d\eta) \nonumber \\
&\le \int_{\cbO_N}\; \norm{K_k(B_k^0,<k)}_{k,B_k^0}\,\abs{g^{\ssup{r}}}_{k,B_k^0} \,\bunderline{w}_k^{B_k^0}(\boldsymbol{\nabla}_N\psi_k+\eta)\,\nu_{k+1}^{\ssup{\bq}}(\d\eta).
\end{align}
We now rely on the bound $\abs{g^{\ssup{r}}}_{k,B_k^0}\le 2L^{-d/2}\abs{g^{\ssup{r}}}_{k+1,B_k^0}$, which holds as $r\ge 1$, to see that
\begin{multline}
\bigg|\int_{\cbO_N}\; K_k(B_k^0,<k,\cdot+\boldsymbol{\nabla}_N\psi_k+\eta) - K_k(B_k^0,<k,\boldsymbol{\nabla}_N\psi_k+\eta)\,\nu_{k+1}^{\ssup{\bq}}(\d\eta)\bigg|_{k+1,B_k^0,0} \\
\le C_k \Acal_{\Bcal} A^{-1} L^{-d/2} \norm{K_k(<k)}_k^{(A)}, 
\end{multline} 
pursuant to Proposition \ref{P:wfpsi}. Arguing along similar lines, we further obtain the estimate
\begin{equation}
\bigg|\int_{\cbO_N}\; K_k(B_k^0,0,\cdot+\eta) - K_k(B_k^0,0,\eta)\,\nu_{k+1}^{\ssup{\bq}}(\d\eta) \bigg|_{k+1,B_k^0,0} \le \Acal_{\Bcal} A^{-1} L^{-d/2} \norm{K_k(0)}_k^{(A)}
\end{equation} 
for the $K_k(0)$ term not depending on $\psi_k$. For the relevant Hamiltonian term involving the external field $\psi_k$, it is convenient to regard the functional as $H_k(B_k^0,\cdot+\boldsymbol{\nabla}_N\psi_k)$ (rather than pushing the $\boldsymbol{\nabla}_N\psi_k$ into the base point of the Taylor polynomial) and rely on the upper bound $\tnorm{H_k(B_k^0,\cdot+\boldsymbol{\nabla}_N\psi_k)}_{k,B_k^0}\le 8 \norm{H_k}_{k,0}$ shown in section \ref{S8:E*}. We thus arrive at the inequalities
\begin{equation}
\bigg|\int_{\cbO_N}\; H_k(B_k^0,\cdot+\boldsymbol{\nabla}_N\psi_k+\eta) - H_k(B_k^0,\boldsymbol{\nabla}_N\psi_k+\eta)\,\nu_{k+1}^{\ssup{\bq}}(\d\eta) \bigg|_{k+1,B_k^0,0} \le 8 \Acal_{\Bcal} L^{-d/2} \norm{H_k}_{k,0}
\end{equation}
and
\begin{equation}
\bigg|\int_{\cbO_N}\; H_k(B_k^0,\cdot+\eta) - H_k(B_k^0,\eta)\,\nu_{k+1}^{\ssup{\bq}}(\d\eta) \bigg|_{k+1,B_k^0,0} \le 4 \Acal_{\Bcal} L^{-d/2} \norm{H_k}_{k,0}.
\end{equation}

The next task involves the $\abs{\cdot}_{k,B_k^0,tv}$-norms for $t\in [0,1]$. Here, a very crude estimate suffices. We simply take the bound
\begin{align}
\bigg|\int_{\cbO_N}\; & K_k(B_k^0,<k,\cdot+\boldsymbol{\nabla}_N\psi_k+\eta) - K_k(B_k^0,<k,\boldsymbol{\nabla}_N\psi_k+\eta)\,\nu_{k+1}^{\ssup{\bq}}(\d\eta)\bigg|_{k,B_k^0,tv} \le \nonumber \\
&\le \int_{\cbO_N}\; \abs{K_k(B_k^0,<k)}_{k,B_k^0,tv+\boldsymbol{\nabla}_N\psi_k+\eta} + \abs{K_k(B_k^0,<k)}_{k,B_k^0,\boldsymbol{\nabla}_N\psi_k+\eta} \,\nu_{k+1}^{\ssup{\bq}}(\d\eta) \nonumber \\
&\le C_k\frac{\Acal_{\Bcal}}{2}\big(\bunderline{w}_{k:k+1}^{B_k^0}(tv)+1\big)\norm{K_k(B_k^0,<k)}_{k,B_k^0} \le C_k\frac{\Acal_{\Bcal}}{A} \bunderline{w}_{k:k+1}^{B_k^0}(tv) \norm{K_k(<k)}_k^{(A)}
\end{align}
for the $K_k(<k)$ term and likewise for the other terms:
\begin{equation}
\bigg|\int_{\cbO_N}\; K_k(B_k^0,<k,\cdot+\eta) - K_k(B_k^0,<k,\eta)\,\nu_{k+1}^{\ssup{\bq}}(\d\eta) \bigg|_{k,B_k^0,tv} \le \frac{\Acal_{\Bcal}}{A} \bunderline{w}_{k:k+1}^{B_k^0}(tv) \norm{K_k(0)}_k^{(A)}, 
\end{equation} 
\begin{multline}
\bigg|\int_{\cbO_N}\; H_k(B_k^0,\cdot+\boldsymbol{\nabla}_N\psi_k+\eta) - H_k(B_k^0,\boldsymbol{\nabla}_N\psi_k+\eta)\,\nu_{k+1}^{\ssup{\bq}}(\d\eta) \bigg|_{k,B_k^0,tv} \\
\le 8 \Acal_{\Bcal} \bunderline{w}_{k:k+1}^{B_k^0}(tv) \norm{H_k}_{k,0}
\end{multline}
and
\begin{equation}
\bigg|\int_{\cbO_N}\; H_k(B_k^0,\cdot+\eta) - H_k(B_k^0,\eta)\,\nu_{k+1}^{\ssup{\bq}}(\d\eta) \bigg|_{k,B_k^0,tv} \le 4 \Acal_{\Bcal} \overline{w}_{k:k+1}^{B_k^0}(tv) \norm{H_k}_{k,0}.
\end{equation}

Since our weight functions are constructed from quadratic forms, taking suprema over $t\in [0,1]$ simply means the $t$ variable drops from the above expressions. Relying further on the monotonicity of our norms and weight functions - $\abs{v}_{k+1,B_k^0}\le \abs{v}_{k+1,B_{k+1}^0}$, $\abs{\cdot}_{k+1,B_{k+1}^0,v}\le \abs{\cdot}_{k+1,B_k^0,v}$ and $\bunderline{w}_{k:k+1}^{B_k^0}\le \bunderline{w}_{k:k+1}^{B_{k+1}^0}$ - we thus arrive at the following estimate:
\begin{multline}
\abs{\bI^0}_{k+1,B_{k+1}^0,v}\le \Acal_{\Bcal}(1+\abs{v}_{k+1,B_{k+1}^0})^3\bigg(L^{-d/2}\big(C_k A^{-1}\norm{K_k(<k)}_k^{(A)}+A^{-1}\norm{K_k(0)}_k^{(A)} + \\
12\norm{H_k}_{k,0}\big) + 16L^{-\frac{3d}{2}}\bunderline{w}_{k:k+1}^{B_{k+1}^0}(v)\big(C_k A^{-1}\norm{K_k(<k)}_k^{(A)}+A^{-1}\norm{K_k(0)}_k^{(A)} + 12\norm{H_k}_{k,0}\big)\bigg).
\end{multline}
Designating by $\Gamma$ the analytic constant $\sup_{x\in\R}(1+x)^3e^{-x^2/2}$, we rely on Theorem 5.2(8) to control the weight function and the factor in $(1+\abs{v}_{k+1,B_{k+1}^0})^3$ through the expression
\begin{equation}
(1+\abs{v}_{k+1,B_{k+1}^0})^3\bunderline{w}_{k:k+1}^{B_{k+1}^0}(v)\le \Gamma\exp\Big(\frac{\abs{v}_{k+1,B_{k+1}^0}^2}{2}\Big)\bunderline{w}_{k:k+1}^{B_{k+1}^0}(v)\le \Gamma\,\bunderline{w}_{k+1}^{B_{k+1}^0}(v).
\end{equation} 
Accordingly, this yields an upper bound on the norm of $\bI^0$ given by
\begin{equation}
A\norm{\bI^0}_{k+1,B_{k+1}^0} \le \Acal_{\Bcal} \Gamma \big(L^{-\frac{d}{2}}+16L^{-\frac{3d}{2}}\big)\big(C_k\norm{K_k(<k)}_k^{(A)}+\norm{K_k(0)}_k^{(A)}+12A\norm{H_k}_{k,0}\big).
\end{equation}

Combining the various estimates above, we have thus shown that, for $B_{k+1}^0\subset U$, the first-order contribution to the norm at scale $(k+1)$ is bounded as
\begin{multline}
A^{\abs{U}_{k+1}}\norm{D\overline{\bS}_k(0,0,0)(H_k,K_k(0),K_k(<k))(U)}_{k+1,U} \le \frac{C_k}{4}\vartheta\norm{K_k(<k)}_k^{(A)} + \frac{1}{2}\vartheta\norm{K_k(0)}_k^{(A)}\\
+ \Acal_{\Bcal} \Gamma \big(L^{-\frac{d}{2}}+16L^{-\frac{3d}{2}}\big)\big(C_k\norm{K_k(<k)}_k^{(A)}+\norm{K_k(0)}_k^{(A)}+12A\norm{H_k}_{k,0}\big).
\end{multline}
Now, $k\ge \bar{k}+\mathfrak{M}$ implies that $C_k\le 4/3$ and $\vartheta\le 2/3$ by assumption. Since neither $\Acal_\Bcal$ nor $\Gamma$ depend on $L$, we may impose a lower bound $L_0$ on $L$ that includes the requirements set out in section \ref{S:5} as well as
\begin{equation} \label{E:Lconst}
\Acal_{\Bcal} \Gamma \big(L^{-\frac{d}{2}}+16L^{-\frac{3d}{2}}\big)C_k \le \frac{1}{3},
\end{equation}
for all $k\ge \bar{k}+\mathfrak{M}$. The inequality claimed in \eqref{E:DSkbarbound} then follows immediately from the above and this completes the proof.
\end{proof}

\subsection{Norm decay of $K_k(<k)$ and error estimate}

The preceding sections have laid the ground work for our next goal, which is to show that the perturbed functionals $K_k(<k)$ vanish for sufficiently large scales as
\begin{equation}
\norm{K_k(<k)}_k^{(A)} \le O\big((2/3)^{k'}),
\end{equation}
where $k'$ designates the relative scale $k'=k-(\bar{k}+\mathfrak{M})$. In the interest of clarity, we begin with a summary of the choices regarding the free constants we made along the way that permit us to achieve this aim.

The only (possibly) additional constraint on $L$ we need is \eqref{E:Lconst}. We then choose $(L,A,h)$ so that, additionally, all of section \ref{S:5} holds and the bulk renormalisation group flow exists for all $N$. With $(L,A,h)$ fixed, section \ref{S7:1} provides a lower bound on $\bar{k}$, depending on $C_f$, such that $\abs{\boldsymbol{\nabla}_N\psi_k}< 1/2$ holds for all $k\ge \bar{k}$. This ensures that Theorem \ref{T:EFsmooth} and Proposition \ref{P:DSkbarbound} are available to us. Let us then emphasise that the constant $\mathfrak{M}$ only depends on $f$ (which we regard as given throughout), but not on $\bar{k}$, $\eps$ or $N$. As outlined previously, the crude bound
\begin{equation}
\norm{K_k(<k)}_k^{(A)} \le C_{\textup{bulk}} \, \vartheta^{\bar{k}} \, C_{\overline{\bS}_k}^{k-\bar{k}}
\end{equation}
thus enables us to pick $\bar{k}$ large enough ($\eps$ small enough) such that the requirements for Theorem \ref{T:EFsmooth} in \eqref{E:Sdom} are satisfied for all $\bar{k}\le k \le \bar{k}+\mathfrak{M}$. Indeed, it follows from the preceding remarks that we can make $\norm{K_k(<k)}_k^{(A)}$ for $k=\bar{k}+\mathfrak{M}$ as small as we like by restricting to suitably small values of $\eps$, and we take advantage of this to show the main result of this section.

\begin{theorem} \label{T:K<kdecay}
On the assumptions of Proposition \ref{P:DSkbarbound} and for a choice of constants consistent with it, there is an integer $\bar{k}_0$ and there are constants $C_{\textup{ext}}=C_{\textup{ext}}(\bar{k})$ for all $\bar{k}\ge\bar{k}_0$ such that
\begin{equation} \label{E:K<kdecay}
\norm{K_k(<k)}_k^{(A)} \le C_{\textup{ext}}(\bar{k}) (2/3)^{k'}
\end{equation}
holds for all $k\ge\bar{k}+\mathfrak{M}$ (where $k'=k-\bar{k}-\mathfrak{M}$).
\end{theorem}
\begin{proof}
We proceed by a second-order Taylor expansion of the map $\overline{\bS}_k$. To that end, it is useful to designate by $\bunderline{C}_{(2)}=\bunderline{C}_{(2)}(L,A)$ the constant such that
\begin{multline}
\norm{D^2\overline{\bS}_k(H_k,K_k(0),K_k(<k))(\dot{H}^2,\dot{K}(0)^2,\dot{K}(<k)^2)}_{k+1}^{(A)} \\
\le \frac{\bunderline{C}_{(2)}(L,A)}{4} \big(\norm{\dot{H}}_{k,0}+\norm{\dot{K}(0)}_k^{(A)}+\norm{\dot{K}(<k)}_k^{(A)}\big)^2
\end{multline}
holds for all $(H_k,K_k(0),K_k(<k))\in \mathfrak{U}_k$, the existence of which follows from Theorem \ref{T:EFsmooth}. We now use that the decay of the bulk renornalisation group flow is controlled by the constant $\vartheta\le 2/3$. This guarantees that there is a $\bar{k}_0$, depending on $(L,A,h)$ and the fixed parameters, such that the following requirements,
\begin{equation} \label{E:kbarreqs}
\begin{cases}
\abs{\boldsymbol{\nabla}_N\psi_k}_{k,B_k^0} < \frac{1}{2} & \text{ for all } k\ge \bar{k}; \\
\eqref{E:Sdom} \text{ holds } & \text{ for all } \bar{k}\le k\le \bar{k}+\mathfrak{M}; \\
\norm{K_k(<k)}_k^{(A)} \le \frac{1}{36\bunderline{C}_{(2)}}\big(\frac{3}{4}\big)^{\bar{k}} & \text{ for } k=\bar{k}+\mathfrak{M}; \text{ and} \\
\big(\frac{2}{3}+4A\big)C_{\textup{bulk}}\vartheta^k \le \frac{1}{36^2\bunderline{C}_{(2)}}\big(\frac{3}{4}\big)^{\bar{k}} & \text{ for } k=\bar{k}+\mathfrak{M},
\end{cases}
\end{equation}
are simultaneously satisfied for any $\bar{k}\ge\bar{k}_0$. We claim that letting $C_{\textup{ext}}(\bar{k})=(36\bunderline{C}_{(2)})^{-1}(3/4)^{\bar{k}}$ fulfils the desired upper bound on $\norm{K_k(<k)}_k^{(A)}$ in \eqref{E:K<kdecay}.

Plainly, we have that $\norm{K_k(<k)}_k^{(A)}\le C_{\textup{ext}}(2/3)^{k'}$ and \eqref{E:Sdom} holds at scale $k=\bar{k}+\mathfrak{M}$ by virtue of \eqref{E:kbarreqs}. We now argue by induction on $k'$. Assume therefore that $\norm{K_k(<k)}_k^{(A)}\le C_{\textup{ext}}(2/3)^{k'}$ and \eqref{E:Sdom} holds at some scale $k'>0$. Observe first that, for $B_{k+1}^0\not\subset U\in\Pcal_{k+1}^c$, we have $K_{k+1}(U,<k+1)=K_{k+1}(U,0)$ and 
\begin{equation}
\norm{K_{k+1}(0)}_{k+1}^{(A)} \le C_{\textup{bulk}}\vartheta^{k+1}\le \frac{\vartheta^{k'+1}}{36^2\bunderline{C}_{(2)}\big(\frac{2}{3}+4A\big)}\big(\frac{3}{4}\big)^{\bar{k}} < C_{\textup{ext}}\big(\frac{2}{3}\big)^{k'+1},
\end{equation}
also following from the assumptions in \eqref{E:kbarreqs}. We therefore concentrate on the case $B_{k+1}^0 \subset U$, for which Theorem \ref{T:EFsmooth} and Proposition \ref{P:DSkbarbound} provide that
\begin{multline}
A^{\abs{U}_{k+1}}\norm{K_{k+1}(U,<k+1)}_{k+1,U} \le 4A\norm{H_k}_{k,0}+\frac{2}{3}\norm{K_k(0)}_k^{(A)} +\frac{5}{9}\norm{K_k(<k)}_k^{(A)} \\
+ \frac{\bunderline{C}_{(2)}}{4} \big(\norm{H_k}_{k,0}+\norm{K_k(0)}_k^{(A)}+\norm{K_k(<k)}_k^{(A)}\big)^2 \le \frac{5}{9}C_{\textup{ext}}\big(\frac{2}{3}\big)^{k'} +\big(\frac{2}{3}+4A\big)C_{\textup{bulk}}\vartheta^{\bar{k}+\mathfrak{M}+k'} \\
+\bunderline{C}_{(2)}\Big(C_{\textup{ext}}^2\big(\frac{2}{3}\big)^{2k'}+C_{\textup{ext}}\big(\frac{2}{3}\big)^{k'}C_{\textup{bulk}}\vartheta^{\bar{k}+\mathfrak{M}+k'} + C_{\textup{bulk}}^2\vartheta^{2\bar{k}+2\mathfrak{M}+2k'}\Big) \\
\le C_{\textup{ext}}\big(\frac{2}{3}\big)^{k'}\Big(\frac{5}{9}+\frac{1}{36}\big(\frac{3}{2}\big)^{k'}\vartheta^{k'}+\frac{1}{36}\big(\frac{2}{3}\big)^{k'}+\frac{1}{36^2}\vartheta^{k'}+\frac{1}{36^4\bunderline{C}_{(2)}}\big(\frac{3}{2}\big)^{k'}\vartheta^{2k'}\Big)\le C_{\textup{ext}}\big(\frac{2}{3}\big)^{k'+1},
\end{multline}
having regard to \eqref{E:kbarreqs} and that $\vartheta\le 2/3$. This also means that \eqref{E:Sdom} holds at scale $k+1$, since all relevant norms satisfy upper bounds that contract from scale $k$ to $k+1$. The desired norm decay for $K_k(<k)$ in \eqref{E:K<kdecay} is thus established.
\end{proof}

It remains to estimate the cumulative error $e_k$. Our strategy anticipates that it should be of the order of the first error at the smoothness scale $\bar{k}$, and that it vanishes as $\bar{k}$ is taken to infinity. This is indeed what we show next.
\begin{lemma} \label{L:eN}
On the assumptions of Theorem \ref{T:K<kdecay} and for a choice of constants consistent with it, the cumulative error coordinate
\begin{equation}
e_N\le\sum_{k=\bar{k}}^{N-1}\abs{e_{k+1}-e_k}
\end{equation}
is bounded uniformly in $N$ and decays as
\begin{equation}
\sum_{k=\bar{k}}^\infty\abs{e_{k+1}-e_k}=O\Big(\big(\frac{3}{4}\big)^{\bar{k}}\Big)
\end{equation}
in the limit $\bar{k}\to\infty$.
\end{lemma}
\begin{proof}
Since $e_{\bar{k}}=0$, we compute the contribution at scale $k\ge \bar{k}$:
\begin{align}
\abs{e_{k+1}-e_k} &\le \int_{\cbO_N}\; \abs{K_k(B_k^0,<k,\eta+\boldsymbol{\nabla}_N\psi_k)} + \Big|e^{-H_k(B_k^0,\eta+\boldsymbol{\nabla}_N\psi_k)}-e^{-H_k(B_k^0,\eta)}\Big| \nonumber \\
&\quad\quad\quad + \abs{K_k(B_k^0,0,\eta)}\,\nu_{k+1}^{\ssup{\bq}}(\d\eta) \nonumber \\
&\le \frac{\Acal_{\Bcal}}{2} \Big(C_k \norm{K_k(B_k^0,<k)}_{k,B_k^0} +  \norm{K_k(B_k^0,0)}_{k,B_k^0} + \tnorm{\bE^*(-H_k)-\bE(-H_k)}_{k,B_k^0}\Big) \nonumber \\
&\le \frac{\Acal_{\Bcal}}{2A} C_k \norm{K_k(<k)}_k^{(A)} + \frac{\Acal_{\Bcal}}{2A} \norm{K_k(0)}_k^{(A)}  + 12 \Acal_{\Bcal} \norm{H_k}_{k,0}.
\end{align}
The norm bounds implied by Theorems \ref{T:mainrep} and \ref{T:K<kdecay} show that, from $k\ge \bar{k}+\mathfrak{M}$ onward, this expression is summable and thus bounded by
\begin{equation}
\sum_{k=\bar{k}+\mathfrak{M}}^\infty \abs{e_{k+1}-e_k} \le \frac{2\Acal_{\Bcal}C_{\textup{ext}}(\bar{k})}{A} + \Acal_{\Bcal}\big(\frac{1}{2A}+12\big)\frac{C_{\textup{bulk}}\vartheta^{\bar{k}+\mathfrak{M}}}{1-\vartheta},
\end{equation}
uniformly in $N$. For the first $\mathfrak{M}$ contributions, we see from the preceding reasoning that $\norm{K_{\bar{k}+\mathfrak{M}}(<\bar{k}+\mathfrak{M})}_{\bar{k}+\mathfrak{M}}^{(A)}$ is the largest $K_k(<k)$-norm that we have to contend with. This follows as otherwise $\overline{\bS}_k$ contracts from $k=\bar{k}$ and we may take $\mathfrak{M}=0$. Accordingly, we obtain an overall upper bound of the form
\begin{equation}
\sum_{k=\bar{k}}^\infty \abs{e_{k+1}-e_k} \le \big(\mathfrak{M} + \frac{2\Acal_{\Bcal}}{A}\big) C_{\textup{ext}}(\bar{k}) + \Acal_{\Bcal}\big(\frac{1}{2A}+12\big)\frac{C_{\textup{bulk}}\vartheta^{\bar{k}}}{1-\vartheta},
\end{equation}
which vanishes in the $\bar{k}\to\infty$ limit as claimed, having regard to the definition of $C_{\textup{ext}}(\bar{k})$. 
\end{proof}

\section{Proofs of the main results} \label{S:10}

\subsection{Identifying the Hessian of surface tension}

We return to our overall strategy as outlined in section \ref{S:3}, with the goal of formalising the asymptotic behaviour of the finite volume surface tension claimed in \eqref{E:Hesslim}. The renormalisation group analysis in sections \ref{S:4} and \ref{S:5}, which adapt \cite{ABKM} to functionals on arbitrary vector fields, allows us to evaluate the perturbed partition function $Z_{N,\beta}(u+\dot{u})$ in the manner we intend. We show the following statement.

\begin{prop} \label{P:Hesslim}
Let $L\ge L_0$, $\beta\ge\beta_0$ and $\delta_0$ be as in Theorem \ref{T:K<kdecay} and Proposition \ref{P:Vass} respectively, and let $V$ satisfy the assumptions in \eqref{E:Assumptions}. There is a positive constant $C=C(L,h,A,\Acal_{\Bcal})$ such that the inequality
\begin{equation} \label{E:HesslimP}
\Big|\frac{\p^2}{\p\dot{u}_i\p\dot{u}_j}\sigma_{N,\beta}(u+\dot{u})\big\rvert_{\dot{u}=0} - (\bQ_V-\bq(N,u))_{i,j} \Big|\le C\vartheta^N
\end{equation}
holds for all $N\in\N$, uniformly in $u\in B_{\delta_0}(0)$.  
\end{prop}
\begin{proof}
We recall that the assumptions of Theorem \ref{T:K<kdecay} incorporate those of Theorem \ref{T:mainrep}, which is the statement of primary importance in this context. The assumptions on the potential in \eqref{E:Assumptions}, via Proposition \ref{P:Vass} and Theorem \ref{T:sigmatilde}, guarantee that the conclusions of Theorem \ref{T:mainrep} hold for the perturbation $\Kcal_{u,\b,V}$ defined in \eqref{E:KubV} for $\beta\ge\beta_0$ and $u\in B_{\delta_0}(0)$. Moreover, the finite volume surface tension $\sigma_{N,\beta}(u)$ is a $C^3$ function of tilt on the domain $B_{\delta_0}(0)$.

Fix $N$ and $u\in B_{\delta_0}(0)$. To simplify the notation, we write $\lambda$ and $\bq$ for the real number $\l_N(\Kcal_{u,\b,V})$ and the symmetric matrix $\bq_N(\Kcal_{u,\b,V})$ given by Theorem \ref{T:mainrep}. By Steps 7 and 8 of section \ref{S:5}, this ensures the existence of a sequence $Z=(H_0,H_1,\ldots, K_N)\in\Zcal$ of functionals such that, in particular, the identity $(H_{k+1},K_{k+1})=\bT_k(H_k,K_k,\bq)$ holds for $0\le k\le N-1$. We thus write the perturbation $\Zcal_{N,\b,u}(\dot{u})$ introduced in \eqref{E:ZNuUqgrad} in terms of the functionals featuring in the renormalisation group flow as follows:
\begin{align}
\Zcal_{N,\beta,u}(\dot{u}) &= \int_{\cbO_N}\;\ex^{-\frac{1}{2}\sum_{x\in\mathbb{T}_N}(eta(x)+\beta^{1/2}\dot{u})\cdot \bq(\eta(x)+\beta^{1/2}\dot{u})}\ex^{-\beta\sum_{x\in\mathbb{T}_N}U(\beta^{-1/2}(\eta+\beta^{1/2}\dot{u}),u)}\,\nu^{\ssup{\bq(u)}}(\d\eta) \nonumber \\
&= \int_{\cbO_N}\;\ex^{-\frac{1}{2}\sum_{x\in\mathbb{T}_N}(\eta(x)+\beta^{1/2}\dot{u})\cdot \bq(\eta(x)+\beta^{1/2}\dot{u})}\sum_{X\subset\mathbb{T}_N}\prod_{x\in X}\Kcal_{u,\b,V}(\eta(x)+\beta^{1/2}\dot{u})\,\nu^{\ssup{\bq}}(\d\eta) \nonumber \\
&= \ex^{L^{Nd}(\lambda+\sum_{i=1}^d a_{i,0}\b^{1/2}\dot{u}_i)}\int_{\cbO_N}\; \big(\ex^{-H_0}\circ K_0\big)(\Lambda_N,\eta+\beta^{1/2}\dot{u})\,\nu^{\ssup{\bq}}(\d\eta),
\end{align}
where $a_{i,0}$ stands for the corresponding linear coefficient of the relevant Hamiltonian $H_0$. No other terms - beyond $\lambda$ and $a_{i,0}$ - appear as all derivatives of $\dot{u}$ vanish. Note further that $\eta+\beta^{1/2}\dot{u}\in\mathfrak{V}_N$ is a valid argument for the functionals $H_0$ and $K_0$ thanks to our construction. Repeated applications of Proposition \ref{P:Tk} then show the representation anticipated in \eqref{E:Zudotrep} (where the additional factors involving $\lambda$ and $a_{i,0}$ had been omitted for simplicity):
\begin{equation} \label{E:Zudotrepactual}
\Zcal_{N,\beta,u}(\dot{u})=\ex^{L^{Nd}(\lambda+\sum_{i=1}^d a_{i,0}\b^{1/2}\dot{u}_i)}\int_{\cbO_N}\;\Big(1+K_N(\Lambda_N,\eta+\beta^{1/2}\dot{u})\Big)\,\nu_{N+1}^{\ssup{\bq}}(\d\eta).
\end{equation}
We now determine the partial derivatives $\p^2/\p\dot{u}_i\p\dot{u}_j$ for $i,j=1,...,d$. A straightforward computation from \eqref{E:ZNuudot} and \eqref{E:Zudotrepactual} yields the desired leading term, $(\bQ_V-\bq)_{i,j}$, and leaves us with the task of controlling the derivatives
\begin{equation} \label{E:deriv1}
\frac{\frac{\p^2}{\p\dot{u}_i\p\dot{u}_j}\bR_{N+1}^{\ssup{\bq}}K_N(\Lambda_N,\b^{1/2}\dot{u})}{\b L^{Nd}(1+\bR_{N+1}^{\ssup{\bq}}K_N(\Lambda_N,\b^{1/2}\dot{u}))}
\end{equation}
and
\begin{equation} \label{E:deriv2}
\frac{\frac{\p}{\p\dot{u}_i}\bR_{N+1}^{\ssup{\bq}}K_N(\Lambda_N,\b^{1/2}\dot{u})\frac{\p}{\p\dot{u}_j}\bR_{N+1}^{\ssup{\bq}}K_N(\Lambda_N,\b^{1/2}\dot{u})}{\b L^{Nd}(1+\bR_{N+1}^{\ssup{\bq}}K_N(\Lambda_N,\b^{1/2}\dot{u}))^2},
\end{equation}
evaluated at $\dot{u}=0$.
Using Lemma \ref{L:Rreg} and the chain rule, we compute
\begin{equation}
\frac{\p^2}{\p\dot{u}_i\p\dot{u}_j}\bR_{N+1}^{\ssup{\bq}}K_N(\Lambda_N,\b^{1/2}\dot{u})=\b\int_{\cbO_N}\;D^2K_N(\Lambda_N,\eta+\b^{1/2}\dot{u})(\boldsymbol{i},\boldsymbol{j})\,\nu_{N+1}^{\ssup{\bq}}(\d\eta),
\end{equation}
where $\boldsymbol{i}$ and $\boldsymbol{j}$ are the constant vector fields whose $i$-th (and respectively $j$-th) component at each site in $\mathbb{T}_N$ is one and all other components are zero. By definition of the dual norm $\abs{\cdot}_{k,X,v}$ and \eqref{E:dualform}, we have the estimate
\begin{align}
\abs{D^2 K_N(\Lambda_N,\eta+\b^{1/2}\dot{u})(\boldsymbol{i},\boldsymbol{j})} &\le 2\abs{K_N(\Lambda_N)}_{N,\Lambda_N,\eta+\b^{1/2}\dot{u}}\abs{\boldsymbol{i}\otimes\boldsymbol{j}}_{N,\Lambda_N} \nonumber \\
&= 2h_N^{-2}L^{Nd}\abs{K_N(\Lambda_N)}_{N,\Lambda_N,\eta+\b^{1/2}\dot{u}} \nonumber \\
&\le 2h_N^{-2}L^{Nd}\bunderline{w}_N^{\Lambda_N}(\eta+\b^{1/2}\dot{u}) \norm{K_N(\Lambda_N)}_{N,\Lambda_N} \nonumber \\
&\le 2 A^{-1} h_N^{-2}L^{Nd}\bunderline{w}_N^{\Lambda_N}(\eta+\b^{1/2}\dot{u}) \norm{K_N}_N^{(A)}.
\end{align}
We now appeal to the bound on $\norm{K_N}_N^{(A)}$ in \eqref{E:KNbound} and use Theorem \ref{T:wf}(10) to control the remaining $\bunderline{w}_N^{\Lambda_N}$ integral. This yields the following inequality for the numerator in \eqref{E:deriv1}:
\begin{equation}
\Big|\frac{\p^2}{\p\dot{u}_i\p\dot{u}_j}\bR_{N+1}^{\ssup{\bq}}K_N(\Lambda_N,\b^{1/2}\dot{u})\Big|\le \b L^{Nd} \frac{\Acal_{\Bcal}}{A h_N^2} \bunderline{w}_{N:N+1}^{\Lambda_N}(\b^{1/2}\dot{u}) C_{\textup{bulk}} \vartheta^N.
\end{equation}
Arguing along similar lines, we also obtain a bound for the numerator in \eqref{E:deriv2}:
\begin{multline}
\Big|\frac{\p}{\p\dot{u}_i}\bR_{N+1}^{\ssup{\bq}}K_N(\Lambda_N,\b^{1/2}\dot{u})\frac{\p}{\p\dot{u}_j}\bR_{N+1}^{\ssup{\bq}}K_N(\Lambda_N,\b^{1/2}\dot{u})\Big| \\ \le \b L^{Nd} \frac{\Acal_{\Bcal}^2}{4A^2h_N^2}\bunderline{w}_{N:N+1}^{\Lambda_N}(\b^{1/2}\dot{u})^2 C_{\textup{bulk}}^2 \vartheta^{2N}.
\end{multline}
Crucially, we see that the volume factors $L^{Nd}$ cancel from both expressions. Moreover, it follows from the estimate for the integrated error term in \eqref{E:KNbound1} of Theorem \ref{T:mainrep} that the denominators of \eqref{E:deriv1} and \eqref{E:deriv2} are bounded away from zero, uniformly in $N$, when evaluated at $\dot{u}=0$. Since $\bunderline{w}_{N:N+1}^{\Lambda_N}(0)=1$ and $h_N^{-2}\le h^{-1}$, the claim in \eqref{E:HesslimP} follows and this completes the proof.
\end{proof}

In combination with Theorem \ref{T:sigmatilde}, this result implies that the Hessian of surface tension that appears in our main results, $\mathscr{H}_\sigma(\beta,u)$, is given by the expression
\begin{equation}
\mathscr{H}_\sigma(\beta,u)=\bQ_V-\bq(u),
\end{equation}
where all limits are taken along the appropriate sub-sequence such that Theorem \ref{T:sigmatilde} holds and $\bq(N,u)\to\bq(u)$. For the torus scaling limit, it thus suffices then to verify that the same object also characterises the covariance of the scaling limit, as it does in \cite{ABKM}. We emphasise that this step, although relatively simple, is nonetheless important. The renormalisation group representation in Theorem \ref{T:mainrep} is, strictly speaking, not the same as in \cite{ABKM}: it holds for a different class of functionals in a different series of Banach spaces defined by different (albeit analogous) constants. Accordingly, it is necessary to state that the scaling limit still follows from it; otherwise our identification of the Hessian of $\sigma_{N,\beta}$ would have no bearing on it.

For the infinite volume scaling limit, the additional task of proving its existence as a Gaussian Free Field arises. The heaviest burden falls of course on the extended renormalisation group analysis constructed in sections \ref{S:6} to \ref{S:9}. The remaining steps are collected in the next sub-section, which closes this article with a proof of Theorems \ref{T:scaling} and \ref{T:scalingIV}.

\subsection{Characterising the scaling limits}

We begin by adapting Theorem 2.10 of \cite{ABKM} into our setting to complete the proof of the torus scaling limit.
\begin{proof} [Proof of Theorem \ref{T:scaling}]
We work at a fixed tilt $u$ and our assumptions guarantee that Theorems \ref{T:mainrep} and \ref{T:sigmatilde} hold. For simplicity, we write $N$ in the following, but we mean in fact the sub-sequence $N_\ell$ given by Theorem \ref{T:sigmatilde} restricted (if necessary) to a further sub-sequence such that the matrices $\bq_N(u)$ converge to a limit $\bq$. As in \cite{ABKM}, we note that we may shift the test function $f_N$ by a constant (which may depend on $N$) without affecting any of other expressions in view of the definition of $\cbX_N$. We therefore assume that $f_N\in\cbX_N$.

We translate the expectation we want to evaluate into our notation. We use the expression
\begin{equation}
\bF_{\beta,u,V}(\phi)=\exp\Big(-\frac{1}{2}\sum_{x\in\mathbb{T}_N}\nabla\phi(x)\cdot\bq\nabla\phi(x)-\beta U(\beta^{-1/2}\nabla\phi(x),u)\Big)
\end{equation}
as a convenient shorthand. In this somewhat lighter notation, we obtain the identities
\begin{align} \label{E:fnexpcalc}
\mathbb{E}_{\gamma_{N,\beta}^{\ssup{u}}}\big[\ex^{(f_N,\phi)}\big] &= \frac{\int_{\cbX_N}\;\ex^{(\beta^{-1/2}f_N,\phi)}\bF_{\beta,u,V}(\phi)\,\mu^{\ssup{\bq}}(\d\phi)}{\int_{\cbX_N}\;\bF_{\beta,u,V}(\phi)\,\mu^{\ssup{\bq}}(\d\phi)} \nonumber \\
&= \ex^{\frac{1}{2\beta}(f_N,\mathscr{C}^{\ssup{\bq_N}}f_N)}\frac{\int_{\cbX_N}\;\bF_{\beta,u,V}(\phi+\mathscr{C}^{\ssup{\bq_N}}f_N)\,\mu^{\ssup{\bq}}(\d\phi)}{\int_{\cbX_N}\;\bF_{\beta,u,V}(\phi)\,\mu^{\ssup{\bq}}(\d\phi)} \nonumber \\
&= \ex^{\frac{1}{2\beta}(f_N,\mathscr{C}^{\ssup{\bq_N}}f_N)}\frac{\int_{\cbO_N}\;\bF_{\beta,u,V}\circ\boldsymbol{\nabla}_N^{-1}(\eta+\boldsymbol{\nabla}_N\mathscr{C}^{\ssup{\bq_N}}f_N)\,\nu^{\ssup{\bq}}(\d\eta)}{\int_{\cbO_N}\;\bF_{\beta,u,V}\circ\boldsymbol{\nabla}_N^{-1}(\eta)\,\nu^{\ssup{\bq}}(\d\eta)} \nonumber \\
&= \ex^{\frac{1}{2\beta}(f_N,\mathscr{C}^{\ssup{\bq_N}}f_N)}\frac{\int_{\cbO_N}\;1+K_N(\Kcal_{u,\beta,V})(\mathbb{T}_N,\eta+\boldsymbol{\nabla}_N\mathscr{C}^{\ssup{\bq_N}}f_N)\,\nu_{N+1}^{\ssup{\bq}}(\d\eta)}{\int_{\cbO_N}\;1+K_N(\Kcal_{u,\beta,V})(\mathbb{T}_N,\eta)\,\nu_{N+1}^{\ssup{\bq}}(\d\eta)},
\end{align}
using the representation provided by \ref{T:mainrep} and noting that all other pre-factors cancel. Using the bound on $\norm{K_N}_N^{(A)}$ in \eqref{E:KNbound} and Theorem \ref{T:wf}(10), we obtain the inequality
\begin{multline} \label{E:wfNest}
\Big|\int_{\cbO_N}\;1+K_N(\Kcal_{u,\beta,V})(\mathbb{T}_N,\eta+\boldsymbol{\nabla}_N\mathscr{C}^{\ssup{\bq_N}}f_N)\,\nu_{N+1}^{\ssup{\bq}}(\d\eta)\Big| \\
\le \frac{C_{\textup{bulk}}\Acal_\Bcal}{2A}\vartheta^N\bunderline{w}_{N:N+1}^{\mathbb{T}_N}(\boldsymbol{\nabla}_N\mathscr{C}^{\ssup{\bq_N}}f_N)
\end{multline}
for the numerator in \eqref{E:fnexpcalc}. Now we rely on Theorem \ref{T:wf}(2) to estimate the weight function. Since $\boldsymbol{\nabla}_N\mathscr{C}^{\ssup{\bq_N}}f_N\in\cbO_N$, we see that
\begin{align}
\bunderline{w}_{N:N+1}^{\mathbb{T}_N}(\boldsymbol{\nabla}_N\mathscr{C}^{\ssup{\bq_N}}f_N) &\le \exp\Big(\frac{\big(\boldsymbol{\nabla}_N\mathscr{C}^{\ssup{\bq_N}}f_N,\bunderline{M}_N\boldsymbol{\nabla}_N\mathscr{C}^{\ssup{\bq_N}}f_N\big)_{\mathfrak{V}_N}}{2\lambda}\Big) \nonumber \\
&= \exp\Big(\frac{\big(\boldsymbol{\nabla}_N\mathscr{C}^{\ssup{\bq_N}}f_N,\mathscr{M}_{N,\nabla}\boldsymbol{\nabla}_N\mathscr{C}^{\ssup{\bq_N}}f_N\big)_{\cbO_N}}{2\lambda}\Big),
\end{align}
and our construction in Appendix \ref{A:C} ensures that the inner product on the second line above corresponds exactly to the $\cbX_N$ inner product that appears in the proof of Theorem 2.10 of \cite{ABKM}. We do not repeat the detailed calculation here and conclude that the weight function in \eqref{E:wfNest} is bounded uniformly in $N$, depending on $f$. This in turn implies that the numerator in \eqref{E:fnexpcalc} converges to one as $N\to\infty$ and the same holds for the denominator. The convergence of the discrete inner product, $\big(f_N,\mathscr{C}^{\ssup{\bq_N}}f_N\big)_{\cbX_N}$, to the continuous inner product on the continuum torus $\mathbb{T}^d$, $\big((f,\mathscr{C}_{\mathbb{T}^d}^{\ssup{u}}f\big)_{L^2(\mathbb{T}^d)}$, where $\mathscr{C}_{\mathbb{T}^d}^{\ssup{u}}$ is the inverse of the operator
\begin{equation}
\mathscr{A}_{\mathbb{T}^d}^{\ssup{u}}=-\sum_{i,j=1}^d\big(\bQ_V-\bq\big)_{i,j}\partial_i\partial_j,
\end{equation}
is exactly as in \cite{ABKM}, so we skip a detailed proof. It then follows from Proposition \ref{P:Hesslim} and following remarks that the coefficients of $\mathscr{A}_{\mathbb{T}^d}^{\ssup{u}}$ indeed correspond to the Hessian of surface tension $\mathscr{H}_\sigma(\beta,u)$ as asserted. This completes the proof of the torus scaling limit.
\end{proof}

We now proceed to the infinite volume scaling limit. As anticipated, we focus on the expectation in finite volume, which we represent in our formalism as follows:
\begin{align} \label{E:IVscalinglimit}
\mathbb{E}_{\gamma_{N,\beta}^{\ssup{u}}}\Big[ & \exp\Big(\sum_{x\in\mathbb{T}_N}\nabla\phi(x)\cdot \nabla g_\eps(x)\Big)\Big] = \frac{\int_{\cbX_N}\;\ex^{\beta^{-1/2}\sum_{x\in\mathbb{T}_N} \nabla g_\eps(x)\cdot\nabla\phi(x)}\bF_{\beta,u,V}(\phi)\,\mu^{\ssup{\bq}}(\d\phi)}{\int_{\cbX_N}\;\bF_{\beta,u,V}(\phi)\,\mu^{\ssup{\bq}}(\d\phi)} \nonumber \\
&= \frac{\int_{\cbO_N}\;\ex^{\beta^{-1/2}(\eta,\boldsymbol{\nabla}_N g_\eps)}\bF_{\beta,u,V}\circ\boldsymbol{\nabla}_N^{-1}(\eta)\,\nu^{\ssup{\bq}}(\d\eta)}{\int_{\cbO_N}\;\bF_{\beta,u,V}\circ\boldsymbol{\nabla}_N^{-1}(\eta)\,\nu^{\ssup{\bq}}(\d\eta)} \nonumber \\
&= \ex^{\frac{1}{2\beta}(\boldsymbol{\nabla}_N g_\eps,\mathscr{C}^{\ssup{\bq_N},\nabla}\boldsymbol{\nabla}_N g_\eps)}\frac{\int_{\cbO_N}\;\bF_{\beta,u,V}\circ\boldsymbol{\nabla}_N^{-1}(\eta+\mathscr{C}^{\ssup{\bq_N},\nabla}\boldsymbol{\nabla}_N g_\eps)\,\nu^{\ssup{\bq}}(\d\eta)}{\int_{\cbO_N}\;\bF_{\beta,u,V}\circ\boldsymbol{\nabla}_N^{-1}(\eta)\,\nu^{\ssup{\bq}}(\d\eta)} \nonumber \\
&= \ex^{\frac{1}{2\beta}(\boldsymbol{\nabla}_N g_\eps,\mathscr{C}^{\ssup{\bq_N},\nabla}\boldsymbol{\nabla}_N g_\eps)}\frac{\int_{\cbO_N}\;\Big(\ex^{-H_0}\circ K_0(0)\Big)(\mathbb{T}_N,\eta+\mathscr{C}^{\ssup{\bq_N},\nabla}\boldsymbol{\nabla}_N g_\eps)\,\nu^{\ssup{\bq}}(\d\eta)}{\int_{\cbO_N}\;1+K_N(\mathbb{T}_N,0,\eta)\,\nu_{N+1}^{\ssup{\bq}}(\d\eta)}.
\end{align}
In the above expression, $H_0$, $K_0(0)$ and $K_N(0)$ refer to the appropriate functionals in the bulk renormalisation group flow provided by Theorem \ref{T:mainrep}. As before, we take advantage of cancellations of various pre-factors and the fact that only the constant and quadratic terms of $\sum_{x\in\mathbb{T}_N}H_0(\{x\})$ acting on a gradient vector field are non-vanishing. Our goal is to show that the inner product involving the test function converges to the desired continuum object and the ratio of partition functions converges to one, in each case, as $N\to\infty$ followed by $\eps\to 0$. In view of Lemma \ref{L:tightness}, in the following we take the $N\to\infty$ limit along a sub-sequence such that the measures $\gamma_{N,\beta}^{\ssup{u}}$ converge weakly to a limit $\gamma_{\infty,\beta}^{\ssup{u}}$. We begin with the inner product in $g_\eps$. The next result is essentially Lemma 2.4 of \cite{BPR22b}, but we include a proof for clarity.

\begin{lemma} \label{L:innerprodscaling}
Let $f=\Delta_{\R^d}g\in C_c^{\infty}(\R^d)$ and $f_\eps$, $g_\eps$ be as described in section \ref{S6:1}. Then
\begin{equation}
\limsup_{\eps\to 0}\limsup_{N\to\infty} \big(\boldsymbol{\nabla}_N g_\eps,\mathscr{C}^{\ssup{\bq_N},\nabla}\boldsymbol{\nabla}_N g_\eps\big)_{\cbO_N} < \infty,
\end{equation}
and, if in addition $\bq_N\to\bq$, the limit exists and equals
\begin{equation}
\lim_{\eps\to 0}\lim_{N\to\infty} \big(\boldsymbol{\nabla}_N g_\eps,\mathscr{C}^{\ssup{\bq_N},\nabla}\boldsymbol{\nabla}_N g_\eps\big)_{\cbO_N} = \big(f,\mathscr{C}_{\R^d}^{\ssup{u}}f\big)_{L^2(\R^d)}
\end{equation}
where $\mathscr{C}_{\R^d}^{\ssup{u}}$ stands for the inverse of the operator
\begin{equation}
\mathscr{A}_{\R^d}^{\ssup{u}}=-\sum_{i,j=1}^d\big(\bQ_V-\bq\big)_{i,j}\partial_i\partial_j.
\end{equation}
\end{lemma}
\begin{proof}
We work in Fourier space: see section \ref{S:7} above and, more generally, \cite{ABKM} and \cite{B18}. Accordingly, we begin by expressing the inner product using Plancherel's identity:
\begin{align} \label{E:discinnerprod}
\big(\boldsymbol{\nabla}_N g_\eps,\mathscr{C}^{\ssup{\bq_N},\nabla}\boldsymbol{\nabla}_N g_\eps\big)_{\cbO_N} &= \big(f_\eps,\mathscr{C}^{\ssup{\bq_N}} f_\eps\big)_{\cbX_N} = L^{-dN} \sum_{p\in\widehat{T}_N\setminus\{0\}} \widehat{\Ccal^{\ssup{\bq_N}}}(p)\abs{\widehat{\Delta g_\eps}(p)}^2 \nonumber \\
&= L^{-dN} \sum_{p\in\widehat{T}_N\setminus\{0\}} \widehat{\Ccal^{\ssup{\bq_N}}}(p)\abs{\widehat{\Delta}(p)}^2\abs{\widehat{g_\eps}(p)}^2.
\end{align}
Our assumptions on the potential in \eqref{E:Assumptions} and on $\norm{\bq_N}$ via the requirements for Theorem \ref{T:wf} provide an upper bound on $\widehat{\Ccal^{\ssup{\bq_N}}}(p)$. Indeed, we see that
\begin{align}
\widehat{(\Ccal^{\ssup{\bq_N}})^{-1}}(p) &= \sum_{j,\ell=1}^d\big(\bQ_V-\bq_N\big)_{j,l}\widehat{\nabla_j^*\nabla_\ell}(p)=\sum_{j,\ell=1}^d\big(\bQ_V-\bq_N\big)_{j,l}\overline{q_j(p)}q_\ell(p) \nonumber \\
&= \big(q(p),(\bQ_V-\bq_N)q(p)\big)_{\C^d}\ge \frac{\omega_0}{2}\abs{q(p)}^2.
\end{align}
Next, the simple estimates, $\abs{\widehat{\Delta}(p)}^2\le \abs{p}^4$ and $\abs{q(p)}^2=2d-2\sum_{j=1}^d \cos(p_j)\ge\frac{4}{\pi^2}\abs{p}^2$, confirm that the expression in \eqref{E:discinnerprod} is bounded, uniformly in $N$. The inequality
\begin{equation} \label{E:FTconvN}
\limsup_{N\to\infty}\big(\boldsymbol{\nabla}_N g_\eps,\mathscr{C}^{\ssup{\bq_N},\nabla}\boldsymbol{\nabla}_N g_\eps\big)_{\cbO_N} \le \frac{2}{\omega_0}\int_{[-\pi,\pi]^d}\;\frac{\abs{p}^4\abs{g_\eps(p)}^2}{2d-2\sum_{j=1}^d \cos(p_j)}\,\d p
\end{equation}
then follows by dominated convergence. After a change of variables $p\mapsto \eps p$, we can bound the resulting integrand by the function
\begin{align}
\1_{[-\eps^{-1}\pi,\eps^{-1}\pi]}(p)\frac{\eps^{d+4}\abs{p}^4\abs{g_\eps(\eps p)}^2}{2d-2\sum_{j=1}^d \cos(\eps p_j)} &\le \frac{\pi^2}{4}\eps^{d+2}\abs{p}^2\abs{g_\eps(\eps p)}^2 = \frac{\pi^2\abs{p}^2}{4}\Big|\sum_{x\in\Z^d} \eps^{\frac{d+2}{2}}e^{-ix\cdot \eps p} g_\eps(x)\Big|^2 \nonumber \\
&= \frac{\pi^2\abs{p}^2}{4(1+\abs{p}^2)^{d+1}}\Big|\sum_{x\in\eps\Z^d} \eps^{d}(1+\abs{p}^2)^{\frac{d+1}{2}}e^{-ix\cdot p} g(x)\Big|^2 \nonumber \\
&\le \frac{\pi^2\abs{p}^2}{4(1+\abs{p}^2)^{d+1}}C(g),
\end{align}
which is integrable over $\R^d$. Arguing again by dominated convergence and using that $g\in C_c^\infty(\R^d)$, we thus arrive at the upper bound
\begin{equation}
\limsup_{\eps\to 0}\limsup_{N\to\infty} \big(\boldsymbol{\nabla}_N g_\eps,\mathscr{C}^{\ssup{\bq_N},\nabla}\boldsymbol{\nabla}_N g_\eps\big)_{\cbO_N} \le \frac{\pi^2}{2\omega_0}\int_{\R^d}\;\abs{k}^2\abs{\widehat{g}(k)}^2\,\d k < \infty.
\end{equation}
If the sequence of matrices $\bq_N$ converges, the limit in \eqref{E:FTconvN} exists and we obtain that
\begin{equation}
\lim_{N\to\infty}\big(\boldsymbol{\nabla}_N g_\eps,\mathscr{C}^{\ssup{\bq_N},\nabla}\boldsymbol{\nabla}_N g_\eps\big)_{\cbO_N} = \int_{[-\pi,\pi]^d}\;\frac{\Big|\sum_{j=1}^d\abs{q_j(p)}^2\Big|^2}{\sum_{j,\ell=1}^d\big(\bQ_V-\bq\big)_{j,l}\overline{q_j(p)}q_\ell(p)}\abs{g_\eps(p)}^2\,\d p.
\end{equation}
Finally, the same reasoning as above then shows the equality
\begin{align}
\lim_{\eps\to 0}\lim_{N\to\infty} \big(\boldsymbol{\nabla}_N g_\eps,\mathscr{C}^{\ssup{\bq_N},\nabla}\boldsymbol{\nabla}_N g_\eps\big)_{\cbO_N} &= \int_{\R^d}\;\frac{\abs{k}^4\abs{\widehat{g}(k)}^2}{\sum_{j,\ell=1}^d\big(\bQ_V-\bq\big)_{j,l}k_jk_l}\,\d k \nonumber \\
&= \int_{\R^d}\;\frac{\abs{\widehat{\Delta_{\R^d}g}(k)}^2}{\sum_{j,\ell=1}^d\big(\bQ_V-\bq\big)_{j,l}k_jk_l}\,\d k,
\end{align}
which characterises the operator $\mathscr{C}_{\R^d}^{\ssup{u}}$ in our statement through its Fourier transform.
\end{proof}

The denominator of the fraction in \eqref{E:IVscalinglimit} converges to one, by virtue of Theorem \ref{T:mainrep}, and so it remains to estimate the numerator. That is the purpose of our next statement. 
\begin{lemma} \label{L:IVnumerator}
Let $f,g$ be as described in section \ref{S6:1}. On the assumptions of Theorem \ref{T:K<kdecay} and for a choice of constants consistent with it, it holds that
\begin{equation} \label{E:IVnumerator}
\Big|\int_{\cbO_N}\;\Big(\ex^{-H_0}\circ K_0(0)\Big)(\mathbb{T}_N,\eta+\mathscr{C}^{\ssup{\bq_N},\nabla}\boldsymbol{\nabla}_N g_\eps)\,\nu^{\ssup{\bq}}(\d\eta)-1\Big| \le O\big((3/4)^{\bar{k}}\big)
\end{equation}
for all $\bar{k}\ge\bar{k}_0$, uniformly in $N$ for $N\ge\bar{k}+\mathfrak{N}$.
\end{lemma}
\begin{proof}
Fix the test function (i.e., $f$ and $g$) and determine $\bar{k}_0$ in accordance with Theorem \ref{T:K<kdecay}. For $\bar{k}\ge \bar{k}_0$ and $N\ge\bar{k}+\mathfrak{N}$, repeated applications of Proposition \ref{P:Tk} and Lemma \ref{L:K<kprop}(4) yield the following representation in terms of the extended renormalisation group flow:
\begin{multline}
\int_{\cbO_N}\;\Big(\ex^{-H_0}\circ K_0(0)\Big)(\mathbb{T}_N,\eta+\mathscr{C}^{\ssup{\bq_N},\nabla}\boldsymbol{\nabla}_N g_\eps)\,\nu^{\ssup{\bq}}(\d\eta) \\
= \int_{\cbO_N}\;\ex^{e_N}\Big(1+ K_N(\Lambda_N,<N,\eta+\boldsymbol{\nabla}_N\psi_N)\Big)\,\nu_{N+1}^{\ssup{\bq}}(\d\eta).
\end{multline}
With the help of Theorems \ref{T:K<kdecay}, \ref{T:wf}(2) and \ref{T:wf}(10), we estimate the residual integral,
\begin{align}
\Big|\int_{\cbO_N}\;K_N(\Lambda_N,<N,\eta+\boldsymbol{\nabla}_N\psi_N)\,\nu_{N+1}^{\ssup{\bq}}(\d\eta)\Big| &\le A^{-1}\norm{K_N}_N^{(A)}\int_{\cbO_N}\;\bunderline{w}_N^{\Lambda_N}(\eta+\boldsymbol{\nabla}_N\psi_N)\Big)\,\nu_{N+1}^{\ssup{\bq}}(\d\eta) \nonumber \\
&\le \frac{\Acal_\Bcal}{2A}\norm{K_N}_N^{(A)}\bunderline{w}_{N:N+1}^{\Lambda_N}(\boldsymbol{\nabla}_N\psi_N) \nonumber \\
&\le \frac{\Acal_\Bcal}{2A}\norm{K_N}_N^{(A)} \exp\Big(\frac{\big(\boldsymbol{\nabla}_N\psi_N,\bunderline{M}_N \boldsymbol{\nabla}_N\psi_N\big)}{2\lambda}\Big) \nonumber \\
&\le \frac{\Acal_\Bcal}{2A}C_{\textup{ext}}(\bar{k})\big(\frac{2}{3}\big)^{N-(\bar{k}+\mathfrak{M})} C(d,f,L),
\end{align}
where the upper bound in \eqref{E:psiNest} was used for the quadratic form in $\boldsymbol{\nabla}_N\psi_N$ on the penultimate line. Finally, the inequality $\ex^x-1\le 2x$ for $x\in (0,1)$, combined with Lemma \ref{L:eN}, confirms the asymptotic behaviour of the above expression as claimed in \eqref{E:IVnumerator}.
\end{proof}

We are now in a position to conclude with a proof of the infinite volume scaling limit.
\begin{proof} [Proof of Theorem \ref{T:scalingIV}]
We study the expectation with respect to the infinite volume state,
\begin{equation} \label{E:IVscalingexp}
\mathbb{E}_{\gamma_{\infty,\beta}^{\ssup{u}}}\Big[\exp\Big({\sum_{x\in\Z^d}\nabla\phi(x)\cdot \nabla g_\eps(x)}\Big)\Big],
\end{equation}
in the $\eps\to 0$ limit. To make use of our analysis of the finite volume expectations, it suffices to show that the random variables $\exp\Big({\sum_{x\in\Z^d}\nabla\phi(x)\cdot \nabla g_\eps(x)}\Big)$, where $\phi$ is distributed under $\gamma_{N,\beta}^{\ssup{u}}$, are uniformly integrable. But this follows from our preceding results applied to, say, the test function $2f$. Indeed, for $\eps$ fixed (and small enough for Lemma \ref{L:IVnumerator} to hold) and all $N$ sufficiently large, $2f_\eps$ is supported in $\Lambda_N$ and then Lemmas \ref{L:innerprodscaling} and \ref{L:IVnumerator} imply that
\begin{equation}
\sup_{N\in\N} \mathbb{E}_{\gamma_{N,\beta}^{\ssup{u}}}\Big[\exp\Big({\sum_{x\in\Lambda_N}\nabla\phi(x)\cdot \nabla g_\eps(x)}\Big)^2\Big]<\infty,
\end{equation}
from which we infer the desired uniform integrability. Combined with the weak convergence of the measures $\gamma_{N,\beta}^{\ssup{u}}$ (as random gradient vector fields), which follows from Lemma \ref{L:tightness} as discussed, this implies the limit
\begin{equation}
\mathbb{E}_{\gamma_{\infty,\beta}^{\ssup{u}}}\Big[\exp\Big({\sum_{x\in\Z^d}\nabla\phi(x)\cdot \nabla g_\eps(x)}\Big)\Big]=\lim_{N\to\infty} \mathbb{E}_{\gamma_{N,\beta}^{\ssup{u}}}\Big[\exp\Big(\sum_{x\in\Lambda_N}\nabla\phi(x)\cdot \nabla g_\eps(x)\Big)\Big].
\end{equation}
Lemma \ref{L:IVnumerator} then ensures that the ratio of partition functions that appears in \eqref{E:IVscalinglimit} converges to one as $\eps\to 0$ (that is, $\bar{k}\to\infty$). Moreover,  since $\bq_N\to\bq$ at least along a sub-sequence, Lemma \ref{L:innerprodscaling} actually shows that the limit of \eqref{E:IVscalingexp} admits the representation
\begin{equation}
\lim_{\eps\to 0} \mathbb{E}_{\gamma_{\infty,\beta}^{\ssup{u}}}\Big[\exp\Big({\sum_{x\in\Z^d}\nabla\phi(x)\cdot \nabla g_\eps(x)}\Big)\Big]=\exp\big(\frac{1}{2\beta}\big(f,\mathscr{C}_{\R^d}^{\ssup{u}}f\big)_{L^2(\R^d)}\Big),
\end{equation}
where $\mathscr{C}_{\R^d}^{\ssup{u}}$ stands for the operator given in Lemma \ref{L:innerprodscaling}. Finally, the identification of the matrix $\bQ_V-\bq$ with the Hessian of surface tension follows from Proposition \ref{P:Hesslim} as above and this completes the proof.
\end{proof}

\section*{Acknowledgements and Dedication}

This article is dedicated to the memory of Stefan Adams, the second author's late PhD adviser, who sadly passed away before its completion. We thank Roland Bauerschmidt warmly for his support and encouragement during the preparation of this article.

Andreas Koller is supported by the Warwick Mathematics Institute Centre for Doctoral Training, and gratefully acknowledges funding from the University of Warwick and the UK Engineering and Physical Sciences Research Council (EPSRC) (Grant number: EP/T51794X/1).

\pagebreak

\section*{Appendices}
\begin{appendices}

\section{Index of notational changes from \cite{ABKM}} \label{A:A}

\subsection{Global adjustments}

In general, the setting considered in this article corresponds to a particular subset of the range of models covered by \cite{ABKM}. The limitation to scalar fields corresponds to setting $m=1$ (in the notation of \cite{ABKM}) and gradient-only interactions reflect the choice of $\mathcal{G}=\R^{\mathcal{I}}$ with $\mathcal{I}=\{e_i\in\R^d:1\le i\le d\}$ as the space of field derivatives. We therefore do not use the notation $\mathcal{G}$ and simply consider gradients as elements of $\R^d$. The fixed parameters listed in the beginning of section \ref{S:5} generally play the same roles as in \cite{ABKM}.

\subsection{Specific changes in notation}

The following table collects the symbols used in this article and lists the corresponding symbol or object in the notation of \cite{ABKM}. Notation and conventions that are identical are omitted from this inventory.

\begin{center}
\begin{tabular}{ |l|l| }
\hline
Symbol & Corresponding symbol in \cite{ABKM} \\
\hline
$\mathbb{T}_N$, the discrete torus of side length $L^{dN}$ & $T_N$ \\
$V:\R^d\to\R$ & $U:(\R^m)^{A}\to\R$ \\
$\mathcal{Q}_V$, quadratic form on $\R^d$ & $\mathcal{Q}_{\Ucal}$, quadratic form on $\Gcal$ \\ 
$u\in\R^d$ & $\overline{F}\in\Gcal^{\nabla}$ \\
$U:\R^d\times\R^d\to\R$ & $\overline{\Ucal}:\Gcal\times\R^{d\times m}\to\R$ \\
$\Kcal_{u,\beta,V}:\R^d\to\R$ & $\Kcal_{F,\beta,\Ucal}:\Gcal\to\R$ \\
$\s_{N,\beta}(u)$, $\s_\beta(u)$ & $W_{N,\beta}(F)$, $W_{\beta}(F)$ \\
$\l\in\R$, the constant relevant monomial & $a_{\emptyset}\in\R$ \\
$m\in\mathfrak{m}$, the linear relevant monomial indices & $(i,\a)\in\mathfrak{v}_1$ \\
$\bd\in\R_{\text{sym}}^{d\times d}$, the quadratic relevant monomial indices & $a_{(i,\a),(j,\b)}$, $(i,\a),(j,\b)\in\mathfrak{v}_2$ \\
$\big\langle \text{Tay}_vF,g\big\rangle_{r_0}$, the dual pairing with $\bigoplus_{r=0}^{r_0}\mathfrak{V}_N^{\otimes r}$ & $\big\langle F,g\big\rangle_\phi=\big\langle \text{Tay}_\phi F,g\big\rangle$ \\
$\abs{\cdot}_{j,X,v}$, the norm on Taylor polynomials & $\abs{\cdot}_{j,X,T_{\phi}}$ \\
$\bunderline{w}_k^X$, $\bunderline{w}_{k:k+1}^X$ and $\bunderline{W}_k^X$, the weight functions & $w_k^X$, $w_{k:k+1}^X$ and $W_k^X$ \\
$\vartheta$, the contraction parameter & $\eta$ \\
$Z:B_{\eps_1}(0)\times B_{\eps2}(0)\to B_{\eps}(0)$, the fixed point map & $\widehat{Z}:B_{\eps_1}(0)\times B_{\eps2}(0)\to B_{\eps}(0)$ \\
in Theorem \ref{T:Tfp} &  \\
$Z_{H_0}$, the projection onto the $H_0$ component of $Z(\Kcal,\Hcal)$ & $\Pi_{H_0}\widehat{Z}(\Kcal,\Hcal)$ \\
$K_N:B_{\varrho}(0)\to\bM(\Pcal_N^c,\mathfrak{V}_N)$, the final component & $\widehat{K}_N:B_{\varrho}(0)\to\bM_N^{(A)}$ \\
of the fixed point map & \\
\hline
\end{tabular}
\end{center}

\section{Existence of the infinite volume limit} \label{A:B}

We show below that the sequence of measures $\big(\gamma_{N,\beta}^{\ssup{u},\nabla}\big)_{N\in\N}$ is tight. For this purpose, we regard them as measures on gradient vector fields on $\Z^d$ by periodic extension.

\begin{lemma} \label{L:tightness}
Suppose that $V$, $\beta$ and $u$ satisfy the assumptions of Theorem \ref{T:scalingIV}. Then there exist positive reals $\iota>0$ and $\mathfrak{K}>0$ such that
\begin{equation}\label{E:expbound}
\frac{1}{\beta L^{dN}}\log\gamma_{N,\beta}^{\ssup{u},\nabla}\Big(\frac{1}{L^{dN}}\sum_{x\in \mathbb{T}_N}\;\abs{\eta(x)}^2\ge \mathfrak{L}\Big)\le -\iota \mathfrak{L} + \mathfrak{K}
\end{equation}
for all $\mathfrak{L}>0$, uniformly in $N$. In particular, $(\gamma_{N,\beta}^{\ssup{u},\nabla})_{N\in\N}$ is tight.
\end{lemma}
\begin{proof}
Our assumptions ensure that the main result of \cite{ABKM} for generalised gradient models, Theorem 2.2, holds in the setting we consider. This implies the existence (along a sub-sequence) of the infinite volume surface tension, which we need here for the lower bound
\begin{equation}
\inf_{N\in\N}\frac{1}{\beta L^{dN}}\log Z_{N,\beta}(u)>-\infty.
\end{equation}
(Alternatively, any quadratic upper bound on $V$ would easily yield the same.) We now use the quadratic lower bound in \eqref{E:Assumptions} to deduce that, for any $0<\iota<\omega$, 
\begin{align}
-H_N^{\ssup{u}}(\eta)+\beta \iota\sum_{x\in\mathbb{T}_N} & \abs{\eta(x)}^2 \le -\beta\sum_{x\in\mathbb{T}_N} (\omega-\iota)\abs{\eta(x)+u}^2 +DV(0)(\eta(x)+u)+V(0) \nonumber \\
&\le -\beta L^{dN}\big(DV(0)(u)+V(0)+\omega\abs{u}^2\big) -\beta(\omega-\iota)\sum_{x\in\mathbb{T}_N} \abs{\eta(x)}^2.
\end{align}
Combined with standard results from Gaussian calculus, this provides a bound on the logarithmic rate of the following expectation,
\begin{multline}
\frac{1}{\beta L^{dN}}\log\mathbb{E}_{\gamma_{N,\beta}^{\ssup{u},\nabla}}\Big[\exp\Big(\beta \iota\sum_{x\in\mathbb{T}_N}\abs{\eta(x)}^2\Big)\Big] \le -DV(0)(u)-V(0)-\omega\abs{u}^2 \\
+ \sup_{N\in\N}\frac{1}{\beta L^{dN}}\log \int_{\cbO_N}\;\ex^{-\beta(\omega-\iota)\sum_{x\in\mathbb{T}_N} \abs{\eta(x)}^2}\,\omega_N(\d\eta)-\inf_{N\in\N}\frac{1}{\beta L^{dN}}\log Z_{N,\beta}(u) \\
\le C(\iota)<\infty,
\end{multline}
for some finite constant $C$, depending on $\iota$. Chebyshev's inequality then implies the estimate
\begin{equation}
\gamma_{N,\beta}^{\ssup{u},\nabla}\Big(\frac{1}{L^{dN}}\sum_{x\in\mathbb{T}_N}\abs{\eta(x)}^2 \ge \mathfrak{L}\Big) \le \ex^{-\beta\iota\mathfrak{L}L^{dN}}\mathbb{E}_{\gamma_{N,\beta}^{\ssup{u},\nabla}}\Big[\exp\Big(\beta \iota\sum_{x\in\mathbb{T}_N}\abs{\eta(x)}^2\Big)\Big],
\end{equation}
and hence
\begin{equation}
\frac{1}{\beta L^{dN}}\log\gamma_{N,\beta}^{\ssup{u},\nabla}\Big(\frac{1}{L^{dN}}\sum_{x\in\mathbb{T}_N}\abs{\eta(x)}^2 \ge \mathfrak{L}\Big) \le -\iota\mathfrak{L}+C(\iota).
\end{equation}
This confirms the claim in \eqref{E:expbound}. To see that tightness follows from it, we use translation invariance to obtain, for any $x\in\mathbb{T}_N$ and $\ell>0$, the identity
\begin{equation}
\gamma_{N,\beta}^{\ssup{u},\nabla}\big(\abs{\eta(x)}>\ell\big)=\mathbb{E}_{\gamma_{N,\beta}^{\ssup{u},\nabla}}\big[\1_{\abs{\eta(x)}>\ell}\big]=\mathbb{E}_{\gamma_{N,\beta}^{\ssup{u},\nabla}}\Big[\frac{1}{L^{dN}}\sum_{x\in\mathbb{T}_N}\1_{\abs{\eta(x)}>\ell}\Big].
\end{equation}
Given $\eps>0$, we then deduce that
\begin{align}
\mathbb{E}_{\gamma_{N,\beta}^{\ssup{u},\nabla}}\Big[\frac{1}{L^{dN}}\sum_{x\in\mathbb{T}_N}\1_{\abs{\eta(x)}>\ell}\Big] &\le \eps + \gamma_{N,\beta}^{\ssup{u},\nabla}\Big(\frac{1}{L^{dN}}\sum_{x\in\mathbb{T}_N}\1_{\abs{\eta(x)}>\ell} \ge \eps\Big) \nonumber \\
&\le \eps + \gamma_{N,\beta}^{\ssup{u},\nabla}\Big(\frac{1}{L^{dN}}\sum_{x\in\mathbb{T}_N}\abs{\eta(x)}^2 \ge \eps\ell^2\Big) \nonumber \\
&\le 2\eps,
\end{align}
provided that $\ell^2\ge \mathfrak{K}+\frac{\log \eps^{-1}}{\eps\beta\iota}$ in view of \eqref{E:expbound}. This confirms that the probability
\begin{equation}
\gamma_{N,\beta}^{\ssup{u},\nabla}\big(\abs{\eta(x)}>\ell\big)
\end{equation}
is arbitrarily small for suitable $\ell$, uniformly in $N$, and this suffices to establish tightness.
\end{proof}

\section{Extension of weight functions to arbitrary vector fields}  \label{A:C}

Below, we set out an explicit construction of the weight functions $\bunderline{w}_k^X$, $\bunderline{w}_{k:k+1}^X$ and $\bunderline{W}_k^X:\mathfrak{V}_N\to \R^+$ that appear in the norms defined in section \ref{S:4}. We then give a proof of the various properties of these functions stated in Theorem \ref{T:wf}. The weight functions introduced in Chapter 7 of \cite{ABKM} constitute our starting point for this effort and most of the properties we show in the following are a straightforward consequence or extension of the results set out in that work. 

\subsection{Definitions}

As in \cite{ABKM}, the weight functions are built with the help of suitable families of symmetric operators. In our case, their domain is the space of arbitrary vector fields $\mathfrak{V}_N$. Our fundamental tool is the the direct sum decomposition $\mathfrak{V}_N = \cbO_N \oplus \cbO_N^\perp$, where $\cbO_N^\perp$ is the orthogonal complement of $\cbO_N$ with respect to the standard (Euclidean) inner product given in \eqref{E:innerproduct}. We designate by $P_{\cbO}$ and $P_\perp$ the orthogonal projections onto these two subspaces.

By way of rough outline, our strategy is to define operators that act on $\cbO_N$ effectively as the corresponding operator in \cite{ABKM} and on $\cbO_N^\perp$ as simple discrete difference operators that can be related to our field norms $\abs{\cdot}_{j,X}$ in the desired manner. To implement this approach, we first define several (symmetric) reference operators on $\mathfrak{V}_N$. For any multi-index $\alpha\in\N_0^d$, we designate by $\boldsymbol{m}(\a)$ the diagonal $d\times d$-matrix whose entries are given by
\begin{equation}
\big(\boldsymbol{m}(\a)\big)_{i,i}=\abs{\{j:\alpha_j+\delta_{i,j}>0\}}^{-1}.
\end{equation}
For convenience, we recall the block-neighbourhood counting functions $\chi_X:\mathbb{T}_N\to\N$ defined in Chapter 7.1 of \cite{ABKM}:
\begin{equation}
\chi_X(x)=\abs{\{B\in\Bcal_k(X):x\in B^+\}}.
\end{equation}
We then define:
\begin{align}
\mathscr{M}_{k,\nabla}^X &= \sum_{0\le \abs{\alpha}\le M-1}L^{2k\abs{\alpha}}(\nabla^*)^\alpha\chi_X\boldsymbol{m}(\alpha)\nabla^\alpha; \\
\mathscr{M}_k^X &= \sum_{0\le \abs{\alpha}\le M-1}L^{2k\abs{\alpha}}(\nabla^*)^\alpha\chi_X\nabla^\alpha; \\
\mathscr{G}_{k,\nabla}^X &= \frac{1}{h_k^2}\sum_{0\le \abs{\alpha}\le \floor{d/2}}L^{2k\abs{\alpha}}(\nabla^*)^\alpha\1_X\boldsymbol{m}(\alpha)\nabla^\alpha; \text{and} \\
\mathscr{G}_k^X &= \frac{1}{h_k^2}\sum_{0\le \abs{\alpha}\le \floor{d/2}}L^{2k\abs{\alpha}}(\nabla^*)^\alpha\1_X\nabla^\alpha.
\end{align}
Using these reference operators, we construct symmetric operators that are block-diagonal with respect to the above orthogonal decomposition of $\mathfrak{V}_N$. We let
\begin{align}
\bunderline{M}_k^X &= P_{\cbO} \mathscr{M}_{k,\nabla}^X P_{\cbO} \oplus P_\perp \mathscr{M}_k^X P_\perp \\
&= \bunderline{M}_{k,\nabla}^X \oplus \bunderline{M}_{k,\perp}^X; \nonumber
\end{align}
and
\begin{align}
\bunderline{G}_k^X &= P_{\cbO} \mathscr{G}_{k,\nabla}^X P_{\cbO} \oplus P_\perp \mathscr{G}_k^X P_\perp \\
&= \bunderline{G}_{k,\nabla}^X \oplus \bunderline{G}_{k,\perp}^X. \nonumber
\end{align}
Similar notation without $k$-polymer specified in the superscript is used for operators on the whole torus $\mathbb{T}_N$ where the $\chi$-factor is ignored. That is, we write
\begin{equation}
\mathscr{M}_{k,\nabla}=\sum_{0\le \abs{\alpha}\le M-1}L^{2k\abs{\alpha}}(\nabla^*)^\alpha\boldsymbol{m}(\alpha)\nabla^\alpha
\end{equation} 
and likewise for the other operators. We further let $\bunderline{M}_k=P_{\cbO} \mathscr{M}_{k,\nabla} P_{\cbO} \oplus P_\perp \mathscr{M}_k P_\perp$ and we borrow the shorthand symbols $\Xi_k=\abs{B^+}_k$ and $\Xi_{\text{max}}=(2R+1)^d$ from \cite{ABKM}.

We pursue the same idea in relation to the main operators $\bunderline{A}_k^X$ and $\bunderline{A}_{k:k+1}^X$, with which the weak norm weight functions are defined. Accordingly, each of these operators is the sum of a gradient component (i.e., an operator that restricts to an endomorphism of $\cbO_N$) and an orthogonal component (i.e., an operator that restricts to an endomorphism of $\cbO_N^\perp$). In symbols, we write
\[\bunderline{A}_k^X=\bunderline{A}_{k,\nabla}^X \oplus \bunderline{A}_{k,\perp}^X\,\text{ and }\,\bunderline{A}_{k:k+1}^X=\bunderline{A}_{k:k+1,\nabla}^X \oplus \bunderline{A}_{k:k+1,\perp}^X.\]
For our iterative construction, we first give an explicit definition of the operators at the $k=0$ scale. Recall that the Gaussian measures $\mu_k^{\ssup{\bq}}$ (and hence $\nu_k^{\ssup{\bq}}$) depend on the $\R^d$-quadratic form $\Qcal_V$ given in terms of the potential $V$. We write $\bQ_V\in\R_{\text{sym}}^{d\times d}$ for the matrix representing that form. We extend it to an operator on $\mathfrak{V}_N$ by letting
\begin{equation}
\big(\mathscr{A}_{\bQ_V}^Xv\big)(x)=\1_X(x)\bQ_V v(x)
\end{equation}
for any vector field $v\in\mathfrak{V}_N$. We recall further the constant $\zeta\in (0,1)$, which appears in the definition of the $\norm{\cdot}_{\zeta,\Qcal_V}$-norm used to control the initial perturbation. We need to be able to relate the exponential weight $\exp\big(-\frac{1}{2}(1-\zeta)\Qcal_V(z)\big)$ in that norm to the weight functions at the $k=0$ scale. Our choice of block-diagonal operators imposes an inefficiency here: we need to rely on the bound
\begin{equation}
\frac{1}{2}(1-\zeta)\sum_{x\in X}\Qcal_V(v(x))\le \frac{1}{2}(1-\zeta)\sum_{x\in X}\Big((1+\eps)\Qcal_V(P_{\cbO}v(x))+(1+\eps^{-1})\Qcal_V(P_{\perp}v(x))\Big),
\end{equation}
for a suitably small $0<\eps<1$. Choosing $\eps\le\frac{\zeta}{2(1-\zeta)}$, we ensure that $(1+\eps)(1-\zeta)$ is strictly less than one and we write this number as $(1-\zeta')$ with $\zeta'\in (0,1)$. An inspection of Chapter 7 of \cite{ABKM} reveals that this is sufficient for the construction of the weight functions used in that work. Indeed, Theorem 7.1 of \cite{ABKM} requires the parameter $\bar{\zeta}$, corresponding to $\zeta'/4$ in our setting, to satisfy $\bar{\zeta}\in (0,1/4)$. We also introduce the shorthand $\mathfrak{r}=\frac{1+\eps^{-1}}{1+\eps}$. With this notation in place, we then define $\bunderline{A}_0^X$ by setting
\begin{equation}
\bunderline{A}_{0,\nabla}^X= (1-\zeta') P_{\cbO} \mathscr{A}_{\bQ_V}^X P_{\cbO} + \delta\bunderline{M}_{0,\nabla}^X;
\end{equation}
and
\begin{equation}
\bunderline{A}_{0,\perp}^X= \mathfrak{r}(1-\zeta') P_\perp \mathscr{A}_{\bQ_V}^X P_\perp + \delta\bunderline{M}_{0,\perp}^X.
\end{equation}
The operators at higher scales are then constructed as follows. We define $\bunderline{A}_{k:k+1}^X$ by letting
\begin{equation}
\bunderline{A}_{k:k+1,\nabla}^X= (\boldsymbol{\nabla}_N^{-1})^* A_{k:k+1}^X \,\boldsymbol{\nabla}_N^{-1} \,\text{ and }\, \bunderline{A}_{k:k+1,\perp}^X= \bunderline{A}_{k,\perp}^X\,\text{ for }0\le k\le N,
\end{equation}
where $A_{k:k+1}^X$ stands for the operator on $\cbX_N$ as defined in Chapter 7 of \cite{ABKM}. The adjoint $(\boldsymbol{\nabla}_N^{-1})^*$ is determined here by the condition
\begin{equation}
(\phi,\boldsymbol{\nabla}_N^{-1}\eta)_{\cbX_N}=((\boldsymbol{\nabla}_N^{-1})^*\phi,\eta)_{\cbO_N}
\end{equation}
for all $\phi\in\cbX_N$ and $\eta\in\cbO_N$. The operator $\bunderline{A}_{k+1}^X$ is then in turn given by
\begin{equation}
\bunderline{A}_{k+1}^X= \bunderline{A}_{k:k+1}^{X^*} + \delta_{k+1}\bunderline{M}_{k+1}^X\,\text{ for }0\le k\le N-1.
\end{equation}

Finally, we define the weak and strong norm weight functions for arbitrary vector fields $v\in\mathfrak{V}_N$ using the quadratic forms induced by the symmetric operators constructed above. We define the strong norm weight function as
\begin{equation}
\bunderline{W}_k^X(v)=e^{\frac{1}{2}(v,\bunderline{G}_k^X v)};
\end{equation}
and the weak norm weight functions as
\begin{equation}
\bunderline{w}_k^X(v)=e^{\frac{1}{2}(v,\bunderline{A}_k^X v)}\,\text{ and }\,\bunderline{w}_{k:k+1}^X(v)=\ex^{\frac{1}{2}(v,\bunderline{A}_{k:k+1}^X v)}.
\end{equation}

We observe that the definitions stated above fall into two categories. Most of them are explicit expressions involving simple discrete difference operations. The remaining definitions are given in terms of objects that are shown to exist as well-defined operators on $\cbX_N$ in pursuant to Lemma 7.5 of \cite{ABKM}. To see that the functions $\overline{w}_k^X$, $\overline{w}_{k:k+1}^X$ and $\overline{W}_k^X$ are well-defined, it thus suffices to check that our choice of constants is consistent with Lemmas 7.3 and 7.5 of \cite{ABKM}. This forms part of the proof in the next sub-section. Instead of tying the operator $\bunderline{A}_{k:k+1}^X$ directly to the $\cbX_N$-operator $A_{k:k+1}^X$, we could replicate the iterative scheme underlying the construction of $A_{k:k+1}^X$ in Chapter 7 of \cite{ABKM}. For completeness, we verify explicitly that these approaches are equivalent. Accordingly, before turning to the numbered claims of Theorem \ref{T:wf}, we record two basic features of our operators that follow directly from their definitions. The first simply states the direct relationship between $\bunderline{A}_{k,\nabla}^X$ and $A_k^X$ at all scales $k$ as outlined initially. The second feature exhibits how the iterative construction of $A_{k:k+1}^X$ (including the need to restrict to an appropriate subspace to make sense of the inverse $(A_k^X)^{-1}$) shapes the structure of the operator $\bunderline{A}_{k:k+1,\nabla}^X$.

\begin{lemma} \label{L:wfdef}
With the definitions as set out above and in the previous sub-section, the operators $\bunderline{A}_k^X$ and $\bunderline{A}_{k:k+1}^X$ satisfy the following:
\begin{enumerate}
\item $\bunderline{A}_{k,\nabla}^X=(\nabla_N^{-1})^* A_k^X \,\nabla_N^{-1}$ for all $k=0,\ldots,N$; and
\item $\bunderline{A}_{k:k+1,\nabla}^X=\begin{cases}
\big((\bunderline{A}_{k,\nabla}^X)^{-1}-(1+\zeta'/4)P_{(\ker \bunderline{A}_{k,\nabla}^X)^\perp}\nabla_N \mathscr{C}_{k+1} \nabla_N^* P_{(\ker \bunderline{A}_{k,\nabla}^X)^\perp})^{-1} &\textup{ on }(\ker \bunderline{A}_{k,\nabla}^X)^\perp \\
0&\textup{ on } \ker \bunderline{A}_{k,\nabla}^X
\end{cases}$ \par
for $k=0,\ldots,N$;
\end{enumerate}
where $P_{(\ker \bunderline{A}_{k,\nabla}^X)^\perp}$ is the orthogonal projection onto $(\ker \bunderline{A}_{k,\nabla}^X)^\perp$ and $(\bunderline{A}_{k,\nabla}^X)^{-1}$ refers to the inverse of the restriction to $(\ker \bunderline{A}_{k,\nabla}^X)^\perp$ of the operator $\bunderline{A}_{k,\nabla}^X$.
\end{lemma}
\begin{proof}
We begin by verifying the claim in (1). For $k=0$, let $\phi$ be any scalar field in $\cbX_N$ and we compute
\begin{align}
(\nabla_N\phi,\bunderline{A}_{0,\nabla}^X\nabla_N\phi)_{\cbO_N} =& (1-\zeta')(\nabla_N\phi,\mathscr{A}_{\bQ_V}^X\nabla_N\phi)_{\cbO_N}+\delta(\nabla_N\phi,\mathscr{M}_{0,\nabla}^X\nabla_N\phi)_{\cbO_N} \nonumber \\
=& (1-\zeta')\sum_{x\in X}\sum_{i,j=1}^d \bQ_{Vi,j}\nabla_i\phi(x)\nabla_j\phi(x) \nonumber \\
&+ \delta \sum_{x\in\mathbb{T}_N}\sum_{0\le \abs{\alpha}\le M-1}L^{2k\abs{\alpha}}\chi_X(x)\big(\nabla^\alpha\nabla\phi(x)\big)\cdot\big(\boldsymbol{m}(\alpha)\nabla^\alpha\nabla\phi(x)\big) \nonumber \\
=& (\phi,A_0^X\phi)_{\cbX_N}.
\end{align}
Here we used the fact that the term $\abs{\nabla_1^{\alpha_1}...\nabla_d^{\alpha_d}\phi(x)}^2$ appears once for every non-zero $\alpha_j$, which is exactly the over-counting for which the diagonal matrix $\boldsymbol{m}$ compensates. This confirms (1) at the initial scale. Moreover, the same argument shows that $\bunderline{M}_{k,\nabla}^X=(\nabla_N^{-1})^* M_k^X \,\nabla_N^{-1}$ for all scales $k$. This observation then allows us to see that (1) holds for $k\ge 1$ simply by expanding the definitions:
\begin{multline}
\bunderline{A}_{k+1,\nabla}^X=\bunderline{A}_{k:k+1,\nabla}^{X^*}+\delta_{k+1}\bunderline{M}_{k+1,\nabla}^X=(\nabla_N^{-1})^* A_{k:k+1}^{X^*} \nabla_N^{-1} + \delta_{k+1}(\nabla_N^{-1})^* M_{k+1}^X \nabla_N^{-1} \\
=(\nabla_N^{-1})^* A_{k+1}^X \nabla_N^{-1}.
\end{multline}

Now, we show that (2) follows from (1). Note first that $\bunderline{A}_{k,\nabla}^X=(\nabla_N^{-1})^* A_k^X \,\nabla_N^{-1}$ implies $\ker \bunderline{A}_{k,\nabla}^X=\nabla_N(\ker A_k^X)$, as well as $\nabla_N^*((\ker \bunderline{A}_{k,\nabla}^X)^\perp)=(\ker A_k^X)^\perp$. The first equality is straightforward and the second follows from it immediately by observing that $k\in\ker A_k^X$ and $\eta\in (\ker \bunderline{A}_{k,\nabla}^X)^\perp$ implies
\begin{equation}
(k,\nabla^*\eta)_{\cbX_N}=(\nabla_N k,\eta)_{\cbO_N}=0
\end{equation}
and that the dimensions must coincide. This is sufficient to conclude that the restriction of $\bunderline{A}_{k,\nabla}^X$ to $(\ker \bunderline{A}_{k,\nabla}^X)^\perp$ is an invertible endomorphism of that sub-space. To give an explicit representation of its inverse, we introduce the projection $\slashed{P}= P_{(\ker \bunderline{A}_{k,\nabla}^X)^\perp}\rvert_{\nabla_N((\ker A_k^X)^\perp)}$. The above relationship between the kernels of $A_k^X$ and $\bunderline{A}_{k,\nabla}^X$ ensures that $\slashed{P}$ is itself invertible. We claim that $\bunderline{A}_{k,\nabla}^X$ acts on $(\ker \bunderline{A}_{k,\nabla}^X)^\perp$ as the operator $(\nabla_N^{-1})^* A_k^X\rvert \,\nabla_N^{-1}\slashed{P}^{-1}$, where the symbol $A_k^X\rvert$ designates the explicit restriction of $A_k^X$ to $(\ker A_k^X)^\perp$. To see this, we note that, for $\eta\in(\ker \bunderline{A}_{k,\nabla}^X)^\perp$, we must have that $\slashed{P}^{-1}(\eta)=\eta+k$ for some $k\in\ker \bunderline{A}_{k,\nabla}^X$. Since $\nabla_N^{-1}(k)\in\ker A_k^X$, this then implies that
\begin{equation}
(\nabla_N^{-1})^* A_k^X\rvert \,\nabla_N^{-1}\slashed{P}^{-1}(\eta)=(\nabla_N^{-1})^* A_k^X\rvert (\nabla_N^{-1}(\eta)+\nabla_N^{-1}(k))=(\nabla_N^{-1})^* A_k^X \,\nabla_N^{-1}(\eta),
\end{equation}
which is $\bunderline{A}_{k,\nabla}^X(\eta)$ by (1). We can therefore write the inverse of $\bunderline{A}_{k,\nabla}^X\rvert_{(\ker \bunderline{A}_{k,\nabla}^X)^\perp}$ as $\slashed{P}\nabla_N(A_k^X\rvert)^{-1}\nabla_N^*$.

We now apply a similar reasoning to $\bunderline{A}_{k:k+1,\nabla}^X=(\nabla_N^{-1})^* A_{k:k+1}^X \,\nabla_N^{-1}$. Pursuant to the construction in Lemma 7.2 of \cite{ABKM}, the operator $A_{k:k+1}^X$ has kernel equal to $\ker A_k^X$ and acts on $(\ker A_k^X)^\perp$ as the invertible operator $\big((A_k^X\rvert)^{-1}-(1+\zeta'/4)P_{(\ker A_k^X)^\perp}\mathscr{C}_{k+1}^{\ssup{0}}P_{(\ker A_k^X)^\perp}\big)^{-1}$. It follows that $\bunderline{A}_{k:k+1,\nabla}^X$ may be represented by the orthogonal decomposition
\begin{equation*}
\bunderline{A}_{k:k+1,\nabla}^X=\begin{cases}
(\nabla_N^{-1})^* \big((A_k^X\rvert)^{-1}-(1+\zeta'/4)P_{(\ker A_k^X)^\perp}\mathscr{C}_{k+1}^{\ssup{0}}P_{(\ker A_k^X)^\perp}\big)^{-1} \nabla_N^{-1}\slashed{P}^{-1} &\textup{ on }(\ker \bunderline{A}_{k,\nabla}^X)^\perp \\
0&\textup{ on } \ker \bunderline{A}_{k,\nabla}^X.
\end{cases}
\end{equation*}
By rewriting the non-trivial component of this representation,
\begin{align}
(\nabla_N^{-1})^* & \big((A_k^X\rvert)^{-1} - (1+\zeta'/4)P_{(\ker A_k^X)^{\perp}}\mathscr{C}_{k+1}^{\ssup{0}}P_{(\ker A_k^X)^{\perp}}\big)^{-1} \nabla_N^{-1}\slashed{P}^{-1} \nonumber \\
&= \big(\slashed{P}\nabla_N(A_k^X\rvert)^{-1}\nabla_N^* - (1+\zeta'/4)\slashed{P}\nabla_N P_{(\ker A_k^X)^{\perp}} \mathscr{C}_{k+1}^{\ssup{0}}P_{(\ker A_k^X)^{\perp}} \nabla_N^*\big)^{-1} \nonumber \\
&= \big((\nabla_N^{-1})^* A_k^X\rvert \,\nabla_N^{-1}\slashed{P}^{-1})^{-1} - (1+\zeta'/4)P_{(\ker \bunderline{A}_{k,\nabla}^X)^\perp} \nabla_N \mathscr{C}_{k+1}^{\ssup{0}}\nabla_N^* P_{(\ker \bunderline{A}_{k,\nabla}^X)^\perp} \big)^{-1},
\end{align}
we obtain the form asserted in (2) and this completes the proof. 
\end{proof}

\subsection{Proof of Theorem \ref{T:wf}}

The purpose of the remainder of this section is to prove the various claims comprising Theorem \ref{T:wf}. In general, we rely on Lemmas 7.3 to 7.8 of \cite{ABKM}, in combination with Lemma \ref{L:wfdef}(1), for the curl-free part of our operators. The main effort of the following paragraphs is therefore directed at showing the appropriate statements for the orthogonal complements.

\subsubsection*{Proof. Property (1)}

We show the (equivalent) claim that $\bunderline{A}_{k,\nabla}^Y \le \bunderline{A}_{k,\nabla}^X$ and $\bunderline{A}_{k,\perp}^Y \le \bunderline{A}_{k,\perp}^X$ for $Y\subset X\in\Pcal_k$ and likewise for the $(k:k+1)$ operators. Observe first that our reference operators, $\mathscr{M}_{k,\nabla}^X$, $\mathscr{M}_k^X$ and $\mathscr{A}_{\bQ_V}^X$, are monotonic in their $k$-polymer argument $X$. This immediately confirms the desired inequalities at the initial scale $Y\subset X\in\Pcal_0$. The claim now follows by induction. We have $\bunderline{A}_{k:k+1,\nabla}^Y\le \bunderline{A}_{k:k+1,\nabla}^X$ as a consequence of the corresponding result in \cite{ABKM} and
\begin{equation}
\bunderline{A}_{k:k+1,\perp}^Y=\bunderline{A}_{k,\perp}^Y\le\bunderline{A}_{k,\perp}^X = \bunderline{A}_{k:k+1,\perp}^X
\end{equation}
from the induction hypothesis. This result and the inclusion $Y^*\subset X^*$ then imply
\begin{equation}
\bunderline{A}_{k+1,\nabla}^Y=\bunderline{A}_{k:k+1,\nabla}^{Y^*}+\bunderline{M}_{k+1,\nabla}^Y\le\bunderline{A}_{k:k+1,\nabla}^{X^*}+\bunderline{M}_{k+1,\nabla}^X=\bunderline{A}_{k+1,\nabla}^X,
\end{equation}
where we again used the monotonicity of the reference operators. The inequality $\bunderline{A}_{k+1,\perp}^Y \le \bunderline{A}_{k+1,\perp}^X$ is deduced similarly.

\subsubsection*{Property (2)}

In light of the identity in Lemma \ref{L:wfdef}(1), the bounds for the gradient vector field components $\bunderline{A}_{k,\nabla}^X\le \lambda^{-1}\bunderline{M}_{k,\nabla}$ (and similarly for the $(k:k+1)$ operators) follow directly from \cite{ABKM}. We have to show the corresponding result for the orthogonal complements. At the initial scale $k=0$, we note that a smaller choice of the constant $\lambda$ may be required, essentially due to the additional $\mathfrak{r}$ factor and because we use different families of reference operators $\mathscr{M}_{k,\nabla}$ and $\mathscr{M}_k$. To see that a valid choice of $\lambda$ exists at all, it is enough to realise the straightforward bound
\begin{equation}
(v,\bunderline{A}_{0,\perp}^X v)\le \big(\Xi_{\text{max}}\delta+\mathfrak{r}(1-\zeta')\omega_0^{-1}\big)(v,\bunderline{M}_{0,\perp} v)
\end{equation}
implied by the upper bound on the quadratic form $\Qcal_V(z)\le \o_0^{-1}\abs{z}^2$. The dependence on $\delta$ (which in \cite{ABKM} is chosen as a function of $\l$) is only apparent: inspecting the proof of Lemma 7.5 of \cite{ABKM}, we see that $\delta$ is in any event bounded by $\frac{\zeta'}{4\Xi_{\text{max}}\O}$, where $\O$ depends only on the quadratic form $\Qcal_V$ and fixed parameters. We may therefore certainly write $\bunderline{A}_{0,\perp}^X \le \lambda_0^{-1}\bunderline{M}_{0,\perp}$ for all $X\in\Pcal_0$ and for some $\l_0\le (\zeta'/4\O+\mathfrak{r}(1-\zeta')\o_0^{-1})^{-1}$. There is nothing separate to prove for $\bunderline{A}_{k:k+1,\perp}^X$, since that operator is simply $\bunderline{A}_{k,\perp}^X$ by definition. Using the simple estimates, $\bunderline{M}_{k,\perp}^X\le \Xi_{\text{max}}\bunderline{M}_{k,\perp}$ for all $X\in\Pcal_k$ and $\bunderline{M}_{\ell,\perp}\le \bunderline{M}_{k,\perp}$ for all $\ell\le k$, we iterate the definition of $\bunderline{A}_{k+1,\perp}^X$ to obtain (for $k\ge 1$)
\begin{align} \label{E:lambdaconstr}
\bunderline{A}_{k,\perp}^X &\le \bunderline{A}_{0,\perp}^{\mathbb{T}_N}+\Xi_{\text{max}}\delta\sum_{\ell=1}^k 4^{-\ell}\bunderline{M}_{\ell,\perp} \le \frac{1}{\lambda_0}\bunderline{M}_{0,\perp}+\frac{\zeta'}{4\O} \bunderline{M}_{k,\perp} \sum_{\ell=1}^k 4^{-\ell} \nonumber \\
&\le \Big(\frac{1}{\lambda_0}+\frac{\zeta'}{12\O}\Big)\bunderline{M}_{k,\perp}.
\end{align}
It follows that a sufficiently small choice of $\lambda$ exists to ensure that $(v,\bunderline{A}_{k,\perp}^X v)\le \lambda^{-1} (v,\bunderline{M}_{k,\perp} v)$ holds for all $0\le k\le N$. Returning to the proof of Lemma 7.5 of \cite{ABKM}, a straightforward check confirms that the argument still holds with a (possibly) smaller choice of $\lambda$ albeit with (possibly) different constants $\mu$ and $\delta$. Combined with the bound for the gradient vector field components, this gives the desired inequalities:
\begin{equation}
\bunderline{w}_k^X(v)\le \ex^{\frac{1}{2\lambda}(v,\bunderline{M}_k v)}\,\text{ and }\,\bunderline{w}_{k:k+1}^X(v)\le \ex^{\frac{1}{2\lambda}(v,\bunderline{M}_k v)}\,\text{ for all }v\in\mathfrak{V}_N.
\end{equation}

\subsubsection*{Properties (3)-(5)}

These items cover various factorisation properties of the weight functions, which all derive from the feature that our reference operators have an additive decomposition for disjoint polymers $X\cap Y=\emptyset$. Indeed, while indicator functions trivially satisfy this feature, the neighbourhood counting function $\chi_X$ is defined as a sum over constituent $k$-blocks and hence also decomposes additively. As before, it suffices to verify the claims for the orthogonal complement components of the relevant operators $\bunderline{A}_{k,\perp}^X$, $\bunderline{A}_{k:k+1,\perp}^X$ and $\bunderline{G}_{k,\perp}^X$. 

We have seen above that $\bunderline{A}_{k,\perp}^X$ is a weighted sum of the initial operator $\bunderline{A}_{0,\perp}^Z$ and the operators $\bunderline{M}_{\ell,\perp}^Z$ for $\ell\le k$, where $Z$ runs over iterated small set neighbourhoods of $X$. We thus need to verify that these iterated neighbourhoods remain disjoint, provided that $X$ and $Y$ are sufficiently distant. By definition of the $(\cdot)^*$ operation on polymers, we have that $\text{dist}_\infty(x,X)\le (2^d+R)L^{k-1}$ for $X\in\Pcal_k$ and $x\in X^*$. We deduce that the maximum distance from $X$ or $Y$ reached by iterating $(\cdot)^*$ from a $k$-polymer to a $0$-polymer is
\begin{equation}
\sum_{\ell=0}^{k-1} (2^d+R)L^{\ell} = (2^d+R)\frac{L^k-1}{L-1}\le \frac{2^d+R}{L-1}L^k.
\end{equation}
We observe that $L$ is chosen large enough (at least $L\ge 2^{d+3}+16R$) so that the pre-factor to $L^k$ appearing above is strictly less than $\frac{1}{4}$. Since $X$ and $Y$ strictly disjoint implies $\text{dist}_\infty(X,Y)>L^k$, it follows that for such polymers $X$ and $Y$ all their iterated small set neighbourhoods remain disjoint and hence we conclude that $\bunderline{A}_{k,\perp}^{X\cup Y}=\bunderline{A}_{k,\perp}^X+\bunderline{A}_{k,\perp}^Y$. This gives property (3) and property (4) follows immediately since $\bunderline{A}_{k:k+1,\perp}^X=\bunderline{A}_{k,\perp}^X$ and the constraint $\text{dist}_\infty(X,Y)\ge \frac{3}{4}L^{k+1}$ is stronger than mere strict disjointedness. Finally, property (5) is a straightforward consequence of our previous observation that the reference operators (here $\mathscr{G}_{k,\nabla}^X$ and $\mathscr{G}_k^X$) are additive on disjoint $k$-polymers.

\subsubsection*{Properties (6) and (7)}

Note first that, in \cite{ABKM}, $h$ is chosen large enough so that $h^{-2}\le \delta$. Together with the definition of $M$ (which controls the number of derivatives in the weak norm weight functions), this implies that $\mathscr{G}_k^X\le \delta_k\mathscr{M}_k^X$ for all $k$ and all $k$-polymers $X$. We then see that
\begin{equation}
\bunderline{A}_{0,\perp}^{X\cup Y}=\bunderline{A}_{0,\perp}^X+\bunderline{A}_{0,\perp}^Y \ge \bunderline{A}_{0,\perp}^X + \delta \bunderline{M}_{0,\perp}^Y \ge \bunderline{A}_{0,\perp}^X + \bunderline{G}_{0,\perp}^Y
\end{equation}
holds at the initial scale for $X$ and $Y$ disjoint. For $k\ge 1$ we argue by induction:
\begin{align}
\bunderline{A}_{k,\perp}^{X\cup Y} &= \bunderline{A}_{k-1,\perp}^{(X\cup Y)^*}+\delta_k\bunderline{M}_{k,\perp}^{X\cup Y} = \bunderline{A}_{k-1,\perp}^{(X\cup Y)^*}+\delta_k\bunderline{M}_{k,\perp}^X + \delta_k\bunderline{M}_{k,\perp}^Y \nonumber \\
&\ge \bunderline{A}_{k-1,\perp}^{X^*}+\bunderline{G}_{k-1,\perp}^{Y^*\setminus X^*} + \delta_k\bunderline{M}_{k,\perp}^X + \delta_k\bunderline{M}_{k,\perp}^Y = \bunderline{A}_{k,\perp}^{X} + \bunderline{G}_{k-1,\perp}^{Y^*\setminus X^*} + \delta_k\bunderline{M}_{k,\perp}^Y \nonumber \\
&\ge \bunderline{A}_{k,\perp}^{X} + \bunderline{G}_{k,\perp}^Y,
\end{align}
where the induction hypothesis was used for the first inequality. This gives property (6) for the orthogonal complement components of the relevant operators. To see property (7), we note that, for $U=\pi_k(X)$, we have $X\subset U^*$ and hence the estimate
\begin{equation}
\bunderline{A}_{k+1,\perp}^U=\bunderline{A}_{k:k+1,\perp}^{U^*}+\delta_{k+1}\bunderline{M}_{k+1,\perp}^U=\bunderline{A}_{k,\perp}^{U^*}+\delta_{k+1}\bunderline{M}_{k+1,\perp}^U\ge\bunderline{A}_{k,\perp}^X+\delta_{k+1}\bunderline{M}_{k+1,\perp}^U
\end{equation}
holds. Now we use that $\chi_U\ge\1_{U^+}$ and that \cite{ABKM} actually gives the stronger bound $h^{-2}\le \frac{\delta}{8}$ to deduce that $\delta_{k+1}\bunderline{M}_{k+1,\perp}^U\ge 2\bunderline{G}_{k,\perp}^{U^+}$, which yields the desired result. We observe in passing that choosing $h$ larger than in \cite{ABKM} (as we do further below) only makes the inequalities needed here stronger.

\subsubsection*{Property (8)}

Given an arbitrary vector field $v\in\mathfrak{V}_N$, let $v=\eta+w$ be its orthogonal decomposition. We get the straightforward bound $\frac{\abs{v}_{k+1,X}^2}{2}\le \abs{\eta}_{k+1,X}^2+\abs{w}_{k+1,X}^2$. Observe that $\abs{\eta}_{k+1,X}=\abs{\boldsymbol{\nabla}_N^{-1}\eta}_{k+1,X}$, where the second norm is taken as defined in \cite{ABKM}. Note that there is a repetition of terms, for example both $\abs{\nabla_1\nabla_2\phi(x)}$ and $\abs{\nabla_2\nabla_1\phi(x)}$ appear separately (where we write $\phi=\boldsymbol{\nabla}_N^{-1}\eta$), but in the context of supremum norms this is irrelevant. Lemma 7.8 of \cite{ABKM} thus implies the bound
\begin{equation}
\abs{\eta}_{k+1,X}^2\le \delta_{k+1}(\eta,\bunderline{M}_{k+1,\nabla}^X \eta)_{\cbO_N},
\end{equation}
which is too weak by a factor of $2$ for our purposes. However, by inspecting the proof of this result in \cite{ABKM}, we see that choosing the constant $h$ larger by a factor of $\sqrt{2}$ gives us the sharper inequality we require.

We now turn to $\abs{w}_{k+1,X}$ with a view to applying the same Sobolev inequality that is at the core of Lemma 7.8 of \cite{ABKM}. Arguing similarly, we compute (for $B\in\Bcal_{k+1}(X)$)
\begin{align}
\sup_{x\in B^*}\abs{\nabla^\alpha w_i(x)}^2 &\le (\floor{d/2}+1)S(d)L^{-d(k+1)}\sum_{0\le \abs{\beta}\le \floor{\frac{d}{2}}+1}(3L^{k+1})^{2\abs{\beta}}\sum_{x\in B^*}\abs{\nabla^\beta\nabla^\alpha w_i(x)}^2 \nonumber \\
&\le (\floor{d/2}+1)3^{2(\floor{\frac{d}{2}}+1)}S(d)L^{-d(k+1)}\sum_{0\le \abs{\gamma}\le M-1}L^{2(\abs{\gamma}-\abs{\alpha})(k+1)}\sum_{x\in B^*}\abs{\nabla^\gamma w_i(x)}^2 \nonumber \\
&\le (\floor{d/2}+1)3^{2(\floor{\frac{d}{2}}+1)}S(d)L^{-(d+2\abs{\alpha})(k+1)}\sum_{0\le \abs{\gamma}\le M-1}L^{2\abs{\gamma}(k+1)}(\nabla^\gamma w,\1_{B^*}\nabla^\gamma w) \nonumber \\
& \le h_{k+1}^{-2}\mathfrak{w}_{k+1}(\alpha)^2(\floor{d/2}+1)3^{2(\floor{\frac{d}{2}}+1)}S(d)(w,\bunderline{M}_{k+1,\perp}^B w),
\end{align}
from which we deduce that the same (greater by $\sqrt{2}$ than in \cite{ABKM}) choice of $h$ ensures that $\abs{w}_{k+1,X}^2\le \frac{\delta_{k+1}}{2}(w,\bunderline{M}_{k+1,\perp}^X w)$. Combining these inequalities, we thus obtain the desired bound:
\begin{multline}
\frac{\abs{v}_{k+1,X}^2}{2}\le \frac{\delta_{k+1}}{2}(\eta,\bunderline{M}_{k+1,\nabla}^X \eta) + \frac{\delta_{k+1}}{2}(w,\bunderline{M}_{k+1,\perp}^X w)= \\
\frac{\delta_{k+1}}{2}(v,\bunderline{M}_{k+1}^X v)=\frac{\bunderline{A}_{k+1}^X-\bunderline{A}_{k:k+1}^{X^*}}{2}\le \frac{\bunderline{A}_{k+1}^X-\bunderline{A}_{k:k+1}^X}{2}.
\end{multline}

\subsubsection*{Properties (9) and (10)}

Our block-diagonal approach to the operators $\bunderline{A}_k^X$ makes the consistency of the weak norm weight functions under the integration map $\bR_{k+1}^{\ssup{\bq}}$ a relatively straightforward corollary of the corresponding result in \cite{ABKM}. Given any $v\in\mathfrak{V}_N$ and any polymer $X\in\Pcal_k$, we compute as follows
\begin{align}
\bigg(\int_{\cbO_N} &\; \big(\bunderline{w}_k^X(v+\eta)\big)^{1+\bunderline{p}}\,\nu_{k+1}^{\ssup{\bq}}(\d\eta)\bigg)^{\frac{1}{1+\bunderline{p}}} \nonumber \\
&= \bigg(\int_{\cbO_N}\;\big(\exp\big(\frac{1}{2}(v^\perp,\bunderline{A}_{k,\perp}^X v^\perp) + \frac{1}{2}(v^{\cbO}+\eta,\bunderline{A}_{k,\nabla}^X(v^{\cbO}+\eta))\big)\big)^{1+\bunderline{p}}\,\nu_{k+1}^{\ssup{\bq}}(\d\eta)\bigg)^{\frac{1}{1+\bunderline{p}}} \nonumber \\
&= \ex^{\frac{1}{2}(v^\perp,\bunderline{A}_{k,\perp}^X v^\perp)}\bigg(\int_{\cbO_N}\;\big(\exp\big(\frac{1}{2}(\boldsymbol{\nabla}_N^{-1}(v^{\cbO}+\eta),A_k^X\boldsymbol{\nabla}_N^{-1}(v^{\cbO}+\eta))\big)\big)^{1+\bunderline{p}}\,\nu_{k+1}^{\ssup{\bq}}(\d\eta)\bigg)^{\frac{1}{1+\bunderline{p}}} \nonumber \\
&= \ex^{\frac{1}{2}(v^\perp,\bunderline{A}_{k:k+1,\perp}^X v^\perp)}\bigg(\int_{\cbX_N}\;\big(\exp\big(\frac{1}{2}(\boldsymbol{\nabla}_N^{-1}v^{\cbO}+\phi,A_k^X(\boldsymbol{\nabla}_N^{-1}v^{\cbO}+\phi))\big)\big)^{1+\bunderline{p}}\,\mu_{k+1}^{\ssup{\bq}}(\d\phi)\bigg)^{\frac{1}{1+\bunderline{p}}} \nonumber \\
&\le \ex^{\frac{1}{2}(v^\perp,\bunderline{A}_{k:k+1,\perp}^X v^\perp)}\Big(\frac{A_{\Pcal}}{2}\Big)^{\abs{X}_k}w_{k:k+1}^X(\boldsymbol{\nabla}_N^{-1}v^{\cbO}) \\
&= \Big(\frac{A_{\Pcal}}{2}\Big)^{\abs{X}_k}\exp\big(\frac{1}{2}(v^\perp,\bunderline{A}_{k:k+1,\perp}^X v^\perp)+\frac{1}{2}(\boldsymbol{\nabla}_N^{-1}v^{\cbO},A_{k:k+1}^X \boldsymbol{\nabla}_N^{-1}v^{\cbO})\Big) = \Big(\frac{A_{\Pcal}}{2}\Big)^{\abs{X}_k}\bunderline{w}_{k:k+1}^X(v), \nonumber
\end{align}
where we used the notation $v=v^{\cbO}+v^\perp$ for the orthogonal decomposition of the vector field $v$. This gives the bound in (9) and, by specialising to the case $X=B\in\Bcal_k$, we immediately see that we get the $L$-independent constant $\frac{A_{\Bcal}}{2}$ from the corresponding inequality in \cite{ABKM}. This confirms property (10).

\subsubsection*{Choice of constants}

The final step in the proof of Theorem \ref{T:wf} involves reconciling the various constraints on the constants $\lambda$, $\mu$, $h$ etc. imposed by the preceding arguments with our stated assumptions. The constant $\zeta$, which controls the norm $\norm{\cdot}_{\Qcal_V,\zeta}$, is considered fixed and two further parameters are immediately determined by it. The choice $\zeta'=\zeta/2$ - corresponding to $\eps=\zeta/2(1-\zeta)$ - is consistent with our construction above and $\rho=(1+\zeta'/4)^{1/3}-1$ is then given as an explicit function of $\zeta'$.

The choice of free parameters begins with $L$, from which the remaining decisions cascade in a logically consistent manner. Assuming that $L$ has been fixed such that $L\ge 2^{d+3}+16R$, we then pick $\lambda$ and $\kappa$. Lemma 7.5 of \cite{ABKM} requires $\lambda\le\min\{\omega_0\zeta'/4,1/4\}$ and we see from \eqref{E:lambdaconstr} in combination with the definition of $\lambda_0$ that the constraint
\begin{equation}
\lambda\le \Big(\frac{\zeta'}{3\Omega}+\mathfrak{r}(1-\zeta')\omega_0^{-1}\Big)^{-1}
\end{equation}
is sufficient to obtain property (2) for all scales $0\le k\le N$. We thus conclude with the choice
\begin{equation}
\lambda=\min\Big\{\frac{\o_0\zeta'}{4},\Big(\frac{\zeta'}{3\O}+\mathfrak{r}(1-\zeta')\o_0^{-1}\Big)^{-1},\frac{1}{4}\Big\}
\end{equation} 
stated in Theorem \ref{T:wf}. Next, $\kappa$ is determined by Lemma 7.7 of \cite{ABKM} and the proof of this result shows that $\kappa\le \frac{\rho L^{-4(d+\widetilde{n})-2}}{K_1}$ ensures that the desired claim holds. $K_1$ is the constant from the bound on the  derivatives of the kernels of the finite range decomposition in Theorem 6.1 of \cite{ABKM}. Together with the basic requirement from \eqref{E:Bkappa}, we thus obtain the choice of $\kappa$ stated in Theorem \ref{T:wf}. The constant $\mu$ is then given by Lemma 7.3 of \cite{ABKM} as a function of $\lambda$ and $L$. With the value of $\mu$ so determined, a valid choice of $\delta$ is guaranteed by Lemma 7.5 of \cite{ABKM}. Finally, Lemma 7.6 of \cite{ABKM} imposes the contraint $h_0\ge \sqrt{8}\delta^{-1/2}$ and claim (8), as we show above, requires $h_0\ge \sqrt{2}c_d\delta^{-1/2}$, where $c_d$ is shorthand for the expression
\begin{equation}
\Big((\floor{d/2}+1)3^{2(\floor{d/2}+1)} S(d)\Big)^{1/2}
\end{equation}
that appears in the estimate above (without the square root). Combining these requirements yields $h_0=\delta^{-1/2}\max\{\sqrt{8},\sqrt{2}c_d\}$ and thereby completes the proof of Theorem \ref{T:wf}.
\ProofEnde

\section{Construction of the operator $\Mcal$} \label{A:D}

We give below an explicit construction of the operator $\Mcal$ used in the proof of Lemma \ref{L:k=kbarcase}. All required properties are part of the next statement.

\begin{lemma} \label{L:Fmult}
Given a positive integer $a<L^{\bar{k}-1}$ and $L\ge 2^{d+3}+16R$, there is an invertible Fourier multiplier $\Mcal: \boldsymbol{\Vcal}_N\to\boldsymbol{\Vcal}_N$, with eigenvalues bounded from below as
\begin{equation}
\widehat{\Mcal}(p) \ge \begin{cases}
\frac{1}{2} & \text{ if } \;\abs{p}\le L^{-\bar{k}} \\
C_a L^{2a(\bar{k}-j-1)} & \text{ if }\; L^{-j-1}<\abs{p}\le L^{-j} \text{ for } j<\bar{k}
\end{cases},
\end{equation}
such that $\Mcal f_\eps$ is supported in $B_{\bar{k}}^0$ and the inequality
\begin{equation} \label{E:Mfepsinner}
\big(\Mcal f_\eps,\Mcal f_\eps\big)_{\cbX_N} \le C(d,f,a) L^{d+2+4a} L^{-2\bar{k}} 
\end{equation}
holds for all $\bar{k}$ large enough (uniformly in $N$ and depending only on the fixed parameters) and for a constant $C(d,f,a)$ independent of $L$.
\end{lemma}
\begin{proof}
We construct $\Mcal$ as linear combination of two Fourier multipliers with range no greater than $L^{\bar{k}-1}$. By our assumptions on $f_\eps$ in \eqref{E:fassump}, this ensures that the support of $\Mcal f_\eps$ extends at most an $\abs{\cdot}_\infty$-distance of $\floor{L^{\bar{k}}/4}+L^{\bar{k}-1}<(L^{\bar{k}}-1)/2$ from the origin in $\Lambda_N$. Specifically, we let 
\begin{equation}
\Mcal = M +\tau \Delta^a,
\end{equation}
where $\Delta = \sum_{i=1}^d \nabla_i^*\nabla_i$ is the discrete Laplacian, $\tau>0$ is determined below and $M$ is an operator constructed in the sequel. Observe that $\Delta^a$ has range $a$ and satisfies the following upper and lower bounds in Fourier space:
\begin{equation}
2C_a \abs{p}^{2a} \le \widehat{\Delta^a}(p) \le \abs{p}^{2a},
\end{equation}
for some constant $0<C_a<1/2$ depending on $a$. The operator $M$ is defined with the help of a kernel $F:\Lambda_N\to\R$, that is, we require that
\begin{equation}
M\phi(x)=\sum_{y\in\Lambda_N} F(y-x)\phi(y).
\end{equation}
The kernel has range $L^{\bar{k}-1}$ and is defined by the expression
\begin{equation}
F(x) = \begin{cases}
(2\pi\theta_0)^{-d/2}L^{-d(\bar{k}-1)}e^{-\frac{\abs{L^{-(\bar{k}-1)}x}^2}{2\theta_0}} & \text{ if }\; \abs{x}_\infty < L^{\bar{k}-1} \\
0 & \text{ if }\; \abs{x}_\infty \ge L^{\bar{k}-1}.
\end{cases}
\end{equation}
The variance of the Gaussian density is determined by the requirements imposed subsequently on the discrete Fourier transform $\widehat{F}$. Let $\mathscr{D}=L^{-(\bar{k}-1)}\Z^d\cap [-1,1)^d$ denote the discretisation of the domain $[-1,1)^d$ with lattice spacing $L^{-(\bar{k}-1)}$ and write $\mathfrak{d}$ for the (continuous) interval $[0,L^{-(\bar{k}-1)})^d$. We now represent the Fourier transform of $F$ as
\begin{align} \label{E:stepint}
\widehat{F}(p) &= \sum_{x\in\Lambda_N}e^{-i x\cdot p}F(x)=\sum_{x\in \mathscr{D}}e^{-i L^{\bar{k}-1} x\cdot p} F(L^{\bar{k}-1}x) \nonumber \\
&= \int\displaylimits_{[-1,1)^d}\; \sum_{y\in \mathscr{D}}e^{-i L^{\bar{k}-1} y\cdot p} F(L^{\bar{k}-1}y)L^{d(\bar{k}-1)}\1_{y+\mathfrak{d}}(x)\,\d x \nonumber \\
&= (2\pi\theta_0)^{-d/2}\int\displaylimits_{[-1,1)^d}\; \sum_{y\in \mathscr{D}}e^{-i L^{\bar{k}-1} y\cdot p} e^{-\frac{\abs{y}^2}{2\theta_0}} (1-\1_{\abs{y}_\infty = 1}) \1_{y+\mathfrak{d}}(x)\,\d x.
\end{align} 
We compare the step function in \eqref{E:stepint} with the continuous integrand $\exp(-i L^{\bar{k}-1} x\cdot p -\abs{x}^2/\theta_0)$. Save when $x_j<L^{-(\bar{k}-1)}-1$ for some $j=1,\ldots,d$, it holds that
\begin{multline}
\big|e^{-i L^{\bar{k}-1} x\cdot p} e^{-\frac{\abs{x}^2}{2\theta_0}}-\sum_{y\in \mathscr{D}}e^{-i L^{\bar{k}-1} y\cdot p} e^{-\frac{\abs{y}^2}{2\theta_0}} (1-\1_{\abs{y}_\infty = 1}) \1_{y+\mathfrak{d}}(x)\big| \\
= \big|e^{-i L^{\bar{k}-1} x\cdot p} e^{-\frac{\abs{x}^2}{2\theta_0}}-e^{-i L^{\bar{k}-1} y\cdot p} e^{-\frac{\abs{y}^2}{2\theta_0}}\big|,
\end{multline}
for some $y\in\mathscr{D}$ such that $\abs{y-x}_\infty < L^{-(\bar{k}-1)}$. We then obtain the following estimate:
\begin{align}
\big| &e^{-i L^{\bar{k}-1} x\cdot p} e^{-\frac{\abs{x}^2}{2\theta_0}} - e^{-i L^{\bar{k}-1} y\cdot p} e^{-\frac{\abs{y}^2}{2\theta_0}}\big| \nonumber \\
&\quad\le \frac{1}{2}\big|e^{-i L^{\bar{k}-1} x\cdot p} - e^{-i L^{\bar{k}-1} y\cdot p} \big|\big|e^{-\frac{\abs{x}^2}{2\theta_0}} + e^{-\frac{\abs{y}^2}{2\theta_0}}\big| + \frac{1}{2}\big|e^{-i L^{\bar{k}-1} x\cdot p} + e^{-i L^{\bar{k}-1} y\cdot p} \big|\big|e^{-\frac{\abs{x}^2}{2\theta_0}} - e^{-\frac{\abs{y}^2}{2\theta_0}}\big| \nonumber \\
&\quad\le \big|e^{-i L^{\bar{k}-1} x\cdot p} - e^{-i L^{\bar{k}-1} y\cdot p} \big| + \big|e^{-\frac{\abs{x}^2}{2\theta_0}} - e^{-\frac{\abs{y}^2}{2\theta_0}}\big| \nonumber \\
&\quad\le L^{\bar{k}-1}\abs{y-x}\abs{p} + \Big|\int_0^1\; \frac{(x+t(y-x))\cdot (y-x)}{\theta_0}e^{-\frac{\abs{x+t(y-x)}^2}{2\theta_0}}\,\d t\Big| \nonumber \\
&\quad\le \sqrt{d}\abs{p} + \frac{d}{\theta_0}L^{-(\bar{k}-1)}.
\end{align}
The volume of the set of values of $x$ excluded from the above is $2^d-(2-L^{-(\bar{k}-1)})^d\le d2^{d-1}L^{-(\bar{k}-1)}$, and so we arrive at the upper bound
\begin{multline} \label{E:discerror}
\Big|\widehat{F}(p)-\int_{[-1,1]^d}\;(2\pi\theta_0)^{-d/2} e^{-i L^{\bar{k}-1} x\cdot p} e^{-\frac{\abs{x}^2}{2\theta_0}}\Big| \\
\le (2\pi\theta_0)^{-d/2} \big(d2^{d-1}L^{-(\bar{k}-1)} + \frac{d2^d}{\theta_0}L^{-(\bar{k}-1)} + 2^d\sqrt{d}\abs{p}\big).
\end{multline}
The next step is to consider the error made by truncating the continuous Fourier transform. We obtain the following inequality,
\begin{align} \label{E:FTerror}
\Big| \int_{\R^d\setminus [-1,1]^d}\;e^{-i L^{\bar{k}-1} x\cdot p} (2\pi\theta_0)^{-d/2} e^{-\frac{\abs{x}^2}{2\theta_0}}\,\d x \Big| &\le \int_{\abs{x}_\infty>1}\;(2\pi\theta_0)^{-d/2}e^{-\frac{\abs{x}^2}{2\theta_0}}\,\d x \le 2de^{-\frac{1}{2\theta_0}},
\end{align}
using simple Gaussian tail estimates. Finally, we have the standard result
\begin{equation} \label{E:FT}
\int_{\R^d}\;(2\pi\theta_0)^{-d/2} e^{-i L^{\bar{k}-1} x\cdot p} e^{-\frac{\abs{x}^2}{2\theta_0}}\,\d x = e^{-\frac{\theta_0 \abs{L^{\bar{k}-1}p}^2}{2}}.
\end{equation}
Combining these estimates yields the following constraint on $\theta_0$. We see that picking $\theta_0<\min\{(2\log 8 d)^{-1},2\log 4/3\}$ ensures that the Fourier transform in \eqref{E:FT} is bounded below by $3/4$ for $\abs{p}\le L^{-\bar{k}+1}$ and the gap in \eqref{E:FTerror} is less than $1/8$. With $\theta_0$ fixed in this way, we then infer that the bound in \eqref{E:discerror} is equally less than $1/8$ for all $\bar{k}$ sufficiently large. The expression in \eqref{E:discerror} is monotone in $L$, so that a lower bound on $\bar{k}$ can be given independently of $L$. We thus conclude that $\widehat{M}(p)\ge 1/2$ for all $\abs{p}\le L^{-\bar{k}+1}$.

Since $\widehat{\Delta^a}(p)$ is non-negative for all $p$, the above immediately implies the desired lower bound $\widehat{\Mcal}(p)\ge 1/2$ for $\abs{p}\le L^{-\bar{k}}$. For the large $\abs{p}$ eigenvalues, we see that a choice $\tau=L^{2 a \bar{k}}$ renders the lower bound
\begin{equation}
\tau\widehat{\Delta^d}(p)\ge 2 C_a \tau\abs{p}^{2a}
\end{equation}
attractive for $\abs{p}\ge L^{-\bar{k}+1}$. For $L^{-\bar{k}}<\abs{p}\le L^{-\bar{k}+1}$ we get $\widehat{\Mcal}(p)\ge 1/2+2C_a>C_a>0$, which is enough to guarantee invertibility. A crude upper bound for $\widehat{M}(p)$ is given by
\begin{equation}
\abs{\widehat{F}(p)} \le 2^d(2\pi\theta_0)^{-d/2} \le C_a L^{2a},
\end{equation}
for all $L$ large enough depending on $d$. A direct calculation shows that the assumption $L\ge 2^{d+3}+16R$, which is in any event needed for our analysis, is (more than) sufficient for this purpose. This confirms the inequality $\widehat{\Mcal}(p) \ge C_a L^{2a(\bar{k}-j-1)}$ for the remaining values of $p$ such that  $L^{-j-1}<\abs{p}\le L^{-j}$ with $j<\bar{k}-1$. Finally, we compute the inner product in \eqref{E:Mfepsinner} to complete the proof:
\begin{align}
\big(\Mcal f_\eps,\Mcal f_\eps\big)_{\cbX_N} &\le 2\big(M f_\eps,M f_\eps\big)_{\cbX_N}+2\tau^2\big(\Delta^a f_\eps,\Delta^a f_\eps\big)_{\cbX_N} \nonumber \\
&\le 2\sum_{x\in B^0} \abs{f_\eps}_\infty^2\Big(\sum_{\abs{y-x}_\infty\le L^{\bar{k}-1}}F(y-x)\Big)^2 + 2\tau^2\sum_{x\in B^0} \abs{\Delta^a f_\eps}_\infty^2 \nonumber \\
&\le 2^{4d+5}C_f^{d+4}(2\pi\theta_0)^{-d}L^{d+2}L^{-2\bar{k}} + 2^{2d+5+8a}d^{2a}C_f^{d+2+4a}L^{d+2+4a}L^{-2\bar{k}} \nonumber \\
&\le C(d,f,a)L^{d+2+4a}L^{-2\bar{k}}.
\end{align}
\end{proof}

\end{appendices}

\end{document}